\newtheorem{thr}{{\sc Theorem}}[section]
\newtheorem{lem}[thr]{{\sc Lemma}}   
\newtheorem{cor}[thr]{{\sc Corollary}}
\newtheorem{prop}[thr]{{\sc Proposition}}
\newtheorem{note}[thr]{{\sc Note}}
\newtheorem{rem}[thr]{{\sc Remark}}
\newtheorem{rems}[thr]{{\sc Remarks}}
\newtheorem{defn}[thr]{{\sc Definition}}
\numberwithin{equation}{section}
\begin{document}

\title[The generic dual of p-adic groups]{The generic dual of p-adic groups and applications}

\vspace{.25in}

\author{Chris Jantzen}
\address{Department of Mathematics \\ East Carolina University \\ Greenville, NC 27858 U.S.A.}
\email{jantzenc@ecu.edu}

\author{Baiying Liu}
\address{Department of Mathematics \\ Purdue University \\ West Lafayette, IN 47907 U.S.A.}
\email{liu2053@purdue.edu}

\date{}

\subjclass[2000]{Primary: 22E50, 11S37}

\keywords{generic representations, local Langlands parameters}

\thanks{The research of the first-named author is partially supported by a summer research award from Thomas Harriot College of Arts and Sciences.
The research of the second-named author is partially supported by the NSF Grants DMS-1702218, DMS-1848058, and by start-up funds from the Department of Mathematics at Purdue University}

\begin{abstract}
In this paper, we give a uniform classification of the generic dual of quasi-split classical groups, their similitude counterparts, and general spin groups. As applications, for quasi-split classical groups, we show that the functorial lifting maps constructed by Cogdell, Kim, Piatetski-Shapiro and Shahidi are surjective. We also analyze structures of general local Langlands parameters and explicitly construct a distinguished element for each local $L$-packet.
\end{abstract}

\maketitle

\tableofcontents
\listoftables

\section{Introduction}\label{sect1}

Let $F$ be a $p$-adic field of characteristic 0.
In \cite{JL14}, the authors had two main results. The first was to classify the irreducible generic representations of $SO_{2n}(F)$. We then used this to show the surjectivity of the functorial lifting map constructed by Cogdell, Kim, Piatetski-Shapiro and Shahidi (\cite{CKPSS04}). In this paper, we extend these results. In particular, the surjectivity of functorial lifting map is extended to all quasi-split classical groups. Because the strategy used for the classification of generic representations does not depend on the structure of the groups in as delicate a manner, the same basic argument can be applied to the corresponding similitude groups, as well as general spin groups. Thus we classify generic representations for these groups as well. These classifications of generic representations are expected to be very useful, for examples, 
\begin{enumerate}
    \item for the problem of classifying the generic unitary dual, as considered in \cite{LMT04};
    \item towards a conjecture of Gross-Prasad and Rallis (see \cite[Conjecture 2.6]{GP92},
\cite{Kud94}, \cite{JS04, Liu11, JL14}, 
\cite[Appendix B]{GI16}) which states that a Langlands parameter is generic (i.e., its $L$-packet contains a generic representation) if and only if its adjoint $L$-function is holomorphic at $s=1$;
\item for the generic Arthur packet conjecture (see \cite{Sha11}, \cite{Liu11, JL14}, \cite{HK17}) which states that an $L$-packet of Arthur type has a generic member if and only if it is a tempered $L$-packet. 
\end{enumerate}
We discuss each of our main results in turn.

The classification of generic representations of $GL_n(F)$ was done in \cite{Jac77} (from which those for $SL_n(F)$ may be obtained--see \cite{Tad92}), those for $SO_{2n+1}(F)$ and $Sp_{2n}(F)$ were done in \cite{Mui98a}, and those for $SO_{2n}(F)$ in \cite{JL14}. To these groups, we add  $SO_{2n+2}^{\ast}(F)$, $U_{2n+1}(F)$, $U_{2n}(F)$, $GSp_{2n}(F)$, $GSO_{2n}(F)$,
$GU_{2n+1}(F)$, $GU_{2n}(F)$, $GSpin_{2n+1}(F)$, $GSpin_{2n}(F)$, $GSO^{\ast}_{2n+2}(F)$, and $GSpin_{2n+2}^{\ast}(F)$,
with the quasi-split groups defined by a quadratic extension $E=F(\sqrt{\varepsilon})$.

There are a few key ingredients in the classification of irreducible generic representations for these groups:

\begin{itemize}

\item results on induced representations and genericity for general linear groups,

\item Levi factors of (standard) parabolic subgroups having the form
\[
M=H_{n_1} \times \dots \times H_{n_k} \times G_{n_0},
\]
where 
\[
H_m=\left\{ \begin{array}{l} GL_m \mbox{ for }G_n \not=U_{2n+1},U_{2n},GU_{2n+1},GU_{2n}, \\
	\mbox{Res}_{E/F}GL_m \mbox{ for }G_n=U_{2n+1},U_{2n},GU_{2n+1},GU_{2n},
\end{array} \right.
\]

\item an explicit form for the Langlands classification and Casselman criterion,

\item a $\mu^{\ast}$ structure for calculating Jacquet module like that of Tadi\'c for classical groups,

\item cuspidal reducibility conditions,

\end{itemize}
and for similitude groups, formulas for

\begin{itemize}

\item twisting induced representations by characters,

\item central characters of induced representations.

\end{itemize}
Most of these are already known, though we do fill in a few gaps and make a correction or two. We also try to formulate things in a uniform way to facilitate treating the different groups together.

Let $\nu$ denote the absolute value. Consistent with \cite{BZ77}, we interpret this as a character of a general linear (resp., similitude) group via composition with the determinant (resp., the similitude character). A representation $\pi$ of a general linear (resp., similitude) group is essentially tempered if there is an $\varepsilon(\pi) \in {\mathbb R}$ such that $\nu^{-\varepsilon(\pi)}(\pi)$ is tempered, and similarly for square-integrable representations.
To have a uniform presentation of the result below, for an irreducible representation $\sigma$ of $G_n(F)$, we let
\[
\beta=\left\{ \begin{array}{l}
\frac{1}{2}\varepsilon(\sigma) \mbox{ if }G_n=GSpin_{2n+1}, GSpin_{2n} \mbox{ with }n=0, \\
\varepsilon(\sigma) \mbox{ if }G_n=GSpin_{2n+2}^{\ast}, \mbox{ or }G_n=GSpin_{2n+1},GSpin_{2n} \mbox{ with }n>0, \\
0 \mbox{ if $G_n$ is not a general spin group},
\end{array} \right.
\]
noting that by Lemma~\ref{beta}, $\beta$ depends only on the representation of (lower rank) $G_k(F)$ appearing in the supercuspidal support of $\sigma$.

The following theorem summarizes Propositions~\ref{gdsprop1} and \ref{gdsprop2} for square-integrable representations (adapted to essentially square integrable representations at the end of \S \ref{gdssection}) and the discussion of essentially tempered representations in \S \ref{gtsection}.

\begin{prop}\label{dsintro}

\ 

\begin{description}

\item[Generic Essentially Discrete Series]
Let $\Delta_i=[\nu^{-a_i}\tau_i, \nu^{b_i}\tau_i]=\{\nu^{-a_i}\tau_i,\nu^{-a_i+1}\tau_i, \ldots, \nu^{b_i}\tau_i\},$
a Zelevinsky segment, $1\leq i \leq k$, where $\tau_i$ is an irreducible unitary supercuspidal representation of a general linear group. Assume that if $i<j$ has $\tau_i \cong \tau_j$, then $a_i<b_i<a_j<b_j$. Let $\sigma^{(e0)}$ be an irreducible supercuspidal generic representation of $G_{n'}(F)$ and assume that for each $i$, one of the following holds (necessarily exclusive):
\begin{enumerate}
\item $\nu^{1+\beta}\tau_i \rtimes \sigma^{(e0)}$ is reducible, in which case $a_i \in \beta+\left({\mathbb Z} \setminus \{0\}\right)$ and $a_i \geq \beta-1$;
\item $\nu^{\frac{1}{2}+\beta}\tau_i \rtimes \sigma^{(e0)}$ is reducible, in which case $a_i \in \beta-\frac{1}{2}+{\mathbb Z}_{\geq 0}$;
\item $\nu^{\beta}\tau_i \rtimes \sigma^{(e0)}$ is reducible, in which case $a_i \in \beta+{\mathbb Z}_{\geq 0}$;
\item $\nu^x \tau_i \rtimes \sigma^{(e0)}$ is irreducible for all $x \in {\mathbb R}$ and (using $c$ to denote the outer automorphism described in \S \ref{sect3})
\begin{enumerate}
\item $\check{\tau}_i \cong \tau_i$ but $c^{d(\tau)} \cdot \sigma^{(e0)} \not \cong \sigma^{(e0)}$ for $SO_{2n}, SO_{2n+2}^{\ast},$

\item $\check{\tau}_i \cong \tau_i $  but $\omega_{\tau} \sigma^{(e0)} \not \cong \sigma^{(e0)}$ for $GSp_{2n}, GU_{2n+1}, GU_{2n},$

\item $\check{\tau}_i \cong \tau_i $  but $\omega_{\tau} (c^{d(\tau)} \cdot \sigma^{(e0)}) \not \cong \sigma^{(e0)}$ for $GSO_{2n}, GSO_{2n+2}^{\ast},$

\item
$\nu^{-2\beta}\omega_{\sigma^{(e0)}}\check{\tau}_i \cong \tau_i $  but $c^{d(\tau)} \cdot\sigma^{(e0)} \not \cong \sigma^{(e0)}$ for $GSpin_{2n}$, $GSpin_{2n+2}^{\ast}$,
\end{enumerate}
in which case $a_i \in\beta+ {\mathbb Z}_{\geq 0}$. 
Here, $\check{\ }$ denotes contragredient composed with Galois conjugation for unitary and general unitary groups and is just contragredient otherwise. This case does not occur for $SO_{2n+1}, Sp_{2n}, U_{2n+1}, U_{2n}, GSpin_{2n+1}$.
\end{enumerate}
Then, if $\pi$ is the generic subquotient of $\delta(\Delta_1) \times \dots \times \delta(\Delta_k) \rtimes \sigma^{(e0)}$, $\pi$ is essentially square-integrable.
Conversely, any generic  irreducible essentially square-integrable $\pi$ of a group $G_n(F)$ is of this form (with $\Delta_1, \dots, \Delta_k$ unique up to permutation), and further
\[
\pi \hookrightarrow \delta(\Delta_1) \times \dots \times \delta(\Delta_k) \rtimes \sigma^{(e0)}.
\]

\item[Generic Essentially Tempered]
 Let $\tau_1, \tau_2, \dots, \tau_c$ be   irreducible generic unitary supercuspidal representations of general linear groups and $\sigma^{(e2)}$ be an irreducible generic essentially square-integrable representation of $G_n(F)$. Let $\Psi_1, \dots, \Psi_c$ be segments of the form $\Psi_i=[\nu^{\beta+\frac{-k_i+1}{2}}\tau_i, \nu^{\beta+\frac{k_i-1}{2}}\tau_i]$. Then the generic component
\[
\sigma^{(et)} \leq \delta(\Psi_1) \times \dots \times
\delta(\Psi_c) \rtimes \sigma^{(e2)}
\]
is a generic essentially tempered representation. Furthermore, any generic essentially tempered representation may be realized this way (with inducing representation unique up to Weyl conjugation).
\end{description}

\end{prop}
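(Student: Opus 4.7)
The overall plan is to use the toolkit assembled in the preceding sections of the paper: the $\mu^*$-structure for computing Jacquet modules, Casselman's square-integrability criterion, the explicit form of the Langlands classification, the cuspidal reducibility conditions for $\nu^x \tau \rtimes \sigma^{(e0)}$, and the fact (due to Rodier and its generalizations) that parabolic induction from an irreducible generic representation has a unique generic irreducible subquotient. Throughout I would first reduce ``essentially'' to ``genuine'' by a single unramified twist, using Lemma~\ref{beta} so that the shift $\beta$ affects only the description of $\sigma^{(e0)}$. After this reduction the statements look formally identical to their classical counterparts for $SO_{2n+1}, Sp_{2n}, SO_{2n}$, and the arguments can be recast in a uniform language.

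For the \emph{existence} direction of the essentially discrete series statement, set $I = \delta(\Delta_1)\times\dots\times\delta(\Delta_k)\rtimes\sigma^{(e0)}$. Since each factor is generic, so is $I$, and by Rodier $I$ has a unique generic irreducible subquotient $\pi$. To verify that $\pi$ is essentially square-integrable I would check Casselman's criterion on the minimal Jacquet module of $\pi$. The $\mu^*$-formula produces all cuspidal exponents appearing in $r_M(I)$; each is a string-theoretic combination of $-a_i,\dots,b_i$ together with the reducibility points from the relevant case among (1)--(4). The ordering hypothesis $a_i<b_i<a_j<b_j$ when $\tau_i\cong\tau_j$, combined with the exclusivity of cases (1)--(4), then forces every exponent that can appear in the generic constituent $r_M(\pi)$ to lie strictly in the negative chamber. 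The embedding $\pi\hookrightarrow I$ follows by Frobenius reciprocity once one identifies the generic summand inside the relevant Jacquet module.

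For the \emph{reconstruction} direction, let $\pi$ be a generic essentially square-integrable representation of some $G_n(F)$ and examine its cuspidal support $(\rho_1,\dots,\rho_r;\sigma^{(e0)})$. The $\rho_j$'s are twists of unitary supercuspidals, and grouping them into Zelevinsky segments $\Delta_i=[\nu^{-a_i}\tau_i,\nu^{b_i}\tau_i]$ yields, via Frobenius reciprocity, an embedding of $\pi$ into the corresponding $\delta(\Delta_1)\times\dots\times\delta(\Delta_k)\rtimes\sigma^{(e0)}$. Casselman's criterion applied to $\pi$ then forces both the ordering condition on the $\Delta_i$'s and the conclusion that each $a_i$ lies in the set prescribed by exactly one of the four reducibility cases; uniqueness of the $\Delta_i$'s up to permutation is immediate from uniqueness of cuspidal support plus the forced ordering. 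The essentially tempered statement follows from the discrete series one and the general principle that an essentially tempered representation is a direct summand of a unitary parabolic induction from an essentially square-integrable representation: the ``balanced'' segments $\Psi_i$ centered at $\beta$ are exactly what unitarity requires, genericity of $\sigma^{(et)}$ picks out a single summand, and uniqueness of the inducing data up to Weyl conjugation is inherited from the uniqueness just established in the square-integrable case.

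The main obstacle is the uniform verification of Casselman's criterion across the four cases (1)--(4), particularly case (4), which demands careful bookkeeping of the outer automorphism $c$ and, for similitude or general spin groups, the central character twist $\omega_\tau$ inside the $\mu^*$-computation; one has to check that after these twists the exponents produced by each $\Delta_i$ still fall strictly in the negative chamber. The remainder is essentially a retracing of the $SO_{2n+1}/Sp_{2n}$ argument of Mui\'c and the $SO_{2n}$ argument of \cite{JL14} in the uniform framework of the paper.
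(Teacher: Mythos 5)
Your overall strategy (Jacquet modules via $\mu^{\ast}$, the Casselman criterion, cuspidal reducibility points, uniqueness of the generic subquotient) is the same as the paper's, but two of your central steps are asserted where the paper has to work hardest, and as stated they do not go through. In the existence direction you claim that the ordering hypothesis plus exclusivity of cases (1)--(4) ``forces every exponent that can appear in the generic constituent $r_M(\pi)$ to lie strictly in the negative chamber.'' This is not something one can read off from $\mu^{\ast}(I)$: the full induced representation $I$ has many Jacquet module terms violating the Casselman inequalities (e.g.\ the summands $\delta([\nu^{-i+1}\tau,\nu^{a}\tau])\times\delta([\nu^{i}\tau,\nu^{b}\tau])$ with $i$ large), and one must show the \emph{generic} subquotient avoids them. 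The paper does this by first constructing the generalized Steinberg representations $\delta_1([\nu^{\alpha}\tau,\nu^{b}\tau];\sigma^{(0)})$, proving they are generic (Corollary~\ref{Steinberggenericity}, which uses the Standard Module Conjecture), and then running an induction on the number of segments (Lemma~\ref{gdslemma}, Proposition~\ref{gdsprop1}) that bounds $s_{GL}(\pi)$ by comparing the two embeddings $\pi\leq\delta(\Delta_1)\rtimes(\cdots)$ and $\pi\leq\delta(\Delta_k)\rtimes(\cdots)$ and discarding terms by supercuspidal-support bookkeeping. Without the Steinberg representations and this two-sided comparison, your verification of Casselman has no content.

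The converse direction has a larger gap. You assert that Casselman's criterion applied to $\pi$ ``forces both the ordering condition on the $\Delta_i$'s and the conclusion that each $a_i$ lies in the set prescribed by exactly one of the four reducibility cases.'' Casselman alone does none of this. Ruling out $a_i\in\frac12+{\mathbb Z}$ when the reducibility point is integral (and vice versa) requires the irreducibility statements of Lemma~\ref{dsirrlem}, which in turn rest on Lemma~\ref{delicate}, Lemma~\ref{GLtypeirr} and the R-group computation of Corollary~\ref{Rgroupcor}; ruling out $a_i=0$ in the (C1) case and showing at most one segment has $a_i=-1$ requires the minimality-of-central-exponent argument together with Lemma~\ref{dps}; and the interleaving condition $a_i<b_i<a_j<b_j$ is extracted in Proposition~\ref{gdsprop2} by a case analysis using the embedding Lemmas~\ref{one segment} and \ref{k=2}, not by inspecting exponent sums. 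Each of these is a place where a naive ``Casselman forces it'' argument fails: the Casselman inequalities are consistent with, say, two overlapping segments or a half-integral shift, and only the irreducibility/commuting arguments eliminate them. The essentially tempered part of your proposal is fine and matches the paper (Harish-Chandra, \S\ref{gtsection}).
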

Note that the essentially tempered claims above follow directly from a result of Harish-Chandra (cf.\, \cite[Proposition III.4.1]{Wal03}).  The classification of square-integrable and tempered representations are done in \S \ref{gdssection} and \S \ref{gtsection}, respectively; the results for essentially square-integrable and essentially tempered representations are obtained as consequences.

We need a bit of notation in order to present the next result in a uniform manner.
First, for a segment $\Sigma=[\nu^{-a} \xi, \nu^b \xi]$, we define $\check{\Sigma}=[\nu^{-b}\check{\xi}, \nu^{a}\check\xi]$, so that $\delta(\Sigma)^{\vee}=\delta(\check{\Sigma})$.
Also, we set
\[
\omega_{\sigma^{(et)}}'=\left\{ \begin{array}{l} \omega_{\sigma^{(et)}} \mbox{ (central character) for general spin groups}, \\
	1 \mbox{ otherwise},
\end{array} \right.
\]
and similarly for $\omega_{\sigma^{(e2)}}'$ and $\omega_{\sigma^{(e0)}}'$.

The following theorem summarizes Theorems~\ref{gen1}, \ref{gen2}, \ref{gen3}, \ref{gen4}, and Note~\ref{interpretation}.

\begin{thr}\label{thm1intro}
Put $\delta(\Sigma_i)=\nu^{x_i}\delta_i$, $i
= 1, 2, \cdots, f$ as above (i.e., $x_1 \geq x_2 \geq \dots \geq x_f>\beta$). Then, the representation
$$
\delta(\Sigma_1)  \times \dots \times \delta(\Sigma_f) \rtimes
\sigma^{(et)}
$$
is irreducible if and only if 
$\{\Sigma_j\}_{j=1}^f$ and
$\sigma^{(et)}$ satisfy the following properties:

\begin{enumerate}

\item[(G1)] $\delta(\Sigma_i) \times \delta(\Sigma_j)$ and $\delta(\Sigma_i) \times \omega_{\sigma^{(et)}}'\delta(\check{\Sigma}_j)$ are irreducible  for all $1 \leq i\not= j \leq f$; and

\item[(G2)] $\delta(\Sigma_i) \rtimes \sigma^{(et)}$ is irreducible for
all $1 \leq i \leq f$.

\end{enumerate}
The reducibility for (1) is known from \cite{Zel80}; for (2) we write $\sigma^{(et)}$ as in Proposition~\ref{dsintro} as above. Then
$\delta(\Sigma) \rtimes \sigma^{(et)}$ is irreducible 
if and only if the following hold:

\begin{enumerate}
\item[(G3)] $\delta(\Sigma) \times \delta(\Psi_j)$ and $\omega_{\sigma^{(e2)}}'\delta(\check{\Sigma}) \times \delta(\Psi_j)$ are irreducible for all $1 \leq j \leq c$; and

\item[(G4)] $\delta(\Sigma) \rtimes \sigma^{(e2)}$ is irreducible.

\end{enumerate}
To understand when the second condition above holds, write $\sigma^{(e2)}$ as in Proposition~\ref{dsintro}. Then, $\delta(\Sigma) \rtimes \sigma^{(e2)}$ is irreducible if and only if the following hold:

\begin{enumerate}

\item[(G5)] $\delta(\Sigma) \times \delta(\Delta_i)$ and 
$\omega_{\sigma^{(e0)}}'\delta(\check{\Sigma}) \times \delta(\Delta_i)$ are irreducible for all $1 \leq i \leq k$; and

\item[(G6)] either (a) $\delta(\Sigma) \rtimes
\sigma^{(e0)}$ is irreducible, or (b) $\delta(\Sigma)=\delta([\nu^{1+\beta}\xi, \nu^{b+\beta} \xi])$,  with $\nu^{1+\beta}\xi \rtimes \sigma^{(e0)}$ reducible and there is some $i$ having $\delta(\Delta_i)=\delta([\nu^{1+\beta}\xi, \nu^{b_i+\beta}\xi])$ and $b_i \geq b$.

\end{enumerate}
Finally, for the second condition above, we have 
$\delta(\Sigma) \rtimes \sigma^{(e0)}$ is irreducible 
if and only if one of the following hold: for $\Sigma=[\nu^{-a}\xi, \nu^b \xi]$, we have

\begin{enumerate}

\item[(G7)] $\xi \not \cong \omega_{\sigma^{(0)}}'\check{\xi}$; or

\item[(G8)] $\xi \cong \omega_{\sigma^{(0)}}'\check{\xi}$ and the following:  (i) if $\nu^x \xi \rtimes \sigma^{(e0)}$ is reducible for some (necessarily unique) $x =\beta+\alpha$ with $\alpha \geq 0$, then $\pm \alpha \not \in \{-a-\beta, -a+1-\beta, \cdots, b-\beta\}$; (ii)
if $\nu^x \xi \rtimes \sigma^{(e0)}$ is irreducible for all $x \in {\mathbb R}$, then $0 \not \in \{-a-\beta, -a+1-\beta, \cdots, b-\beta\}$.

\end{enumerate}

\end{thr}

We take a moment to note a couple of misstatements in the introduction of \cite{JL14}.
First,  condition (2) for square-integrable generic representations was misstated in \cite[Proposition 1.1]{JL14} (with $\frac{1}{2}$ instead of $-\frac{1}{2}$). Similarly, in \cite[Theorem 1.2]{JL14}, condition (2) in the reducibility of $\delta(\Sigma) \rtimes \sigma^{(2)}$ should be $\nu\xi \rtimes \sigma^{(0)}$ reducible (rather than $\xi \rtimes \sigma^{(0)}$ as stated). Both are correct in the body of the paper.

The second part of this paper is to apply the above classification of generic representations to show the surjectivity of the functorial lifting maps for $G_n = SO_{2n+1}, Sp_{2n}, SO_{2n}, SO_{2n+2}^*, U_{2n+1}, U_{2n}$, quasi-split classical groups of $F$-rank $n$, constructed by Cogdell, Kim, Piatetski-Shapiro and Shahidi (\cite{CKPSS04}, \cite{CPSS11}). 

Let 
$N = 2n$ for $G_n=SO_{2n+1}, U_{2n}, SO_{2n}$, 
$N = 2n+2$ for $G_n=SO_{2n+2}^*$, 
 $N = 2n+1$ for $G_n=Sp_{2n}, U_{2n+1}$.
By Langlands' principle of functoriality, the following table summarizes the cases of funtoriality we consider from $G_n$ to $H_N$:

\begin{table}[H]
\begin{tabular}{ |c|c|c| } 
 \hline
$G_n$ & $\iota: {}^L G_n \hookrightarrow {}^L H_N$ & $H_N$ \\ 
 \hline
$SO_{2n+1}$ & $Sp_{2n}(\mathbb{C}) \times W_F \hookrightarrow GL_{2n}(\mathbb{C}) \times W_F$ & $GL_{2n}$ \\ 
 $Sp_{2n}$ & $SO_{2n+1}(\mathbb{C}) \times W_F \hookrightarrow GL_{2n+1}(\mathbb{C}) \times W_F$ & $GL_{2n+1}$ \\ 
 $SO_{2n}$ & $SO_{2n}(\mathbb{C}) \times W_F \hookrightarrow GL_{2n}(\mathbb{C}) \times W_F$ & $GL_{2n}$\\
  $SO_{2n+2}^*$ & $SO_{2n+2}(\mathbb{C}) \rtimes W_F \hookrightarrow GL_{2n+2}(\mathbb{C}) \times W_F$ & $GL_{2n+2}$\\
  $U_{2n+1}$ & $GL_{2n+1}(\mathbb{C}) \rtimes W_F \hookrightarrow GL^{\times 2}_{2n+1}(\mathbb{C}) \rtimes W_F$ & $Res_{E/F} GL_{2n+1}$\\
    $U_{2n}$ & $GL_{2n}(\mathbb{C}) \rtimes W_F \hookrightarrow GL^{\times 2}_{2n}(\mathbb{C}) \rtimes W_F$ & $Res_{E/F} GL_{2n}$\\
 \hline
\end{tabular}\vspace{.2in}
\caption{Langlands functoriality}
\label{tab:Langlands functoriality}
\end{table}\vspace{-.2in}
\noindent
where $GL^{\times 2}_{k}(\mathbb{C}) = GL_{k}(\mathbb{C}) \times GL_{k}(\mathbb{C})$, and $GL_{k}(\mathbb{C}) \hookrightarrow GL^{\times 2}_{k}(\mathbb{C})$ is the diagonal embedding. 

In \cite{CKPSS04}, Cogdell, Kim, Piatetski-Shapiro and Shahidi, and \cite{CPSS11}, Cogdell, Piatetski-Shapiro and Shahidi, 
constructed a local functorial lifting $l$ from $\Pi^{(g)}(G_n)$ (generic representations of $G_n$) 
to a subset $\Pi^{(g)}_{\varepsilon}(H_N)$  of representations of $H_N$, satisfying the following conditions:
$$L(\sigma \times \pi, s) = L(l(\sigma) \times \pi, s),$$
$$\epsilon(\sigma \times \pi, s, \psi) = \epsilon(l(\sigma) \times \pi, s, \psi),$$
for any irreducible generic representation $\pi$ of $H_k(F)$, with $k \in \mathbb{Z}_{>0}$,
where $\psi$ is a fixed nontrivial character of $F$. The left hand sides are the local factors defined by Shahidi (\cite{Sha90a}), and the right hand sides are the local factors defined by Jacquet, Piatetski-Shapiro, Shalika (\cite{JPSS83}).

One of the main ingredients of this paper is that the local Langlands functorial lifting from irreducible unitary supercuspidal generic representations of $G_n(F)$ is surjective.
Arthur (\cite{Art13}) and Mok (\cite{Mok15}) proved this result using the trace formula method and the global descent result of Ginzburg, Rallis and Soudry (\cite{GRS11}). 
Jiang and Soudry (\cite{JS12}) (for $G_n=SO_{2n+1}, Sp_{2n}, SO_{2n}, SO_{2n+2}^*$), Soudry and Tanay (\cite{ST15}) (for $G_n=U_{2n}$),
constructed the local descent map from irreducible unitary supercuspidal representations of $H_{N}$ to irreducible supercuspidal representations of $G_n$.
The generalization of descent map from irreducible unitary supercuspidal representations of $H_N$ to their product is straightforward for $G_n=SO_{2n+1}, Sp_{2n}, SO_{2n}, SO_{2n+2}^*$, but for $G_n=U_{2n}, U_{2n+1}$, further work is needed. 

As an application of the classifcation of the generic dual $\Pi^{(g)}(G_n)$ of $G_n$, we show that the local functorial lifting
$l: \Pi^{(g)}(G_n) \rightarrow \Pi^{(g)}_{\varepsilon}(H_N)$ constructed
above by Cogdell, Kim, Piatetski-Shapiro and Shahidi is surjective.
Note that for $SO_{2n+1}$, in \cite{JS03, JS04}, Jiang and Soudry have already constructed the corresponding local Langlands functorial lifting, and proved that it is actually bijective.
In \cite{Liu11}, Liu proved the surjectivity for $Sp_{2n}$, and in \cite{JL14}, Jantzen and Liu proved the case of split $SO_{2n}$. 
Note that the details of the proofs in \cite{Liu11} and \cite{JL14} have been omitted. Here we gave uniform detailed proofs for all the quasis-plit classical groups. 
We remark that, for $Sp_{2n}$, $SO_{2n}$, and $SO_{2n+2}^*$, $l$ is expected
not to be injective by  \cite[Conjecture 3.7]{Jia06},
which is a refinement of the local converse theorem conjecture.

Let $\Phi(G_n)$ be the set of local Langlands parameters for $G_n$, which are
${}^LG_n$-conjugacy classes of admissible homomorphisms $W_F \times SL_2(\mathbb{C}) \rightarrow {}^LG_n$,
where $W_F \times SL_2(\mathbb{C})$ is the Weil-Deligne group. 
When $G_n=SO_{2n}, SO^*_{2n+2}$, given a local Langlands parameter $\phi \in \Phi(GL_{N})$, $\phi: W_F \times SL_2(\mathbb{C}) \rightarrow GL_{N}(\mathbb{C})$, assume that it factors through $G_{n}(\mathbb{C})$ and $\phi \ncong c \phi$ within $G_{n}(\mathbb{C})$, where $c \phi$
is the c-conjugate of $\phi$. Then $\phi$ produces two elements in 
$\Phi(G_{n})$ (see \cite[Chapter 1]{Art13}), which are denoted by $\phi$ and $c \phi$. To identify $\phi$ and $c \phi$, let $\widetilde{\Phi}(G_{n})$ be the set of $c$-conjugacy classes of $\phi \in \Phi(G_{n})$. For any $\phi \in \Phi(G_{n})$, denote its $c$-conjugacy class by $\widetilde{\phi}$.
Note that for any $\phi \in \Phi(G_{n})$, if $\phi \ncong c \phi$, then they automatically have the same twisted local factors since they come from the same local Langlands parameter $\phi \in \Phi(GL_{N})$. Define the twisted local factors of $\widetilde{\phi}$ to be those of $\phi$. When $G_n$ is not $SO_{2n}, SO^*_{2n+2}$, we simply put $\widetilde{\Phi}(G_{n})$ to be ${\Phi}(G_{n})$. 

The local functorial lifting $l$ enables us to assign
a parameter $\phi \in \widetilde{\Phi}(G_{n})$ to each $\sigma \in \Pi^{(g)}(G_{n})$,
which is exactly the parameter corresponding to $l(\sigma)$.
That is, there is a map 
$\iota: \Pi^{(g)}(G_{n}) \rightarrow \widetilde{\Phi}^{(g)}(G_{n})$,
where $\widetilde{\Phi}^{(g)}(G_{n})$ is the set of parameters corresponding to 
representations in $l(\Pi^{(g)}(G_n))$.
We show that the surjectivity of $l$ implies that of $\iota$. 

As another application of the classification of the generic dual of $G_n$, for any local Langlands 
parameter $\widetilde\phi \in \Phi(G_{n})$, by an explicit analysis of its structure, we construct a distinguished
irreducible representation $\sigma$ of $
G_{n}(F)$ such that
$\widetilde{\phi}$ and $\sigma$ have the same twisted local factors, as in \cite{JS04, Liu11, JL14}. We remark that Arthur (\cite{Art13}) and Mok (\cite{Mok15}) have already proved the local Langlands correspondence and constructed the local $L$-packets for $G_n$. However, this explicitly constructed member $\sigma$ in each local $L$-packet is very useful for certain problems, for example, it plays a crucial role in the work towards Jiang's conjecture on the wavefront sets of representations in local Arthur packets (see \cite{Jia14} and \cite{LS22}). 

We now discuss the contents by sections. The next two sections introduce notation and background material, and the groups to be considered in this paper, respectively.
\S \ref{sect4} contains the classifications of generic representations for our groups. This is broken into four parts, which classify generalized Steinberg, square-integrable generic, tempered generic, and generic representations, respectively. 
Then, we introduce Langlands philosophy of functoriality for quasi-split classical groups in \S \ref{sect5}, and prove the surjectivity of the local functorial lifting maps in \S \ref{sect6}. In \S \ref{sect7}, we analyze the structure of local
Langlands parameters and associate a particular representation to each local Langlands parameter. 

\subsection*{Acknowledgements} 
The authors would like to thank Mahdi Asgari, Sviatoslav Archava, Kwangho Choiy, Joseph Hundley, Muthu Krishnamurthy, and Marko Tadi\'c for helpful discussions on various aspects of this work.
The authors also would like to thank Dihua Jiang and Freydoon Shahidi for their interest and constant support.

\section{Notation and preliminaries}\label{sect2}

Let $F$ be a $p$-adic field of characteristic 0. We begin by defining the groups under consideration. To this end, let $J_m$ denote the $m \times m$ matrix having $j_{k,\ell}=\left\{ \begin{array}{l}1, \mbox{ if }k+\ell=m+1,  \\ 0, \mbox{ otherwise} \end{array} \right.$ (1's on the antidiagonal, 0's elsewhere).

We start with the split groups defined by forms. In the symplectic case. We take
\begin{align*}
 &\,GSp_{2n}(F)\\
 =&\,\left\{ X \in GL_{2n}(F) \,|\, {}^TX\left(\begin{array}{cc} 0 & -J_n \\ J_n & 0 \end{array} \right) X=\lambda \left(\begin{array}{cc} 0 & -J_n \\ J_n & 0 \end{array} \right), \lambda \in F^{\times} \right\}. 
\end{align*}
For the classical group $Sp_{2n}(F)$, one simply restricts to $\lambda=1$ above. We remark that for $X \in Sp_{2n}(F)$, it is automatic that $\det(X)=1$.

For the odd orthogonal case, we take
\[
SO_{2n+1}(F)=\left\{ X \in SL_{2n+1} \,|\, {}^TXJ_{2n+1}X=J_{2n+1} \right\}.
\]
Note that we do not consider the corresponding similitude group as it is just a direct product of $SO_{2n+1}(F)$ and $F^{\times}$.

We now turn to the even orthogonal case. Here, we first define
\[
GO_{2n}(F)=\left\{ X \in GL_{2n}(F) \,|\, {}^TXJ_{2n}X=\lambda J_{2n} \mbox{ for some }\lambda \in F^{\times} \right\}.
\]
Taking determinants, $\lambda^{2n}=(\det X)^2$; we set
\[
GSO_{2n}(F)=\{ X \in GO_{2n}(F) \,|\, \lambda^n=\det X \}.
\]
The classical group $SO_{2n}(F)$ then consists of those $X \in GSO_{2n}(F)$ having $\lambda=1$.

We now turn to the (non-split) quasi-split groups defined by forms. Here, we start with a quadratic extension $E=F(\sqrt{\varepsilon})$. As we do not have such groups in the symplectic or odd-orthogonal cases, we begin with the even orthogonal case. Set
\[
J^{(\varepsilon)}_{2n+2}=\left( \begin{array}{cccc}
	 & & & J_n \\
	 & 1 & & \\
	 & & -\varepsilon & \\
	J_n & & & 
\end{array} \right).
\]
We then define
\[
GO^{(\varepsilon)}_{2n+2}(F)=\left\{ X \in GL_{2n+2}(F) \,|\, {}^TXJ^{(\varepsilon)}_{2n+2}X=\lambda J^{(\varepsilon)}_{2n+2} \mbox{ for some }\lambda \in F^{\times} \right\}.
\]
and take
\[
GSO^{(\varepsilon)}_{2n+2}(F)=\{ X \in GO^{(\varepsilon)}_{2n+2}(F) \,|\, \lambda^{n+1}=\det X \}.
\]
The group $SO^{(\varepsilon)}_{2n+2}(F)$ then consists of those $X \in GSO^{(\varepsilon)}_{2n+2}(F)$ having $\lambda=1$.

For the unitary groups, we retain the quadratic extension above and set
\[
J'_N=\left\{ \begin{array}{l}
\left( \begin{array}{cccccc}
	 & & & & & 1 \\
	 & & & & -1 &\\
  & & & 1 & & \\
  & & \iddots & & & \\
  & -1 & & & & \\
  1 & & & & & 
\end{array} \right), \mbox{ if }N=2n+1, \\
\\
\left( \begin{array}{cc}
	 & J_n \\
	-J_n & 
\end{array} \right), \mbox{ if }N=2n.
\end{array}\right.
\]
Then,
\[
GU_N(F)=\{X \in \mbox{Res}_{E/F}GL_N(F) \,|\, {}^T\bar{X}J'_N X=\lambda J'_N \mbox{ for some }\lambda \in F^{\times} \},
\]
where $\bar{\ }$ denotes the Galois conjugation. Consistent with this, by definition, $GU_0(F) \cong F^{\times}$. Again, the unitary group $U_N(F)$ consists of those $X \in GU_N(F)$ having $\lambda=1$. We also remark that in the archimedean case ($F={\mathbb R}$ and $E={\mathbb C}$), one has $GU_{2n+1}({\mathbb R}) \cong U_{2n+1}({\mathbb R}) \times H$, where $H=\{zI \,|\, z>0\}$.

We now turn to the general spin groups. As we do not have convenient matrix realizations, we follow \cite{Asg02} and work from the root data in the split cases.
Write
\[
X={{\mathbb Z}e_0 \oplus } {\mathbb Z}e_1 \oplus \dots \oplus {\mathbb Z}e_n
\]
and
\[
\check{X}={ {\mathbb Z}\check{e}_0 \oplus }{\mathbb Z}\check{e}_1 \oplus \dots \oplus {\mathbb Z}\check{e}_n
\]
as the rational characters (resp., rational cocharacters) with the usual pairing. Then $GSpin_{2n+1}$ has roots and coroots
\[
\Pi=\{ e_1-e_2, e_2-e_3, \dots, e_{n-1}-e_n, e_n\}
\]
and
\[
\check{\Pi}=\{ \check{e}_1-\check{e}_2, \check{e}_2-\check{e}_3, \dots, \check{e}_{n-1}-\check{e}_n, 2\check{e}_n-\check{e}_0\}.
\]
Note that these are dual to the data for $GSp_{2n}$.

For $GSpin_{2n}$, we retain $X$ and $\check{X}$ from above. We then have roots and coroots
\[
\Pi=\{ e_1-e_2, e_2-e_3, \dots, e_{n-1}-e_n, e_{n-1}+e_n\}
\]
and
\[
\check{\Pi}=\{ \check{e}_1-\check{e}_2, \check{e}_2-\check{e}_3, \dots, \check{e}_{n-1}-\check{e}_n, \check{e}_{n-1}+\check{e}_n-\check{e}_0\}.
\]
Note that these are dual to the data for $GSO_{2n}$.

More generally, we have included the simple $F$-roots and co-roots for the groups under consideration in Appendix~\ref{F-roots}.

Finally, we turn to the (non-split) quasi-split general spin groups. Here, we follow \cite{HS16}. In particular, the quasi-split forms correspond to homomorphisms of $Gal(\bar{F}/F)$ into the automorphisms of the Dynkin diagram. In the odd case, such homomorphisms are trivial and one has only the split odd general spin groups. In the even case, one has a nontrivial homomorphism which may be parameterized by a quadratic extension $E=F(\sqrt{\varepsilon})$ as above, with the nontrivial element of $Gal(E/F)$ being mapped to the automorphism of the Dynkin diagram which interchanges the last two simple roots. This defines the group we denote as $GSpin_{2n+2}^{(\varepsilon)}$. A more detailed description can be found in \S \ref{sect3appendix}.

\medskip

We let $G_n(F)$ denote one of the following groups under consideration
$$Sp_{2n}(F),  SO_{2n+1}(F), SO_{2n}(F), SO_{2n+2}^{(\varepsilon)}(F), U_{2n+1}(F), U_{2n}(F),$$ $$GSp_{2n}(F), GSO_{2n}(F),  GSO_{2n+2}^{(\varepsilon)}(F),GU_{2n+1}(F), GU_{2n}(F),$$
$$GSpin_{2n+1}(F), GSpin_{2n}(F), GSpin_{2n+2}^{(\varepsilon)}(F),$$
and fix a Borel subgroup $B$. 
The standard parabolic subgroups containing $B$ may then be parameterized by subsets $\Phi \subset \Pi$. For $G_n(F)\not=SO_{2n}(F)$, $GSO_{2n}(F)$, $GSpin_{2n}(F)$, the standard parabolic subgroup associated to $\Phi=\Pi \setminus \{\alpha_{n_1}, \alpha_{n_1+n_2},\dots, \alpha_{n_1+ \dots+n_k}\}$ has the form $P=MU$ with
\[
M=H_{n_1}(F) \times \dots \times H_{n_k}(F) \times G_{n_0}(F),
\]
where $n_1+\dots+n_k+n_0=n$ and
\[
H_m=\left\{ \begin{array}{l} GL_m \mbox{ for }G_n \not=U_{2n+1},U_{2n},GU_{2n+1},GU_{2n}, \\
	\mbox{Res}_{E/F}GL_m \mbox{ for }G_n=U_{2n+1},U_{2n},GU_{2n+1},GU_{2n}.
\end{array} \right.
\]
Note that we freely identify $\mbox{Res}_{E/F}GL_m(F)$ with $GL_m(E)$ in places.
We also note that in this context, we have
\[
G_0(F) \cong \left\{ \begin{array}{l}
	1 \mbox{ for } Sp_{2n}, SO_{2n+1}, SO_{2n}, U_{2n}, \\
	N_1(E/F) \mbox{ for } SO_{2n+2}^{(\varepsilon)}, U_{2n+1}, \\
	F^{\times} \mbox{ for } GSp_{2n}, GSO_{2n}, GSpin_{2n+1}, GSpin_{2n}, GU_{2n}, \\
	E^{\times} \mbox{ for }GSO_{2n+2}^{(\varepsilon)}, GSpin_{2n+2}^{(\varepsilon)}, GU_{2n+1}.
\end{array} \right.
\]

For $G_n(F)=SO_{2n}(F)$, $GSO_{2n}(F)$, $GSpin_{2n}(F)$, there is an outer autmorphism $c$ of the root data which interchanges the last two simple roots. Now, if $\alpha_{n-1},\alpha_n \in \Phi$ or $\alpha_n \not\in \Phi$, the situation is like that above for the other groups under consideration. If $\alpha_n \in \Phi$ but $\alpha_{n-1} \not\in \Phi$,  then $c \cdot \Phi$ contains $\alpha_{n-1}$ but not $\alpha_n$ (and is otherwise the same). In both cases $M \cong GL_{n_1}(F) \times \dots \times GL_{n_k}(F) \times G_0(F)$, but these are not conjugate in $G_n(F)$. However, through the use of an artifice--introduced in \cite{JL14} for $G_n=SO_{2n}$ and discussed in the next section for $G_n=GSO_{2n}$ and $GSpin_{2n}$--one may set things up so that for representations of $M$, the two situations are distinguished in the representation for $G_0(F)$. The ambuguity in writing $M \cong GL_{n_1}(F) \times \dots \times GL_{n_k}(F) \times G_0(F)$ is then eliminated, and we may consider standard Levi factors for these groups to also have the form $M\cong GL_{n_1}(F) \times \dots \times GL_{n_k}(F) \times G_{n_0}(F)$ with $n_1+\dots+n_k+n_0=n$, but with $n_0\not=1$ (noting, e.g., that $SO_2(F) \cong GL_1(F) \times SO_0(F)$ is better viewed as the latter).

We now take a moment to recall some notation from \cite{BZ77}, \cite{Tad94}. First, for $P=MN$ a standard parabolic subgroup of a $p$-adic group $G$, we let $i_{G,M}$ (resp., $r_{M,G}$) denote normalized induction  (resp., the normalized Jacquet module) with respect to $P$. Let $G=H_{k}(F)$ and  $P=MU$ the standard parabolic subgroup with $M=H_{k_1}(F) \times \dots \times H_{k_r}(F)$. If $\tau_1 \otimes \dots \otimes \tau_r$ is a representation of $M$, we let
\[
\tau_1 \times \dots \times \tau_r=i_{G,M}(\tau_1 \otimes \dots \otimes \tau_r).
\]
Similarly, suppose $P=MU$ is a standard parabolic subgroup of $G_n(F)$ with $M=H_{k_1}(F) \times \dots \times H_{k_r}(F) \times G_{k_0}(F)$. For $\tau_1 \otimes \dots \otimes \tau_r \otimes \sigma$ a representation of $M$, we let
\[
\tau_1 \times \dots \times \tau_r \rtimes \sigma=i_{G,M}(\tau_1 \otimes \dots \otimes \tau_r \otimes \sigma).
\]
Note that in the classical case, this allows $\sigma=1$, the trivial representation of $G_{0}(F)$--the trivial group.

We next discuss some structure theory from \cite{Zel80}. Let
\[
R=\bigoplus_{n \geq 0}{\mathcal R}(H_{n}(F))
\]
where ${\mathcal R}(G)$ denotes the Grothendieck group of the category of smooth finite-length representations of $G$.
We define multiplication on $R$ by extending the semisimplification of $\times$ to give the multiplication $\times: R\times R \longrightarrow R$. To describe the comultiplication on $R$, let $M_{(i)}$ denote the standard Levi factor for $H_{n}(F)$ having $M_{(i)}=H_{i}(F) \times H_{n-i}(F)$. For a representation $\tau$ of $H_{n}(F)$, we define
\begin{equation}\label{m-ast def}
m^{\ast}(\tau)=\sum_{i=0}^n r_{M_{(i)},G}(\tau),
\end{equation}
the sum of semisimplified Jacquet modules (lying in $R \otimes R$). This extends to a map $m^{\ast}:R \longrightarrow R \otimes R$. We note that with this multiplication and comultiplication--and antipode map given by the Zelevinsky involution (a special case of the general duality operator of \cite{Aub95}, \cite{SS97})--$R$ is a Hopf algebra.

Following \cite{Tad94} and \cite{Tad95}, we move to the classical and similitude groups under consideration. Set
\[
R[G]=\bigoplus_{n  \geq 0}{\mathcal R}(G_n(F)).
\]
We then extend the semisimplification of $\rtimes$ to a map $\rtimes :R \otimes R[G] \longrightarrow R[G]$. For the groups under consideration other than $SO_{2n}, GSO_{2n}$ and $GSpin_{2n}$, we define $\mu^{\ast}$ as in \cite{Tad95}: For $0 \leq i \leq n$, let $M_{(i)}$ be the standard Levi factor for $G_n(F)$ having $M_{(i)}=H_i(F) \times G_{n-i}(F)$. We then set
\begin{equation}\label{mu-ast def 1}
\mu^{\ast}=\sum_{i=0}^n r_{M_{(i)},G},
\end{equation}
a sum of semisimplified Jacquet modules. In the cases of $Sp_{2n},SO_{2n+1}$ addressed in \cite{Tad95} (and a number of other families from \cite{MT02}), this produces a twisted Hopf module structure. However, the general situation here is not quite as elegant--but just as useful calculationally. This is addressed in the next section; as are the cases of $SO_{2n}, GSO_{2n}$ and $GSpin_{2n}$ (based on the approach in \cite{JL14} for $SO_{2n}$).

Before discussing some specific representations of general linear groups, we need a bit of notation. As in \cite{BZ77}, we let $\nu=|\cdot|_F$ and interpret this as $\nu \circ \det$ on $GL_n(F)$ (with the $n$ determined by context); for $H_n(F) \cong GL_n(E)$, it is the corresponding character under the isomorphism. Similarly, on a similitude group, we let $\nu=\nu \circ \xi$, where $\xi$ denotes the similitude character of the group (again, determined by context; see Lemma~\ref{twisting}). This is always used with a representation (e.g., $\nu\pi$), with the underlying group that of the representation. Later, we apply this convention to other characters of $F^{\times}$ as well.

As in \cite{Zel80}, we consider segments of the form
\[
[\nu^a \tau, \nu^b\tau]=\{\nu^a\tau, \nu^{a+1}\tau, \dots, \nu^{b-1}\tau, \nu^b\tau\}
\]
for $\tau$ a unitary supercuspidal representation of a general linear group and $a \equiv b\,\mbox{mod}\,1$. The induced representation $\nu^a \tau \times \dots \times \nu^b\tau$ has a unique irreducible quotient (resp., subrepresentation) which we denote by $\delta([\nu^a \tau, \nu^b \tau])$ (resp., $\zeta([\nu^a \tau, \nu^b \tau])$). 
The representations $\delta([\nu^a \tau, \nu^b \tau])$ are essentially square-integrable (i.e., square-integrable after twisting by a character), and every irreducible essentially square-integrable representation has this form.
Given a segment $\Sigma=[\nu^a \tau, \nu^b\tau]$, let $\check\Sigma=[\nu^{-b} \check\tau, \nu^{-a}\check\tau]$, where $\check\tau$ is the contragredient of $\tau$ if $$G_n\neq U_{2n+1},U_{2n},GU_{2n+1},GU_{2n},$$
and is the Galois conjugate contragredient otherwise. 
We call $\tau$ self-dual (respectively, self-conjugate-dual in the case of unitary and general
unitary groups) if $\tau \cong \check\tau$.
We remark that square-integrable representations for general linear groups are generic (cf. \cite{Jac77}). The analogous representations for classical and similitude groups are discussed in \S \ref{Steinberg sect}.

\section{Groups}\label{sect3}

In this section, we give some background on the particular groups under consideration. This include material on the groups themselves as well as representation theoretic notation and results specialized to these groups (the Langlands classification/Casselman criterion, $\mu^{\ast}$ structure for Jacquet modules, etc.). While much of this material is known, we fill in a number of gaps.

We retain the assumption $char(F)=0$ in this section but note that for most applications here, $char(F) \not=2$ suffices. See Remark \ref{char} below.

For the split classical groups, the papers \cite{Mui98b}, \cite{JS04}, \cite{Liu11}, \cite{JL14} cover the problems considered in this paper. Thus our interest here is in the non-split quasi-split cases, as well as similitude groups (split or quasi-split). However, as the discussion of irreducible generic representations may be applied to the split classical groups as well, we include them here for the sake of completeness. In particular, we let $G_n$ be one of the following: $SO_{2n+1}$, $Sp_{2n}$, $SO_{2n}$, $SO_{2n+2}^{\ast}$, $U_{2n+1}$, $U_{2n}$, $GSpin_{2n+1}$, $GSpin_{2n}$, $GSpin_{2n+2}^{\ast}$, $GSp_{2n}$, $GSO_{2n}$, $GSO_{2n+2}^{\ast}$, $GU_{2n+1}$, and $GU_{2n}$. Note that the groups which are not split over $F$ require a quadratic extension (or equivalent) in their definition; denote this extension by $E=F(\sqrt{\varepsilon})$. For those non-split groups defined via bilinear forms, one may consult \cite{MVW87} or \cite{Bru63} for an explicit description of the anistropic part. For the general spin groups, consult \cite{Asg02} or \cite{HS16}; for the (non-split) quasi-split case, also see \S \ref{sect3appendix} on $GSpin_{2n+2}^{\ast}$. We also note that \cite{Arc} has identified an issue in $GSO_{2n+2}^{\ast}$ which also occurs in $SO_{2n+2}^{\ast}$ and seems to have gone unnoticed in the literature (including some work by the first-named author), namely, the need to account for the parity in the number of sign changes for the Weyl group action. This affects the characterization of when a unitary supercuspidal representation of a standard Levi factor is ramified, as well as the $\mu^{\ast}$ structure of Tadi\'c; we discuss these in more detail below.

One result needed in this paper is the standard module conjecture. This has been done in the generality needed for the groups at hand in \cite{HO13}.


We start by looking at the maximal split tori. If $G_n$ is one of the classical groups $SO_{2n+1}$, $Sp_{2n}$, $SO_{2n}$, $SO_{2n+2}^{\ast}$, $U_{2n+1}$, or $U_{2n}$, let $d(a_1, \dots, a_n)=\check{e}_1(a_1) \cdots \check{e}_n(a_n)$, with $a_i \in F^{\times}$. Note that with respect to the matrix realizations of these groups, we have
\[
d(a_1, \dots, a_n)=\left\{ \begin{array}{lr}
diag(a_1, \dots, a_n, 1, a_n^{-1}, \dots, a_1^{-1}) \\
\mbox{ for }G_n=SO_{2n+1}(F), U_{2n+1}(F), \\
diag(a_1, \dots, a_n, a_n^{-1}, \dots, a_1^{-1}) \\
\mbox{ for }G_n=Sp_{2n}(F), SO_{2n}(F), U_{2n}(F), \\
diag(a_1, \dots, a_n, 1, 1, a_n^{-1}, \dots, a_1^{-1}) \\
\mbox{ for }G_n=SO_{2n+2}^{\ast}(F). \\
\end{array} \right.
\]
The maximal split torus is then
\[
A=\{ d(a_1, \dots, a_n) \,|\, a_i \in F^{\times}\}.
\]

For the similitude groups 
$$GSp_{2n}, GSO_{2n}, GSpin_{2n+1}, GSpin_{2n}, GSO_{2n+2}^{\ast}, GU_{2n+1},GU_{2n},GSpin_{2n+2}^{\ast},$$
we set $d(a_1, \dots, a_n, a_0)=\check{e}_1(a_1)\check{e}_2(a_2) \dots \check{e}_n(a_n)\check{e}_0(a_0)$ for $a_i \in F^{\times}$. Again, for those given as matrix groups (i.e., those other than the general spin groups), these have matrix realizations
\[
d(a_1, \dots, a_n,a_0)=\left\{ \begin{array}{l}
diag(a_1, \dots, a_n, a_0a_n^{-1}, \dots, a_0a_1^{-1}) \\
\mbox{ for }G_n=GSp_{2n}(F), GSO_{2n}(F), GU_{2n}(F), \\
diag(a_1, \dots, a_n, a_0, a_0, a_0^2a_n^{-1}, \dots, a_0^2a_1^{-1})\\ \mbox{ for }G_n=GSO_{2n+2}^{\ast}(F), \\
diag(a_1, \dots, a_n, a_0, a_0^2a_n^{-1}, \dots, a_0^2a_1^{-1})\\ \mbox{ for }G_n=GU_{2n+1}(F). \\
\end{array} \right.
\]
The maximal split torus is then
\[
A=\{ d(a_1, \dots, a_n, a_0) \,|\, a_i \in F^{\times}\}.
\]

For the (non-split) quasi-split groups, the maximal quasi-split torus is also important in what follows. We start by looking at $G_n=U_{2n+1},U_{2n},GU_{2n+1},GU_{2n}, GSpin_{2n+2}^{(\varepsilon)}$. For $GU_{2n+1}(F)$, we have
\[
T=\{d(a_1, \dots, a_n, a_0)=diag(a_1, \dots, a_n, a_0, a_0\bar{a}_0\bar{a}_n^{-1}, \dots, a_0\bar{a}_0\bar{a}_1^{-1}) \,|\, a_i \in E^{\times} \}
\]
after identifying $H_1(F)$ with $E^{\times}$. Since the similitude factor is $a_0\bar{a_0}$, the quasi-split torus for $U_{2n+1}(F)$ is
\begin{align*}
T=\{d(a_1, \dots, a_n, a_0)=&diag(a_1, \dots, a_n, a_0, \bar{a}_n^{-1}, \dots, \bar{a}_1^{-1}) \, \\ &|\,a_1, \dots, a_n \in E^{\times} \text{ and } a_0\in N_1(E/F) \},
\end{align*}
where $N_1(E/F)$ denotes the norm one elements.
Similarly, for $GU_{2n}(F)$, we have
\begin{align*}
  T=\{d(a_1, \dots, a_n, a_0)=&diag(a_1, \dots, a_n, a_0\bar{a}_n^{-1}, \dots, a_0 \bar{a}_1^{-1}) \,\\
  &|\, a_1, \dots, a_n \in E^{\times} \mbox{ and }a_0 \in F^{\times} \}.  
\end{align*}
As the similitude factor is $a_0$, the quasi-split torus for $U_{2n}$ is
\[
  T=\{d(a_1, \dots, a_n)=diag(a_1, \dots, a_n, \bar{a}_n^{-1}, \dots, \bar{a}_1^{-1}) \,|\, a_1, \dots, a_n \in E^{\times}\}.  
\]
Note that these match the descriptions of \cite{Gol97} (though our form is slightly different in the odd case).
Viewing the quasi-split torus for $GSpin_{2n+2}^{(\varepsilon)}(F)$ as a subgroup of that for $GSpin_{2n+2}(E)$ (see \S \ref{sect3appendix} on $GSpin_{2n+2}^{\ast}$), we have elements of the quasi-split torus having the form (using $d_E$ for $GSpin_{2n+2}(E)$) $d_E(a_1, \dots, a_n, \bar{a}_0/a_0, a_0)$ with $a_0 \in E^{\times}$ and $a_i \in F^{\times}$ for $i>0$. To simplify the presentation below, we set
\[
d(a_1, \dots, a_n,a_0)=d_E(a_1, \dots, a_n, \bar{a}_0/a_0,a_0).
\]
The quasi-split torus is then
\[
T=\{d(a_1, \dots, a_n, a_0) \,|\, a_1, \dots, a_n \in F^{\times} \mbox{ and }a_0 \in E^{\times} \}.
\]

We now turn to the quasi-split orthogonal case. We first note that for the form used in defining $SO_{2n+2}^{(\varepsilon)}(F)$ and $GSO_{2n+2}^{(\varepsilon)}(F)$, the maximal quasi-split torus for $GSO_{2n+2}^{(\varepsilon)}(F)$ has matrix realization
\begin{align*}
  T=\{ & diag(a_1, \dots, a_n, X, (\det X) a_n^{-1}, \dots, (\det X) a_1^{-1}) \,\\
  & |\,
a_i \in F^{\times}, \, X \in GSO_2^{(\varepsilon)}(F) \},  
\end{align*}
where
\[
GSO_2^{(\varepsilon)}(F)=\left\{ X=\left( \begin{array}{cc} x & \varepsilon y \\ y & x \end{array}\right),  x, y \in F \mbox{ with } x^2-\varepsilon y^2 \not=0 \right\}.
\]
Note that $GSO_2^{(\varepsilon)}(F)\cong E^{\times}$ via $X= \left( \begin{array}{cc} x & \varepsilon y \\ y & x \end{array}\right) \longmapsto a_0=x+y\sqrt{\varepsilon}$. Under this isomorphism, $\det(X)$ corresponds to $N_{E/F}(a_0)$ (norm) and gives the similitude factor. We may abuse notation slightly and write this as
\[
T=\{ d(a_1, \dots, a_n, a_0) \,|\, a_1, \dots, a_n \in F^{\times} \mbox{ and }a_0 \in E^{\times}\}.
\]
Alternatively, realizing $GSO_{2n+2}^{(\varepsilon)}(F)$ by working inside $GSO_{2n+2}(E)$, in a manner similar to what is done for $GSpin_{2n+2}^{(\varepsilon)}(F)$ in \S \ref{sect3appendix}, one can arrange that the quasi-split torus genuinely has diagonal matrices. The maximal quasi-split torus for $SO_{2n+2}^{(\varepsilon)}(F)$ has the same form but with $\det(X)=1 \Leftrightarrow N_{E/F}(a_0)=1$.

We now look at the Weyl group action on the split tori. In the classical case, we have
\[
s_i \cdot d(a_1, \dots, a_{i-1}, a_i, a_{i+1}, a_{i+2}, \dots, a_n) =d(a_1, \dots, a_{i-1}, a_{i+1}, a_i, a_{i+2}, \dots, a_n)
\]
for $i<n$ and
\[
s_n \cdot d(a_1, \dots,a_{n-1},  a_n)=\left\{ \begin{array}{lr}
	d(a_1, \dots,a_{n-1},  a_n^{-1}) \\
	\mbox{ for }G_n=SO_{2n+1}, Sp_{2n}, SO_{2n+2}^{\ast}, U_{2n+1}, U_{2n},\\
	d(a_1, \dots, a_{n-2}, a_n^{-1},  a_{n-1}^{-1}) \\
	\mbox{ for }G_n=SO_{2n}.
\end{array} \right.
\]
If $G_n$ is one of the similitude groups,  the Weyl group acts as follows: for $i<n$, we have
\[
s_i \cdot d(a_1, \dots, a_{i-1}, a_i, a_{i+1}, a_{i+2}, a_n, a_0) =d(a_1, \dots, a_{i-1}, a_{i+1}, a_i, a_{i+2}, a_n, a_0)
\]
and for $i=n$,
\[
s_n \cdot d(a_1, \dots,a_{n-1},  a_n, a_0)=
\left\{ \begin{array}{l}
	d(a_1, \dots,a_{n-1},  a_n^{-1}, a_n a_0)\\ \mbox{ for }G_n=GSpin_{2n+1}, GSpin_{2n+2}^{\ast}, \\
	d(a_1, \dots,a_{n-1},  a_0a_n^{-1}, a_0)\\ \mbox{ for }G_n=GSp_{2n}, GU_{2n}, \\
	d(a_1, \dots, a_{n-2}, a_{n}^{-1},  a_{n-1}^{-1}, a_0 a_{n-1}a_n) \\
	\mbox{ for } G_n=GSpin_{2n}, \\
	d(a_1, \dots, a_{n-2}, a_0a_n^{-1},  a_0a_{n-1}^{-1}, a_0)\\ \mbox{ for } G_n=GSO_{2n}, \\
	d(a_1, \dots, a_{n-1}, a_0^2a_n^{-1}, a_0)\\
	\mbox{ for } G_n=GSO_{2n+2}^{\ast}, GU_{2n+1}.
\end{array} \right.
\]

For the (non-split) quasi-split groups, the action on the quasi-split torus is also important in what follows. The action of $s_i$, $i<n$ matches that above; for $i=n$, we have the following:
\[
s_n \cdot d(a_1, \dots, a_{n-1}, a_n, a_0)=\left\{ \begin{array}{l}
d(a_1, \dots, a_{n-1}, a_n^{-1}, \bar{a}_0)\\ \mbox{ if }G_n=SO_{2n+2}^{(\varepsilon)}, \\
d(a_1, \dots, a_{n-1}, a_0\bar{a}_0 a_n^{-1}, \bar{a}_0) \\
\mbox{ if }G_n=GSO_{2n+2}^{(\varepsilon)}, \\
d(a_1, \dots, a_{n-1}, \bar{a}_n^{-1}, a_0) \\
\mbox{ if }G_n=U_{2n+1}, \\
d(a_1, \dots, a_{n-1}, a_0\bar{a}_0\bar{a}_n^{-1}, a_0) \\
\mbox{ if }G_n=GU_{2n+1}, \\
d(a_1, \dots, a_{n-1}, a_0\bar{a}_n^{-1}, a_0)\\
\mbox{ if }G_n=GU_{2n},\\
d(a_1, \dots, a_{n-1}, a_n^{-1}, \bar{a}_0a_n)\\
\mbox{ if }G_n=GSpin_{2n+2}^{(\varepsilon)},
\end{array} \right.
\]
and
\[
s_n \cdot d(a_1, \dots, a_{n-1}, a_n)=d(a_1, \dots, a_{n-1}, \bar{a}_n^{-1})
\mbox{ if }G_n=U_{2n},\\
\]
recalling that
\[
a_1, \dots, a_n \in \left\{ \begin{array}{l} F^{\times} \mbox{ if }G_n=SO_{2n+2}^{(\varepsilon)}, GSO_{2n+2}^{(\varepsilon)}, GSpin_{2n+2}^{(\varepsilon)}, \\ E^{\times} \mbox{ if }G_n=U_{2n+1},U_{2n},GU_{2n+1}, GU_{2n}, \end{array} \right. 
\]
and
\[
a_0 \in \left\{ \begin{array}{l} N_1(E/F) \mbox{ if }G_n=SO_{2n+2}^{(\varepsilon)}, U_{2n+1} \\ E^{\times} \mbox{ if }G_n=GSO_{2n+2}^{(\varepsilon)}, GU_{2n+1}, GU_{2n}, GSpin_{2n+2}^{(\varepsilon)}. \end{array} \right.
\]
We remark that the restriction of the action of $s_n$ to the split tori matches the description above.

We now discuss centers and central characters for the similitude groups. In the split case, the center is just $\cap_{\alpha \in \Pi} \mbox{ker}\alpha$.
For an element $d(a_1, \dots, a_n, a_0)$ from the quasi-split torus to be in the center, it must be fixed under the action of the Weyl group. For $GSO_{2n+2}^{(\varepsilon)}(F)$, this implies the center lies in the split torus; the center then follows as in the split case. For $GSpin_{2n+2}^{(\varepsilon)}(F)$, the center contains $Z(GSpin_{2n+2}(E)) \cap GSpin_{2n+2}^{(\varepsilon)}(F)$ and is contained in the set of elements of the quasi-split torus fixed by the Weyl group; these match and give the center as described below. For $GU_{2n}(F)$, the center contains the scalar multiples of the identity which lie in $GU_{2n}(F)$ and is contained in the set of elements of the quasi-split torus fixed by the Weyl group; again, these match and give the center below. For $GU_{2n+1}(F)$, the center again contains the scalar multiples of the identity which lie in $GU_{2n+1}(F)$. To see that there is nothing else, consider the root group corresponding to the $F$-root $\alpha_n=e_n-e_0$; direct calculation shows that conjugation by $d(a_1, \dots, a_n,a_0)$ from $GU_{2n+1}$ commutes with this only if $a_na_0^{-1}=1$ (for $a_0,a_n \in E^{\times}$). Thus, to be in the center, we must have $a_0=a_n$; Weyl invariance then gives $a_1=\dots=a_n=a_0$ (so scalar). Thus we obtain the following conditions (see \cite[Proposition 4.3(v)]{Tad94} for $G_n=GSp_{2n}$, \cite[Proposition 2.3]{AS06} for $G_n=GSpin_{2n}$, and \cite[Proposition 2.10]{AS06} for $G_n=GSO_{2n}$):
\begin{table}[H]
\begin{tabular}{l|l}
$G_n$ & constraints for center  \\ \hline
$GSpin_{2n+1}$ & $a_1= \dots =a_n=1$  \\
$GSp_{2n}$ & $a_1= \dots =a_n=z$ and $a_0=z^2$    \\
$GSpin_{2n}$ & $a_1= \dots =a_n=\zeta$ with $\zeta^2=1$ \\
$GSO_{2n}$ & $a_1= \dots =a_n=z$ and $a_0=z^2$   \\
$GSO_{2n+2}^{\ast}$ & $a_1= \dots =a_n=a_0=z$ \\
$GSpin_{2n+2}^{\ast}$ & $a_1= \dots =a_n=\zeta$, $a_0=\zeta \bar{a}_0$ with $\zeta^2=1$  \\
$GU_{2n+1}$ & $a_1= \dots =a_n=a_0=z$  \\
$GU_{2n}$ & $a_1= \dots =a_n=z, \, a_0=z\bar{z}$
\end{tabular}
\caption{Centers}
\label{tab:center}
\end{table}

\begin{table}[H]
\begin{tabular}{l|l}
$G_n$ & $\omega_{\pi_1 \times \dots \times \pi_k \rtimes \pi_0}$ \\ \hline
$GSpin_{2n+1}$  & $\omega_{\pi_0}$ \\
$GSp_{2n}$ & $\omega_{\pi_1} \omega_{\pi_2} \dots \omega_{\pi_k}\omega_{\pi_0}$ if $n_0>0$ \\
& $\omega_{\pi_1} \omega_{\pi_2} \dots \omega_{\pi_k}\omega_{\pi_0}^2$ if $n_0=0$\\
$GSpin_{2n}$  & $\omega_{\pi_0}$\\
$GSO_{2n}$  & $\omega_{\pi_1} \omega_{\pi_2} \dots \omega_{\pi_k}\omega_{\pi_0}$ if $n_0>0$ \\
& $\omega_{\pi_1} \omega_{\pi_2} \dots \omega_{\pi_k}\omega_{\pi_0}^2$ if $n_0=0$ \\
$GSO_{2n+2}^{\ast}$ & $\omega_{\pi_1} \omega_{\pi_2} \dots \omega_{\pi_k} \omega_{\pi_0}$ \\
$GSpin_{2n+2}^{\ast}$ & $\omega_{\pi_0}$ \\
$GU_{2n+1}$  & $\omega_{\pi_1} \omega_{\pi_2} \dots \omega_{\pi_k} \omega_{\pi_0}$ \\
$GU_{2n}$ & $\omega_{\pi_1} \omega_{\pi_2} \dots \omega_{\pi_k} \omega_{\pi_0}$ if $n_0>0$ \\
&  $\omega_{\pi_1} \omega_{\pi_2} \dots \omega_{\pi_k} (\omega_{\pi_0} \circ N_{E/F})$ if $n_0=0$
\end{tabular}
\caption{Central characters}
\label{tab:center2}
\end{table}

\noindent
Note that for $GSpin_{2n}(F)$ with $n>1$, the center actally has $Z \cong \{\pm 1\} \times F^{\times}$; we abuse notation slightly and use $\omega_{\pi}$ to denote the central character of $\pi$ on the $F^{\times}$ part of $Z$ (which is what actually arises in (\ref{w_0action})); similarly for $GSpin_{2n+2}^{\ast}(F)$. We also note that $GSO_2^{\ast}(F) \cong GSpin_2^{\ast}(F) \cong E^{\times}$, so $\omega_{\pi_0}$ is technically a character of $E^{\times}$ in these cases. However, in practice it is the restriction to $F^{\times}$ that actually comes into play in Table~\ref{tab:center2} or (\ref{w_0action}).


\

For $G_n=SO_{2n}, GSO_{2n}$, or $GSpin_{2n}$--groups of type $D_n$--we would like to set things up so that the $\mu^{\ast}$ structure and Casselman criterion/Langlands classification have forms like that for the other similitude groups considered. This has been done in \cite{JL14} for $SO_{2n}(F)$ and \cite{Kim16} for $GSpin_{2n}$; we follow the same basic strategy for $GSO_{2n}$. For this, we must first take up the existence of an outer automorphism corresponding to the action of $c$ on $SO_{2n}(F)$ ($c$ defined on \cite[p.208]{JL14}, e.g.). Of course, for $GSO_{2n}(F)$, the same $c$ provides the outer automorphism. For $GSpin_{2n}(F)$, a corresponding $c$ of order two is constructed on \cite[p.622]{Kim09}.
The following lemma combines the discussion of $c$ for $GSpin_{2n}$ from \cite{Kim09} and \cite{HS16}, and Haar measure properties in \cite[Section 2]{BJ01}. The (based) root datum for $(\Pi, X, \check{\Pi}, \check{X})$ for $GSpin_{2n}$ is given in \cite{Asg02} and summarized in \S \ref{sect2} above; the action of $c$ on the data is described in \cite[Lemma 4.5]{HS16}. We include the proof for the sake of completeness.

\begin{lem}\label{lem c}
There exists an outer automorphism $c: GSpin_{2n}(F) \longrightarrow GSpin_{2n}(F)$ of order two with the following properties:

\begin{enumerate}


\item $c \cdot (\Pi, X, \check{\Pi}, \check{X})=(\Pi, X, \check{\Pi}, \check{X})$.

\item $c$ is a homeomorphism in the $p$-adic topology.

\item $c$ preserves Haar measures on $GSpin_{2n}(F)$ and $GSpin_{2n}(F)/Z(F)$ (where $Z(F)$ is the center).

\end{enumerate}

\noindent
In particular, it then follows that if a representation $\pi$ of $GSpin_{2n}(F)$ is square-integrable (resp., tempered), then so is $c \cdot \pi$. Similar considerations apply to standard Levi factors and their representations--if $\theta$ is a square-integrable (resp., tempered) representation of a standard Levi factor $M$, then $c \cdot \theta$ is a square-integrable (resp., tempered) representation of the standard Levi factor $c(M)$.

\end{lem}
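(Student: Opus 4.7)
The plan is to assemble the automorphism from existing ingredients in \cite{Kim09} and \cite{HS16} and then verify each of the three properties separately, with the representation-theoretic consequences following mechanically. First I would invoke the unique nontrivial diagram automorphism of $D_n$, which interchanges the last two simple roots $e_{n-1}-e_n$ and $e_{n-1}+e_n$ and fixes the rest; the induced action on $\check{\Pi}$ is forced, and inspection shows the lattices $X$ and $\check X$ from \S\ref{sect2} are stabilized. By the isomorphism theorem for split reductive groups over $F$, equipped with a pinning, this diagram automorphism lifts to an $F$-rational algebraic automorphism $c$ of $GSpin_{2n}$ of order two, and it is outer precisely because the diagram automorphism is nontrivial; this is the content of \cite[Lemma 4.5]{HS16} together with the construction of \cite[p.\,622]{Kim09}. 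This gives (1).

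For (2), since $c$ is a morphism of affine $F$-schemes, the induced map on $F$-points is continuous in the $p$-adic topology; because $c^{-1}=c$, it is automatically a homeomorphism. For (3), any continuous automorphism of a locally compact group pulls Haar measure back to a positive scalar multiple of itself; call this scalar $\delta(c)$. Since $c^2=\mathrm{id}$, one has $\delta(c)^2=1$ and so $\delta(c)=1$. Hence $c$ preserves the Haar measure on $GSpin_{2n}(F)$. From the description of the center in Table~\ref{tab:center}, $Z(F)$ is defined by conditions on the cocharacters $\check e_0,\ldots,\check e_n$ that are compatible with the permutation induced by $c$, so $c(Z(F))=Z(F)$ and $c$ descends to an involutive automorphism $\bar c$ of $GSpin_{2n}(F)/Z(F)$; the same order-two argument shows $\bar c$ preserves the invariant measure on the quotient, matching the framework of \cite[Section 2]{BJ01}.

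For the final assertion, if $\pi$ is square-integrable modulo the center, its matrix coefficients lie in $L^2(GSpin_{2n}(F)/Z(F))$; those of $c\cdot\pi$ are obtained by precomposition with $c^{-1}$, and by (3) this precomposition preserves the $L^2$-norm, so $c\cdot\pi$ is again square-integrable. Temperedness is characterized via a weak inequality against Harish-Chandra's $\Xi$ function; because $c$ permutes the standard parabolic subgroups (sending $P_{\Phi}$ to $P_{c(\Phi)}$) and preserves Haar measures, $\Xi$ is $c$-invariant and the weak inequality passes to $c\cdot\pi$. For standard Levi factors $M$, the automorphism $c$ either stabilizes $M$ or sends it to the standard Levi $c(M)$ obtained from the subset of simple roots by swapping $\alpha_{n-1}$ and $\alpha_n$; in either case $c$ restricts to a homeomorphism $M(F)\to c(M)(F)$ of order two that preserves Haar measures on each side and on the centers, so the same argument transports square-integrability and temperedness. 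The main obstacle is really just verifying that the lift of the diagram automorphism to an $F$-rational involution exists and that the Haar-measure scalar collapses to $1$; both reduce to the order-two property of $c$.
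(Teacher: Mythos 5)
Your proposal is correct and follows essentially the same route as the paper: the automorphism is the lift of the $D_n$ diagram automorphism taken from \cite{Kim09} and \cite[Section 4.3]{HS16}, and the representation-theoretic consequences are deduced from continuity and Haar-measure preservation on $G$ and $G/Z$. The only cosmetic difference is that the paper verifies (2) and (3) by checking $c$-invariance of the Iwahori filtration subgroups $I_k$ and evaluating $c \cdot \mu$ on them, whereas you deduce measure preservation from the observation that the modulus scalar $\delta(c)$ satisfies $\delta(c)^2=1$; both arguments are valid.
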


\noindent
\begin{proof}
First, note that for $GSO_{2n}$, the matrix $c$ (see \cite[p.208]{JL14}) has these properties. For $GSpin_{2n}$, there is an outer automorphism constructed in \cite[Section 2]{Kim09} which interchanges the last two simple roots and can be seen to be of order 2. This corresponds to the automorphism on the data in \cite[Section 4.3]{HS16} and has the properties there, which imply (1). We remark the the root data for $GSpin_{2n}$ is dual to that for $GSO_{2n}$ (cf. \cite{Asg02}); the actions of their respective $c$'s are dual.


 For (2), let $I_k$ denote the $k$th group in the Iwahori filtration, i.e., the subgroup generated by $\{u_{\alpha}(x) \,|\, x \in {\mathcal P}^k,\ \alpha \in \Delta^+\}$, $\{u_{\alpha}(y) \,|\, y \in {\mathcal P}^{k+1},\ \alpha \in \Delta^-\}$, and $\{\check{x}(z) \,|\, z \in 1+{\mathcal P}^k,\ \check{x} \in \check{X}\}$. This is a basis for the topology at the identity. From this description, one clearly has $c(I_k)=I_k$, from which the claim follows.


For (3), first let $\mu$ denote Haar measure on $G$. As $c$ is homeomorphic, it takes measurable sets to measurable sets, so $c \cdot \mu$ is defined. As in \cite[Lemma 2.2]{BJ01}, one then has $c \cdot \mu$ left-invariant hence a multiple of $\mu$; as $c \cdot \mu(I_k)=\mu(I_k)$, they are equal. Next, observe that since $c(Z)=Z$, we have a quotient action $c$ on $G/Z$. Further, since the action of $c$ on $G$ gives an automorphism of toplogical groups, the action of $c$ on $G/Z$ is also an automorphism of topological groups. As a consequence, if $\bar{\mu}$ denotes a Haar measure on $G/Z$, $c \cdot \bar{\mu}$ is also a Haar measure on $G/Z$, hence a multiple of $\bar{\mu}$. In fact, looking at the measure of a $c$-invariant subset of $G/Z$, it follows that $c \cdot \bar{\mu}=\bar{\mu}$. Therefore, if $\pi$ is a square-integrable (resp., tempered) representation of $G$, then $c \cdot \pi$ is also a square-integrable (resp., tempered) representation of $G/Z$.

The remaining claims now follow directly. \end{proof}

\begin{rem}\label{rmk c}
Similar considerations apply to $SO_{2n+2}^{\ast}$, $GSO_{2n+2}^{\ast}$ and $GSpin_{2n+2}^{\ast}$. For the form used for $SO_{2n+2}^{\ast}$, $GSO_{2n+2}^{\ast}$ (see \S \ref{sect2}), the matrix
\[
c=\left( \begin{array}{cccc} I_n & & & \\
     & 1 & & \\
     & & -1 & \\
     & & & I_n
     \end{array} \right)
\]
may be used. For $GSpin_{2n+2}^{\ast}$, we have $GSpin_{2n+2}^{\ast}(F) \subset GSpin_{2n+2}(E)$ and inherits the action of $c$.
Note that for $SO_{2n}^{\ast}$, $GSO_{2n+2}^{\ast}$, and $GSpin_{2n+2}^{\ast}$, $c$ acts trivially on the $F$-data.

\end{rem}

We return to the task of setting things up in a more uniform way for the type $D_n$ similitude groups. Following \cite{JL14},
we let both $\chi \otimes e$ and $\chi \otimes c$ denote the character $\chi$ on $G_0(F) \cong F^{\times}$, but with different interpretations when used with parabolic induction. These play the role of $1 \otimes e$ and $1 \otimes c$ for $SO_{2n}$; the discussion below applies to $SO_{2n}$ if these are used instead. In particular, suppose $P=MU$ is a standard parabolic subgroup with $\alpha_n \not\in \Pi_M$. Then $M=GL_{m_1}(F) \times \dots \times GL_{m_k}(F) \times G_0(F)$. For representations $\pi_1, \dots, \pi_k$ of $GL_{m_1}(F), \dots, GL_{m_k}(F)$, we let $\pi_1 \otimes \dots \otimes \pi_k \otimes (\chi \otimes e)$ denote a representation of $M$, while $\pi_1 \otimes \dots \otimes \pi_k \otimes (\chi \otimes c)$ denotes a representation of $c(M)$ (the Levi factor of the standard parabolic subgroup $c(P)$). Thus, we write
$$
\pi_1 \times \dots \times \pi_k \rtimes (\chi \otimes e)=
i_{G,M}(\pi_1 \otimes \dots \otimes \pi_k \otimes \chi),
$$
and
$$
\pi_1 \times \dots \times \pi_k \rtimes (\chi \otimes c)=c(i_{G,M}(\pi_1 \otimes \dots \otimes \pi_k \otimes \chi)).
$$
In terms of the action of $c$, we take $c(\chi \otimes e)=\chi \otimes c$ and $c(\chi \otimes c)=\chi \otimes e$. Note that if $\chi'$ is a character of $F^{\times}$, then the representations $\chi' \rtimes (\chi \otimes e)$ and $\chi'^{-1} \rtimes (\chi'\chi \otimes c)$ constitute the same representation of $GSO_{2}(F) \cong F^{\times} \times F^{\times}$; similarly for $GSpin_{2}(F)$ with  $\chi' \rtimes (\chi \otimes e)$ and $\chi\chi'^{-1} \rtimes (\chi \otimes c)$.
(For either family of groups, if $M=GL_{m}(F) \times G_0(F)$ with $m>1$, we have $M$ and $c(M)$ nonconjugate and the corresponding induced representations are not in general equivalent.) Note that it is a straightforward consequence of induction in stages and $c \circ i_{G,M} \cong i_{G,c(M)} \circ c$ that
\begin{equation}\label{action of c}
c \cdot (\pi \rtimes \sigma) \cong \pi \rtimes c \cdot \sigma
\end{equation}
and
\begin{equation}\label{module}
\pi_1 \rtimes (\pi_2 \rtimes \sigma) \cong (\pi_1 \times \pi_2) \rtimes \sigma
\end{equation}
in this context.


We now take up how induced representations behave under twisting by characters. For concreteness, we consider the case of $GSpin_{2n+1}$ (see \cite{Kap17} for a slightly different approach to the same question).
If $\xi_n$ denotes a rational character of $GSpin_{2n+1}(F)$, $n>0$, then the restriction of $\xi_n$ is invariant under the action of the Weyl group. Checking the action of the simple reflections, $\xi_n|_A=c_1e_1+\dots +c_ne_n+c_0e_0$ has $c_1=\dots=c_n=\frac{1}{2}c_0$; we normalize so that $\xi_n|_A=e_1+\dots+e_n+2e_0$. If $n=0$, we take $\xi_0=e_0$. In either case, $\xi_n$ is a $\mathbb Z$-basis for $X(G)$ (rational characters on $G_n=GSpin_{2n+1}(F)$). Note that if $M=GL_k(F) \times GSpin_{2(n-k)+1}(F)$ is a standard Levi factor, it follows from this description that
\[
\xi_n|_M=\left\{ \begin{array}{l} 
	\det_k \otimes \xi_{n-k} \mbox{ if }k<n, \\
	\\
	\det_k \otimes \xi_0^2 \mbox{ if }k=n.
\end{array} \right.
\]

Let $\chi$ be a character of $F^{\times}$. We may identify $\chi$ with a character of $GSpin_{2n+1}(F)$ (resp., $GL_k(F)$) via $\chi \circ \xi_n$ (resp., $\chi \circ \det_k$). With this identification, we have
\[
\chi(\pi \rtimes \theta)\cong\left\{ \begin{array}{l}
	\chi\pi \rtimes \chi\theta \mbox{ if }k<n \\
	\\
	\chi\pi \rtimes \chi^2\theta \mbox{ if }k=n
\end{array}\right.
\]
The bifurcation in the formula is essentially dual to that in the central character formula for $GSp_{2n}(F)$ above (see \cite[Proposition 4.3(v)]{Tad94}). It is a straightforward matter to verify that these are indeed equivalent (with equivalence given by $f \in V_{\pi \rtimes \theta} \longmapsto \chi f \in V_{\chi\pi \rtimes \chi^{\varepsilon}\theta}$, $\varepsilon=1$ or $2$, as appropriate); see \cite[Proposition 1.9]{BZ77}.

We note that for $G_n=GSpin_{2n}$, we also have  $\xi_n|_A=e_1+\dots+e_n+2e_0$ for $n>1$ and $\xi_0=e_0$; for $G_n=GSpin_{2n+2}^{\ast}$, see \S \ref{sect3appendix}. Similar arguments then give the following
(also, see, e.g., \cite[(1.2)]{S-T} or \cite[Proposition 4.3]{Tad94} for $G_n=GSp_{2n}$ and \cite{CFK20} for $G_n=GSpin_{2n+1},GSpin_{2n}$).

\begin{lem}\label{twisting}
Let $\chi$ be a character of $F^{\times}$. We may identify $\chi$ with a character of $G_n(F)$ via $\chi \circ \xi$, where $\xi=\xi_n$ as above for $G_n=GSpin_{2n+1}$, $GSpin_{2n}$ and is the similitude character for $$G_n=GSp_{2n},GSO_{2n},GSO^{\ast}_{2n+2},GU_{2n+1},GU_{2n}$$
(Recall that for $GU_{2n+1}(F)$ and $GU_{2n}(F)$, the image of $\xi$ lies in $F^{\times}$, so $\chi$ a character of $F^{\times}$ is sufficient).  For $\pi$ and $\theta$ representations of $H_k(F)$ and $G_n(F)$, we then have the following:

\begin{enumerate}

\item For $G_n=GSpin_{2n+1}$,
$
\chi(\pi \rtimes \theta)\cong \left\{ \begin{array}{l}
	\chi\pi \rtimes \chi\theta \mbox{ if }n>0 \\
	\chi\pi \rtimes \chi^2\theta \mbox{ if }n=0.
\end{array}\right.
$

\item For $G_n=GSp_{2n}$, $GSO_{2n+2}^{\ast}$, $GU_{2n+1}, GU_{2n}$, $\chi(\pi \rtimes \theta)\cong \pi \rtimes \chi\theta$.

\item For $G_n=GSO_{2n}$, $\chi(\pi \rtimes \theta)\cong \pi \rtimes \chi\theta$ for $n \not=1$.

\item For $G_n=GSpin_{2n}$, $\chi(\pi \rtimes \theta)\cong \left\{\begin{array}{l}
	\chi\pi \rtimes \chi\theta \mbox{ if }n>1, \\
	\chi\pi \rtimes \chi^2\theta \mbox{ if }n=0.
	\end{array}\right.$

\item For $G_n=GSpin_{2n+2}^{\ast}$, $\chi(\pi \rtimes \theta)\cong \chi\pi \rtimes \chi\theta.$
\end{enumerate}
\end{lem}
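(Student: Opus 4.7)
The strategy is to reduce every assertion in the lemma to a single bookkeeping fact: if $\chi$ is a rational character of $G=G_{k+n}(F)$ and $M=H_k(F)\times G_n(F)$ is a standard Levi, then the map $f\mapsto \chi f$ exhibits an equivalence
\[
\chi\cdot i_{G,M}(\sigma)\;\cong\;i_{G,M}\bigl((\chi|_M)\cdot\sigma\bigr),
\]
as in \cite[Proposition 1.9]{BZ77} and used explicitly in the paragraph preceding the lemma for $GSpin_{2n+1}$. So the only task is to compute $\chi\circ\xi|_M$, decomposed as a product of characters on the two factors of $M$, and then to read off which of $\pi$, $\theta$ get twisted and by what power of $\chi$.

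The plan is to handle the two structural families in parallel. For $G_n\in\{GSp_{2n},GSO^{\ast}_{2n+2},GU_{2n+1},GU_{2n}\}$, $\xi$ is literally the similitude character; using the matrix realizations of $M$ given in \S\ref{sect3}, a direct diagonal computation shows that the similitude factor of an element of $M=H_k(F)\times G_n(F)$ depends only on its $G_n(F)$-component. Hence $\xi|_M=1\otimes\xi_n$, so $\chi\circ\xi|_M$ is trivial on the $H_k$-factor, yielding $\chi(\pi\rtimes\theta)\cong\pi\rtimes\chi\theta$ without bifurcation. For $GSO_{2n}$ the same computation applies as soon as $n\neq 1$; the case $n=1$ must be excluded because $GSO_{2}(F)\cong F^{\times}\times F^{\times}$ is abelian and falls outside the Levi pattern under consideration.

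The general spin cases $GSpin_{2n+1}$, $GSpin_{2n}$, $GSpin_{2n+2}^{\ast}$ proceed by the same argument given in the text for $GSpin_{2n+1}$: Weyl invariance of $\xi_{k+n}|_A$ pins down $\xi_{k+n}|_A=e_1+\dots+e_{k+n}+2e_0$ (with the $GSpin_{2n+2}^{\ast}$-version coming from \S\ref{sect3appendix}, where $\xi$ is described as the restriction from $GSpin_{2n+2}(E)$), and the identification $\xi_0=e_0$ on $G_0\cong F^{\times}$ (or $E^{\times}$). Restricting to $M=GL_k(F)\times G_n(F)$ then gives
\[
\xi_{k+n}|_M \;=\; \det\nolimits_k\otimes\,\xi_n
\]
when the residual rank is positive, and
\[
\xi_{k+n}|_M \;=\; \det\nolimits_k\otimes\,\xi_0^{\,2}
\]
when the residual group is $G_0$ (which appears only for $GSpin_{2n+1}$ with $n=0$ and for $GSpin_{2n}$ with $n=0$; the case $n=1$ for $GSpin_{2n}$, like $GSO_{2n}$, is again excluded because $GSpin_{2}(F)$ behaves abelianly). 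Substituting through $f\mapsto\chi f$ yields $\chi\pi\rtimes\chi\theta$ in the first situation and $\chi\pi\rtimes\chi^{2}\theta$ in the second, which is exactly the dichotomy in (1), (4), and (5).

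The only place where a genuine obstacle arises is checking the restriction formula for $\xi$ in the non-split quasi-split case $GSpin_{2n+2}^{\ast}$, since there one must work inside $GSpin_{2n+2}(E)$ under the identification $d(a_{1},\dots,a_{n},a_{0})=d_{E}(a_{1},\dots,a_{n},\bar{a}_{0}/a_{0},a_{0})$ of \S\ref{sect3}, and verify Galois equivariance of the character. Once this identification is in hand, the absence of a bifurcation in case (5) is forced by the fact that the residual $G_0\cong E^{\times}$ still contributes an $e_0$-summand (not a $2e_0$) after the Galois twist, so the two factors of $\chi$ are distributed as $\chi\otimes\chi$ across $H_k$ and $G_n$ rather than as $\chi\otimes\chi^{2}$. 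With that check in place, all five parts of the lemma follow uniformly.
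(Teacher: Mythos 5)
Your proposal is correct and follows essentially the same route the paper takes: compute $\xi|_M$ (via the matrix realization of the Levi for the groups where $\xi$ is the similitude character, and via Weyl-invariance of $\xi_n|_A$ plus restriction for the general spin groups), then transport the twist through parabolic induction by $f\mapsto\chi f$ as in \cite[Proposition 1.9]{BZ77}. The paper in fact only writes this argument out for $GSpin_{2n+1}$ and asserts "similar arguments" for the remaining cases, so your treatment of $GSpin_{2n+2}^{\ast}$ and of the excluded $n=1$ cases is, if anything, slightly more explicit than the source.
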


\begin{rems}
\begin{enumerate}
\item
We have not discussed twisting by characters for $G_n=GSO_{2n},GSpin_{2n}$ when $n=1$. The issue is that $G_1(F) \cong F^{\times} \times F^{\times}$ for these groups, so $X(G_1)$ is actually 2-dimensional. One could reasonably define
\[
\begin{array}{rl}
\chi (\chi_1 \rtimes \chi_0) &=\chi_1 \rtimes \chi\chi_0 \mbox{ for } G_n=GSO_{2n}, \\
& =\chi\chi_1 \rtimes \chi^2\chi_0 \mbox{ for }G_n=GSpin_{2n}
\end{array}
\]
to make the formulas above work when $n=1$.

\item Note that the image of the similitude character need not be all of $F^{\times}$--e.g., for $GSO_{2n+2}^{\ast}$, \cite[Lemma 2.1]{Xu18} shows that it consists of only the norms of $E/F$, where $E$ is the associated quadratic extension.

\item In terms of the action of $c$, observe that $c \cdot \chi=\chi$. For this, it suffices that $c \cdot \xi=\xi$. For $G_n=GSO_{2n},GSO_{2n+2}^{\ast}$, this may be verified directly from ${}^T\!XJX=\lambda J$. For $G_n=GSpin_{2n}$, one has $(c \cdot \xi)|_A=\xi|_A$ by the definition of $\xi$ above and \cite[Lemma 4.5]{HS16}. Further, as $\xi|_U$ is trivial and $c \cdot U=U$, we have $(c \cdot \xi)|_U=\xi|_U$. Similarly, $(c \cdot \xi)|_{\bar{U}}=\xi|_{\bar{U}}$. As $A,U,\bar{U}$ suffice to generate $G$, we have $c\cdot \xi=\xi$. The similitude character for $GSpin_{2n+2}^{\ast}(F)$ may be obtained by restriction of that for $GSpin_{2n+2}(E)$ (see \S \ref{sect3appendix}) so also satisfies $c \cdot \xi=\xi$.


\end{enumerate}
\end{rems}

\begin{note}
We note that for $G_n=GU_{2n+1}$, $GU_{2n}$ (or, $U_{2n+1}$, $U_{2n}$) one can also compose a character $\chi$ of $E^{\times}$ with $\det$ to produce a character of $G_n$. It is not difficult to see that this construction can produce characters which are not among those in Lemma ~\ref{twisting}, and vice-versa. Though we do not use these characters in what follows, we include the corresponding twisting formula for the sake of completeness:
\[
(\chi \circ \det ) (\pi \rtimes \theta)=(\chi \circ \tau \circ \det )\pi \rtimes [\chi \circ (\xi^k \cdot \det)] \theta,
\]
where $\tau(x)=x\bar{x}^{-1}$ on $E^{\times}$ and $M=H_k(F) \times G_{n-k}(F)$. We also remark that these characters of $GU_N(F)$ are related by $\det \cdot \,\overline{\det}=\xi^N$ (take $\det$ of ${}^T\!\bar{X} J' X=\lambda J'$).

\end{note}

The following definition and lemma allow for more concise versions of the Casselman criterion and Langlands classification.

\begin{defn}\label{beta def}
Let $\pi$ be an irreducible representation of $G_n$.
Write $\pi=\nu^{\varepsilon(\pi)}\pi_0$ with $\pi_0$ having a unitary central character. We set
\[
\beta(\pi)=\left\{ \begin{array}{l}
\varepsilon(\pi) \mbox{ if } G_n=GSpin_{2n+1},GSpin_{2n} \mbox{ and }n>0, \\
\frac{1}{2}\varepsilon(\pi) \mbox{ if } G_n=GSpin_{2n+1}, GSpin_{2n} \mbox{ and }n=0, \\
\varepsilon(\pi) \mbox{ if } G_n=GSpin_{2n+2}^{\ast}, \\
0 \mbox{ otherwise.}
\end{array} \right.
\]
\end{defn}

\begin{lem}\label{beta}
If $\pi \hookrightarrow \phi_1 \times \dots \phi_k \rtimes \sigma^{(e0)}$ is an embedding into a representation induced from supercuspidals, then $\beta(\pi)=\beta(\sigma^{(e0)})$.
\end{lem}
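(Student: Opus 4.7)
If $G_n$ is not a general spin group, $\beta\equiv 0$ by definition and the claim is immediate, so assume $G_n\in\{GSpin_{2n+1},GSpin_{2n},GSpin_{2n+2}^{\ast}\}$. The idea is to rewrite $\beta$ as a function solely of the central character restricted to the cocharacter $\check{e}_0$, and then appeal to Table~\ref{tab:center2}.

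First, an embedding preserves central character, so $\omega_{\pi}=\omega_{\phi_1\times\cdots\times\phi_k\rtimes\sigma^{(e0)}}$. By Table~\ref{tab:center2}, the right side equals $\omega_{\sigma^{(e0)}}$ on the $F^{\times}$-part of the center of $G_n$, identified with (a subgroup of) the center of $G_{n_0}$ via $\check{e}_0$. Therefore $\omega_{\pi}\circ\check{e}_0=\omega_{\sigma^{(e0)}}\circ\check{e}_0$ as characters of $F^{\times}$; write this common character as $|z|^{t}\omega_0(z)$ with $\omega_0$ unitary.

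Next, I would compute $\beta$ in terms of $t$. Using the description of $\xi$ given before Lemma~\ref{twisting}, for $GSpin_{2n+1}$ and $GSpin_{2n}$ with $n>0$, and for $GSpin_{2n+2}^{\ast}$, $\xi|_A$ has coefficient $2$ on $e_0$; hence $\nu(\check{e}_0(z))=|z|^2$, so $\varepsilon(\pi)=t/2$ and $\beta(\pi)=\varepsilon(\pi)=t/2$. For $GSpin_1$ and $GSpin_0$ (the $n=0$ branch of the $GSpin_{2n+1},GSpin_{2n}$ families), $\xi_0=e_0$ has coefficient $1$, giving $\varepsilon=t$ but $\beta=\tfrac{1}{2}\varepsilon=t/2$ by the $\tfrac{1}{2}$-factor in the definition. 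For $GSpin_2^{\ast}$ the restriction of $\xi$ from the larger $GSpin_{2n+2}^{\ast}$ carries the same normalization (consistent with Lemma~\ref{twisting}'s formula $\chi(\pi\rtimes\theta)\cong\chi\pi\rtimes\chi\theta$ in this case), so again $\beta=t/2$. In every case $\beta=t/2$, a function of the restricted central character alone, and applying the same formula to $\sigma^{(e0)}$ yields $\beta(\pi)=t/2=\beta(\sigma^{(e0)})$.

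The main obstacle is really bookkeeping rather than substance: verifying the coefficient of $e_0$ in $\xi|_A$ (equivalently, the exponents in Lemma~\ref{twisting}) uniformly across all three families, especially for the quasi-split $GSpin_{2n+2}^{\ast}$ via its embedding into $GSpin_{2n+2}(E)$ (cf.\ \S~\ref{sect3appendix}), and confirming in each case that the $F^{\times}$-piece of the center appearing in Table~\ref{tab:center2} is precisely the one identified via $\check{e}_0$ in both $G_n$ and $G_{n_0}$.
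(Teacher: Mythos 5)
Your argument is correct, but it follows a genuinely different route from the paper's. The paper writes $\pi=\nu^{\varepsilon(\pi)}\pi_0$ with $\omega_{\pi_0}$ unitary, embeds $\pi_0$ into a representation induced from unitary supercuspidal data, twists that embedding by $\nu^{\varepsilon(\pi)}$ using Lemma~\ref{twisting}, and compares with the given embedding (implicitly via uniqueness of the supercuspidal support) to get $\varepsilon(\sigma^{(e0)})=\varepsilon(\pi)$ or $2\varepsilon(\pi)$ according as $n_\sigma>0$ or $n_\sigma=0$; the factor $\frac{1}{2}$ in Definition~\ref{beta def} then absorbs the discrepancy. You instead restrict central characters to $\check{e}_0(F^{\times})\subset Z(G_n)$ and show $\beta=t/2$ uniformly, where $t$ is the real exponent of $\omega\circ\check{e}_0$; equality of the two $t$'s comes from Table~\ref{tab:center2} together with the fact that $Z(G_n)$ acts by a scalar on the whole induced space. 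Your version buys two things: it avoids invoking uniqueness of cuspidal support (which the paper's step ``$\sigma^{(e0)}=\nu^{\varepsilon(\pi)}\sigma_0$'' quietly uses), and it exhibits $\beta$ as an invariant of the central character alone, which is arguably the cleaner conceptual explanation of why the $\frac{1}{2}$ appears in the $n=0$ branch. The cost is exactly the bookkeeping you flag: the coefficient of $e_0$ in $\xi_n$ in each family and the identification of the $F^{\times}$-part of the center in the quasi-split case, which the paper sidesteps by quoting Lemma~\ref{twisting} directly. One point worth making explicit in your write-up: $\varepsilon(\pi)$ is recoverable from $t$ because unitarity of $\omega_{\pi_0}$ on $\check{e}_0(F^{\times})$ is equivalent to unitarity on all of $Z(G_n)$ (the former has finite index in the latter), and $\nu\circ\xi_n$ is nontrivial on $\check{e}_0(F^{\times})$.
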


\noindent
\begin{proof}
We focus on the case $G_n=GSpin_{2n+1}, GSpin_{2n}$. The case $G_n=GSpin_{2n+2}^{\ast}$ is similar; the other cases are trivial.

If $\pi=\sigma^{(e0)}$ the result is immediate. So, suppose $\pi \not=\sigma^{(e0)}$. Note that we must then have $\beta(\pi)=\varepsilon(\pi)$. Recall that  $\pi=\nu^{\varepsilon(\pi)}\pi_0$ with $\pi_0$ having a unitary central character. Then
\[
\begin{array}{c}
\pi_0 \hookrightarrow \phi_1' \times \dots \phi_k' \rtimes \sigma_0 \Rightarrow
\pi \hookrightarrow \nu^{\varepsilon(\pi)}(\phi_1' \times \dots \phi_k' \rtimes \sigma_0).
\end{array}
\]
We then have
\[
\pi \hookrightarrow \nu^{\varepsilon(\pi)}\phi_1' \times \dots \nu^{\varepsilon(\pi)}\phi_k' \rtimes
\left\{ \begin{array}{l} \nu^{\varepsilon(\pi)}\sigma_0 \mbox{ if }n_\sigma>0, \\ \nu^{2\varepsilon(\pi)}\sigma_0 \mbox{ if }n_\sigma=0. \end{array} \right.
\]
Therefore,
\[
\begin{array}{c}
\sigma^{(e0)}=\left\{ \begin{array}{l} \nu^{\varepsilon(\pi)}\sigma_0 \mbox{ if }n_\sigma>0, \\ \nu^{2\varepsilon(\pi)}\sigma_0 \mbox{ if }n_\sigma=0 \end{array} \right. \\
\Downarrow \\
\varepsilon(\sigma^{(e0)})=\left\{ \begin{array}{l} \varepsilon(\pi) \mbox{ if }n_\sigma>0, \\ 2\varepsilon(\pi) \mbox{ if }n_\sigma=0 \end{array} \right. \\
\Downarrow \\
\beta(\pi)=\varepsilon(\pi)=\left\{ \begin{array}{l} \varepsilon(\sigma^{(e0)}) \mbox{ if }n_\sigma>0, \\ \frac{1}{2}\varepsilon(\sigma^{(e0)}) \mbox{ if }n_\sigma=0 \end{array} \right. =\beta(\sigma^{(e0)}),
\end{array}
\]
as needed. \end{proof}

\begin{note}\label{beta note}
In light of Lemma~\ref{beta}, we simply write $\beta$ below for $\beta(\pi)$, $\beta(\sigma^{(e0)})$, etc.
\end{note}

We now return to our general discussion of the results needed for the families of groups under consideration.
We start with the Casselman criterion. See \cite[Propositions III.1.1 and III.2.2]{Wal03} for the general result;  \cite[Section 6]{Tad94}, \cite[Proposition 4.2]{Asg02}--noting that it is missing a unitary central character hypothesis--and \cite[Proposition 3.2]{Kim09} for some of the specific groups under consideration. Suppose $\pi$ is irreducible.
Let $\phi_1 \otimes \dots \otimes \phi_k \otimes \sigma^{(e0)} \leq r_{M,G}(\pi)$, with $\phi_i$ an irreducible supercuspidal representation of $H_{n_i}(F)$ and $\sigma^{(e0)}$ an irreducible supercuspidal representation of $G_{n_0}(F)$. If $\pi$ is essentially square-integrable, then
\begin{equation}\label{Casselman criterion}
\begin{array}{rl}
n_1[\varepsilon(\phi_1)-\beta] & >0 \\
n_1[\varepsilon(\phi_1)-\beta]+n_2[\varepsilon(\phi_2)-\beta] &>0 \\
\vdots & \\
n_1[\varepsilon(\phi_1)-\beta]+n_2[\varepsilon(\phi_2)-\beta]+ \dots +n_k[\varepsilon(\phi_k)-\beta] & >0.
\end{array}
\end{equation}
Conversely, if the above inequalities hold for all such $\phi_1 \otimes \dots \otimes \phi_k \otimes \sigma^{(e0)}$, then $\pi$ is essentially square-integrable. The criterion for temperedness is similar, but with weak inequalities.  Note that for $G_n=SO_{2n}$,  if $\alpha_{n-1},\alpha_n \not\in \Pi_M$, both  $\phi_1 \otimes \dots \otimes \phi_{k-1} \otimes \phi_k \otimes (1 \otimes e)$ and the equivalent $\phi_1 \otimes \dots \otimes \phi_{k-1} \otimes \phi_k^{-1} \otimes (1 \otimes c)$ must be used in the Casselman criterion; for $GSO_{2n}$ (resp., $GSpin_{2n}$), both $\phi_1 \otimes \dots \otimes \phi_k \otimes \sigma^{(e0)}$ and $\phi_1 \otimes \dots \otimes \phi_{k-1} \otimes \phi_k^{-1} \otimes (\phi_k\chi \otimes c)$ (resp., $\phi_1 \otimes \dots \otimes \phi_{k-1} \otimes \chi\phi_k^{-1} \otimes (\chi \otimes c)$) must be used.


We note that for $SO_2(F)$ (resp., $GSO_2(F)$, $GSpin_2(F)$), the representation $\chi \rtimes (1 \otimes e)=\chi^{-1} \rtimes (1 \otimes c)$ (resp., $\chi \rtimes (\chi_0 \otimes e)=\chi^{-1} \rtimes (\chi\chi_0 \otimes c)$, $\chi \rtimes (\chi_0 \otimes e)=\chi_0 \chi^{-1} \rtimes (\chi_0 \otimes c)$) with $\chi_0$ unitary is considered tempered but not square-integrable if $\chi$ is unitary. This interpretation is consistent with other cases of irreducible $\rho \rtimes \sigma^{(0)}$ and the inequalities above.

We now turn to the Langlands classification. For the general result, see \cite{Kon03}; more concretely, see \cite[Section 6]{Tad94} for $Sp_{2n}$ and $GSp_{2n}$; \cite[Section 1.3]{Jan93} for $SO_{2n+1}$ and $SO_{2n}$, with the latter interpreted in terms of the artifice above in \cite{Jan11}; \cite[Theorem 5.4 and Remark 5.5]{Kim09} for $GSpin_{2n+1}$, $GSpin_{2n}$, $GSpin_{2n}^{\ast}$, noting the assumption $T$ tempered there. Let $\delta_1, \dots, \delta_k$ be essentially square integrable representations of general linear groups and and $T$ an essentially tempered representation of $G_n(F)$ satisfying
\begin{equation}\label{Langlands classification}
\varepsilon(\delta_1) \geq \dots \geq \varepsilon(\delta_k)> \beta.
\end{equation}
Then the Langlands classification tells us that
\[
\delta_1 \times \dots \times \delta_k \rtimes T
\]
contains a unique irreducible quotient; denote it by $L(\delta_1 \otimes \dots \otimes \delta_k \otimes T)$. Further, any irreducible admissible representation may be written in this form, with the data unique up to permutations among representations of general linear groups having the same central exponents.

We also have occasion to use the Langlands classification in the subrepresentation setting. In this case, the inequalities in (\ref{Langlands classification}) are reversed and we use $L_{sub}(\delta_1 \otimes \dots \otimes \delta_k; T)$ for the corresponding (Langlands) subrepresentation.

For general linear groups, the Langlands classification is similar--if
\[
\varepsilon(\delta_1) \geq \dots \geq \varepsilon(\delta_k),
\]
then $\delta_1 \times \dots \times \delta_k$ has a unique irreducible quotient ${\mathcal L}(\delta_1 \otimes \dots \otimes \delta_k)$ and every irreducible representation may be written in this form, with $\delta_1 \otimes \dots \otimes \delta_k$ unqiue up to the order in which representations having the same central characters appear. The corresponding subrepresentation version has the inequalities reversed and ${\mathcal L}_{sub}(\delta_1 \otimes \dots \otimes \delta_k)$ the unique irreducible subrepresentation of $\delta_1 \times \dots \times \delta_k$. 

 Note that for $G_n=GSO_{2n}$ or $GSpin_{2n}$ and $\alpha_{n-1}, \alpha_n \in \Pi_M$, one can have $T=\chi \rtimes (\chi_0 \otimes d)$, $d=e$ or $c$, if it is essentially tempered, i.e., if $\chi$ unitary (for $GSO_{2n}$) or $e(\chi)=\frac{1}{2}e(\chi_0)$ (for $GSpin_{2n}$). The translation from the standard form of the Langlands classification to the description above using the artifice is then a straightforward matter.

\begin{rem}\label{action of c remark}
We also make a brief remark on the action of $c$ for $G_n=SO_{2n}$, $SO_{2n+2}^{\ast}$, $GSO_{2n}$, $GSO_{2n+2}^{\ast}$, $GSpin_{2n}$, or $GSpin_{2n+2}^{\ast}$. It is a straightforward matter to verify that with notation as above--and already noted in (\ref{action of c}) for the $D_n$ cases--that
\[
c \cdot(\tau \rtimes \sigma) \cong \tau \rtimes c \cdot \sigma.
\]
Further, if $\pi=L(\delta_1 \otimes \dots \otimes \delta_k \otimes T)$, then $c\cdot \pi=L(\delta_1 \otimes \dots \otimes \delta_k \otimes c\cdot T)$ (\cite[Proposition 4.5]{BJ01}).
\end{rem}


\

We now discuss cuspidal reducibility. In particular, suppose $\tau$ is an irreducible, unitary, supercuspidal representation of $H_m(F)$ and $\sigma^{(0)}$ an irreducible unitary supercuspidal representation of $G_n(F)$. As in \cite{Sha90a}, for a maximal parabolic subgroup $P=MU$ associated to $\Pi \setminus \{\alpha\}$, we set $\tilde{\alpha}=\langle \rho_U, \check{\alpha} \rangle^{-1} \rho_U$ with $\rho_U$ the half-sum of the $F$-roots from $U$. For $s \in {\mathbb R}$, we let $I(s\tilde{\alpha}, \tau \otimes \sigma^{(0)})$ denote the corresponding parabolically induced representation. It follows from work of Silberger and Harish-Chandra (\cite{Sil79}, \cite{Sil80}) that if $\tau \otimes \sigma^{(0)}$ is ramified (i.e., $w_0(\tau \otimes \sigma^{(0)}) \cong \tau \otimes \sigma^{(0)}$, where $w_0$ is the long element in the set of minimal-length double-coset representatives) that there is a unique $s_0 \geq 0$ such that $I(s_0\tilde{\alpha})$ reduces; if $\tau \otimes\sigma^{(0)}$ is not ramified, then $I(s\tilde{\alpha}, \tau \otimes \sigma^{(0)})$ is irreducible for all $s \geq 0$. Further, Theorem 8.1 of \cite{Sha90a} tells us that when $\tau \otimes \sigma^{(0)}$ is generic, $s_0 \in \{0, \frac{1}{2},1\}$. We now make this more explicit for the groups in question.

First, we describe the action of $w_0$ needed for the ramified condition. This is in \cite{MT02} (implicitly) for the cases of classical groups  $SO_{2n+1},Sp_{2n},U_{2n+1},U_{2n}$, \cite{Tad98a} for $GSp_{2n}$, \cite{Kim15} for $GSpin_{2n+1}$, and \cite{Kim16} for $GSpin_{2n}$; \cite[Lemma 4.17]{ACS16} covers a number of families considered below
\begin{equation}\label{w_0action}
w_0 (\tau \otimes \sigma^{(0)})=\left\{ \begin{array}{l}
	\check{\tau} \otimes \sigma^{(0)} \mbox{ for }G_n=SO_{2n+1}, Sp_{2n}, U_{2n+1}, U_{2n}, \\
	\check{\tau} \otimes c^m \cdot \sigma^{(0)} \mbox{ for }G_n=SO_{2n}, SO_{2n+2}^{\ast}, \\
	\omega_{\sigma^{(0)}}\check{\tau} \otimes \sigma^{(0)} \mbox{ for }G_n=GSpin_{2n+1}, \\
	\check{\tau} \otimes \omega_{\tau}\sigma^{(0)} \mbox{ for }G_n=GSp_{2n}, GU_{2n+1}, GU_{2n}, \\
	\check{\tau} \otimes \omega_{\tau}(c^m \cdot \sigma^{(0)}) \mbox{ for }G_n=GSO_{2n}, GSO_{2n+2}^{\ast}, \\
	\omega_{\sigma^{(0)}}\check{\tau} \otimes c^m \cdot \sigma^{(0)} \mbox{ for }G_n=GSpin_{2n}, GSpin_{2n+2}^{\ast}.
\end{array} \right. 
\end{equation}
Note that we must have $n>1$ for $SO_{2n}, GSO_{2n}, GSpin_{2n}$. We also remark that the fact that $\sigma^{(0)}$ remains unchanged for most classical groups is what allows the notion of partial cuspidal support in \cite{MT02}.

Next, for $\Pi_M=\Pi \setminus \{\alpha_k\}$, the induced representation translates as follows:

\medskip
\noindent
\begin{table}[H]
\begin{tabular}{l|l}
$G_n$  & $I(s\tilde{\alpha}, \tau \otimes \sigma^{(0)})$ \\ \hline\hline
$SO_{2n+1}$, $SO_{2n+2}^{\ast}$   & $\nu^s \tau \rtimes \sigma^{(0)}$ for $k<n$ \\
$U_{2n+1}$, $GSpin_{2n+1}$ 	& $\nu^{\frac{s}{2}} \tau \rtimes \sigma^{(0)}$ for $k=n$\\ 
$GSpin_{2n+2}^{\ast}$ & \\
\hline
$Sp_{2n}$, $U_{2n}$ & $\nu^s \rho \rtimes \sigma^{(0)}$ \\ \hline
$SO_{2n}$ & $\nu^s \tau \rtimes \sigma^{(0)}$ for $k<n-1$ \\
	& $\nu^{\frac{s}{2}} \tau \rtimes \sigma^{(0)}$ for $k=n-1,n$ (so $\sigma^{(0)}=1 \otimes e$ or $1 \otimes c$) \\ \hline
$GSp_{2n}$, $GU_{2n}$ & $\nu^s \tau \rtimes \nu^{-\frac{ks}{2}}\sigma^{(0)}\cong \nu^{-\frac{ks}{2}}(\nu^s \tau \rtimes \sigma^{(0)})$ \\ \hline
$GSpin_{2n}$ & $\nu^s \tau \rtimes \sigma^{(0)}$ for $k<n-1$  \\
	& $\nu^{\frac{s}{2}} \tau \rtimes \sigma^{(0)}$ for $k=n-1,n$ (so $\sigma^{(0)}=\chi \otimes e$ or $\chi \otimes c$) \\ \hline
$GSO_{2n}$ & $\nu^s \tau \rtimes \nu^{-\frac{ks}{2}}\sigma^{(0)} \cong \nu^{-\frac{ks}{2}}(\nu^s \tau \rtimes \sigma^{(0)})$ for $k<n-1$ \\
	& $\nu^{\frac{s}{2}} \tau \rtimes \nu^{-\frac{ns}{4}}\sigma^{(0)} \cong \nu^{-\frac{ns}{4}}(\nu^{\frac{s}{2}} \tau \times \sigma^{(0)})$ for $k=n-1,n$ \\
&\hfill (so $\sigma^{(0)}=\chi \otimes e$ or $\chi \otimes c$) \\ \hline
$GSO_{2n+2}^{\ast}$, $GU_{2n+1}$ &  $\nu^s \tau \rtimes \nu^{-\frac{ks}{2}}\sigma^{(0)} \cong \nu^{-\frac{ks}{2}}(\nu^s \tau \rtimes \sigma^{(0)})$ for $k<n$ \\
	& $\nu^{\frac{s}{2}} \tau \rtimes \nu^{-\frac{ns}{4}}\sigma^{(0)} \cong \nu^{-\frac{ns}{4}}(\nu^{\frac{s}{2}} \rtimes \sigma^{(0)})$ for $k=n$ \\ \hline
\end{tabular}
\caption{$I(s\tilde{\alpha}, \tau \otimes \sigma^{(0)})$}
\label{tab:I(s)}
\end{table}

\noindent
This is a straightforward calculation; the example of $GSpin_{2n+2}^{\ast}$ is done as part of \S \ref{sect3appendix}; the example of $Sp_4(F)$ may be found in \cite{Sha91}. Many of these are also covered by  \cite[(4.20)]{ACS16}.

If $\tau \otimes \sigma^{(0)}$ is ramified and $\alpha \geq 0$ is the unique nonnegative value for which $\nu^{\alpha}\tau \rtimes \sigma^{(0)}$ is reducible, we say that $(\tau; \sigma^{(0)})$ satisfies (C$\alpha$). We claim $\alpha \in \{0, \frac{1}{2},1\}$. Table~\ref{tab:I(s)} coupled with \cite[Theorem 8.1]{Sha90a} tells us we have (C0), (C$\frac{1}{2}$), or (C1) except possibly for the Siegel parabolic(s) for $G_n=SO_{2n+1}$, $SO_{2n}$, $SO_{2n+2}^{\ast}$, $U_{2n+1}$, $GSpin_{2n+1}$, $GSpin_{2n}$, $GSO_{2n}$, $GSO_{2n+2}^{\ast}$, $GSpin_{2n+2}^{\ast}$, and $GU_{2n+1}$. For these, we must rule out the possibility of (C$\frac{1}{4}$) (corresponding to $s=\frac{1}{2}$ above). For $G_n=SO_{2n+1}$ and $SO_{2n}$, this possibility is eliminated by \cite{Sha92}; \cite[Th\'eor\`eme 3.1]{Moe14} further rules out this possibility for $G_n=SO_{2n+2}^{\ast}$, $GSpin_{2n+1}$, $GSpin_{2n}$, $GSpin_{2n+2}^{\ast}$, and $U_{2n+1}$. Thus it remains to deal with $G_n=GSO_{2n}$, $GSO_{2n+2}^{\ast}$, and $GU_{2n+1}$.

To address these cases, we have the following lemma:

\begin{lem}
Let $\tilde{G}_n=GSO_{2n}(F)$ (resp., $GSO_{2n+2}^{\ast}(F)$, $GU_{2n+1}(F)$) and $G_n=SO_{2n}(F)$ (resp., $SO_{2n+2}^{\ast}(F)$, $U_{2n+1}(F)$). Let $\tau$ is an irreducible supercuspidal representation of $H_n(F)$ and $\chi$ a character of
\[
\tilde{G}_0(F) \cong \left\{ \begin{array}{l}
F^{\times} \mbox{ for }\tilde{G}_0(F)=GSO_0(F), \\
E^{\times} \mbox{ for }\tilde{G}_0(F)=GSO_2^{\ast}(F) \mbox{ or }GU_1(F)
\end{array} \right.
\]
(so $\chi$ means $\chi \otimes e$ or $\chi \otimes c$ for $GSO_0(F)$). Let $\chi_0$ be the irreducible representation of
\[
G_0(F) \cong \left\{ \begin{array}{l}
1 \mbox{ for }G_0(F)=SO_0(F), \\
N_1(E/F) \mbox{ for }G_0(F)=SO_2^{\ast}(F) \mbox{ or }U_1(F).
\end{array} \right.
\]
given by restriction of $\chi$ (so $\chi_0=1 \otimes e$ or $1 \otimes c$ for $SO_0(F)$; trivial otherwise as similitude is 1). 
Then $\nu^s \tau \rtimes \chi$ reducible implies $\nu^s \tau \rtimes \chi_0$ is reducible.
\end{lem}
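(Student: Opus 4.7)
The plan is to establish the restriction identity
\[
\bigl(\nu^s \tau \rtimes \chi\bigr)\big|_{G_n(F)} \;\cong\; \nu^s \tau \rtimes \chi_0,
\]
after which the lemma follows at once: any proper nonzero $\tilde{G}_n(F)$-invariant subspace of $\nu^s\tau\rtimes\chi$ is in particular a proper nonzero $G_n(F)$-invariant subspace of $\nu^s\tau\rtimes\chi_0$, so reducibility transfers from $\tilde{G}_n(F)$ to $G_n(F)$.

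To prove the identity, the first step is to verify the Iwasawa-type decomposition $\tilde{G}_n(F)=\tilde{P}\cdot G_n(F)$, where $\tilde{P}=\tilde{M}\tilde{U}$ is the Siegel parabolic of $\tilde{G}_n(F)$ with Levi $\tilde{M}\cong H_n(F)\times \tilde{G}_0(F)$. For this it suffices to produce a one-parameter subgroup $\lambda$ of $\tilde{M}$ whose similitude $\xi\circ\lambda$ surjects onto $\xi(\tilde{G}_n(F))$. In each case one takes $\lambda(t)=(I_n,\,t)\in H_n(F)\times\tilde{G}_0(F)$; for $GSO_{2n}$ this is $\mathrm{diag}(I_n,\,tI_n)$, which a short check against ${}^TXJ_{2n}X=\lambda J_{2n}$ shows lies in $GSO_{2n}(F)$ with similitude $t$, and $GSO_{2n+2}^{\ast}$ and $GU_{2n+1}$ admit analogous central elements of $\tilde{M}$ from the explicit description of the torus in \S\ref{sect3}. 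Given $x\in\tilde{G}_n(F)$ with $\xi(x)=t$ one then writes $x=\lambda(t)\cdot\bigl(\lambda(t)^{-1}x\bigr)$ with the second factor in $G_n(F)$.

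With this in hand, the double coset space $G_n(F)\backslash \tilde{G}_n(F)/\tilde{P}$ reduces to a single point; moreover $\tilde{P}\cap G_n(F)=P$ is the Siegel parabolic of $G_n(F)$ with the same unipotent radical as $\tilde{P}$, so the modulus characters match: $\delta_{\tilde{P}}|_M=\delta_P$. The Mackey restriction formula then gives the normalized identity
\[
i^{\tilde{G}_n}_{\tilde{M}}\bigl(\nu^s\tau\otimes\chi\bigr)\big|_{G_n(F)}
\;\cong\; i^{G_n}_{M}\bigl((\nu^s\tau\otimes\chi)|_M\bigr)
\;=\;\nu^s\tau\rtimes\chi_0,
\]
as needed. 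For $GSO_{2n}$, the same computation shows that $\chi\otimes e$ restricts to $1\otimes e$ and $\chi\otimes c$ restricts to $1\otimes c$, so the artifice from \S\ref{sect2}--\S\ref{sect3} is respected. The main technical point is the case-by-case verification of the first step---exhibiting an element of the Siegel Levi realizing every value of the similitude character---but this is a short matrix computation in each of the three cases using the explicit realizations of \S\ref{sect3}.
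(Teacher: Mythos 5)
Your proposal is correct and follows essentially the same route as the paper: both rest on the decomposition of $\tilde{G}_n(F)$ as the product of $G_n(F)$ with a piece of the Siegel Levi (the paper uses $\tilde{\mathcal A}_0=1\times\tilde{G}_0(F)$, so that $\tilde{G}_n=\tilde{\mathcal A}_0G_n$), and both deduce the restriction isomorphism $\left.\left(\nu^s\tau\rtimes\chi\right)\right|_{G_n}\cong\nu^s\tau\rtimes\chi_0$, from which reducibility transfers. The only cosmetic difference is that the paper writes down the $G_n$-equivariant extension map $f\mapsto\tilde{f}$ explicitly rather than invoking the Mackey single-coset formalism you use; the content is identical.
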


\begin{proof}
Let $\tilde{\mathcal A}_0=1 \times \tilde{G}_0(F) \subset H_n(F) \times \tilde{G}_0(F)$ in the Siegel parabolic for $\tilde{G}_n(F)$. We have $\tilde{G}_n=\tilde{\mathcal A}_0 G_n$. We remark that the resulting decomposition is unique for $GSO_{2n}(F)$.

Consider the map
\[
\begin{array}{rrl}
{\mathcal E}:&  V_{\nu^s \rho \rtimes \chi_0} \longrightarrow & V_{\nu^s \rho \rtimes \chi} \\
& f \longmapsto & \tilde{f},
\end{array}
\]
where $\tilde{f}$ is defined by extending $f$ as follows:
\[
f(\tilde{g})=f(\tilde{a}_0g)=\tilde{\delta}^{\frac{1}{2}}(\tilde{a}_0)\chi(\tilde{a}_0)f(g).
\]
It is a straightforward matter to show that ${\mathcal E}$ is well-defined. Further, it is also not difficult to show that ${\mathcal E}$ is bijective, with inverse
\[
{\mathcal E}^{-1}: \tilde{f} \longmapsto \left.\tilde{f}\right|_{G_n}.
\]
Noting that $G_n \subset \tilde{G}_n$, one can show that ${\mathcal E}$ is $G_n$-equivariant (or equivalently, that ${\mathcal E}^{-1}$ is $G_n$-equivariant). One then has $\nu^s \rho \rtimes \chi_0 \cong \left.\left(\nu^s \rho \rtimes \chi \right)\right|_{G_n}$. Therefore, if $\nu^s \rho \rtimes \chi$ is reducible, so is $\nu^s \rho \rtimes \chi_0$. \end{proof}

In particular, in light of Lemma~\ref{twisting}, this shows that if $(\tau, \chi)$ is (C$\alpha$) for $GSO_{2n+2}^{\ast}$ or $GU_{2n+1}$ (resp., $(\tau, \chi \otimes e)$ or $(\tau, \chi \otimes e)$ for $GSO_{2n}$), then we also have $(\tau, \chi_0)$ is (C$\alpha$) for $SO_{2n+2}^{\ast}$ or $U_{2n+1}$ (resp., $(\tau, 1 \otimes e)$ or $(\tau, 1 \otimes c)$ for $SO_{2n}$). Since it is known that we do not have (C$\frac{1}{4}$) for the classical cases, the corresponding result is immediate for their similitude counterparts. 

There are also certain circumstances in which $\tau \otimes \sigma^{(0)}$ is not ramified but which can still contribute to square-integrable representations. These occur when $\tau$ is equivalent to the first factor of $w_0 (\tau \otimes \sigma^{(0)})$ in (\ref{w_0action}) but $\sigma^{(0)}$ is not equivalent to the second factor. In particular, we say that $(\tau; \sigma^{(0)})$ satisfies (CN) under the following conditions:

\ 

\noindent
\begin{table}[H]
\begin{tabular}{r|c}
$G_n$ \ \ \ & (CN) \\ \hline
$SO_{2n+1}$, $Sp_{2n}$, $U_{2n+1}, U_{2n}, GSpin_{2n+1}$ & none \\
$SO_{2n}$, $SO_{2n+2}^{\ast}$ & $\check{\tau} \cong \tau$ but $c^m \cdot \sigma^{(0)} \not\cong \sigma^{(0)}$ \\
$GSp_{2n}, GU_{2n+1}, GU_{2n}$ & $\check{\tau} \cong \tau$ but $\omega_{\tau}\sigma^{(0)} \not\cong \sigma^{(0)}$ \\
$GSO_{2n}, GSO_{2n+2}^{\ast}$ & $\check{\tau} \cong \tau$ but $\omega_{\tau}(c^m \cdot \sigma^{(0)}) \not\cong \sigma^{(0)}$ \\
$GSpin_{2n}$, $GSpin_{2n+2}^{\ast}$ & $\omega_{\sigma^{(0)}}\check{\tau} \cong \tau$ but $c^m \cdot \sigma^{(0)} \not\cong \sigma^{(0)}$ \\
\end{tabular}
\caption{(CN) conditions}
\label{tab:(CN)}
\end{table}

\noindent

\begin{note}
Note that $\nu^x \tau \rtimes \sigma^{(0)}$ is reducible if and only if $\nu^{-x}\tau \rtimes \sigma^{(0)}$ is reducible--for $GSp_{2n}(F)$, this is \cite[Remark 2.2]{Tad98b}. The same argument applies to $GSO_{2n}(F)$, $GSO_{2n+2}^{(\varepsilon)}(F)$, $GU_{2n+1}(F)$, $GU_{2n}(F)$ (using (\ref{w_0action}) and Lemma~\ref{twisting}); for the remaining groups, it follows from (\ref{w_0action}). In particular, $-\alpha$ is the unique nonpositive value where reducibility occurs.
\end{note}

Our last task is to set up a $\mu^{\ast}$ structure like that for classical groups in \cite{Tad95} (also \cite{MT02}). In order to have a uniform presentation, we define a variation on Tadi\'c's $M^{\ast}$ below. This variation counts sign changes, as those are important for many of the groups under consideration.
Thus we formally define $N^{\ast}: R \longrightarrow R \otimes R \otimes R \otimes {\mathbb Z}(C)$ by
\[
N^{\ast}=(\check{\ } \otimes m^{\ast})_D \circ s \circ m^{\ast},
\]
where $C = \{e,c\}$, $s: \pi_1 \otimes \pi_2 \mapsto \pi_2 \otimes \pi_1$, and
\[
(\,\check{\ } \otimes m^{\ast})_D(\pi_1 \otimes \pi_2)=\left\{ \begin{array}{l}
	\check{\pi}_1 \otimes m^{\ast}(\pi_2) \otimes e\\
	\mbox{ if }\pi_1 \mbox{ is a representation of }H_{n_1}(F)
	\mbox{ with }n_1\mbox{ even,} \\
	\check{\pi}_1 \otimes m^{\ast}(\pi_2) \otimes c\\
	\mbox{ if }\pi_1 \mbox{ is a representation of }H_{n_1}(F)
	\mbox{ with }n_1\mbox{ odd.}
\end{array}\right.
\]

We first consider the groups $G_n=SO_{2n+1}$, $Sp_{2n}$, $U_{2n+1}$, $U_{2n}$, $GSpin_{2n+1}$, $GSp_{2n}$, $GU_{2n+1}$, and $GU_{2n}$. Here we define $\tilde{\rtimes}$ by
\[
(\rho_1 \otimes \rho_2 \otimes \rho_3 \otimes d) \tilde{\rtimes} (\rho \otimes \sigma) =\left\{ \begin{array}{l}
(\rho_1 \times \rho_2 \times \rho) \otimes (\rho_3 \rtimes \sigma)\\
\mbox{ for }G_n=SO_{2n+1}, Sp_{2n}, U_{2n+1}, U_{2n}, \\
(\omega_{\sigma}\rho_1 \times \rho_2 \times \rho) \otimes (\rho_3 \rtimes \sigma) \\
\mbox{ for }G_n=GSpin_{2n+1}, \\
(\rho_1 \times \rho_2 \times \rho) \otimes (\rho_3 \rtimes \omega_{\check{\rho}_1} \sigma) \\
\mbox{ for }G_n=GSp_{2n}, GU_{2n+1}, GU_{2n}. \\
\end{array} \right.
\]
Note that the action of $C$ is trivial in this case.
With $\mu^{\ast}$ as defined in (\ref{mu-ast def 1}), we then have
\[
\mu^{\ast}(\lambda \rtimes \pi)=N^{\ast}(\lambda) \tilde{\rtimes}\mu^{\ast}(\pi),
\]
an immediate consequence of \cite{Tad95} (for $G_n=SO_{2n+1}$, $Sp_{2n}$, and $GSp_{2n}$), \cite{MT02} (for $G_n=U_{2n+1}$ and $U_{2n}$), \cite{Kim15} (for $G_n=GSpin_{2n+1}$), and \cite{KM19} (for $G_n=GU_{2n}$); the case $G_n=GU_{2n+1}$ is similar.

For $G_n=SO_{2n+2}^{\ast}$, $GSO_{2n+2}^{\ast}$, or $GSpin_{2n+2}^{\ast}$, we define $\tilde{\rtimes}$ by
\[
(\rho_1 \otimes \rho_2 \otimes \rho_3 \otimes d) \tilde{\rtimes} (\rho \otimes \sigma)=
\left\{ \begin{array}{l}
(\rho_1 \times \rho_2 \times \rho) \otimes d(\rho_3 \rtimes \sigma)\\
\mbox{ for }G_n=SO^{\ast}_{2n+2}, \\
(\rho_1 \times \rho_2 \times \rho) \otimes d(\rho_3 \rtimes \omega_{\tilde{\rho}_1}\sigma) \\
\mbox{ for }G_n=GSO^{\ast}_{2n+2}, \\
(\omega_{\sigma}\rho_1 \times \rho_2 \times \rho) \otimes d(\rho_3 \rtimes \sigma)\\
\mbox{ for }G_n=GSpin^{\ast}_{2n+2}.
\end{array} \right.
\]
Then, with $\mu^{\ast}$ as defined in (\ref{mu-ast def 1}),
\[
\mu^{\ast}(\lambda \rtimes \pi)=N^{\ast}(\lambda) \tilde{\rtimes} \mu^{\ast}(\pi).
\]
We outline the changes needed to the proof of \cite[Theorem 5.2]{Tad95} to deal with $GSpin_{2n+2}^{\ast}$ in \S \ref{sect3appendix}; the other cases are similar. Also note that for $SO_{2n+2}^{\ast}$, the inclusion of the action of $C$ represents a correction to \cite{MT02}.

\begin{rem}
We can also use this to produce $\mu^{\ast}$ structure for $SO_{2n+2}^{\ast}$ more like that of \cite{Tad95}. In particular, we define $M_D^{\ast}: R \longrightarrow R \otimes R \otimes {\mathbb Z}[C]$ by $M_D^{\ast}=(m \otimes 1)_D \circ N^{\ast}$, where
\[
(m \otimes 1)_D(\lambda_1 \otimes \lambda_2 \otimes \lambda_3 \otimes d)=(\lambda_1 \times \lambda_2) \otimes \lambda_3 \otimes d.
\]
Then,
\[
\mu^{\ast}(\tau \otimes \theta)=M_D^{\ast}(\tau) \rtimes \mu^{\ast}(\theta),
\]
where
\[
(\tau_1 \otimes \tau_2 \otimes d) \rtimes (\tau \otimes \theta)=(\tau_1 \times \tau) \otimes (\tau_2 \rtimes d \cdot \theta)
\]
(noting that $\tau_2 \rtimes d \cdot \theta \cong d \cdot(\tau_2 \rtimes \theta)$).

\end{rem}

The structures for $SO_{2n}$, $GSpin_{2n}$ are done in \cite{JL14} and \cite{Kim16}, respectively, and included in the summary below (though the presentation below is a minor variation on that in  \cite{Kim16}).
The structure for $GSO_{2n}$ (resp., $GSpin_{2n}$) is essentially a combination of that for $GSp_{2n}$ (resp., $GSpin_{2n+1}$) above and that for $SO_{2n}$.  Note that the Weyl groups here allow only even sign changes; again $N^{\ast}$ accounts for this.
Let $G_n$ be $GSO_{2n}$ or $GSpin_{2n}$. We can now construct a $\mu^{\ast}$ structure which closely resembles that for the other classical groups. To this end, we set
\[
\Omega_k=\left\{ \begin{array}{l}
	\Pi \setminus\{\alpha_k\} \mbox{ if }k \leq n-2, \\
	\Pi \setminus \{\alpha_{n-1},\alpha_n \} \mbox{ if }k=n-1, \\
	\Pi \setminus \{\alpha_n\} \mbox{ if }k=n.
\end{array} \right.
\]
Note $c\Omega_n=\Pi \setminus\{\alpha_{n-1}\}$. For $\pi$ an irreducible representation of $G_n(F)$ with $n \geq 2$, and $0 \leq k \leq n$, write $r_{M_{\Omega_k},G}(\pi)= \sum_{i \in I_k} \pi_{i,k} \otimes \theta_{i,k}$ and $r_{M_{c\Omega_n},G}(\pi)=\sum_{j \in J} \pi_j \otimes (\chi_j \otimes c)$. We then define
\begin{equation}\label{m-ast def2}
\mu^{\ast}(\pi)=\sum_{k=0}^n \sum_{i \in I_k} \pi_{i,k} \otimes \theta_{i,k}+\sum_{j \in J} \pi_j \otimes (\chi_j \otimes c)
\end{equation}
for $G_n=GSO_{2n}$ or $GSpin_{2n}$. This also applies to $G_n=SO_{2n}$ if one replaces $\chi_j$ by 1 in the second summand and in $r_{M_{c\Omega_n},G}(\pi)$ above.
For $n=0$, we have only $\chi \otimes e$ and $\chi \otimes c$ for $GSO_{2n}$ or $GSpin_{2n}$; for $d=e \mbox{ or }c$, we define
\[
\mu^{\ast}(\chi \otimes d)=1 \otimes (\chi \otimes d).
\]
The corresponding definition for $SO_{2n}$ with $n=0$ is $\mu^{\ast}(1 \otimes d)=1 \otimes (1 \otimes d)$.
For $n=1$, an irreducible representation of $GSO_2(F)$ has the form $\chi \rtimes (\chi' \otimes e)=\chi^{-1} \rtimes (\chi'\chi \otimes c)$ for $\chi$ a (quasi)character of $F^{\times}$ (noting that under $GSO_2(F) \cong F^{\times} \times F^{\times}$, this corresponds to the character $\chi \otimes \chi'$), and we set
\[
\mu^{\ast}(\chi \rtimes (\chi' \otimes e))=1 \otimes (\chi \rtimes (\chi' \otimes e))+ \chi \otimes (\chi' \otimes e)+\chi^{-1} \otimes (\chi\chi' \otimes c).
\]
The situation for $GSpin_2$ is similar except that $\chi \rtimes (\chi' \otimes e)=\chi'\chi^{-1} \rtimes (\chi' \otimes c)$. Thus, we take
\[
\mu^{\ast}(\chi \rtimes (\chi' \otimes e))=1 \otimes (\chi \rtimes (\chi' \otimes e))+\chi \otimes (\chi' \otimes e)+\chi'\chi^{-1} \otimes (\chi' \otimes c).
\]
For $SO_2$, we have $\chi \otimes (1 \otimes e)=\chi^{-1} \otimes (1 \otimes c)$ and take
\[
\mu^{\ast}(\chi \rtimes (1 \otimes e))=1 \otimes (\chi \rtimes (1 \otimes e))+\chi \otimes (1 \otimes e)+\chi^{-1} \otimes (1 \otimes c).
\]
In any of these cases, we linearly extend $\mu^{\ast}$ to a map $\mu^{\ast}: R[D] \longrightarrow R \otimes R[D]$.

We  take $\tilde{\rtimes}$ defined by
\[
(\rho_1 \otimes \rho_2 \otimes \rho_3 \otimes d) \tilde{\rtimes} (\rho \otimes \sigma)=
\left\{ \begin{array}{l}
(\rho_1 \times \rho_2 \times \rho) \otimes d(\rho_3 \rtimes \sigma)\\
\mbox{ for }G_n=SO_{2n}, \\
(\rho_1 \times \rho_2 \times \rho) \otimes d(\rho_3 \rtimes \omega_{\tilde{\rho}_1} \sigma)\\
\mbox{ for }G_n=GSO_{2n}, \\
(\omega_{\sigma}\rho_1 \times \rho_2 \times \rho) \otimes d(\rho_3 \rtimes \sigma)\\
\mbox{ for }G_n=GSpin_{2n}.
\end{array} \right.
\]
Then,
\[
\mu^{\ast}(\lambda \rtimes \pi)=N^{\ast}(\lambda) \tilde{\rtimes} \mu^{\ast}(\pi).
\]
We note that this follows from \cite{Kim16} for $G_n=GSpin_{2n}$; for $G_n=GSO_{2n}$, the argument is similar to that for $SO_{2n}$ in \cite{JL14}.

\subsection{Structure of  \texorpdfstring{$GSpin_{2n+2}^{\ast}(F)$}{}}\label{sect3appendix}

The purpose of this subsection is to establish certain key properties of $GSpin_{2n+2}^{\ast}$. To this end, let $E$ be a quadratic extension of $F$; write $E=F(\sqrt{\varepsilon})$. In particular, the goal here is to understand the $F$-data, maximal non-split torus, and Weyl group action and apply them to obtain the $\mu^{\ast}$ structure, description of characters, and calculation of $I(s\tilde{\alpha}, \tau \otimes \sigma^{(0)})$.

Given an algebraic group $G$ over $F$, the set of isomorphism classes of $F$-forms is in bijection with $H^1(Gal(\bar{F}/F), Aut(G))$ (see \cite[Chapter III.1]{Ser97}), where $Aut(G)$ is the group of $\bar{F}$-automorphisms of $G$. Then, the $F$-rational points of an $F$-form $H$ of $G$ can be obtained as follows:
$$H(F)= \{x \in G(\bar{F}) : a_s(s(x))=x \text{ for all }s \in Gal(\bar{F}/F) \},$$
where $a_s$ is a $1$-cocycle corresponding to $H$. 

Let $\sigma$ be the nontrivial element of $Gal(E/F)$ and $c$ be the outer conjugation corresponding to reflection of the Dynkin diagram of $GSpin_{2n+2}$. Let $a_\sigma=c$. Then, we have the following concrete realization of the  quasi-split $GSpin^{(\varepsilon)}_{2n+2}(F)$: 
\[
GSpin^{(\varepsilon)}_{2n+2}(F)=\{X \in GSpin_{2n+2}(E) \,|\, \sigma(X)=c \cdot X\}.
\]
We thank Kwangho Choiy for helpful discussions on this realization.

We start by looking at tori. In $GSpin_{2n+2}(E)$, set
\[
d(a_1, \dots, a_n, a_{n+1},a_0)=\check{e}_1(a_1) \dots \check{e}_n(a_n) \check{e}_{n+1}(a_{n+1}) \check{e}_0(a_0),
\]
as above. Then, the maximal split torus in $GSpin_{2n+2}(E)$ is
\[
\{ d(a_1, \dots, a_n, a_{n+1},a_0) \,|\, a_i \in E^{\times} \mbox{ for all }i=0,1, \dots, n+1\}
\]
To lie in the maximal (non-split) torus in $GSpin^{(\varepsilon)}_{2n+2}(F)$, we then require 
\begin{align*}
   d(\sigma(a_1), \dots, \sigma(a_n), \sigma(a_{n+1}), \sigma(a_0))&=c \cdot d(a_1, \dots, a_n, a_{n+1},a_0)\\
   &=d(a_1, \dots, a_n, a_{n+1}^{-1},a_{n+1}a_0), 
\end{align*}
noting the action of $c$ is given by \cite[Lemma 4.5]{HS16}. This then tells us
\[
\begin{array}{c}
\sigma(a_1)=a_1, \ \dots, \ 
\sigma(a_n)=a_n,\  \sigma(a_{n+1})=a_{n+1}^{-1}, \ \sigma(a_0)=a_{n+1}a_0 \\
\Downarrow \\
a_1, \dots, a_n \in F^{\times},\ a_{n+1}=\sigma(a_0)a_0^{-1}, \ a_0 \in E^{\times}.
\end{array}
\]
Thus, the maximal non-split torus in $GSpin^{(\varepsilon)}_{2n+2}(F)$ is
\[
T=\{ d(a_1, \dots, a_n, \sigma(a_0)/a_0,a_0) \,|\, a_i \in F^{\times} \mbox{ for }i=1, \dots n, \ a_0 \in E^{\times}\}
\]
For $a_0 \in F^{\times}$, we have $\sigma(a_0)/a_0=1$, so the maximal split torus in $GSpin^{(\varepsilon)}_{2n+2}(F)$ is
\[
A=\{ d(a_1, \dots, a_n, 1,a_0) \,|\, a_i \in F^{\times} \mbox{ for }i=0, \dots, n\}
\]
(elements written as $d(a_1, \dots, a_n, a_0)$ earlier).

For the $F$-roots for $GSpin^{(\varepsilon)}_{2n+2}(F)$, we restrict the roots for $GSpin_{2n+2}(E)$ to $A$. To make the results clearer, let
\[
E_i=e_i|_A \mbox{ for }i=0, \dots, n.
\]
Of course, a $\mathbb Z$-basis for the rational characters is
\[
X_F=\{E_1, \dots, E_n, E_0\}.
\]
Then,
\[
(e_1-e_2)|_A=E_1-E_2, \dots, (e_{n-1}-e_{n})|_A=E_{n-1}-E_{n}
\]
and
\[
(e_{n}-e_{n+1})|_A=(e_n+e_{n+1})|_A=E_n.
\]
Thus, the simple $F$-roots are
\[
\Pi_F=\{E_1-E_2, \dots, E_{n-1}-E_n, E_n\}.
\]

A $\mathbb Z$-basis for the $F$-cocharacters for $GSpin^{(\varepsilon)}_{2n+2}(F)$ is
\[
\check{X}_F=\{\check{E}_1, \dots, \check{E}_n, \check{E}_0\},
\]
where $\check{E}_i=\check{e}_i|_{F^{\times}}$ viewed as a cocharacter into $S^{\ast}$. This is easily seen to satisfy $\check{E}_j(E_i)=\delta_{i,j}$ (calculated via $t^{\check{E}_j(E_i)}=E_i(\check{E}_j(t)$). It remains to determine $\check{\Pi}_F$.

At this point, we note that philosophically, we can move between the relative Weyl group $W_F^{\ast}$ (see \cite[Section 15.3]{Spr98}), a $W_F^{\ast}$-invariant inner product, and the F-coroots. That is, given one, the other two may be determined. In the discussion below, we start with the relative Weyl group. From this, we then construct a $W_F^{\ast}$-invariant product and use that to determine the coroots.

We first observe that $W^{\ast}_F$ consists of the $c$-invariant elements of $W$. To be more precise, recall that $c \cdot \check{e}_k=\left\{ \begin{array}{l} \check{e}_k \mbox{ for }0 \leq k \leq n, \\
\check{e}_0-\check{e}_{n+1} \mbox{ for }k=n+1 \end{array} \right.$ (see \cite[Lemma 4.5]{HS16}). It is then a fairly straightforward matter to check that $w \in W$ is $c$-invariant if and only if $w \cdot \check{e}_{n+1}=\check{e}_{n+1}$ or $\check{e}_0-\check{e}_{n+1}$. This immediately gives a well-defined action of a $c$-invariant $w$ on $\check{E}_0, \dots, \check{E}_n$ (via $w \cdot \check{E}_i=(w \cdot \check{e}_i)|_{F^{\times}}$).
On the other hand, if $w \in W_F^{\ast}$, we may lift it to a corresponding element $w' \in W$ by defining the action of $w'$ on $\check{e}_i$, $i=0, 1, \dots, n+1$. There is an obvious action of $w$ on $\check{e}_i$, $i=0,1, \dots, n$ (lifting from $\check{E}_i=\check{e}_i|_{F^{\times}}$); we take $w' \cdot \check{e}_i=w \cdot \check{e}_i$ for $i=0,1, \dots, n$. For $i=n+1$, we define
\[
w' \cdot \check{e}_{n+1}=\left\{ \begin{array}{l}
\check{e}_{n+1} \mbox{ if $w$ has an even number of sign changes}, \\
\check{e}_0-\check{e}_{n+1} \mbox{ if $w$ has an odd number of sign changes}.
\end{array} \right.
\]
So defined, $w'$ gives an element of $W$ whose action on $A$ matches that of $w$ and which is clearly $c$-invariant. We remark that if $s_i$ denotes the simple reflection corresponding to the simple root $\alpha_i$ for $GSpin_{2n+2}$ (usual ordering), then $s_1, \dots, s_{n-1}, s_ns_{n+1}=s_{n+1}s_n$ are $c$-invariant and generate $W_F^{\ast}$. Further, these give rise to the simple reflections corresponding to $\Pi_F$ as may be seen by their actions via restriction.

We now turn to $W$-invariant forms, starting with $GSpin_{2n+2}(E)$. We first note that it is a straightforward calculation to check that $\langle \cdot, \cdot \rangle$ defined by
\[
\langle e_i, e_j \rangle=\left\{ \begin{array}{l}
\hspace{.15in}\alpha \mbox{ if } j=i>0, \\
\hspace{.15in} 0 \mbox{ if } j\not=i \mbox{ with }i,j>0, \\
\hspace{-.09in}-\frac{1}{2}\alpha \mbox{ if }i=0 \mbox{ or }j=0 \mbox{ but not both}, \\
\hspace{.15in} \beta \mbox{ if }i=j=0,
\end{array} \right.
\]
is invariant under simple reflections, hence a $W$-invariant, symmetric bilinear form; positive definite if $\alpha, \beta>0$. It is also $c$-invariant. Further, any positive definite, symmetric, $W$-invariant bilinear form is $\langle \cdot, \cdot \rangle$ for some $\alpha, \beta>0$. In what follows, we take $\alpha=\beta=1$ for convenience. 

We now define $\langle \cdot, \cdot \rangle^{\ast}$ by
\[
\langle E_i , E_j \rangle^{\ast}=\langle e_i, e_j \rangle \mbox{ for } 0 \leq i,j \leq n,
\]
to get a symmetric, positive definite bilinear form.
To show the $W_F^{\ast}$-invariance of $\langle \cdot, \cdot \rangle^{\ast}$, it suffices to check it for $w=s_1, \dots, s_{n-1}, s_ns_{n+1}=s_{n+1}s_n$. For $k<n$, we note that $s_k$ permutes the $E_i$'s, so $s_k \cdot E_i= s_k \cdot E_j \Leftrightarrow i=j$ and
\[
\langle s_k \cdot E_i, s_k \cdot E_j \rangle^{\ast}=\delta_{i,j}=\langle E_i, E_j \rangle^{\ast}.
\]
For $s_ns_{n+1}$, we note that
\[
s_ns_{n+1} \cdot E_i=\left\{ \begin{array}{l}
	E_0+E_n \mbox{ if } i=0, \\
	E_i \mbox{ if } 1 \leq i \leq n-1, \\
	-E_n \mbox{ if }i=n,
\end{array} \right.
\]
from which it is a straightforward calculation to verify $\langle s_ns_{n+1}\cdot E_i, s_ns_{n+1} \cdot E_j \rangle^{\ast}=\langle E_i, E_j \rangle^{\ast}$.

We next determine the simple dual $F$-roots. For an $F$-root $\alpha$, we want
\[
\check{\alpha}(x_1E_1+ \dots +x_nE_n+x_0E_0)=2\frac{\langle \alpha, x_1E_1+ \dots +x_nE_n+x_0E_0 \rangle^{\ast}}{\langle \alpha, \alpha \rangle^{\ast}}.
\]
For $\alpha_i=E_i-E_{i+1}$, $1 \leq i \leq n-1$, we get
\[
\check{\alpha}_i(x_1E_1+ \dots +x_nE_n+x_0E_0)=x_i-x_{i+1};
\]
for $\alpha_n=E_n$,
\[
\check{\alpha}_n(x_1E_1+ \dots +x_nE_n+x_0E_0) =2x_n-x_0.
\]
We then have
\[
\check{\Pi}_F=\{ \check{E}_1-\check{E}_2, \dots, \check{E}_{n-1}-\check{E}_n, 2\check{E}_n-\check{E}_0 \}.
\]

While the action of $W_F^{\ast}$ on $A$ is described above, we also need the action on $T$. We now take up this question. For $1 \leq i \leq n-1$, we have
\begin{align*}
 &\,\, s_i \cdot d(a_1, \dots, a_{i-1}, a_i, a_{i+1}, a_{i+2}, \dots, a_n, \sigma(a_0)/a_0,a_0)\\
 &=d(a_1, \dots, a_{i-1}, a_{i+1}, a_i, a_{i+2}, \dots, a_n, \sigma(a_0)/a_0,a_0), 
\end{align*}
and
\begin{align*}
  &\,\, s_ns_{n+1} \cdot d(a_1, \dots, a_{n-1}, a_n, \sigma(a_0)/a_0,a_0)\\
  &=d(a_1, \dots, a_{n-1}, a_n^{-1}, a_0/\sigma(a_0),\sigma(a_0)a_n).  
\end{align*}
In particular, note that the action of $s_ns_{n+1}$ includes a Galois conjugation of $a_0$.

We now take a moment to discuss the proof of the formula $\mu^{\ast}(\lambda \rtimes \pi)=N^{\ast}(\lambda) \tilde{\rtimes}\mu^{\ast}(\pi)$ from above. The proof parallels that of Theorem 5.2 of \cite{Tad95} for $GSp_{2n}$, formally calculating $N^{\ast}(\lambda) \tilde{\rtimes} \mu^{\ast}(\pi)$ from the definitions above and comparing it to $\mu^{\ast}(\lambda \rtimes \pi)$ calculated via \cite[Lemma 2.12 (Geometrical Lemma)]{BZ77}. The calculation of $N^{\ast}(\lambda) \tilde{\rtimes} \mu^{\ast}(\pi)$ follows that of \cite[Theorem 5.2]{Tad95} very closely. Most of the calculation is, in fact, just a process of re-indexing summations, which does not depend on the underlying group. The difference in the definitions of $\tilde{\rtimes}$ is what results in the central character of the $GSpin$ representation being attached to the inverted $GL$ representation rather than the other way around. (Technical note: \cite {Tad95} does the contragredient for the inverted $GL$ representations as part of $\tilde{\rtimes}$ whereas here we include it in the definition of $N^{\ast}$, but this is not significant.)

The calculation of $\mu^{\ast}(\lambda \rtimes \pi)$ from \cite{BZ77} is done using the Weyl group double-coset representatives calculated in \cite[Section 4]{Tad95}. These calculations depend only on the Weyl group, not the underlying group, so we have the same representatives for $GSpin_{2n+2}^{\ast}$. The difference here arises in the action of these double-coset representatives, done in \cite[Lemma 5.1]{Tad95} and the discussion immediately preceding it. In particular, using the superscript to denote the rank of the underlying group and following the notation of \cite{Tad95},
\[
\begin{array}{l}
q_n(d,k)_{i_1,i_2}^{-1}\left(\pi_1^{(j)} \otimes \pi_2^{(i_1-d-k)} \otimes \pi_3^{(d)} \otimes \pi_4^{(i_2-d-k)} \otimes \sigma^{(n-i_1-i_2+d+k)}\right) \\
=\pi_1^{(j)} \otimes \pi_4^{(i_2-d-k)} \otimes\omega (\pi_3^{(d)})^{\vee} \otimes \pi_2^{(i_1-d-k)} \otimes \sigma^{(n-i_1-i_2+d+k)},
\end{array}
\]
where $\omega=\omega_{\sigma^{(n-i_1-i_2+d+k)}}$. With this, the calculation of $\mu^{\ast}(\lambda \rtimes \pi)$ then matches that of $N^{\ast} \tilde{\rtimes }\mu^{\ast}(\pi)$, as needed.

We next discuss characters of $GSpin_{2n+2}^{\ast}(F)$. The restriction of a rational character of $GSpin_{2n+2}^{\ast}(F)$ to $A$ produces a Weyl-invariant character of $A$. Write
\[
\lambda|_A=c_1E_1+ \dots +c_nE_n+c_0E_0, \hspace{,5in} c_1, \dots, c_n,c_0 \in {\mathbb Z}.
\]
Then, for $i<n$,
\[
\begin{array}{c}
s_i \cdot \lambda=c_1E_1+ \dots c_{i-1}E_{i-1} +c_{i+1}E_i+c_iE_{i+1}+c_{i+2}E_{i+2}+\dots+c_nE_n+c_0E_0 \\
\Downarrow \\
c_i=c_{i+1}.
\end{array}
\]
For $i=n$,
\[
\begin{array}{c}
s_n \cdot \lambda =c_1E_1+ \dots +c_{n-1}E_{n-1}+(c_0-c_n)E_n+c_0E_0 \\
\Downarrow \\
c_n=c_0-c_n.
\end{array}
\]
Combining these, we get $\lambda=cE_1+ \dots cE_{n-1}+2cE_n+cE_0$. We take $c=1$ for our basic character, so $\xi_n|_A=E_1+\dots+E_n+2E_0$. 

We now determine $I(s\tilde{\alpha}, \tau \otimes \sigma^{(0)})$ for $GSpin_{2n+2}^{\ast}$. By \cite[Section 1.2]{Sha10}, we may use the $F$-roots to calculate $\langle \rho_U, \alpha \rangle$. By \cite[(1.2.6)]{Sha10}, we have
\[
\langle \rho_U, \alpha \rangle=2\frac{(\rho_U, \alpha)}{(\alpha, \alpha)},
\]
where $(\cdot, \cdot)$ denotes a $W$-invariant inner product. Note that this is also $ \check{\alpha}(\rho_U)$, which can be used with the dual roots above.

We first look at $\rho_{U_{min}}$ for the Borel subgroup. The positive $F$-roots are $E_i-E_j$ for $i<j$ (1-dimensional root space), $E_i+E_j$ for $i<j$ (1-dimensional root space), $E_i$ (2-dimensional root space--corresponds to $u_{e_i-e_{n+1}}(x)u_{e_i+e_{n+1}}(\bar{x})$, $x \in E$). Then,
\[
\rho_{U_{min}}=\frac{1}{2}\left(
\sum_{i=1}^n\sum_{j=i+1}^n 1[(E_i-E_j)+(E_i+E_j)]+\sum_{i=1}^n 2 [E_i]\right)=\sum_{i=1}^n (n-i+1)E_i,
\]
noting the coefficients of 1 or 2 depend on the dimension of the root space. 
For the standard parabolic subgroup $P$ having $M=GL_k(F) \times GSpin_{2(n-k)+2}^{\ast}(F)$, we have
\[
\begin{array}{rl}
\rho_U
&= \rho_{U_{min}}-\rho_{U_{M,min}}\\ &=\displaystyle{\sum_{i=1}^n (n-i+1)E_i-\left[\frac{1}{2}\sum_{i=1}^k\sum_{j=i+1}^k(E_i-E_j)+\sum_{i=k+1}^n(n-i+1)E_i \right]}\\ 
&=\displaystyle{\sum_{i=1}^k \left(n+\frac{-k+1}{2}\right)E_i}
\end{array}
\]
If we observe that
\[
\check{\alpha}=\left\{ \begin{array}{l} \check{E}_k-\check{E}_{k+1} \mbox{ if }k<n, \\ 2\check{E}_n-\check{E}_0 \mbox{ if }k=n, \end{array} \right.
\]
we immediately obtain
\[
\langle \rho_P, \check{\alpha} \rangle=\left\{ \begin{array}{l}
	n+\frac{-k+1}{2} \mbox{ if }k<n, \\
	n+1 \mbox{ if }k=n.
\end{array} \right.
\]
Thus,
\[
\tilde{\alpha}=\left\{ \begin{array}{l}
E_1+ \dots +E_k \mbox{ for }k<n, \\
\frac{1}{2}(E_1+ \dots+E_n) \mbox{ for }k=n.
\end{array} \right.
\]
Noting that determinant on $GL_k$ corresponds to $E_1+ \dots +E_k$, we then see
\[
I(s\tilde{\alpha}, \tau \otimes \sigma^{(0)})=\left\{ \begin{array}{l}
\nu^s \tau \rtimes \sigma^{(0)} \mbox{ for }k<n, \\
\nu^{\frac{s}{2}}\tau \rtimes \sigma^{(0)} \mbox{ for }k=n.
\end{array} \right.
\]


\begin{rem}\label{char}
While $char(F)\not=2$ is enough for most of the discussion in \S \ref{sect3}, there is one key result which requires $char(F)=0$ (at least at present): the characterization of generic cuspidal reducibility points (C$\alpha$) (based on \cite{Sha90a}).
\end{rem}

\section{Generic representations}\label{sect4}

In this section, we classify irreducible generic representations for the groups under consideration. Although not essential to the discussion which follows, in \S \ref{Whittaker models} we classify their Whittaker modules. \S \ref{Steinberg sect} constructs a basic family of representations needed later, which are essentially generalized Steinberg representations. In \S \ref{gdssection}, we start our classification by classifying square-integrable generic representations. Building on this, we classify irreducible tempered generic representations in \S \ref{gtsection} and irreducible generic (admissible) representations in \S \ref{generic}.

We note that the assumption $char(F)=0$ is not needed in \S \ref{Whittaker models}; $char(F) \not=2$ suffices. However, the Standard Module Conjecture (\cite{HO13}), generic cuspidal reducibility conditions (C$\alpha$) (using \cite{Sha90a}) and R-group results (following \cite{Gol94}, etc.) all use $char(F)=0$. So, while the requirement $char(F)=0$ is not directly needed in the combinatorial arguments presented in the rest of \S \ref{sect4}, the results do not hold without it.

\subsection{Whittaker models}\label{Whittaker models}

Recall that an irreducible representation is called generic if it admits a nontrivial Whittaker model. In this section, we review the properties of Whittaker models and classify the Whittaker models for the families of groups under consideration. We remark that this is more information than is actually needed in this paper but is included for the sake of completeness.

Let $G$ be a quasi-split group defined over $F$, $B=TU$ a fixed Borel subgroup, and $\alpha_1, \dots, \alpha_n$ the corresponding simple roots. To be precise, we view $G$ as in \cite{Spr98}--a based root datum $(X, \Pi, \check{X}, \check{\Pi})$ with an action of the Galois group (so simple root refers to elements of $\Pi$, not $F$-roots). Recall (see \cite[Appendix]{Sha74} or \cite[Section 3]{Sha88}) an $F$-morphism $f: U \longrightarrow \bar{F}$ is called non-degenerate if it satisfies the following: for  $u=u_{\alpha_1}(x_1) \dots u_{\alpha_n}(x_n)$ (with $u_{\alpha_i}$ root subgroup map from $\bar{F}$ to $U$ corresponding to $\alpha$),
\[
f(u)=k_1x_1+k_2x_2+ \dots +k_nx_n
\]
with $k_1, \dots, k_n \not=0$. If $G$ is quasi-split but not split, there are additional constraints described below. A generic character is then one of the form
\[
\Psi(u)=\Psi(u_{\alpha_1}(x_1)u_{\alpha_2}(x_2) \dots u_{\alpha_n}(x_n))=\psi(k_1x_1+ \dots +k_{n-1}x_{n-1}+k_nx_n),
\]
with $\psi$ a fixed nontrivial additive character of $F$. We remark that by \cite{Tat67}, any other nontrivial additive character has the form $\psi^{(b)}(x)=\psi(bx)$ for some $b \in F^{\times}$ hence produces the same family of possible $\Psi$. An irreducible representation $(\pi, G(F), V_{\pi})$ is $\Psi$-generic if it admits a Whittaker model with respect to $\Psi$, i.e., there is a nontrivial linear functional $\ell$ on $V_{\pi}$ satisfying
\[
\ell(\pi(u)v)=\Psi(u)\ell(v)
\]
for all $u \in U(F)$ and $v \in V_{\pi})$. Note that if $\pi$ admits a Whittaker model with respect to $\Psi$, it is unique (\cite{Rod73} in the split case and \cite{Sha74} more generally).

For $t \in T(F)$, let $t \cdot \Psi(u)=\Psi(t^{-1}ut)$. As in \cite{Jia06}, we note that if $\pi$ is $\Psi$-generic, then $\pi$ is also $t \cdot \Psi$ generic for all $t \in T(F)$. In particular, the $T(F)$-orbits of generic characters parameterize the distinct Whittaker models. In the remainder of this section, we parameterize those orits for the groups under consideration.

 Let $d=d(a_1, \dots, a_n)$ (for classical groups) and $d(a_1, \dots, a_n,a_0)$ (for similitude groups) be as before.

We begin with the split cases  (noting the odd and even special orthogonal groups are done in \cite{JS04} and \cite{JL14}, respectively, and are included here for the sake of completeness). For the classical groups, we have 
\begin{align*}
{}&d^{-1} \cdot \Psi[u_{\alpha_1}(x_1)\dots u_{\alpha_{n-1}}(x_{n-1}) u_{\alpha_n}(x_n)]\\
&=\Psi[u_{\alpha_1}(\alpha_1(d)x_1) \dots u_{\alpha_{n-1}}(\alpha_{n-1}(d)x_{n-1}) u_{\alpha_n}(\alpha_n(d)x_n)] \\
&=\left\{ \begin{array}{l}
\psi(k_1(a_1a_2^{-1})x_1+ \dots +k_{n-1}(a_{n-1}a_n^{-1})x_{n-1}+k_n(a_n^2)x_n) \\
\mbox{ for }G_n=Sp_{2n}, \\
\psi(k_1(a_1a_2^{-1})x_1+ \dots +k_{n-1}(a_{n-1}a_n^{-1})x_{n-1}+k_n(a_n)x_n) \\
\mbox{ for }G_n=SO_{2n+1}, \\
\psi(k_1(a_1a_2^{-1})x_1+ \dots +k_{n-1}(a_{n-1}a_n^{-1})x_{n-1}+k_{n-1}k_n(a_{n-1}a_n)x_n) \\
\mbox{ for }G_n=SO_{2n}.
\end{array}\right.
\end{align*}
If we take $a_1=a_n k_1^{-1}k_2^{-1} \dots k_{n-1}^{-1}$, $a_2=a_n k_2^{-1}k_3^{-1} \dots k_{n-1}^{-1}, \dots, a_{n-1}=a_n k_{n-1}^{-1}$, we get
\[
\begin{array}{rl}
&d^{-1} \cdot \Psi[u_{\alpha_1}(x_1)\dots u_{\alpha_{n-1}}(x_{n-1}) u_{\alpha_n}(x_n)]\\
&=\Psi[u_{\alpha_1}(\alpha_1(d)x_1) \dots u_{\alpha_{n-1}}(\alpha_{n-1}(d)x_{n-1}) u_{\alpha_n}(\alpha_n(d)x_n)] \\
&=\left\{ \begin{array}{l}
\psi(x_1+ \dots +x_{n-1}+k_n(a_n^2)x_n) \mbox{ for }G_n=Sp_{2n}, \\
\psi(x_1+ \dots +k_n(a_n)x_n) \mbox{ for }G_n=SO_{2n+1}, \\
\psi(x_1+ \dots +k_{n-1}^{-1}k_n(a_n^2)x_n) \mbox{ for }G_n=SO_{2n}.
\end{array}\right.
\end{array}
\]
From this,  we see that there is one orbit of generic characters for $SO_{2n+1}$ while the generic characters of $Sp_{2n}$ and $SO_{2n}$ are parameterized by $F^{\times}/(F^{\times})^2$.

For the split similitude groups, we have 
\[
\begin{array}{rl}
&d^{-1} \cdot \Psi[u_{\alpha_1}(x_1)\dots u_{\alpha_{n-1}}(x_{n-1}) , u_{\alpha_n}(x_n)] \\
&=\Psi[u_{\alpha_1}(\alpha_1(d)x_1) \dots u_{\alpha_{n-1}}(\alpha_{n-1}(d)x_{n-1}) u_{\alpha_n}(\alpha_n(d)x_n)] \\
&=\left\{ \begin{array}{l}
\psi(k_1(a_1a_2^{-1})x_1+ \dots +k_{n-1}(a_{n-1}a_n^{-1})x_{n-1}+k_n(a_n^2a_0^{-1})x_n) \\
\mbox{ for }G_n=GSp_{2n}, \\
\psi(k_1(a_1a_2^{-1})x_1+ \dots +k_{n-1}(a_{n-1}a_n^{-1})x_{n-1}+k_n(a_{n-1}a_na_0^{-1})x_n) \\
\mbox{ for }G_n=GSO_{2n}, \\
\psi(k_1(a_1a_2^{-1})x_1+ \dots +k_{n-1}(a_{n-1}a_n^{-1})x_{n-1}+k_n(a_n)x_n) \\
\mbox{ for }G_n=GSpin_{2n+1}, \\
\psi(k_1(a_1a_2^{-1})x_1+ \dots +k_{n-1}(a_{n-1}a_n^{-1})x_{n-1}+k_n(a_{n-1}a_n)x_n) \\
\mbox{ for }G_n=GSpin_{2n}.
\end{array}\right.
\end{array}
\]

For $G_n=GSp_{2n}$, if we choose $a_0=k_{n-1}^{-1}$, $a_n=1$, $a_{n-1}=k_{n-1}^{-1}$, $a_{n-2}=k_{n-1}^{-1}k_{n-2}^{-1}$, $\dots,$ $a_1=k_{n-1}^{-1}k_{n-2}^{-1} \dots k_1^{-1}$, we get
\[
d^{-1} \cdot \Psi(u_{\alpha_1}(x_1)u_{\alpha_2}(x_2) \dots u_{\alpha_n}(x_n))=\psi(x_1+ \dots +x_{n-1}+x_n);
\]
for $GSO_{2n}$, if we choose $a_0=k_{n-1}^{-1}k_n$, $a_n=1$, $a_{n-1}=k_{n-1}^{-1}$, $a_{n-2}=k_{n-1}^{-1}k_{n-2}^{-1}$, $\dots,$ $a_1=k_{n-1}^{-1}k_{n-2}^{-1} \dots k_1^{-1}$, we get
\[
d^{-1} \cdot \Psi(u_{\alpha_1}(x_1)u_{\alpha_2}(x_2) \dots u_{\alpha_n}(x_n))=\psi(x_1+ \dots +x_{n-1}+x_n).
\]
Thus there is one orbit in both cases.
For $GSpin_{2n+1}$ and $GSpin_{2n}$, as the roots match those for $SO_{2n+1}$ and $SO_{2n}$, respectively, one can take $a_0=1$ and the remaining values of $a_k$ as above to see there is one orbit in the odd case and orbits parameterized by $F^{\times}/(F^{\times})^2$ in the even case.

We now turn to the quasi-split groups. Fix $E=F(\sqrt{\varepsilon})$. First, $G_n=SO_{2n+2}^{\ast}$, $GSO_{2n+2}^{\ast}$, and $GSpin_{2n+2}^{\ast}$ have root data that of the corresponding split groups with Galois action given by $c$ as earlier. In particular, the $F$-points are those $X \in G_n(E)$ satisfying $\sigma(X)=c \cdot X$ ($\sigma \in Gal(E/F)$ nontrivial). Similarly, for $U_N$, the root data is that of $GL_N$ with Galois action -- also denoted $c$ for convenience -- given by $c: X \longmapsto J'_N{}^{-1} ({}^T\!X^{-1})J'_N$, with $J'_N$ as earlier. Again, the $F$-points are those $X \in GL_N(E)$ satisfying $\sigma(X)=c \cdot X$.

Let $d \in G_n(E)$ in the maximal torus be given by
\[
d=\left\{ \begin{array}{l}
	d(a_1, \dots, a_n, a_{n+1}) \mbox{ for }G_n=SO_{2n+2}^{\ast}, \\
	d(a_1, \dots, a_n, a_{n+1}, a_0) \mbox{ for }G_n=GSO_{2n+2}^{\ast}, GSpin_{2n+2}^{\ast}, \\
	d(a_1, \dots, a_n, a_{n+1}, a_{n+2}, \dots, a_{2n+1}) \mbox{ for }G_n=U_{2n+1}, \\
	d(a_1, \dots, a_n, a_{n+1}, \dots, a_{2n}) \mbox{ for }G_n=U_{2n}.
\end{array} \right.
\]
Then,
\[
c \cdot d=\left\{ \begin{array}{l}
	d(a_1, \dots, a_n, a_{n+1}^{-1}) \mbox{ for }G_n=SO_{2n+2}^{\ast}, \\
	d(a_1, \dots, a_n, a_0a_{n+1}^{-1}, a_0) \mbox{ for }G_n=GSO_{2n+2}^{\ast}, \\
	d(a_1, \dots, a_n, a_{n+1}^{-1}, a_0a_{n+1}) \mbox{ for }G_n=GSpin_{2n+2}^{\ast}, \\
	d(a_{2n+1}^{-1}, \dots, a_{n+2}^{-1}, a_{n+1}^{-1}, a_n^{-1}, \dots, a_1^{-1}) \mbox{ for }G_n=U_{2n+1}, \\
	d(a_{2n}^{-1}, \dots, a_{n+1}^{-1}, a_n^{-1}, \dots, a_1^{-1}) \mbox{ for }G_n=U_{2n}.
\end{array} \right.
\]
Thus, $d \in G_n(F)$ if $\sigma(d)=c \cdot d$, i.e.,
\begin{table}[H]
\begin{tabular}{r|l}
$G_n$ & condition \\ \hline
$SO_{2n+2}^{\ast}$ & $\bar{a}_1=a_1, \dots, \bar{a}_n=a_n, \bar{a}_{n+1}=a_{n+1}^{-1}$\\
$GSO_{2n+2}^{\ast}$ & $\bar{a}_1=a_1, \dots, \bar{a}_n=a_n, \bar{a}_{n+1}=a_0 a_{n+1}^{-1}$ \\
$GSpin_{2n+2}^{\ast}$ & $\bar{a}_1=a_1, \dots, \bar{a}_n=a_n, a_{n+1}=\bar{a}_0a_0^{-1}$\\
$U_{2n+1}$ & $a_{2n+1}=\bar{a}_1^{-1}, \dots, a_{n+2}=\bar{a}_n^{-1}, a_{n+1}=\bar{a}_{n+1}^{-1}$ \\
$U_{2n}$ & $a_{2n}=\bar{a}_1^{-1}, \dots, a_{n+1}=\bar{a}_n^{-1}$ \\
\end{tabular}
\caption{Quasi-split tori}
\label{tab:a conditions}
\end{table}
\noindent
(compare with the descriptions in \S \ref{sect3}).

Similarly, let $u \in G_n(E)$ in the unipotent radical be given by
\[
u=\left\{ \begin{array}{l}
u_{\alpha_1}(x_1) \dots u_{\alpha_{n-1}}(x_{n-1})u_{\alpha_n}(x_n)u_{\alpha_{n+1}}(x_{n+1}) \\
\mbox{ for }G_n=SO_{2n+2}^{\ast}, GSO_{2n+2}^{\ast}, GSpin_{2n+2}^{\ast}, \\
u_{\alpha_1}(x_1) \dots u_{\alpha_n}(x_n) u_{\alpha_{n+1}}(x_{n+1}) \dots u_{\alpha_{2n}}(x_{2n}) \\
\mbox{ for }G_n=U_{2n+1}, \\
u_{\alpha_1}(x_1) \dots u_{\alpha_{n-1}}(x_{n-1}) u_{\alpha_n}(x_n) u_{\alpha_{n+1}}(x_{n+1}) \dots u_{\alpha_{2n-1}}(x_{2n-1}) \\
\mbox{ for }G_n=U_{2n}.
\end{array} \right.
\]
Then
\[
c \cdot u=\left\{ \begin{array}{l}
u_{\alpha_1}(x_1) \dots u_{\alpha_{n-1}}(x_{n-1}) u_{\alpha_{n+1}}(x_n)u_{\alpha_n}(x_{n+1}) \\
\mbox{ for }G_n=SO_{2n+2}^{\ast}, GSO_{2n+2}^{\ast}, GSpin_{2n+2}^{\ast}, \\
u_{\alpha_{2n}}(x_1) \dots u_{\alpha_{n+1}}(x_n) u_{\alpha_n}(x_{n+1}) \dots u_{\alpha_1}(x_{2n}) \\
\mbox{ for }G_n=U_{2n+1}, \\
u_{\alpha_{2n-1}}(x_1) \dots u_{\alpha_{n+1}}(x_{n-1}) u_{\alpha_n}(x_n) u_{\alpha_{n-1}}(x_{n+1}) \dots u_{\alpha_1}(x_{2n-1}) \\
\mbox{ for }G_n=U_{2n}.
\end{array} \right.
\]
Thus $u \in G(F)$ if $\sigma(u)=c \cdot u$, i.e., 
\begin{table}[H]
\begin{tabular}{r|l}
$G_n$ & \mbox{condition} \\ \hline
$SO_{2n+2}^{\ast}$, $GSO_{2n+2}^{\ast}$, $GSpin_{2n+2}^{\ast}$ & $\bar{x}_1=x_1, \dots, \bar{x}_{n-1}=x_{n-1}, \bar{x}_{n+1}=x_n$\\
$U_{2n+1}$ & $x_{2n}=\bar{x}_1, \dots,  x_{n+1}=\bar{x}_n$ \\
$U_{2n}$ & $x_{2n-1}=\bar{x}_1, \dots, x_{n+1}=\bar{x}_{n-1}, \bar{x}_n=x_n$ \\
\end{tabular}
\caption{Unipotent radicals, quasi-split cases}
\label{tab:x conditions}
\end{table}

\noindent
Note that for $SO_{2n+2}^{\ast}, GSO_{2n+2}^{\ast}, GSpin_{2n+2}^{\ast}$, we have $x_1, \dots, x_{n-1} \in F$ and $x_n \in E$; for $G_n=U_{2n+1},U_{2n}$, $x_i \in E$ except for $x_n \in F$ in the case of $U_{2n}$.

Now, let
\begin{equation}\label{qs psi}
\Psi(u)=\psi(k_1x_1+ \dots +k_mx_m)
\end{equation}
with $m=n+1$ (resp., $2n, 2n-1$) for $G_n=SO_{2n+2}^{\ast}, GSO_{2n+2}^{\ast}, GSpin_{2n+2}^{\ast}$ (resp., $G_n=U_{2n+1}$, $G_n=U_{2n}$). Writing $k_i=k_{\alpha_i}$, the coefficients must satisfy $\sigma(k_{\alpha_i})=k_{c \cdot \alpha_i}$ (see \cite[Section 3]{Sha88} or \cite[Appendix]{Sha74}). This results in the same constraints as in Table~\ref{tab:x conditions} with $k_i$ in place of $x_i$. In particular,
\begin{equation}\label{qs psi 2}
\Psi(u)=\left\{ \begin{array}{l}
\psi(k_1x_1+ \dots +k_{n-1}x_{n-1} +k_n x_n+\bar{k}_n \bar{x}_n) \\
\mbox{ for }G_n=SO_{2n+2}^{\ast}, GSO_{2n+2}^{\ast}, GSpin_{2n+2}^{\ast}, \\
\psi(k_1x_1+ \dots +k_n x_n+\bar{k}_n \bar{x}_n+ \dots +\bar{k}_1 \bar{x}_1)\\
\mbox{ for }G_n=U_{2n+1}, \\
\psi(k_1x_1+ \dots +k_{n-1}x_{n-1} +k_n x_n+\bar{k}_{n-1} \bar{x}_{n-1}+ \dots \bar{k}_1 \bar{x}_1) \\
\mbox{ for }G_n=U_{2n}.
\end{array} \right.
\end{equation}
Next, in (\ref{qs psi}), we have
\[
d^{-1} \cdot \Psi(u)=\Psi(d \cdot u)=\psi\big(\alpha_1(d)k_1x_1+ \dots +\alpha_m(d)k_mx_m\big).
\]

For $G_n=SO_{2n+2}^{\ast}, GSO_{2n+2}^{\ast}, GSpin^{\ast}_{2n+2}$, using Table~\ref{tab:a conditions} above, we have
\[
\alpha_1(d)=a_1a_2^{-1}, \dots, \alpha_{n-1}(d)=a_{n-1}a_n^{-1},
\]
and
\[
\begin{array}{l}
\alpha_n(d)=a_na_{n+1}^{-1}=\left\{ \begin{array}{l}
a_n a_{n+1}^{-1} \mbox{ for }G_n=SO_{2n+2}^{\ast}, GSO_{2n+2}^{\ast}\\
a_n a_0 \bar{a}_0^{-1} \mbox{ for }G_n=GSpin_{2n+2}^{\ast}, \\
\end{array} \right. \\
\\
\alpha_{n+1}(d) =\left\{ \begin{array}{l}
a_na_{n+1}=\bar{a}_n \bar{a}_{n+1}^{-1} \mbox{ for }G_n=SO_{2n+2}^{\ast}, \\
a_n a_{n+1}a_0^{-1}=\bar{a}_n \bar{a}_{n+1}^{-1} \mbox{ for }G_n=GSO_{2n+2}^{\ast}, \\
a_n a_{n+1}=\bar{a}_n \bar{a}_0 a_0^{-1} \mbox{ for }G_n=GSpin_{2n+2}^{\ast}.
\end{array} \right.
\end{array}
\]
For $G_n=SO_{2n+2}^{\ast}$, taking $a_j=a_n k_n^{-1}k_{n-1}^{-1} \dots k_j^{-1}$ for $1 \leq j \leq n-1$ (noting that $a_1, \dots, a_n, k_1, \dots, k_{n-1} \in F^{\times}$) gives
\begin{equation}\label{orbit psi}
\Psi(d \cdot u)=\psi(x_1+\dots +x_{n-1}+k_n a_n a_{n+1}^{-1}x_n+\bar{k}_n \bar{a}_n \bar{a}_{n+1}^{-1} \bar{x}_n).
\end{equation}
Since $k_n \in E^{\times}$, $a_n \in F^{\times}$, and $a_{n+1} \in N_1(E/F)$, we have orbits parameterized by $E^{\times}/(F^{\times} \cdot N_1(E/F))$. The situation for $G_n=GSO_{2n+2}^{\ast}$ is similar except that instead of $a_{n+1}\bar{a}_{n+1}=1$ we have $a_{n+1}\bar{a}_{n+1}=a_0$ ($a_0 \in F^{\times}$), so may choose $a_n=1$, $a_{n+1}=k_n$ and $a_0=a_{n+1}\bar{a}_{n+1}$ to reduce (\ref{orbit psi}) to
\[
\Psi(d \cdot u)=\psi(x_1+\dots +x_{n-1}+x_n+\bar{x}_n),
\]
telling us there is only one orbit of generic characters in this case. For $G_n=GSpin_{2n+2}^{\ast}$, again taking $a_j=a_n k_n^{-1}k_{n-1}^{-1} \dots k_j^{-1}$ for $1 \leq j \leq n-1$ (noting that $a_1, \dots, a_n, k_1, \dots, k_{n-1} \in F^{\times}$) gives
\[
\Psi(d \cdot u)=\psi(x_1+\dots +x_{n-1}+k_n a_n a_0 \bar{a}_0^{-1} x_n+\bar{k}_n \bar{a}_n \bar{a}_0 a_0^{-1} \bar{x}_n).
\]
With $k_n \in E^{\times}$, $a_0\bar{a}_0^{-1} \in N_1(E/F)$, and $a_n \in F^{\times}$ (and noting every element of $N_1(E/F)$ may be written in the form $a_0 \bar{a}_0^{-1}$ by Hilbert's Theorem 90), we again have the orbits of generic characters parameterized by $E^{\times}/(F^{\times} \cdot N_1(E/F))$.

For $G_n=U_{2n+1}$, using Table~\ref{tab:a conditions} above, we have
\begin{align*}
  &\alpha_1(d)=a_1 a_2^{-1}, \dots, \alpha_n(d)=a_n a_{n+1}^{-1},\\
  &\alpha_{n+1}(d)=a_{n+1}a_{n+2}^{-1}=\bar{a}_n\bar{a}_{n+1}^{-1}, \dots,
\alpha_{2n}(d)=a_{2n}a_{2n+1}^{-1}=\bar{a}_1\bar{a}_2^{-1}.
\end{align*}
Noting that $a_j,k_j \in E^{\times}$ for $1 \leq j \leq n$ and $a_{n+1} \in N_1(E/F)$, we may take $a_{n+1}=1$ and $a_j=a_n k_n^{-1}k_{n-1}^{-1} \dots k_j^{-1}$ for $1 \leq j \leq n$. Then, (\ref{qs psi 2}) becomes
\[
\Psi(d \cdot u)=\psi(x_1+\dots +x_{n-1}+x_n+\bar{x}_n+\bar{x}_{n-1}+ \dots +\bar{x}_1),
\]
telling us there is only one orbit of generic characters. Since $G_n=GU_{2n+1}$ has the same unipotent radical but larger torus, it also has only one orbit of generic characters.

For $G_n=U_{2n}$, using Table~\ref{tab:a conditions} above, we have
\begin{align*}
    &\alpha_1(d)=a_1 a_2^{-1}, \dots, \alpha_{n-1}(d)=a_{n-1}a_n^{-1} \mbox{ and }\alpha_n(d)=a_n a_{n+1}^{-1}=a_n\bar{a}_n, \\
&\alpha_{n+1}(d)=a_{n+1}a_{n+2}^{-1}=\bar{a}_{n-1}\bar{a}_n^{-1}, \dots, \alpha_{2n}(d)=a_{2n}a_{2n+1}^{-1}=\bar{a}_1\bar{a}_2^{-1}.
\end{align*}
We may take $a_j=a_n k_n^{-1}k_{n-1}^{-1} \dots k_j^{-1}$ for $1 \leq j \leq n-1$ (noting $a_j,k_j \in E^{\times}$ for $1 \leq j \leq n$ except that $k_n \in F^{\times}$). Then, (\ref{qs psi 2}) becomes
\[
\Psi(d \cdot u)=\psi(x_1+\dots +x_{n-1}+k_n a_n \bar{a}_nx_n+\bar{x}_{n-1}+ \dots +\bar{x}_1),
\]
telling us the orbits of generic characters are parameterized by $F^{\times}/N^{\times}(E/F)$. For $GU_{2n}$, the matrix realization tells us $d'=d(1, \dots, 1, a_0)$ (noting $a_0 \in F^{\times}$) has
\[
\Psi(d'd \cdot u)=\psi(x_1+\dots +x_{n-1}+k_n a_0^{-1}a_n \bar{a}_nx_n+\bar{x}_{n-1}+ \dots +\bar{x}_1),
\]
so we may choose $a_n=1$ and $a_0=k_n$ to reduce to $\psi(x_1+ \dots+x_{n-1}+x_n+\bar{x}_{n-1}+ \dots+\bar{x}_1)$ and see that there is only one orbit in this case (n.b. $x_1, \dots, x_{n-1} \in E$ and $x_n \in F$).

\subsection{Generalized Steinberg representations}\label{Steinberg sect}

In this section, we construct a family of representations which we call generalized Steinberg representations, as well as establishing some properties of them needed later. Except for $\tau \rtimes \sigma^{(0)}$ or its irreducible subquotients in the (C0) or (CN) cases, they are essentially square-integrable representations. For similitude groups, note that the assumption $\sigma^{(0)}$ unitary ensures unitary central character for $GSpin$ groups, giving square-integrable representations in those cases; for other similitude groups, a suitable twist is needed to make them square-integrable representations. For $\alpha>0$, these are strongly positive square-integrable representations (see \cite{MT02} in the classical case, as well as
\cite{Kim15}, \cite{Kim16}, \cite{KM19} for some of the other families considered; also \cite{Mui06}). We note that the results in this section do not involve genericity except for Corollary~\ref{Steinberggenericity}.

The following is known in some settings (\cite{Tad98a}):

\begin{lem}\label{GLtypeirr}
Let $\tau$ be an irreducible unitary supercuspidal representation of $H_{m}(F)$ and $\sigma^{(0)}$ an irreducible unitary supercuspidal representation of $G_{m_0}(F)$. Further, assume that $(\tau; \sigma^{(0)})$ satisfies (C$\alpha$) for some $\alpha \not\in\{0,1, N\}$. Then, $\delta([\nu^{-1}\tau, \nu\tau]) \rtimes \sigma^{(0)}$ is irreducible.
\end{lem}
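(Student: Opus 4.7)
The plan is to prove the irreducibility of $\pi := \delta(\Sigma) \rtimes \sigma^{(0)}$, where $\Sigma = [\nu^{-1}\tau,\nu\tau]$, by analyzing its Jacquet modules via the $\mu^{\ast}$-formalism from Section~\ref{sect3}. Note that since the segment exponents sum to zero, $\delta(\Sigma)$ is a unitary discrete series of $H_{3m}(F)$, so $\pi$ is tempered and unitarily induced; in particular its irreducible subquotients all share the same supercuspidal support as $\pi$.

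First, using the Zelevinsky product formula, the four segment-subsegment pairs give
\begin{equation*}
m^{\ast}(\delta(\Sigma)) = 1\otimes \delta(\Sigma) + \nu\tau \otimes \delta([\nu^{-1}\tau,\tau]) + \delta([\tau,\nu\tau]) \otimes \nu^{-1}\tau + \delta(\Sigma) \otimes 1.
\end{equation*}
Applying $N^{\ast} = (\check{\,}\otimes m^{\ast})_D \circ s \circ m^{\ast}$ and then the formula $\mu^{\ast}(\lambda \rtimes \pi_0) = N^{\ast}(\lambda)\,\tilde{\rtimes}\,\mu^{\ast}(\pi_0)$ with $\mu^{\ast}(\sigma^{(0)}) = 1 \otimes \sigma^{(0)}$, I obtain an explicit formula for $\mu^{\ast}(\pi)$, and by further iteration for the Jacquet module at the minimal Levi.

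The key observation is that the hypothesis that $(\tau;\sigma^{(0)})$ satisfies $(C\alpha)$ with $\alpha \notin\{0,1,N\}$ is exactly the statement that $\nu^{k}\tau \rtimes \sigma^{(0)}$ is irreducible for each $k\in\{-1,0,1\}$ and that the non-ramified self-matching of Table~\ref{tab:(CN)} fails. Suppose $\pi'$ is any irreducible subrepresentation of $\pi$; by Frobenius reciprocity, $\delta(\Sigma)\otimes\sigma^{(0)}\leq r_{H_{3m},G}(\pi')$. The above $\mu^{\ast}$-computation, together with the hypothesis, shows that $\delta(\Sigma)\otimes\sigma^{(0)}$ occurs in $r_{H_{3m},G}(\pi)$ with multiplicity one: the intermediate terms $\nu\tau\otimes\delta([\nu^{-1}\tau,\tau])$ and $\delta([\tau,\nu\tau])\otimes\nu^{-1}\tau$ cannot produce an additional copy of $\delta(\Sigma)\otimes\sigma^{(0)}$ after pairing with $\mu^{\ast}(\sigma^{(0)})$, precisely because the boundary inductions $\nu^{\pm 1}\tau\rtimes\sigma^{(0)}$, $\tau\rtimes\sigma^{(0)}$ are irreducible and no $(CN)$-type fusion is available. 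Since $\delta(\Sigma)$ is the unique irreducible subrepresentation of $\nu\tau\times\tau\times\nu^{-1}\tau$, the embedding of $\pi'$ into the full induced representation factors through $\pi$, so $\pi$ has a unique irreducible subrepresentation. A parallel argument applied to the quotient side (or via the contragredient) yields a unique irreducible quotient; counting supercuspidal supports forces these to coincide, and hence $\pi$ is irreducible.

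The main obstacle will be the bookkeeping in this Jacquet module analysis in the more delicate cases, namely $GSO_{2n}$, $GSpin_{2n}$, and the quasi-split even variants $SO_{2n+2}^{\ast}$, $GSO_{2n+2}^{\ast}$, $GSpin_{2n+2}^{\ast}$, where $\tilde{\rtimes}$ picks up contributions from the outer automorphism $c$ and central-character twists of $\sigma^{(0)}$. In those cases one must verify case-by-case, using the explicit descriptions of $w_0(\tau\otimes\sigma^{(0)})$ from (\ref{w_0action}) and Table~\ref{tab:(CN)}, that no additional matchings are forced on the Jacquet module --- and it is exactly the exclusion of $(CN)$ in the hypothesis that makes these verifications succeed.
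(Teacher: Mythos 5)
Your overall strategy --- count occurrences of $\delta(\Sigma)\otimes\sigma^{(0)}$ in the Jacquet module and invoke Frobenius reciprocity --- founders on the central multiplicity claim, which is false. Under the hypothesis (C$\alpha$) with $\alpha\notin\{0,1,N\}$, the pair $\tau\otimes\sigma^{(0)}$ is necessarily ramified (see (\ref{w_0action})), i.e.\ $\check{\tau}\cong\tau$ up to the relevant twist, and hence $\delta(\check{\Sigma})\rtimes\sigma^{(0)}\cong\delta(\Sigma)\rtimes\sigma^{(0)}$. Concretely, in $s_{GL}\bigl(\delta(\Sigma)\rtimes\sigma^{(0)}\bigr)$ the term $\delta(\Sigma)\otimes\sigma^{(0)}$ arises twice: once from the ``nothing inverted'' contribution $1\otimes 1\otimes\delta(\Sigma)$ of $N^{\ast}(\delta(\Sigma))$ and once from the ``everything inverted'' contribution $\delta(\check{\Sigma})\otimes 1\otimes 1$, which your analysis of the intermediate terms does not exclude (it is not an intermediate term). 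So the multiplicity is two, not one, and the Frobenius-reciprocity count only bounds the number of irreducible summands of this (semisimple, unitarily induced) representation by two --- exactly the situation in the (C0) case, where $\tau\rtimes\sigma^{(0)}$ genuinely splits into two pieces each carrying one copy of $\tau\otimes\sigma^{(0)}$. The Jacquet-module count alone therefore cannot distinguish irreducibility from a length-two decomposition, and the final step ``unique sub and unique quotient must coincide by supercuspidal support'' is also vacuous, since all constituents of a parabolically induced representation share the same supercuspidal support.

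The paper closes this gap by a different mechanism. It first passes by the Aubert--Zelevinsky duality (\cite{Aub95}) to $\zeta([\nu^{-1}\tau,\nu\tau])\rtimes\sigma^{(0)}$, uses unitarity to see that every irreducible subquotient is a subrepresentation, and then threads a chain of embeddings
\[
\pi\hookrightarrow\zeta([\nu^{-1}\tau,\tau])\times\nu\tau\rtimes\sigma^{(0)}\cong\zeta([\nu^{-1}\tau,\tau])\times\nu^{-1}\tau\rtimes\nu^{\kappa}\sigma^{(0)}\hookrightarrow\nu^{-1}\tau\times\nu^{-1}\tau\rtimes(\tau\rtimes\nu^{\kappa}\sigma^{(0)}),
\]
using precisely the irreducibility of $\nu^{\pm1}\tau\rtimes\sigma^{(0)}$ and of $\tau\rtimes\sigma^{(0)}$ guaranteed by $\alpha\notin\{0,1,N\}$. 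The subrepresentation form of the Langlands classification then pins every irreducible subquotient down to the single representation $L_{sub}(\nu^{-1}\tau\times\nu^{-1}\tau\otimes(\tau\rtimes\nu^{\kappa}\sigma^{(0)}))$, which occurs with multiplicity one. If you want to salvage a Jacquet-module argument, you would need a finer input than supercuspidal support --- for instance an R-group computation as in Corollary~\ref{Rgroupcor} --- rather than a bare multiplicity count.
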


\noindent
\begin{proof}
Note that the hypotheses imply $\tau \otimes \sigma^{(0)}$ ramified (see (\ref{w_0action})).  In particular, if $G_n=SO_{2n}, GSO_{2n}$, or $GSpin_{2n}$, the minimal (nonzero) Jacquet module has 8 terms, the same as for the other families considered (as either $\tau$ is from an even-dimensional general linear group or the leftover sign changes can be absorbed into $\sigma^{(0)}$).

By duality (\cite{Aub95}), it suffices to show that $\zeta([\nu^{-1}\tau, \nu\tau]) \rtimes \sigma^{(0)}$ is irreducible. We use an argument from the second example in \cite[Section 6]{Jan98}. Let $\pi$ be an irreducible subquotient of $\zeta([\nu^{-1}\tau, \nu\tau]) \rtimes \sigma^{(0)}$; by unitarity, necessarily a subrepresentation. Then, using the irreducibility of $\nu\tau \rtimes \sigma^{(0)}$, we have
\[
\pi \hookrightarrow \zeta([\nu^{-1}\tau, \nu\tau]) \rtimes \sigma^{(0)} \hookrightarrow\zeta([\nu^{-1}\tau, \tau]) \times \nu\tau \rtimes \sigma^{(0)} \cong \zeta([\nu^{-1}\tau, \tau]) \times \nu^{-1}\tau \rtimes \nu^{\kappa}\sigma^{(0)},
\]
where
\[
\kappa=\left\{ \begin{array}{l} m \mbox{ for $G_n=GSp_{2n}$, $GSO_{2n}$, $GSO_{2n+2}^{\ast}$, $GU_{2n+1}$, or $GU_{2n}$}, \\ 
	0 \mbox{ otherwise.}\end{array} \right.
\]
By the irreducibility of $\zeta([\nu^{-1}\tau, \tau]) \times \nu^{-1}\tau$, we have
\[
\pi \hookrightarrow \nu^{-1}\tau \times \zeta([\nu^{-1}\tau, \tau]) \rtimes \nu^{\kappa}\sigma^{(0)} \hookrightarrow \nu^{-1}\rho \times \nu^{-1}\tau \rtimes (\tau \rtimes \nu^{\kappa}\sigma^{(0)}).
\]
By the (subrepresentation version of the) Langlands classification, $\pi=L_{sub}(\nu^{-1}\tau \times \nu^{-1}\tau \otimes \tau \rtimes \nu^{\kappa}\sigma^{(0)})$, noting that $\tau \rtimes \nu^{\kappa}\sigma^{(0)}$ is irreducible by hypothesis and Lemma~\ref{twisting}. As this applies to any irreducible subquotient, and $L_{sub}(\nu^{-1}\tau \times \nu^{-1}\tau \otimes \tau \rtimes \nu^{\kappa}\sigma^{(0)})$ appears with multiplicity one in $\nu^{-1}\tau \times \nu^{-1}\tau \rtimes (\tau \rtimes \nu^{\kappa}\sigma^{(0)})$, we have the irreducibility claimed. \end{proof}

The following is based on \cite{Gol94} (also \cite{Gol95}, \cite{Gol97}) and requires characteristic zero.

\begin{cor}\label{Rgroupcor}
With hypotheses as in Lemma~\ref{GLtypeirr}, we have $\delta([\nu^{-1}\tau, \nu\tau]) \times \tau \rtimes \sigma^{(0)}$ irreducible.
\end{cor}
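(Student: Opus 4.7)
The plan is to invoke Knapp--Stein--Silberger $R$-group theory together with the explicit $R$-group computations of Goldberg (\cite{Gol94}, \cite{Gol95}, \cite{Gol97}) and their extensions to the remaining families. The hypothesis $\alpha \notin \{0,1,N\}$ forces $(\tau;\sigma^{(0)})$ to satisfy (C$\frac{1}{2}$), so the only reducibility points of $\nu^x\tau \rtimes \sigma^{(0)}$ are $x = \pm\frac{1}{2}$; in particular both $\tau \rtimes \sigma^{(0)}$ and $\nu\tau \rtimes \sigma^{(0)}$ are irreducible. Since the segment $[\nu^{-1}\tau,\nu\tau]$ is symmetric, $\delta([\nu^{-1}\tau,\nu\tau])$ has unitary central character and is a discrete series of the ambient general linear group; consequently $\delta([\nu^{-1}\tau,\nu\tau]) \times \tau \rtimes \sigma^{(0)}$ is unitarily induced from tempered data and decomposes as a direct sum of $|R|$ irreducible tempered summands.

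The next step is to show $R$ is trivial. The stabilizer $W(\delta([\nu^{-1}\tau,\nu\tau]) \otimes \tau \otimes \sigma^{(0)})$ has at most two generators, namely the sign-change reflections $c_1, c_2$ attached to the self-dual factors $\delta([\nu^{-1}\tau,\nu\tau])$ and $\tau$ (self-duality of the first coming from the symmetric segment, and that of the second from the ramification of $(\tau;\sigma^{(0)})$). There is no permutation-type generator, since $\delta([\nu^{-1}\tau,\nu\tau])$ and $\tau$ live on general linear groups of different ranks ($3m$ versus $m$), ruling out any Weyl element that interchanges the two blocks. Goldberg's criterion identifies $c_i \in R$ with the reducibility of the rank-one induction $\delta_i \rtimes \sigma^{(0)}$: Lemma~\ref{GLtypeirr} rules out $c_1$, and the irreducibility of $\tau \rtimes \sigma^{(0)}$ noted above rules out $c_2$. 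Therefore $R$ is trivial and the induced representation is irreducible.

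The main technical obstacle will be that Goldberg's $R$-group calculations are written most explicitly for $Sp_{2n}$, $SO_n$, and the unitary groups; for several of the similitude and quasi-split general spin groups considered here, one needs to check that the identification of $R$ via rank-one reducibility still persists. When the literature does not cover a given case, the same conclusion can be reached by a direct argument: any irreducible subrepresentation $\pi$ embeds, via the Langlands subrepresentation setting and the irreducibility of $\nu\tau \rtimes \sigma^{(0)}$, into $\nu^{-1}\tau \times \nu^{-1}\tau \rtimes T$ for some irreducible tempered $T$ occurring in $\tau \times \tau \rtimes \sigma^{(0)}$, and the uniqueness statement of the Langlands classification reduces the problem to counting such $T$, which can be done using the $\mu^{\ast}$-structure of \S \ref{sect3} in the manner of the proof of Lemma~\ref{GLtypeirr}.
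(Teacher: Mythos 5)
Your proof is correct and follows essentially the same route as the paper: identify $W(\delta([\nu^{-1}\tau,\nu\tau])\otimes\tau\otimes\sigma^{(0)})\cong{\mathbb Z}_2^2$ (no permutation-type element since the two general linear blocks have different ranks) and then kill both sign changes in the $R$-group via the irreducibility of the two rank-one inductions $\delta([\nu^{-1}\tau,\nu\tau])\rtimes\sigma^{(0)}$ and $\tau\rtimes\sigma^{(0)}$, which is exactly the paper's argument that both Plancherel measures vanish so that $\Delta'=\Phi(P,A)$ and $R=1$. The only (harmless) imprecision is the claim that $\alpha\notin\{0,1,N\}$ forces (C$\frac{1}{2}$) — since $\sigma^{(0)}$ is not assumed generic in Lemma~\ref{GLtypeirr}, $\alpha$ could a priori be any element of $\frac{1}{2}{\mathbb Z}_{>0}\setminus\{1\}$ — but all you actually use is that $0$ and $\pm1$ are not reducibility points, which does follow from the hypothesis.
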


\noindent
\begin{proof}
For $G=SO_{2n+1}, Sp_{2n}$, or $SO_{2n}$ (resp., $G=U_{2n}$ or $U_{2n+1}$; $G=GSp_{2n}$, $GU_{2n}$, $GU_{2n+1}$; $G=SO_{2n+2}^{\ast}$; $G=GSpin_{2n+1}$ or $GSpin_{2n}$), the result follows from the irreducibility results above and \cite{Gol94} (resp., \cite{Gol95}; \cite{Gol97}; \cite{LMT04}; \cite{BG15}).

Note that  $\delta([\nu^{-1}\tau, \nu\tau]) \rtimes \sigma^{(0)}$ and $\tau \rtimes\sigma^{(0)}$ are irreducible (by Lemma~\ref{GLtypeirr} and the the assumption ($C\alpha$) with $\alpha \not\in\{0,1,N\}$). That this implies $\delta([\nu^{-1}\tau, \nu\tau]) \times \tau \rtimes \sigma^{(0)}$ is irreducible follows as in \cite[Section 4]{Gol94}

We sketch the argument, using the notation of \cite{Gol94}.
First, we observe that
\begin{align*}
  W(\delta([\nu^{-1}\tau, \nu\tau]) \otimes \tau \otimes \sigma^{(0)})&=\{w \in W\,| & \,w(\delta([\nu^{-1}\tau, \nu\tau]) \otimes \tau \otimes \sigma^{(0)}) \\
  & &\cong \delta([\nu^{-1}\tau, \nu\tau]) \otimes \tau \otimes \sigma^{(0)}\} \\
  &\cong {\mathbb Z}_2^2.&  
\end{align*}
Recall that the R-group is
\[
R=\{ w \in W(\delta([\nu^{-1}\tau, \nu\tau]) \otimes \tau \otimes \sigma^{(0)}) \,|\, w\beta>0 \mbox{ for all }\beta \in \Delta'\},
\]
where $\Delta'=\{ \alpha\in \Phi(P,A) \,|\, \mu_{\alpha}(\delta([\nu^{-1}\tau, \nu\tau]) \otimes \tau \otimes \sigma^{(0)})=0\}$ (with $\mu_{\alpha}(\delta([\nu^{-1}\tau, \nu\tau]) \otimes \tau \otimes \sigma^{(0)})$ the Plancherel measure as in \cite{Gol94})). 
Now, suppose $\tau$ is a representation of $H_{m_1}(F)$ and $\sigma^{(0)}$ a representation of $G_{m_0}(F)$ (defining $m_0,m_1$); by abuse of notation, we also let $e_i$ denote the restriction to $A$ of $e_i$. If we show $\Delta'=\Phi(P,A)=\{\alpha_{3m_1},\alpha_{4m_1}\}$, where $\alpha_k$ denotes the $k^{th}$ simple root as listed in the Appendix,
then $W(\delta([\nu^{-1}\tau, \nu\tau]) \otimes \tau \otimes \sigma^{(0)})$ is the Weyl group for $\Delta'$, so $R=1$ and we have irreducibility.
That $\Delta'=\Phi(P,A)$ follows as in Lemma 4.8 of \cite{Gol94}: both $\tau \otimes \sigma^{(0)}$ and $\delta([\nu^{-1}\tau, \nu\tau]) \otimes \sigma^{(0)}$ are ramified and both $\tau \rtimes \sigma^{(0)}$ and $\delta([\nu^{-1}\tau, \nu\tau]) \rtimes \sigma^{(0)}$ are irreducible. Consequently, $\mu(\delta([\nu^{-1}\tau, \nu\tau]) \otimes \sigma^{(0)})=0$ and $\mu(\tau \otimes \sigma^{(0)})=0$. As $\mu_{\alpha'_1}(\delta([\nu^{-1}\tau, \nu\tau]) \otimes \tau \otimes \sigma^{(0)})=\mu(\delta([\nu^{-1}\tau, \nu\tau]) \otimes \sigma^{(0)})$ and  $\mu_{\alpha'_2}(\delta([\nu^{-1}\tau, \nu\tau]) \otimes \tau \otimes \sigma^{(0)})=\mu(\tau \otimes \sigma^{(0)})$, we have $\Delta'$ as claimed. \end{proof}

Lemma~\ref{delicate} below is done in \cite[Section 6]{Tad98a} for $G=SO_{2n+1}$ and $Sp_{2n}$; the proof here uses the same basic approach. To facilitate the proof, we borrow some notation from \cite{Tad98a}, which is used frequently in the remainder of this section and \S \ref{gdssection}.

Let $\alpha=(m_1, \dots, m_k)$ be a tuple having $m_1+\dots+m_k \leq n$ and $M_{\alpha}$ the standard parabolic subgroup of $G_n$ having $M \cong H_{m_1}(F) \times \dots \times H_{m_k}(F) \times G_{n-(m_1+\dots+m_k)}(F)$. We then set
\begin{equation}\label{s notation}
s_{\alpha}=r_{M_{\alpha},G_n}.
\end{equation}
Similar notation is used for general linear groups but with $r_{\alpha}$ replacing $s_{\alpha}$.
If a representation of $G_n(F)$ is a subquotient of some $\tau_1 \times \dots \times\tau_k \rtimes \sigma$ with $\tau_i$ a supercuspidal representation of $H_{m_{\tau_i}}(F)$ and $\sigma$ a supercuspidal representation of $G_{m_0}(F)$,  we let $s_{GL}=s_{(m_1+\dots+m_k)}$.

\begin{lem}\label{delicate}
Suppose $(\tau; \sigma^{(0)})$ satisfies (C$\alpha$) with $\alpha \not\in\{0,1,N\}$. Then $\delta([\tau, \nu\tau]) \rtimes \sigma^{(0)}$ is irreducible.
\end{lem}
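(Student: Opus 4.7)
The plan is a Jacquet module obstruction: I would show that the only irreducible subquotient of $\delta([\tau,\nu\tau])\rtimes\sigma^{(0)}$ is its Langlands quotient $L_1:=L(\delta([\tau,\nu\tau])\otimes\sigma^{(0)})$, appearing with multiplicity one. Under the hypothesis $\alpha\not\in\{0,1,N\}$, one has $\alpha=\tfrac12$, so by (C$\tfrac12$) both $\tau\rtimes\sigma^{(0)}$ (which is then irreducible tempered, by unitarity of $\tau,\sigma^{(0)}$) and $\nu\tau\rtimes\sigma^{(0)}$ are irreducible; since $\varepsilon(\delta([\tau,\nu\tau]))=\tfrac12>\beta$, the representation $\delta([\tau,\nu\tau])\rtimes\sigma^{(0)}$ is a Langlands standard module, and so $L_1$ occurs with multiplicity one.

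I would then run through the Langlands classification for all irreducible representations whose cuspidal support equals $\{\tau,\nu\tau,\sigma^{(0)}\}$ (up to Weyl orbit): since $\varepsilon(\tau)=0=\beta$, the cuspidal $\tau$ cannot figure as a $\delta_i$, and since $\nu\tau$ is non-unitary, it cannot occur in a tempered piece. The only admissible Langlands data are therefore $(\delta([\tau,\nu\tau]))\otimes\sigma^{(0)}$, giving $L_1$, and $(\nu\tau)\otimes(\tau\rtimes\sigma^{(0)})$, giving $L_2:=L(\nu\tau\otimes(\tau\rtimes\sigma^{(0)}))$. These exhaust the possible composition factors of $\delta([\tau,\nu\tau])\rtimes\sigma^{(0)}$.

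Applying the $\mu^{\ast}$-formula of \S\ref{sect3} to $m^{\ast}(\delta([\tau,\nu\tau]))=1\otimes\delta([\tau,\nu\tau])+\tau\otimes\nu\tau+\delta([\tau,\nu\tau])\otimes 1$, I would compute the rank-$m$ part of $\mu^{\ast}(\delta([\tau,\nu\tau])\rtimes\sigma^{(0)})$; up to the central-character twists from the $\tilde\rtimes$ product in the similitude and general spin cases, it equals
\[
\nu^{-1}\check\tau\otimes(\tau\rtimes\sigma^{(0)}) \;+\; \tau\otimes(\nu\tau\rtimes\sigma^{(0)}).
\]
The key observation is that the term $\nu\tau\otimes(\tau\rtimes\sigma^{(0)})$ does not appear: the GL-cuspidals $\nu^{-1}\check\tau$ and $\tau$ (or their unitary twists) in the first tensor slot have central exponents $-1$ and $0$, which differ from the exponent $1$ of $\nu\tau$.

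If $L_2$ were a composition factor with multiplicity $k\geq 1$, then since $s_{(m)}(L_2)$ contains its defining Langlands datum $\nu\tau\otimes(\tau\rtimes\sigma^{(0)})$ with multiplicity at least one (Casselman), that term would occur at least $k$ times in $s_{(m)}(\delta([\tau,\nu\tau])\rtimes\sigma^{(0)})$, contradicting the computation. Hence $k=0$; only $L_1$ is a composition factor, with multiplicity one, and $\delta([\tau,\nu\tau])\rtimes\sigma^{(0)}$ is irreducible. The principal obstacle will be carrying out the $\mu^{\ast}$-bookkeeping uniformly across the families of groups considered, but since the central-character twists introduced by $\tilde\rtimes$ only modify the $G_n$-piece (or shift the first GL-cuspidal by a unitary character) in each Jacquet component, they never produce the excluded term.
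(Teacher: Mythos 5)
Your reduction to the two candidate constituents $L_1=L(\delta([\tau,\nu\tau])\otimes\sigma^{(0)})$ and $L_2=L(\nu\tau\otimes(\tau\rtimes\sigma^{(0)}))$ is fine, but the Jacquet-module computation that is supposed to exclude $L_2$ is wrong, and the error is fatal. For an essentially square-integrable $\delta([\nu^a\tau,\nu^b\tau])$ the comultiplication keeps the \emph{top} of the segment in the first tensor factor: $m^{\ast}(\delta([\tau,\nu\tau]))=1\otimes\delta([\tau,\nu\tau])+\nu\tau\otimes\tau+\delta([\tau,\nu\tau])\otimes 1$, not $\tau\otimes\nu\tau$ in the middle as you wrote. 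Feeding the correct formula into $N^{\ast}\,\tilde{\rtimes}\,$ gives
\[
s_{(m)}\bigl(\delta([\tau,\nu\tau])\rtimes\sigma^{(0)}\bigr)=\nu\tau\otimes(\tau\rtimes\sigma^{(0)})+\tau\otimes(\nu\tau\rtimes\sigma^{(0)})
\]
(after using the ramification of $\tau\otimes\sigma^{(0)}$, which is part of the (C$\alpha$) hypothesis, to replace $\check\tau$ by $\tau$ up to the usual twist). The term $\nu\tau\otimes(\tau\rtimes\sigma^{(0)})$ that you claim is absent is in fact forced by Frobenius reciprocity: $s_{(2m)}$ contains $\delta([\tau,\nu\tau])\otimes\sigma^{(0)}$, whose further restriction produces $\nu\tau\otimes\tau\otimes\sigma^{(0)}$. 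Consequently the rank-$m$ Jacquet module is perfectly consistent with a decomposition $\lambda_1\oplus\lambda_2$ in which one summand absorbs $\nu\tau\otimes(\tau\rtimes\sigma^{(0)})$ and the other ($=L_1$) absorbs $\tau\otimes(\nu\tau\rtimes\sigma^{(0)})$; no contradiction arises at this level, and your multiplicity count collapses.

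This is exactly why the lemma is delicate. The paper's proof assumes reducibility, so that $\delta([\tau,\nu\tau])\rtimes\sigma^{(0)}=\lambda_1+\lambda_2$ with the two $s_{(m)}$-terms split between the summands, and then derives a contradiction from a larger induced representation: $\delta([\nu^{-1}\tau,\tau])\times\delta([\tau,\nu\tau])\rtimes\sigma^{(0)}$ equals both $\sum_i\delta([\nu^{-1}\tau,\tau])\rtimes\lambda_i$ and $\delta([\nu^{-1}\tau,\nu\tau])\times\tau\rtimes\sigma^{(0)}+{\mathcal L}_{sub}(\delta([\nu^{-1}\tau,\tau])\otimes\delta([\tau,\nu\tau]))\rtimes\sigma^{(0)}$; the irreducibility of $\delta([\nu^{-1}\tau,\nu\tau])\times\tau\rtimes\sigma^{(0)}$ (Corollary~\ref{Rgroupcor}, an R-group input) forces it to lie under a single $\delta([\nu^{-1}\tau,\tau])\rtimes\lambda_i$, and counting $s_{(m,m,m,m)}$ gives $64\leq 48$, a contradiction. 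Some additional input of this kind is unavoidable; a count in $s_{(m)}$ alone cannot decide the question. (A minor separate point: $\alpha\notin\{0,1,N\}$ forces $\alpha=\tfrac12$ only when $\sigma^{(0)}$ is generic, via \cite[Theorem 8.1]{Sha90a}; what the argument actually needs, and what the hypothesis gives in general, is just the irreducibility of $\tau\rtimes\sigma^{(0)}$ and $\nu\tau\rtimes\sigma^{(0)}$.)
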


\noindent
\begin{proof}
Suppose $\delta([\tau, \nu\tau]) \rtimes \sigma^{(0)}$ were reducible. As
\[
s_{(m)}(\delta([\tau, \nu\tau]) \rtimes \sigma^{(0)})=\nu\tau \otimes (\tau \rtimes \sigma^{(0)})+\tau \otimes (\nu\tau \rtimes \sigma^{(0)}),
\]
(noting that the hypotheses imply $\tau \otimes \sigma^{(0)}$ ramified) with both terms on the right-hand side irreducible, we must have $\delta([\tau, \nu\tau]) \rtimes \sigma^{(0)}=\lambda_1 + \lambda_2$, where $s_{(m)}(\lambda_1)=\nu\tau \otimes (\tau \rtimes \sigma^{(0)})$ and $s_{(m)}(\lambda_2)=\tau \otimes (\nu\tau \rtimes \sigma^{(0)})$. We now consider $\delta([\nu^{-1}\tau, \tau]) \times \delta([\tau, \nu\tau]) \rtimes \sigma^{(0)}$. On the one hand,
\[
\delta([\nu^{-1}\tau, \tau]) \times \delta([\tau, \nu\tau]) \rtimes \sigma^{(0)}=\delta([\nu^{-1}\tau, \tau]) \rtimes \lambda_1+\delta([\nu^{-1}\tau, \tau]) \rtimes \lambda_2.
\]
On the other hand,
\begin{align*}
 &\delta([\nu^{-1}\tau, \tau]) \times \delta([\tau, \nu\tau]) \rtimes \sigma^{(0)}\\
 &=\delta([\nu^{-1}\tau,\nu\tau]) \times \tau \rtimes \sigma^{(0)}+{\mathcal L}_{sub}(\delta([\nu^{-1}\tau, \tau]) \otimes \delta([\tau, \nu\tau])) \rtimes \sigma^{(0)}.   
\end{align*}
Therefore, noting the irreducibility in Corollary~\ref{Rgroupcor}, we must have $\delta([\nu^{-1}\tau,\nu\tau]) \times \tau \rtimes \sigma^{(0)} \leq \delta([\nu^{-1}\tau, \tau]) \rtimes \lambda_i$ for some $i$. However, 
$$|s_{(m,m,m,m)}(\delta([\nu^{-1}\tau, \tau]) \rtimes \lambda_i)|=48,$$
while $|s_{(m,m,m,m)}(\delta([\nu^{-1}\tau,\nu\tau]) \times \tau \rtimes \sigma^{(0)})|=64$, a contradiction. The lemma follows. \end{proof}



\begin{prop}\label{Steinberg}
Suppose $(\tau; \sigma^{(0)})$ satisfies either (1) (C$\alpha$) for $\alpha \in \frac{1}{2}{\mathbb Z}$ with $\alpha \geq 0$, or (2) (CN).

\begin{enumerate}

\item If $(\tau; \sigma^{(0)})$ satisfies (C$\alpha$), suppose $b \geq \alpha$ with $b \equiv \alpha \, \mbox{mod}\,1$.

\begin{enumerate}

\item If $\alpha=0$, write $\tau \rtimes \sigma^{(0)}=T_1(\tau; \sigma^{(0)}) \oplus T_{-1}(\tau; \sigma^{(0)})$ (with $T_1(\tau; \sigma^{(0)})$ the generic component if $\sigma^{(0)}$ is generic). Then, $\delta([\nu^{\alpha}\tau, \nu^b\tau]) \rtimes \sigma^{(0)}$ contains exactly two irreducible subrepresentations, which we denote $\delta_i([\tau, \nu^b \tau];\sigma^{(0)})$, $i=\pm 1$ (taking $(\delta_i([\tau, \nu^b \tau]; \sigma^{(0)}))=T_i(\tau; \sigma^{(0)})$ for $b=0$). Further, we have
\[
s_{(m)}\left(\delta_i([\tau, \nu^b \tau]; \sigma^{(0)})\right)=\nu^b \tau \otimes \delta_i([\tau, \nu^{b-1}\tau;\sigma^{(0)}).
\]

\item If $\alpha>0$, then $\delta([\nu^{\alpha}\tau, \nu^b\tau]) \rtimes \sigma^{(0)}$ contains a unique irreducible subrepresentation, which we denote $\delta([\nu^{\alpha}\tau, \nu^b \tau];\sigma^{(0)})$.  Further, we have
\[
s_{(m)}\left(\delta([\nu^{\alpha}\tau, \nu^b \tau]; \sigma^{(0)})\right)=\nu^b \tau \otimes \delta([\nu^{\alpha}\tau, \nu^{b-1}\tau]; \sigma^{(0)}).
\]

\end{enumerate}

\item  If $(\tau; \sigma^{(0)})$ satisfies (CN), suppose $b \geq 0$ with $b \equiv 0 \, \mbox{mod}\,1$. Then $\delta([\tau, \nu^b\tau]) \rtimes \sigma^{(0)}$ contains a unique irreducible subrepresentation $\delta([\tau, \nu^b \tau]; \sigma^{(0)})$. Further, we have
\[
s_{(m)}\left(\delta([\tau, \nu^b \tau]; \sigma^{(0)})\right)=
\nu^b \tau \otimes \delta([\tau, \nu^{b-1}\tau]; \sigma^{(0)}) \mbox{ if } b>0
\]
for $b>0$ (with $\tau \rtimes \sigma^{(0)}$ irreducible for $b=0$).

\end{enumerate}

\end{prop}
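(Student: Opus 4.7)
The plan is to argue by induction on the length of the segment, that is, on the integer $b - \alpha$ (for case (1b)) or on $b$ (for cases (1a) and (2)). The base cases are essentially definitions: for (1a) with $b=0$ we put $\delta_i([\tau,\tau];\sigma^{(0)}) = T_i(\tau;\sigma^{(0)})$; for (1b) with $b=\alpha$, the reducibility assumption $(C\alpha)$ tells us $\nu^{\alpha}\tau\rtimes\sigma^{(0)}$ has length two, and we take $\delta([\nu^\alpha\tau,\nu^\alpha\tau];\sigma^{(0)})$ to be the (square-integrable) Langlands subrepresentation $L_{\text{sub}}(\nu^\alpha\tau;\sigma^{(0)})$; for (2) with $b=0$, irreducibility of $\tau\rtimes\sigma^{(0)}$ follows from (CN) together with the reducibility characterization in \S\ref{sect3}, and we set $\delta([\tau,\tau];\sigma^{(0)})=\tau\rtimes\sigma^{(0)}$. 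The Jacquet module formulas in the base cases are then immediate from the $\mu^{\ast}$-structure and the formula $m^{\ast}(\tau)=1\otimes\tau + \tau\otimes 1$.

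For the inductive step, assume the construction has been carried out for $b-1$. First, combining the Zelevinsky formula
\[
m^{\ast}(\delta([\nu^\alpha\tau,\nu^b\tau])) = \sum_{i=\alpha-1}^{b}\delta([\nu^{i+1}\tau,\nu^b\tau])\otimes\delta([\nu^\alpha\tau,\nu^i\tau])
\]
with the identity $\mu^{\ast}(\lambda\rtimes\sigma^{(0)})=N^{\ast}(\lambda)\,\tilde\rtimes\,\mu^{\ast}(\sigma^{(0)})$ from \S\ref{sect3}, and using $\mu^{\ast}(\sigma^{(0)})=1\otimes\sigma^{(0)}$, I would extract the pieces of $s_{(m)}(\delta([\nu^\alpha\tau,\nu^b\tau])\rtimes\sigma^{(0)})$ whose $H_m$-factor has length one. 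A short calculation shows that exactly two such contributions arise (corresponding to $(i,j)=(\alpha-1,b-1)$ and $(i,j)=(\alpha,b)$ in $N^{\ast}$), yielding
\[
s_{(m)}(\delta([\nu^\alpha\tau,\nu^b\tau])\rtimes\sigma^{(0)}) = \nu^b\tau\otimes\bigl(\delta([\nu^\alpha\tau,\nu^{b-1}\tau])\rtimes\sigma^{(0)}\bigr) + \nu^{-\alpha}\check\tau\otimes\bigl(\delta([\nu^{\alpha+1}\tau,\nu^b\tau])\rtimes\sigma^{(0)}\bigr),
\]
modulo the appropriate twist by $\omega'_{\sigma^{(0)}}$ needed for general spin and similitude groups.

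Next, I would use Frobenius reciprocity together with the embedding $\delta([\nu^\alpha\tau,\nu^b\tau])\hookrightarrow \nu^b\tau\times\delta([\nu^\alpha\tau,\nu^{b-1}\tau])$ to show that any irreducible subrepresentation $\pi$ of $\delta([\nu^\alpha\tau,\nu^b\tau])\rtimes\sigma^{(0)}$ lies in the image of $\nu^b\tau\rtimes\pi'$ for some irreducible subrepresentation $\pi'$ of $\delta([\nu^\alpha\tau,\nu^{b-1}\tau])\rtimes\sigma^{(0)}$. Here the fact that $\nu^b\tau$ is the strictly most positive exponent in the supercuspidal support is essential: it rules out the second summand above contributing such an embedding, since $\nu^{-\alpha}\check\tau\not\cong\nu^b\tau$ except in the degenerate subcase covered by (1a) with $b=0$. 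By the inductive hypothesis, $\pi'$ must equal $\delta([\nu^\alpha\tau,\nu^{b-1}\tau];\sigma^{(0)})$ (or one of the $\delta_{\pm1}$ in case (1a)). Then I would compute $s_{(m)}(\nu^b\tau\rtimes\delta([\nu^\alpha\tau,\nu^{b-1}\tau];\sigma^{(0)}))$ again via $\mu^{\ast}$ and the inductive Jacquet-module formula, and read off that $\nu^b\tau\otimes\delta([\nu^\alpha\tau,\nu^{b-1}\tau];\sigma^{(0)})$ appears with multiplicity exactly one; this forces uniqueness of the irreducible subrepresentation in $\nu^b\tau\rtimes\delta([\nu^\alpha\tau,\nu^{b-1}\tau];\sigma^{(0)})$, hence in $\delta([\nu^\alpha\tau,\nu^b\tau])\rtimes\sigma^{(0)}$. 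The Jacquet-module formula for the subrepresentation then follows by extracting the $\nu^b\tau$-leading piece.

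The main obstacle I anticipate is twofold. Technically, the $\mu^{\ast}$ bookkeeping must be done uniformly across all fourteen families under consideration; one has to track the action of $c$ for groups of type $D_n$ (using the setup with $\chi\otimes e$ and $\chi\otimes c$), the central character twists $\omega'_{\sigma^{(0)}}$ in the similitude and general spin cases, and the parity-of-sign-changes for the quasi-split even orthogonal and spin groups. Conceptually, the subtle step is ruling out that an irreducible subrepresentation $\pi$ embeds via the second summand $\nu^{-\alpha}\check\tau\otimes(\cdots)$ rather than via the expected $\nu^b\tau\otimes(\cdots)$; this is where the positivity of $b$ relative to $\alpha$ (and the exact form of the cuspidal reducibility condition assumed) is used. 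For case (1a) the argument must be modified to produce two subrepresentations $\delta_{\pm1}$, matched to the two summands $T_1$ and $T_{-1}$ of $\tau\rtimes\sigma^{(0)}$, and the corresponding doubling of multiplicities must be carried through the induction consistently.
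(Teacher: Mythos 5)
Your overall strategy---induction on the segment length, the $\mu^{\ast}$ computation of $s_{(m)}(\delta([\nu^{\alpha}\tau,\nu^{b}\tau])\rtimes\sigma^{(0)})$ as exactly two summands, and a multiplicity-one comparison with $\nu^{b}\tau\rtimes\delta([\nu^{\alpha}\tau,\nu^{b-1}\tau];\sigma^{(0)})$---is the same as the paper's, and it does close the induction whenever $(b-1,\alpha)\neq(0,0)$. Two points, however, do not go through as written. The lesser one: the claim that any irreducible subrepresentation $\pi$ of $\delta([\nu^{\alpha}\tau,\nu^{b}\tau])\rtimes\sigma^{(0)}$ embeds in $\nu^{b}\tau\rtimes\pi'$ with $\pi'$ an irreducible \emph{subrepresentation} of the lower induced representation is not what Frobenius reciprocity plus Lemma~\ref{Lemma 5.5} deliver; they only give an irreducible \emph{subquotient} $\theta$, which a priori could be the Langlands quotient $L(\delta([\nu^{\alpha}\tau,\nu^{b-1}\tau])\otimes\sigma^{(0)})$ rather than the Steinberg piece, so ``by the inductive hypothesis $\pi'$ must equal $\delta([\nu^{\alpha}\tau,\nu^{b-1}\tau];\sigma^{(0)})$'' is unjustified as stated. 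The paper avoids this by arguing in the opposite direction: both $I=\delta([\nu^{\alpha}\tau,\nu^{b}\tau])\rtimes\sigma^{(0)}$ and $I'=\nu^{b}\tau\rtimes\delta_i(\tau,b-1;\sigma^{(0)})$ embed in $I''=\nu^{b}\tau\times\delta([\nu^{\alpha}\tau,\nu^{b-1}\tau])\rtimes\sigma^{(0)}$, where $\nu^{b}\tau\otimes\delta_i(\tau,b-1;\sigma^{(0)})$ has multiplicity one, so they share a common irreducible subquotient whose $s_{(m)}$ is then pinned down by intersecting $s_{(m)}(I)$ with $s_{(m)}(I')$.

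The serious gap is that you have mislocated the degenerate case. The coincidence $\nu^{-\alpha}\tau\cong\nu^{b}\tau$ inside $s_{(m)}(I)$ itself never occurs in an inductive step; the actual breakdown is in the passage from $b=0$ to $b=1$ when $\alpha=0$ (cases (1a) and (2)), where the \emph{intersection} of $s_{(m)}(I)$ with $s_{(m)}(\nu\tau\rtimes T_i(\tau;\sigma^{(0)}))$ contains two terms, namely $\nu\tau\otimes T_i(\tau;\sigma^{(0)})$ and $\tau\otimes L(\nu\tau\otimes\sigma^{(0)})$, because the second summand $\nu^{-\alpha}\tau\otimes(\cdots)=\tau\otimes(\nu\tau\rtimes\sigma^{(0)})$ of $s_{(m)}(I)$ collides with the term $\tau\otimes L(\nu\tau\otimes\sigma^{(0)})$ produced by inverting $\nu\tau$ past $T_i$. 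Multiplicity counting therefore does not determine the Jacquet module of the subrepresentation at this step, and this is precisely where the paper works hardest: in the (C0) case it adapts the argument of \cite[Proposition 3.11]{Jan96a}, using central characters to force $\pi_i\hookrightarrow\nu\tau\rtimes T_i(\tau;\sigma^{(0)})$ and the Langlands quotient $L(\delta([\tau,\nu\tau])\otimes\sigma^{(0)})$ to absorb the term $\tau\otimes L(\nu\tau\otimes\sigma^{(0)})$; in the (CN) case it instead compares $\delta([\tau,\nu\tau])\rtimes\sigma^{(0)}$ with $\delta([\tau,\nu\tau])\rtimes\sigma'^{(0)}$ for the twisted partner $\sigma'^{(0)}\not\cong\sigma^{(0)}$, using regularity of $\nu\tau\otimes\tau\otimes\sigma^{(0)}$ and $\nu\tau\rtimes\sigma^{(0)}\not\cong\nu\tau\rtimes\sigma'^{(0)}$. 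Your closing remark that case (1a) ``must be modified'' gestures at the existence of a problem but supplies neither the correct diagnosis nor the missing argument, and it omits the parallel difficulty in the (CN) case entirely.
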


\noindent
\begin{proof} The proof is by induction on $b$. The case $b=\alpha$ (resp., $b=0$) is immediate from the definition of (C$\alpha$) (resp.,  (CN)).

 To uniformize the presentation, for $b>\alpha$ (resp., $b>0$ in the (CN) case), let
\begin{equation}\label{Steinbergshorthand}
\delta_i(\tau, b; \sigma^{(0)})=\left\{ \begin{array}{l}
\delta([\nu^{\alpha} \tau, \nu^b \tau];\sigma^{(0)}) \mbox{ for (C$\alpha$) with $\alpha>0$ ($i=1$ only)}, \\
\delta_i([\tau, \nu^b \tau]; \sigma^{(0)}) \mbox{ for (C0) ($i=\pm 1$)}, \\
\delta([\tau, \nu^b \tau]; \sigma^{(0)}) \mbox{ for (CN) ($i=1$ only)}.
\end{array} \right.
\end{equation}
 Note that it follows from the formula for $s_{min}(\delta_i(\tau, b; \sigma^{(0)}))$ and the inductive assumption that $s_{(m)}(\delta_i(\tau, b; \sigma^{(0)}))=\nu^b \tau \otimes \delta_i(\tau, b-1; \sigma^{(0)})$.

We now assume inductively that the result holds for $b$ with $b \geq \alpha$.
Consider the induced representations
\[
I=\left\{ \begin{array}{l} \delta([\nu^{\alpha}\tau, \nu^{b+1} \tau]) \rtimes \sigma^{(0)} \mbox{ for (C$\alpha$)}, \\
	\delta([\tau, \nu^{b+1} \tau]) \rtimes \sigma^{(0)} \mbox{ for (CN)},
\end{array}\right.
\]
\[
I_i'=\nu^{b+1} \tau \rtimes \delta_i(\tau, b;\sigma^{(0)}),
\]
and
\[
I''=\left\{ \begin{array}{l} \nu^{b+1}\tau \times \delta([\nu^{\alpha}\tau, \nu^b \tau]) \rtimes \sigma^{(0)} \mbox{ for (C$\alpha$)}, \\
	\nu^{b+1}\tau \times \delta([\tau, \nu^b \tau]) \rtimes \sigma^{(0)} \mbox{ for (CN)}.
\end{array}\right.
\]
Clearly, $I \hookrightarrow I''$. To see $I_i' \hookrightarrow I''$, note that it follows from induction and the Jacquet module formula that
\[
s_{GL}(\delta_i(\tau, b;\sigma^{(0)}))=\left\{ \begin{array}{l} \delta([\nu^{\alpha}\tau, \nu^b \tau]) \otimes \sigma^{(0)} \mbox{ for (C$\alpha$),} \\
	\delta([\tau, \nu^b \tau]) \otimes \sigma^{(0)}+\delta([\tau, \nu^b \tau]) \otimes \sigma'{}^{(0)} \mbox{ for (CN)}
\end{array} \right.
\]
(see (CN) conditions for the possible $\sigma'{}^{(0)}$); note that $\delta([\tau, \nu^b \tau]; \sigma^{(0)}) \cong \delta([\tau, \nu^b \tau]; \sigma'{}^{(0)})$. By Frobenius reciprocity--and replacing $\sigma^{(0)}$ by $\sigma'{}^{(0)}$ in the (CN) case if needed--it follows that $I_i'  \hookrightarrow I''$.

Observe that from the $\mu^{\ast}$ formula, if we interpret $\alpha=0$ in the (CN) case, we have
\[
s_{(m)}(I)=\nu^{b+1} \tau \otimes \left( \delta_i([\nu^{\alpha}\tau, \nu^b\tau]) \rtimes \sigma^{(0)} \right)+\nu^{-\alpha}\tau \otimes (\dots),
\]
noting that for $GSpin$ groups, (C$\alpha$) and (CN) require $\omega_{\sigma^{(0)}}\tilde{\tau} \cong \tau$ (and $\check{\tau} \cong \tau$ for the remaining groups). The particular representation appearing with $\nu^{-\alpha} \tau$ is not important for the argument which follows and is omitted to save space. Similarly,
\[
s_{(m)}(I_i')=\nu^{b+1} \tau \otimes  \delta_i(\tau, b;\sigma^{(0)})+\nu^{-b-1}\tau \otimes (\dots)+\nu^b\tau \otimes (\dots),
\]
and
\begin{align*}
    s_{(m)}(I'')&=\nu^{b+1} \tau \otimes \left( \delta_i([\nu^{\alpha}\tau, \nu^b\tau]) \rtimes \sigma^{(0)} \right)+\nu^{-b-1}\tau \otimes (\dots)\\
    &+\nu^b \tau \otimes ( \dots)+\nu^{-\alpha}\tau \otimes (\dots).
\end{align*}
Note that $b+1>b,-b-1,-\alpha$. It is then easy to see that  $\nu^{b+1} \tau \otimes  \delta_i(\tau, b;\sigma^{(0)})$ appears with mutliplicity one in $s_{(m)}(I)$, $s_{(m)}(I_i')$, and $s_{(m)}(I'')$. Hence $I$ and $I_i'$ have a common irreducible subquotient $\pi$ satisfying $s_{(m)}(\pi) \geq \nu^{b+1} \tau \otimes  \delta_i(\tau, b;\sigma^{(0)})$. It remains to show that $s_{(m)}(\pi)=\nu^{b+1} \tau \otimes  \delta_i(\tau, b;\sigma^{(0)})$.
However, we immediately see that as long as $b \not= -\alpha$, the only term in common between $s_{(m)}(I)$ and $s_{(m)}(I_i')$ is $\nu^{b+1} \tau \otimes \delta_i(\tau, b; \sigma^{(0)})$. Then,
\[
s_{(m)}(\delta_i(\tau,b+1; \sigma^{(0)}))=\nu^{b+1} \tau \otimes \delta_i(\tau, b; \sigma^{(0)}),
\]from which the result is immediate.

If $b=-\alpha$, we must have $\alpha=0$ and $b=0$. In particular, $(\tau; \sigma^{(0)})$ is either (C0) or (CN). In the (CN) case, note that $\tau \rtimes \sigma^{(0)} \cong \tau \rtimes \sigma'{}^{(0)}$ with $\sigma'{}^{(0)}$ depending on the group, but $\sigma'{}^{(0)} \not\cong \sigma^{(0)}$. Observe that $\nu\tau \otimes \tau \otimes \sigma^{(0)}$ is regular. Further, we have
\[
\delta([\tau, \nu\tau]) \rtimes \sigma \leq\nu\tau \times \tau \rtimes \sigma^{(0)}
\]
and
\[
\delta([\tau, \nu\tau]) \rtimes \sigma'{}^{(0)} \leq\nu\tau \times \tau \rtimes \sigma'{}^{(0)}=\nu\tau \times \tau \rtimes \sigma^{(0)}.
\]
As
\[
s_{(m)}(\delta([\tau, \nu\tau]) \rtimes \sigma^{(0)})= \nu\tau \otimes (\tau \rtimes \sigma^{(0)})+\tau \otimes (\nu\tau \rtimes \sigma'{}^{(0)})
\]
 and
\[
s_{(m)}(\delta([\tau, \nu\tau]) \rtimes \sigma'{}^{(0)})= \nu\tau \otimes (\tau \rtimes \sigma^{(0)})+\tau \otimes (\nu\tau \rtimes \sigma^{(0)}),
\]
we see that $\delta([\tau, \nu\tau]) \rtimes \sigma^{(0)}$ and $\delta([\tau, \nu\tau]) \rtimes \sigma'{}^{(0)}$ must have an irreducible subquotient in common. Comparing Jacquet modules--and noting $\nu\tau \rtimes \sigma^{(0)} \not\cong \nu\tau \rtimes \sigma'^{(0)}$--we see that this irreducible subquotient must have Jacquet module consising of $\nu\tau \otimes (\tau \rtimes \sigma^{(0)})$. The (CN) case follows.

In the (C0) case, we adapt an argument from the proof of \cite[Proposition 3.11]{Jan96a}. We consider some induced representations which appear in $\nu\tau \times \tau \rtimes \sigma^{(0)}$. Observe that (noting the ramified conditions on $\tau \otimes \sigma^{(0)}$ required for (C0))
\[
s_{(m)}(\delta([\tau, \nu \tau]) \rtimes \sigma^{(0)})=\nu\tau \otimes T_1(\rho; \sigma^{(0)})+\nu\tau \otimes T_{-1}(\tau; \sigma^{(0)})+\tau \otimes L(\nu\tau \otimes \sigma^{(0)}),
\]
and for $i \in \{ \pm 1\}$,
\[
s_{(m)}(\nu\tau \rtimes T_i(\tau; \sigma^{(0)}))=\nu\tau \otimes T_i(\tau; \sigma^{(0)})+\nu^{-1}\tau \otimes T_i(\tau; \sigma^{(0)})+\tau \otimes L(\nu\tau \otimes \sigma^{(0)}).
\]
Let $\pi_i$, $i \in \{ \pm 1\}$, be the irreducible subquotient of $\nu\tau \times \tau \rtimes \sigma^{(0)}$ such that $s_{(m)}(\pi_i)$ contains (the unique copy of) $\nu \tau \otimes T_i(\tau; \sigma^{(0)})$. Comparing Jacquet modules above, we see that these are distinct and $s_{(m)}(\pi_i) \leq \nu\tau \otimes T_i(\tau; \sigma^{(0)})+\tau \otimes L(\nu\tau \otimes  \sigma)$. Further, by central character considerations, we must have $\pi_i \hookrightarrow \nu\tau \rtimes T_i(\tau; \sigma^{(0)})$. Now, since
\[
\delta([\tau, \nu \tau]) \rtimes \sigma^{(0)} \hookrightarrow
\nu\tau \times \tau \rtimes \sigma^{(0)} \cong \nu\tau \rtimes \left(T_1(\tau; \sigma^{(0)}) \oplus T_{-1}(\tau; \sigma^{(0)})\right),
\]
we see that both $\pi_1$ and $\pi_{-1}$ appear as subrepresentations of $\delta([\tau, \nu \tau]) \rtimes \sigma^{(0)}$. By the Langlands classification, we have $L(\delta([\tau, \nu \tau]) \otimes \sigma^{(0)})$ as unique irreducible quotient, making this distinct from $\pi_{\pm 1}$. Looking at the Jacquet modules above, it is then clear that $s_{(m)}(\pi_i)= \nu\tau \otimes T_i(\tau ; \sigma^{(0)})$ and $s_{(m)}(L(\delta([\tau, \nu \tau]) \otimes \sigma^{(0)})=\tau \otimes L(\nu\tau; \sigma^{(0)})$. The result follows. \hfill\end{proof}

\begin{cor}
The representations $\delta([\nu^{\alpha}\tau, \nu^b \tau]; \sigma^{(0)})$ (for (C$\alpha$) with $\alpha>0$), $\delta_i([\tau, \nu^b \tau]; \sigma^{(0)})$ (for (C0)), and $\delta([\tau, \nu^b \tau]; \sigma^{(0)})$ (for (CN)) of Proposition~\ref{Steinberg} are essentially square-integrable if $b>0$. If $b=0$--which can happen only in the cases of (C0) or (CN)--the representations are essentially tempered but not essentially square-integrable.
\end{cor}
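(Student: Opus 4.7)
The plan is to derive the minimal Jacquet module of the generalized Steinberg representations explicitly, and then invoke the Casselman criterion (\ref{Casselman criterion}). Iterating the formula $s_{(m)}(\delta_i(\tau,b;\sigma^{(0)})) = \nu^b\tau \otimes \delta_i(\tau,b-1;\sigma^{(0)})$ from Proposition~\ref{Steinberg}, together with the base case at $b=\alpha$ (respectively $b=0$ in the (CN) case), transitivity of Jacquet functors shows that the minimal (proper) Jacquet module of $\delta_i(\tau,b;\sigma^{(0)})$ contains a chain of the form
\[
\nu^b\tau \otimes \nu^{b-1}\tau \otimes \cdots \otimes \nu^{\alpha}\tau \otimes \sigma^{(0)},
\]
with $\alpha$ interpreted as $0$ in the (CN) case. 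Any other exponents appearing in the minimal Jacquet module (for instance those produced by the Weyl action (\ref{w_0action}), or those involving the outer automorphism $c$ in the $D_n$-type cases) have the same underlying real parts, so they contribute the same partial sums in the Casselman criterion.

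Next, I would apply the Casselman criterion. Since $\tau$ and $\sigma^{(0)}$ are unitary, Lemma~\ref{beta} gives $\beta=\beta(\sigma^{(0)})=0$. With $\phi_i=\nu^{b-i+1}\tau$ in $H_m(F)$, the $k$-th partial sum is
\[
\sum_{i=1}^{k} n_i\bigl[\varepsilon(\phi_i)-\beta\bigr] \;=\; m\sum_{i=1}^{k}(b-i+1) \;=\; \frac{mk(2b-k+1)}{2},
\]
where $k$ ranges from $1$ up to $b-\alpha+1$ in the (C$\alpha$) case and up to $b+1$ in the (C0) and (CN) cases. Since $2b-k+1 \geq b+\alpha$ at the largest value of $k$ (or $\geq b$ when $\alpha=0$), the partial sums are \emph{strictly} positive whenever $b>0$, so $\delta_i(\tau,b;\sigma^{(0)})$ satisfies the Casselman square-integrability inequalities and is essentially square-integrable.

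For the remaining case $b=0$---which is possible only under (C0) or (CN)---the representation is, by the construction in Proposition~\ref{Steinberg}, either a direct summand of or the whole of the unitarily induced $\tau \rtimes \sigma^{(0)}$, hence essentially tempered (the weak Casselman inequalities $\geq 0$ hold trivially). It \emph{fails} to be essentially square-integrable because its minimal Jacquet module contains $\tau \otimes \sigma^{(0)}$, producing a first partial sum of $m(0-0)=0$, which violates the strict Casselman inequality. The only genuine delicacy is the bookkeeping of the artifice terms (and $c$-conjugates) for $SO_{2n}$, $SO^{\ast}_{2n+2}$, and the analogous similitude and general spin groups; but since all such terms involve supercuspidal exponents with the same absolute values as the chain above, they yield the same partial sums and present no additional obstruction.
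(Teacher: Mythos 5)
Your proof is correct and follows exactly the route the paper takes: the paper's own argument is the one-line observation that the claim "follows from the Jacquet module characterization given in Proposition~\ref{Steinberg} and the Casselman criterion," and you have simply written out the iterated Jacquet module chain $\nu^b\tau\otimes\cdots\otimes\nu^{\alpha}\tau\otimes\sigma^{(0)}$ and the resulting partial sums $\tfrac{mk(2b-k+1)}{2}$ explicitly. The verification that these are strictly positive for $b>0$, weakly positive with a vanishing first sum for $b=0$, and the remark that $\beta=0$ by unitarity, are all accurate and add useful detail beyond what the paper records.
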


\noindent
\begin{proof}
This follows from the Jacquet module characterization given in Proposition~\ref{Steinberg} and the Casselman criterion. \end{proof}

Recall that in the (C0) case, we have chosen $T_1(\tau; \sigma^{(0)})$ to be the $\psi_a$-generic component.

\begin{cor}\label{Steinberggenericity}
The representations $\delta([\nu^{\alpha}\tau, \nu^b \tau]; \sigma^{(0)})$ (for (C$\alpha$) with $\alpha>0$), $\delta_1([\tau, \nu^b \tau]; \sigma^{(0)})$ (for (C0)), and $\delta([\tau, \nu^b \tau]; \sigma^{(0)})$ (for (CN)) of Proposition~\ref{Steinberg} are $\psi_a$-generic.
\end{cor}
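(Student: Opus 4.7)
The plan is to induct on $b$, exploiting at each step the Standard Module Conjecture of Heiermann--Opdam (\cite{HO13}) together with the uniqueness of the Whittaker model on irreducible representations of quasi-split groups (\cite{Sha74}). Throughout I write $\delta_1(\tau,b;\sigma^{(0)})$ for the representation in question, following the shorthand of (\ref{Steinbergshorthand}).

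For the base case, in (CN) the pair $\tau\otimes\sigma^{(0)}$ is not ramified in the sense of (\ref{w_0action}), so $\tau\rtimes\sigma^{(0)}$ is irreducible; being induced from generic data, it is $\psi_a$-generic. In (C0) with $b=0$, the representation $T_1(\tau;\sigma^{(0)})$ is $\psi_a$-generic by definition. For (C$\alpha$) with $\alpha>0$ and $b=\alpha$, the induced module $\nu^{\alpha}\tau\rtimes\sigma^{(0)}$ is a standard module in Langlands form, with generic tempered inducing datum $\sigma^{(0)}$, and (C$\alpha$) forces reducibility. The Standard Module Conjecture then tells us that the Langlands quotient $L(\nu^{\alpha}\tau\otimes\sigma^{(0)})$ is not generic; since the induced module is itself $\psi_a$-generic with one-dimensional Whittaker space, the unique generic composition factor must be the Langlands subrepresentation $\delta([\nu^{\alpha}\tau,\nu^{\alpha}\tau];\sigma^{(0)})$.

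For the inductive step, assume $\delta_1(\tau,b-1;\sigma^{(0)})$ is $\psi_a$-generic. The proof of Proposition~\ref{Steinberg} yields the embedding
\[
\delta_1(\tau,b;\sigma^{(0)}) \hookrightarrow \nu^{b}\tau \rtimes \delta_1(\tau,b-1;\sigma^{(0)}),
\]
and the right-hand side is induced from generic data on the Levi ($\nu^{b}\tau$ is generic by Gelfand--Kazhdan, and $\delta_1(\tau,b-1;\sigma^{(0)})$ is generic by induction). It is also a standard module in Langlands form: its essentially tempered part is $\delta_1(\tau,b-1;\sigma^{(0)})$ (essentially square-integrable by the preceding corollary) and its positive Langlands exponent is carried by $\nu^{b}\tau$. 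If irreducible, $\delta_1(\tau,b;\sigma^{(0)})$ coincides with the induced module and is generic. If reducible, the Standard Module Conjecture tells us the Langlands quotient is not generic; combined with the one-dimensional Whittaker space and the fact that $\delta_1(\tau,b;\sigma^{(0)})$ sits as the Langlands subrepresentation, we conclude that $\delta_1(\tau,b;\sigma^{(0)})$ is $\psi_a$-generic.

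The main obstacle is ensuring the Standard Module Conjecture is available uniformly across all the families at hand---classical, similitude, and general spin, in both split and quasi-split forms---with ``tempered'' interpreted through the $\beta$ shift of Definition~\ref{beta def} in the general spin cases. This is precisely the generality established in \cite{HO13}. A secondary point, in the $D_n$ similitude and general spin cases, is that the action of the outer automorphism $c$ must be tracked via the artifice of \S \ref{sect3}, but this does not affect the genericity argument, and so the same proof proceeds uniformly for every group on our list.
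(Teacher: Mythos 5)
Your base cases are fine and coincide with the paper's: the Standard Module Conjecture for (C$\alpha$) with $\alpha>0$, the choice of $T_1(\tau;\sigma^{(0)})$ for (C0), and irreducibility for (CN). The gap is in the inductive step. Having embedded $\delta_1(\tau,b;\sigma^{(0)})$ into $\nu^{b}\tau\rtimes\delta_1(\tau,b-1;\sigma^{(0)})$, you invoke the Standard Module Conjecture to rule out genericity of the Langlands quotient and then conclude that the generic constituent must be $\delta_1(\tau,b;\sigma^{(0)})$ because it ``sits as the Langlands subrepresentation.'' Two problems. First, a standard module written in quotient form (positive exponent on $\nu^b\tau$) has a unique irreducible quotient but no distinguished ``Langlands subrepresentation''; what is true (via Frobenius reciprocity and the multiplicity-one occurrence of $\nu^b\tau\otimes\delta_1(\tau,b-1;\sigma^{(0)})$ in $s_{(m)}$) is that $\delta_1(\tau,b;\sigma^{(0)})$ is the unique irreducible subrepresentation, but you neither prove this nor is it enough. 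Second, and this is the real gap: eliminating the Langlands quotient only eliminates one constituent. The Jacquet module $s_{(m)}$ of $\nu^b\tau\rtimes\delta_1(\tau,b-1;\sigma^{(0)})$ has three terms and the module may a priori have length greater than two, so the unique generic constituent could be a third subquotient that is neither the Langlands quotient nor the subrepresentation $\delta_1(\tau,b;\sigma^{(0)})$. Closing this requires either a length-two computation or the generalized injectivity property (the generic constituent of a standard module is a subrepresentation); the latter is available for classical groups via \cite{Han10} but not, at the time and generality needed here, for the similitude and general spin families that the corollary also covers, and its proof itself leans on the classification of generic discrete series, which risks circularity.

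The paper closes this gap by a different mechanism: it plays two induced representations against each other. Both $I=\delta([\nu^{\alpha}\tau,\nu^{b+1}\tau])\rtimes\sigma^{(0)}$ and $I_1'=\nu^{b+1}\tau\rtimes\delta_1(\tau,b;\sigma^{(0)})$ are induced from generic data, hence each contains exactly one generic constituent; both embed in $I''$, whose Whittaker space is one-dimensional, so their generic constituents coincide and the generic constituent is therefore a \emph{common} irreducible subquotient of $I$ and $I_1'$. The Jacquet-module comparison in the proof of Proposition~\ref{Steinberg} shows that the only common irreducible subquotient is $\delta_1(\tau,b+1;\sigma^{(0)})$, which is consequently the generic one. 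Your induction would be repaired by replacing the Standard Module Conjecture step with this unique-common-constituent argument, reserving \cite{HO13} for the base case only.
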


\noindent
\begin{proof}
We use the notation of (\ref{Steinbergshorthand}).

The proof is by induction on $b$. The base case $b=\alpha$ follows from the Standard Module Conjecture (for (C$\alpha$) with $\alpha>0$), choice of $T_1(\tau; \sigma^{(0)})$ (for (C0)), and irreducibility (for (CN)).
For $b>\alpha$, one observes inductively that both $I$ and $I_1'$ contain the generic subquotient;
the characterization of $\delta_1(\tau, b; \sigma^{(0)})$ as the unique common irreducible subquotient then finishes the proof. \end{proof}

\begin{rem}\label{dsrem}
A few words on square-integrability vs. essential square-integrability for similitude groups are in order at this point. Suppose $\nu^{x_1}\tau_1 \otimes \dots \otimes \nu^{x_k}\tau_k \otimes \sigma^{(0)}$ (with $\tau_1, \dots, \tau_k$ and $\sigma^{(0)}$ irreducible unitary supercuspidal representations) is in the Jacquet module for an essentially square-integrable representation $\pi$. For $G_n=GSpin_{2n+1}$, $GSpin_{2n}$, or $GSpin_{2n+2}^{\ast}$, the central character of $\pi$ is $\omega_{\sigma^{(0)}}$, so $\pi$ is square-integrable. On the other hand, for $G_n=GSp_{2n}$, $GSO_{2n}$, $GSO_{2n+2}^{\ast}$, $GU_{2n+1}$, or $GU_{2n}$,  the central character has the form $\omega_{\nu^{x_1}\tau_1} \dots \omega_{\nu^{x_k}\tau_k} \omega_{\sigma^{(0)}}'$ (where $\omega_{\sigma^{(0)}}'=\omega_{\sigma^{(0)}}$, $\omega_{\sigma^{(0)}}^2$, or $\omega_{\sigma^{(0)}} \circ N_{E/F}$). In particular, the central character is not unitary (by the requirement $n_1x_1+ \dots +n_kx_k>0$ in the Casselman criterion). To obtain a square-integrable representation, one can twist by a suitable unramified character $\chi_0$. This amounts to twisting $\sigma^{(0)}$ by $\chi_0$ (Lemma~\ref{twisting}). Thus, for $\pi$ square-integrable, we write
\[
\pi \hookrightarrow \nu^{x_1}\tau_1 \times \dots \nu^{x_k}\tau_k \rtimes \chi_0\sigma^{(0)},
\]
noting $\chi_0$ trivial for classical or general spin groups. For the generalized Steinberg representations of Proposition~\ref{Steinberg}, we write
\[
\delta_i([\nu^{\alpha}\tau, \nu^b \tau]; \chi_0\sigma^{(0)})=\chi_0 \delta_i([\nu^{\alpha}\tau, \nu^b \tau]; \sigma^{(0)}),
\]
noting they are generic for $i=1$ (by Corollary~\ref{Steinberggenericity}). We further note that for these groups, it follows from Proposition ~\ref{Steinberg} and Lemma~\ref{twisting} that
\[
s_{(\underbrace{m,\dots,m}_{b-\alpha+1})}\left(\delta_i([\nu^{\alpha}\tau, \nu^b \tau]; \chi_0\sigma^{(0)})\right)=\nu^b \tau \otimes \nu^{b-1}\tau \otimes \dots \otimes \nu^{\alpha}\tau \otimes \chi_0 \sigma^{(0)}.
\]
Similar notation is used for more general $\chi_0$ (not just that needed to ensure unitarity).
\end{rem}

In the classical case, the next lemma may be deduced from \cite{Mui04} or (via duality) \cite{Jan96a}.

\begin{lem}\label{dps}
Suppose $(\tau; \sigma^{(0)})$ satisfies (C1). Let $a,b \in {\mathbb N}$ with $a \leq b$. Then $\delta([\nu\tau, \nu^a \tau]) \rtimes \delta([\nu\tau, \nu^b \tau]; \sigma^{(0)})$ is irreducible.
\end{lem}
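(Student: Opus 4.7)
I would proceed by induction on $a$, with base case $a=1$, relying on the $\mu^{\ast}$-formula from \S \ref{sect3} together with Proposition~\ref{Steinberg}'s iterated Jacquet module formula $s_{(m)}(\delta([\nu\tau, \nu^b\tau]; \sigma^{(0)})) = \nu^b\tau \otimes \delta([\nu\tau, \nu^{b-1}\tau]; \sigma^{(0)})$ as the main technical input. The nesting $[\nu\tau,\nu^a\tau] \subseteq [\nu\tau,\nu^b\tau]$ is used throughout: since these two segments are non-linked in Zelevinsky's sense, the $GL$-product $\delta([\nu\tau, \nu^a\tau]) \times \delta([\nu\tau, \nu^b\tau])$ is irreducible, and the factors may be commuted freely inside any induced representation. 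Condition (C1) enters through the fact that $\nu^x\tau \rtimes \sigma^{(0)}$ is reducible exactly at $x = \pm 1$.

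For the base case, I would show that $\nu\tau \rtimes \delta([\nu\tau, \nu^b\tau]; \sigma^{(0)})$ is irreducible by computing $\mu^{\ast}(\nu\tau \rtimes \delta([\nu\tau, \nu^b\tau]; \sigma^{(0)}))=N^{\ast}(\nu\tau)\,\tilde{\rtimes}\,\mu^{\ast}(\delta([\nu\tau, \nu^b\tau]; \sigma^{(0)}))$ explicitly. Via the embedding $\nu\tau \rtimes \delta([\nu\tau, \nu^b\tau]; \sigma^{(0)}) \hookrightarrow \nu\tau \times \delta([\nu\tau, \nu^b\tau]) \rtimes \sigma^{(0)}$ and Frobenius reciprocity, any irreducible subrepresentation must carry $\nu\tau \times \delta([\nu\tau, \nu^b\tau]) \otimes \sigma^{(0)}$ in its $s_{GL}$-Jacquet module; multiplicity one of that term yields a unique irreducible subrepresentation. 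A parallel analysis via the Langlands classification (or, equivalently, via Aubert--Zelevinsky duality applied to the sub-side) identifies a unique irreducible quotient, and a length count against $\mu^{\ast}$—constrained by the irreducibility of $\nu^k\tau \rtimes \sigma^{(0)}$ for $k \neq \pm 1$—forces the induced representation to have length one. For the inductive step $a > 1$, I embed
$$\delta([\nu\tau, \nu^a\tau]) \rtimes \delta([\nu\tau, \nu^b\tau]; \sigma^{(0)}) \hookrightarrow \nu^a\tau \times \delta([\nu\tau, \nu^{a-1}\tau]) \rtimes \delta([\nu\tau, \nu^b\tau]; \sigma^{(0)})$$
using $\delta([\nu\tau, \nu^a\tau]) \hookrightarrow \nu^a\tau \times \delta([\nu\tau, \nu^{a-1}\tau])$. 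The inductive hypothesis gives the irreducibility of $\Pi_{a-1} := \delta([\nu\tau, \nu^{a-1}\tau]) \rtimes \delta([\nu\tau, \nu^b\tau]; \sigma^{(0)})$, so it remains to isolate the unique composition factor of $\nu^a\tau \rtimes \Pi_{a-1}$ whose Jacquet module contains $\delta([\nu\tau, \nu^a\tau]) \otimes \delta([\nu\tau, \nu^b\tau]; \sigma^{(0)})$. Since $\nu^a\tau \rtimes \sigma^{(0)}$ is irreducible for $a \neq 1$ by (C1), the $\mu^{\ast}$-computation for $\nu^a\tau \rtimes \Pi_{a-1}$ remains cleanly constrained, and the sub/quotient uniqueness argument from the base case transfers directly.

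The main obstacle I anticipate is the boundary case $a=b$, where the outside $\nu^a\tau$ has the same twist as the rightmost piece $\nu^b\tau$ appearing in the iterated Jacquet module of $\delta([\nu\tau, \nu^b\tau]; \sigma^{(0)})$. Several a priori distinct contributions to $\mu^{\ast}$—those arising from the $\nu^b\tau$ factor in $N^{\ast}(\nu^a\tau)\,\tilde{\rtimes}\,(\cdots)$ and those coming directly from $\mu^{\ast}(\Pi_{a-1})$—must be matched and combined correctly. The two simplifications that make this combinatorics tractable are the non-linked nesting of the segments and the irreducibility of $\nu^{b+1}\tau \rtimes \sigma^{(0)}$ (valid since $b+1 \neq 1$ under (C1)); together they force every composition factor to have the same supercuspidal support and hence coincide. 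As the excerpt indicates, an alternative organization is to apply Aubert--Zelevinsky duality and reduce to the analogous statement about Langlands quotients, as in \cite{Jan96a}.
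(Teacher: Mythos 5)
Your overall toolkit (the $\mu^{\ast}$ formula, Frobenius reciprocity, and a sub-equals-quotient argument via the Langlands classification) is the right one, and your structural choice of inducting on $a$ rather than the paper's route (induction on $b$ for the case $a=1$, then a commuting/inverting argument with Lemma~\ref{Lemma 5.5} for general $a$) is a legitimate reorganization. But your base case has a genuine gap, concentrated exactly where the paper's proof is most delicate: $a=b=1$. The multiplicity-one claim you rely on is false there. Computing $s_{(m)}(\nu\tau \rtimes \delta(\nu\tau;\sigma^{(0)}))$ via $N^{\ast}(\nu\tau)\,\tilde{\rtimes}\,\mu^{\ast}(\cdot)$ gives the three terms $\nu\tau \otimes \delta(\nu\tau;\sigma^{(0)})$, $\nu^{-1}\tau \otimes \delta(\nu\tau;\omega\sigma^{(0)})$, and $\nu\tau \otimes (\nu\tau \rtimes \sigma^{(0)})$; since (C1) makes $\nu\tau \rtimes \sigma^{(0)}$ reducible with $\delta(\nu\tau;\sigma^{(0)})$ as a constituent, the term $\nu\tau \otimes \delta(\nu\tau;\sigma^{(0)})$ appears with multiplicity \emph{two}, so Frobenius reciprocity does not pin down a unique irreducible subrepresentation. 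Moreover, even granting a unique irreducible sub and a unique irreducible quotient, that does not force length one (the module could be uniserial of length three); the standard fix is to exhibit the Langlands quotient as a subrepresentation of the \emph{same} induced representation via a different embedding chain, and the obvious chain here (inverting $\nu\tau$ to $\nu^{-1}\tau$) presupposes exactly the irreducibility you are trying to prove. Your proposed rescue for the $a=b$ boundary — that all composition factors share a supercuspidal support and "hence coincide" — is not valid; distinct constituents routinely share supercuspidal support.

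For comparison, the paper breaks this circularity at $a=b=1$ with an indirect argument: assuming reducibility, it isolates the constituent $\theta$ with $s_{GL}(\theta)=(\nu\tau\times\nu\tau)\otimes\sigma^{(0)}$ (essentially square-integrable by Casselman), then compares $I_1=\tau\rtimes\theta$ and $I_2=\delta([\nu^{-1}\tau,\nu\tau])\rtimes\omega\sigma^{(0)}$ inside $I_3=\nu\tau\times\nu\tau\times\tau\rtimes\sigma^{(0)}$, using the multiplicity of $\nu\tau\times\delta([\tau,\nu\tau])\otimes\sigma^{(0)}$ and the essential unitarity of $I_1$ (so that Frobenius forces a $\tau\otimes\cdots$ term) against the absence of any $\tau\otimes\cdots$ term in $\mu^{\ast}(I_2)$. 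Some such external input — or the Aubert--Zelevinsky duality route to \cite{Jan96a} that you mention but do not carry out — is needed; once $a=1$ is secured for all $b$, your inductive step for $a>1$ would still need the Langlands sub-equals-quotient mechanism made precise (the paper does this by inverting each $\nu^k\tau$ in turn and invoking Lemma~\ref{Lemma 5.5}), but that part is repairable along the lines you sketch.
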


\noindent
\begin{proof}
We start with the case $a=b=1$, upon which the lemma is inductively based. The proof here follows that in \cite[Proposition 5.1]{Tad98a}. Note that to satisfy (C1), we must have $\tau \otimes \sigma^{(0)}$ ramified.

For $a=b=1$, we argue indirectly--suppose $\nu\tau \rtimes \delta(\nu\tau; \sigma^{(0)})$ were reducible. Letting
\[
\omega=\left\{ \begin{array}{l}
\omega_{\nu\tau} \mbox{ for }G_n=GSp_{2n}, GSO_{2n}, GSO_{2n+2}^{\ast}, GU_{2n+1}, GU_{2n},\\  
1 \mbox{ otherwise}, \end{array} \right.
\]
we have
\[
s_{GL}(\nu\tau \rtimes \delta(\nu\tau; \sigma^{(0)}))=(\nu\tau \times \nu\tau) \otimes \sigma^{(0)}+(\nu\tau \times \nu^{-1}\tau) \otimes \omega\sigma^{(0)}.
\]
Therefore, were the induced representation reducible, one irreducible subquotient would satisfy $s_{GL}(\theta)=(\nu\tau \times \nu\tau) \otimes \sigma^{(0)}$. Note that by the Casselman criterion, $\theta$ is essentially square-integrable (and square-integrable for classical and general spin groups--see Table~\ref{tab:center2}).

Consider the induced representations $I_1=\tau \rtimes \theta$, $I_2=\delta([\nu^{-1}\tau, \nu\tau]) \rtimes \omega\sigma^{(0)}$, and $I_3=\nu\tau \times \nu\tau \times \tau \rtimes \sigma^{(0)}$. Note that $I_1,I_2 \leq I_3$. Observe that $\nu\tau \times \delta([\tau, \nu\tau]) \otimes \sigma^{(0)}$ appears with multiplicity two in $\mu^{\ast}(I_i)$ for $i=1,2,3$. If $\pi \leq I_1$ is an irreducible subquotient such that $\mu^{\ast}(\pi) \geq \nu\tau \times \delta([\tau, \nu\tau]) \otimes \sigma^{(0)}$, then $\pi \leq I_2$ as well. Observe that as $I_1$ is an essentially unitary representation (see Lemma~\ref{twisting} for similitude groups other than general spin groups), Frobenius reciprocity tells us $\mu^{\ast}(\pi) \geq \tau \otimes \theta$. However, as there are no terms of the form $\tau \otimes \dots$ in $\mu^{\ast}(I_2)$, we have a contradiction. Thus we must have had $\nu\tau \rtimes \delta(\nu\tau; \sigma^{(0)})$ irreducible.


We next show inductively that $\nu\tau \rtimes \delta([\nu\tau, \nu^b \tau]; \sigma^{(0)})$ is irreducible, with the base case $b=1$ done above. Observe that (using Proposition~\ref{Steinberg})
\begin{align*}
  &s_{(m)}(\nu\tau \rtimes \delta([\nu\tau, \nu^b \tau]; \sigma^{(0)}))\\
  &=\nu\tau \otimes \delta([\nu\tau, \nu^b \tau]; \sigma^{(0)})+\nu^{-1}\tau \otimes \delta([\nu\tau, \nu^b \tau]; \omega\sigma^{(0)}) \\
&+\nu^b \tau \otimes \nu\tau \rtimes \delta([\nu\tau, \nu^{b-1} \tau]; \sigma^{(0)}) \\
&=\theta+\theta'+\theta''  
\end{align*}
(defining $\theta,\theta',\theta''$). By inductive hypothesis, all three terms are irreducible. Let $\pi \leq \nu\tau \rtimes \delta([\nu\tau, \nu^b \tau]; \sigma^{(0)})$ be irreducible with $s_{(m)}(\pi) \geq \theta'$. Now,
\[
\begin{array}{c}
s_{(m,m)}(\pi) \geq s_{(m,m)}(\theta') \geq \nu^{-1}\tau \otimes \nu^b \tau \otimes \delta([\nu\tau, \nu^{b-1}\tau]; \sigma^{(0)}) \\
\Downarrow \\
s_{(2m)}(\pi) \geq (\nu^{-1}\tau \times \nu^b \tau) \otimes \delta([\nu\tau, \nu^{b-1}\tau]; \sigma^{(0)}) \\
\Downarrow \\
s_{(m)}\pi \geq \nu^b \tau \otimes ...
\end{array}
\]
noting the irreducibility of $\nu^{-1}\tau \times \nu^b \tau$. In particular, this forces $s_{(m)}(\pi) \geq \theta''$. If $b>2$, a similar argument starting with
 $s_{(m,m)}(\pi) \geq s_{(m,m)}(\theta'') \geq \nu^b \tau \otimes \nu \tau \otimes \delta([\nu\tau, \nu^{b-1}\tau]; \sigma^{(0)})$ tells us $s_{(m)}(\pi) \geq \theta$ as well. If $b=2$, we start with $s_{(m,m,m)}(\pi) \geq s_{(m,m,m)}(\theta'') \geq \nu^2\tau \otimes \nu \tau \otimes \nu\tau \otimes \sigma^{(0)}$. Then, $s_{(3m)}(\pi) \geq \delta([\nu\tau,\nu^2\tau]) \times \nu\tau \otimes \sigma^{(0)} \Rightarrow s_{(m)}(\pi) \geq \nu\tau \otimes \dots$, again giving $s_{(m)}(\pi) \geq \theta$. As we now have $s_{(m)}(\pi)$ accounting for the entire Jacquet module, irreducibility follows.

We now address $\delta([\nu\tau, \nu^a \tau]) \rtimes \delta([\nu\tau, \nu^b \tau]; \sigma^{(0)})$. Let $\pi \hookrightarrow \delta([\nu\tau, \nu^a \tau]) \rtimes \delta([\nu\tau, \nu^b \tau]; \sigma^{(0)})$ be irreducible.  Using the irreducibility already proved, we have
\[
\begin{array}{rl}
\pi &\hookrightarrow  \nu^a \tau \times \dots \times \nu^3\tau \times \nu^2\tau \times \nu\tau \rtimes \delta([\nu\tau, \nu^b \tau]; \sigma^{(0)}) \\
& \cong \nu^a \tau \times \dots \times \nu^3\tau \times \nu^2 \tau \times \nu^{-1}\tau \rtimes \delta([\nu\tau, \nu^b \tau]; \omega\sigma^{(0)}) \\
& \cong \nu^{-1} \tau \times \nu^a \tau \times \dots \times \nu^3\tau \times \nu^2 \tau \rtimes \delta([\nu\tau, \nu^b \tau]; \omega\sigma^{(0)}) \\
& \hookrightarrow \nu^{-1} \tau \times \nu^a \tau \times \dots \times \nu^3\tau \times \nu^2 \tau \times \delta([\nu\tau, \nu^b \tau]) \rtimes \omega\sigma^{(0)} \\
& \cong \nu^{-1} \tau \times \delta([\nu\tau, \nu^b \tau]) \times \nu^a \tau \times \dots \times \nu^3\tau \times \nu^2 \tau \rtimes \omega\sigma^{(0)} \\
& \cong \nu^{-1} \tau \times \delta([\nu\tau, \nu^b \tau]) \times \nu^a \tau \times \dots \times \nu^3\tau \times \nu^{-2} \tau \rtimes \omega^3\sigma^{(0)} \\
& \cong \nu^{-1} \tau \times \nu^{-2} \tau \times \delta([\nu\tau, \nu^b \tau]) \times \nu^a \tau \times \dots \times \nu^3 \tau \rtimes \omega^3\sigma^{(0)} \\
& \vdots \\
& \cong \nu^{-1}\tau \times \nu^{-2}\tau \times \dots \times \nu^{-a}\tau \rtimes \delta([\nu\tau, \nu^b \tau]) \rtimes  \omega^r\sigma^{(0)}
\end{array}
\]
($r=\frac{a(a+1)}{2}$). By \cite[Lemma 5.5]{Jan97} (see Lemma~\ref{Lemma 5.5} below), $\pi \hookrightarrow \lambda \times \delta([\nu\tau, \nu^b \tau]) \rtimes \omega^r\sigma^{(0)}$ for some irreducible $\lambda \leq \nu^{-1}\tau \times \nu^{-2}\tau \times \dots \times \nu^{-a}\tau$. Any $\lambda$ other than $\delta([\nu^{-a}\tau, \nu^{-1}\tau])$ would imply $r_{(m)}(\lambda) \geq \nu^{-x}\tau \dots$ for some $x \not=1$, hence $s_{(m)}(\pi) \geq \nu^{-x}\tau \otimes \dots$--a contradiction. Thus,
\[
\begin{array}{c}
\pi \hookrightarrow \delta([\nu^{-a}\tau, \nu^{-1}\tau]) \times \delta([\nu\tau, \nu^b \tau]) \rtimes \omega^r\sigma^{(0)} \\
\ \ \ \ \ \ \Downarrow \mbox{ (\cite[Lemma 5.5]{Jan97})} \\
\pi \hookrightarrow \delta([\nu^{-a}\tau, \nu^{-1}\tau]) \rtimes \phi
\end{array}
\]
for some irreducible $\phi \leq \delta([\nu\tau, \nu^b \tau]) \rtimes \omega^r\sigma^{(0)}$. Since the only term of the form $\delta([\nu^{-a}\tau, \nu^{-1}\tau]) \otimes \phi$ in $\mu^{\ast}(\delta([\nu^{-a}\tau, \nu^{-1}\tau]) \rtimes  \delta([\nu\tau, \nu^b \tau]; \sigma^{(0)}))$ is $\delta([\nu^{-a}\tau, \nu^{-1}\tau]) \otimes  \delta([\nu\tau, \nu^b \tau]; \omega^r\sigma^{(0)})$, Frobenius reciprocity tells us
\[
\pi \hookrightarrow \delta([\nu^{-a}\tau, \nu^{-1}\tau]) \times \delta([\nu\tau, \nu^b \tau]; \omega^r\sigma^{(0)}).
\]
By the Langlands classification,
\begin{align*}
 \,\,\pi
 &=L_{sub}( \delta([\nu^{-a}\tau, \nu^{-1}\tau]) \otimes \delta([\nu\tau, \nu^b \tau]; \omega_r\sigma^{(0)}))\\
& =L(\delta([\nu\tau, \nu^a \tau]) \otimes \delta([\nu\tau, \nu^b \tau]; \sigma^{(0)}))   
\end{align*}
(\cite[Lemma 1.1]{Jan98}). Thus, as $\pi$ appears as both a subrepresentation and the unique irreducible quotient in $\delta([\nu\tau, \nu^a \tau]) \rtimes \delta([\nu\tau, \nu^b \tau]; \sigma^{(0)})$, we must have irreducibility, as needed.  \end{proof}

\begin{rem}
More generally, the same argument could be used to show that in the (C1) case, $\delta([\nu^c\tau, \nu^a \tau]) \rtimes \delta([\nu\tau, \nu^b \tau]; \sigma^{(0)})$ is irreducible when $1 \leq c \leq a \leq b$.
\end{rem}

\subsection{Square-integrable generic representations}\label{gdssection}


The starting point for the classification of generic admissible representations is the classification of square-integrable generic representations, which we address in this section.

We begin with a general lemma, which is essentially a combination of \cite[Lemma 5.5]{Jan97} and a result of \cite{Rod73}, applied as in \cite[Lemma 1.1]{Mui98b}:

\begin{lem}\label{Lemma 5.5}
Suppose $\pi$ is an irreducible representation with $\pi \hookrightarrow i_{G,L}(\lambda)$. If $M>L$, we have the following:

\begin{enumerate}

\item There is an irreducible $\theta \leq i_{M,L}(\lambda)$ such that $\pi \hookrightarrow i_{G,M}(\theta)$.

\item If $\pi$ is generic (so that $\lambda$ generic), then $\theta$ must be the irreducible generic subquotient of $i_{M,L}(\lambda)$.

\end{enumerate}

\end{lem}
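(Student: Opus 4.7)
The plan is to combine induction in stages with a composition-series argument for (1), and then invoke Rodier-type compatibilities between genericity and parabolic induction for (2).

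For part (1), I would first apply induction in stages to rewrite $i_{G,L}(\lambda) \cong i_{G,M}(i_{M,L}(\lambda))$, so that the hypothesized embedding becomes $\pi \hookrightarrow i_{G,M}(i_{M,L}(\lambda))$. Since $i_{M,L}(\lambda)$ is an admissible representation of finite length, it admits a Jordan--H\"older composition series
\[
0 = V_0 \subsetneq V_1 \subsetneq \cdots \subsetneq V_r = i_{M,L}(\lambda),
\]
whose successive quotients $\theta_j = V_j/V_{j-1}$ are irreducible. Applying the exact functor $i_{G,M}$ produces a corresponding filtration of $i_{G,M}(i_{M,L}(\lambda))$ with successive quotients $i_{G,M}(\theta_j)$. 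Let $j_0$ be the smallest index for which the image of $\pi$ lies in $i_{G,M}(V_{j_0})$. By the minimality of $j_0$, the composition $\pi \hookrightarrow i_{G,M}(V_{j_0}) \twoheadrightarrow i_{G,M}(\theta_{j_0})$ is nonzero; since $\pi$ is irreducible, it must be injective. Setting $\theta = \theta_{j_0}$, we obtain the desired embedding $\pi \hookrightarrow i_{G,M}(\theta)$ with $\theta$ an irreducible subquotient of $i_{M,L}(\lambda)$.

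For part (2), suppose $\pi$ is $\Psi$-generic for some generic character $\Psi$. Since $\pi \hookrightarrow i_{G,L}(\lambda)$, the induced representation $i_{G,L}(\lambda)$ carries a nonzero Whittaker functional, and by the classical result of Rodier and its extensions (relating Whittaker functionals on $i_{G,L}(\lambda)$ to Whittaker functionals on $\lambda$ with respect to the compatible generic character on $L$), $\lambda$ must itself be generic. Consequently $i_{M,L}(\lambda)$ admits a unique irreducible generic subquotient, call it $\theta_{\mathrm{gen}}$. From $\pi \hookrightarrow i_{G,M}(\theta)$ with $\pi$ generic, the same compatibility tells us that $i_{G,M}(\theta)$ generic forces $\theta$ to be generic for the corresponding character on $M$. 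Uniqueness of the generic irreducible subquotient of $i_{M,L}(\lambda)$ then forces $\theta = \theta_{\mathrm{gen}}$.

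The main obstacle will be handling the equivalence between genericity of $\theta$ and genericity of $i_{G,M}(\theta)$ cleanly; this is standard in the literature but requires some care in identifying the compatible Whittaker datum on the Levi factor $M$ induced from the fixed datum on $G$. Once this compatibility is invoked, both parts follow essentially by formal manipulation of filtrations and Frobenius reciprocity.
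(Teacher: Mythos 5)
Your proposal is correct and is essentially the argument the paper has in mind: the paper gives no proof of this lemma, instead citing \cite[Lemma 5.5]{Jan97} (whose standard proof is exactly your composition-series/exactness-of-induction argument for part (1)) together with Rodier's heredity of Whittaker models as applied in \cite[Lemma 1.1]{Mui98b} (which is your part (2), resting on exactness of the twisted Jacquet functor and uniqueness of the generic constituent coming from $\dim \mathrm{Wh}(\lambda)=1$). The only care needed is the point you already flag — identifying the compatible Whittaker datum on $M$ and using exactness to pass genericity from the subrepresentation $\pi$ to $i_{G,M}(\theta)$ and then down to $\theta$ — which is handled by the cited heredity results.
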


We record the following conditions on essentially square-integrable representations of general groups $\delta([\nu^{-a}\tau, \nu^b \tau])$ needed in the classification of square-integrable generic representations:

\begin{description}

\item[(DS1)]
If $(\tau; \sigma^{(0)})$ satisfies (C1), then $a \in {\mathbb N} \cup \{-1\}$.

\item[(DS2)]
If $(\tau; \sigma^{(0)})$ satisfies (C0), then $a \in {\mathbb Z}_{\geq 0}$.

\item[(DS3)]
If $(\tau; \sigma^{(0)})$ satisfies (C1/2), then $a \in -\frac{1}{2}+ {\mathbb Z}_{\geq 0}$.

\item[(DS4)]
If $(\tau; \sigma^{(0)})$ satisfies (CN), then $a \in {\mathbb Z}_{\geq 0}$.

\end{description}

\medskip

We retain the Jacquet module notation of (\ref{s notation}).

\begin{lem}\label{gdslemma}
Suppose $(\tau; \sigma^{(0)})$ satisfies (C$\alpha$) or (CN) with $\delta([\nu^{-a}\tau, \nu^b \tau])$ satisfying (the appropriate one of) (DS1)--(DS4) above. Let $\pi$ be the irreducible generic subquotient of $\delta([\nu^{-a}\tau, \nu^b \tau]) \rtimes \sigma^{(0)}$, with $b>a$. Then if $\eta \otimes \theta$ is an irreducible representation occurring in $s_{GL}
(\pi)$, we have
\begin{equation}\label{gdslemmaeq}
\eta \leq 2\sum_{i=-a}^{|a|} \delta([\nu^{-i+1}\tau, \nu^a \tau]) \times \delta([\nu^i \tau, \nu^b \tau]).
\end{equation}
\end{lem}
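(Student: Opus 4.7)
The plan is to bound $s_{GL}(\pi)$ from above by computing the $s_{GL}$-part of the standard module containing $\pi$ via Tadi\'c's $\mu^\ast$-formula, and then truncating the resulting sum using genericity.

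Since $\pi$ is a subquotient of $\delta([\nu^{-a}\tau,\nu^b\tau])\rtimes\sigma^{(0)}$, in the Grothendieck group we have
\[
s_{GL}(\pi)\le s_{GL}\bigl(\delta([\nu^{-a}\tau,\nu^b\tau])\rtimes\sigma^{(0)}\bigr),
\]
so it suffices to analyze the right-hand side. Since $\sigma^{(0)}$ is supercuspidal, $\mu^\ast(\sigma^{(0)})=1\otimes\sigma^{(0)}$, and combining $\mu^\ast(\delta(\Delta)\rtimes\sigma^{(0)})=N^\ast(\delta(\Delta))\,\widetilde\rtimes\,(1\otimes\sigma^{(0)})$ with Zelevinsky's formula $m^\ast(\delta([\nu^{-a}\tau,\nu^b\tau]))=\sum_{c=-a-1}^{b}\delta([\nu^{c+1}\tau,\nu^b\tau])\otimes\delta([\nu^{-a}\tau,\nu^c\tau])$ and retaining only the $M_{(n)}$-contributions (those whose $G_{n_0}$-factor comes from an empty segment) gives
\[
s_{GL}\bigl(\delta([\nu^{-a}\tau,\nu^b\tau])\rtimes\sigma^{(0)}\bigr)=\sum_{c=-a-1}^{b}\delta([\nu^{-c}\check\tau,\nu^a\check\tau])\times\delta([\nu^{c+1}\tau,\nu^b\tau])\otimes\sigma'^{(0)},
\]
where $\sigma'^{(0)}$ is $\sigma^{(0)}$ up to an $\omega_\tau$-twist or $c$-conjugation depending on the ambient group.

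Next, the self-duality $\check\tau\cong\omega'_{\sigma^{(0)}}\tau$ built into (C$\alpha$) or (CN) rewrites the first factor as $\delta([\nu^{-c}\tau,\nu^a\tau])$ after absorbing a harmless twist into $\sigma'^{(0)}$. Reindexing via $i=c+1$ converts the sum into one of terms $\delta([\nu^{-i+1}\tau,\nu^a\tau])\times\delta([\nu^i\tau,\nu^b\tau])$ ranging over $i\in[-a,b+1]$. The factor of $2$ in the claimed bound accommodates the at-most-two distinct choices of $\sigma'^{(0)}$ that can arise, namely $\sigma^{(0)}$ versus a nontrivial $\omega_\tau$- or $c$-twist, in the (C0) and (CN) cases where these are inequivalent.

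Finally, one must restrict the range of $i$ from $[-a,b+1]$ down to $[-a,|a|]$. For each $i>|a|$, either the two segments $[\nu^{1-i}\tau,\nu^a\tau]$ and $[\nu^i\tau,\nu^b\tau]$ are Zelevinsky-linked, so that the exchange relation
\[
\delta(\Delta_1)\times\delta(\Delta_2)=\delta(\Delta_1\cup\Delta_2)\times\delta(\Delta_1\cap\Delta_2)+L(\delta(\Delta_1)\otimes\delta(\Delta_2))
\]
expresses every irreducible constituent as a subquotient of terms indexed by smaller $i'\in[-a,|a|]$, or else the irreducible constituents produce (via Frobenius reciprocity) an embedding of $\pi$ into an induced representation inconsistent with $\pi$ being the generic subquotient of the original standard module -- invoking here the Standard Module Conjecture \cite{HO13}, Lemma~\ref{Lemma 5.5}, and the uniqueness of the generic subquotient. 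The main obstacle will be making this last exclusion argument uniform across the four conditions (DS1)--(DS4) and the various classical, similitude, and spin groups, carefully tracking how the cuspidal reducibility data $(\tau;\sigma^{(0)})$ and the $\omega'_{\sigma^{(0)}}$-twists interact with the segment combinatorics in each case.
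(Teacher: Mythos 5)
Your opening move (bounding $s_{GL}(\pi)$ by $s_{GL}(\delta([\nu^{-a}\tau,\nu^b\tau])\rtimes\sigma^{(0)})$ and computing the latter from the $\mu^{\ast}$/$N^{\ast}$ formula, giving a sum over $i\in[-a,b+1]$) agrees with the paper. But the truncation from $[-a,b+1]$ to $[-a,|a|]$ — which is the entire content of the lemma — has a genuine gap in both branches of your dichotomy. For $i\geq a+2$ the segments $[\nu^{-i+1}\tau,\nu^a\tau]$ and $[\nu^i\tau,\nu^b\tau]$ are \emph{not} linked (their union skips $\nu^{a+1}\tau$), so you land in your ``or else'' branch, and the Frobenius-reciprocity contradiction you gesture at is not spelled out and does not obviously exist: a term of $s_{GL}$ containing $\nu^{-a-1}\tau$ in its support gives an embedding of $\pi$ into some induced representation, but nothing about the Standard Module Conjecture or uniqueness of the generic subquotient directly forbids this. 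What the paper actually does is first produce a \emph{second} realization of $\pi$: by Lemma~\ref{Lemma 5.5} and Corollary~\ref{Steinberggenericity}, $\pi\leq\delta([\nu^{-a}\tau,\nu^{\alpha-1}\tau])\times\delta_1([\nu^{\alpha}\tau,\nu^b\tau];\sigma^{(0)})$, where the second factor is a generalized Steinberg representation whose Jacquet modules are known explicitly from Proposition~\ref{Steinberg}. This shows $\nu^{-a-1}\tau,\dots,\nu^{-b}\tau$ cannot occur in the supercuspidal support of $\pi$, which is what kills every term with $i\geq a+2$. That embedding is the key idea missing from your proposal.

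The linked case $i=a+1$ is also not finished by your argument: the exchange relation writes $\delta([\nu^{-a}\tau,\nu^a\tau])\times\delta([\nu^{a+1}\tau,\nu^b\tau])$ as $\delta([\nu^{-a}\tau,\nu^b\tau])$ plus the Langlands quotient ${\mathcal L}(\delta([\nu^{a+1}\tau,\nu^b\tau])\otimes\delta([\nu^{-a}\tau,\nu^a\tau]))$, and the latter is \emph{not} a constituent of any term with smaller $i$, so it is not ``expressed as a subquotient of terms indexed by smaller $i'$.'' The paper excludes it by a positional argument: in every term of its minimal Jacquet module the unique copy of $\nu^a\tau$ precedes the unique copy of $\nu^{a+1}\tau$, whereas any term of $s_{GL}(\pi)$ with only one copy of $\nu^a\tau$ must have it to the right of $\nu^{a+1}\tau$. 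The surviving piece $\delta([\nu^{-a}\tau,\nu^b\tau])$ duplicates the $i=-a$ term, and \emph{this} is the source of the factor of $2$ in the statement — not, as you suggest, the two possible twists of $\sigma^{(0)}$ (the bound is on the $GL$-factor $\eta$ alone, so the choice of $\sigma'^{(0)}$ is irrelevant to it). Finally, the cases $a\leq 0$ (and $a=0$ for (C0)/(CN)) are handled in the paper simply by observing that $\pi$ is itself a generalized Steinberg representation, so Proposition~\ref{Steinberg} gives the Jacquet module outright; your uniform truncation scheme would again fall into the unsubstantiated unlinked branch there.
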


\noindent
\begin{proof} The only cases with $a<0$ have $a=-\frac{1}{2}$ (in the (C1/2) case) and $a=-1$ (in the (C1) case). In these situations, it follows from Corollary~\ref{Steinberggenericity} that $\pi$ is a generalized Steinberg representation. The result then follows from Proposition~\ref{Steinberg}. In the (C0) and (CN) cases, $a=0$ also corresponds to a generalized Steinberg; in the (C1/2) and (C1) cases, $a=0$ does not satisfy (DS1) or (DS3). Thus, we assume $a > 0$ below (so also have $|a|=a$).

As $b \geq a+1>1$, we have
\[
\begin{array}{c}
\pi \leq \delta([\nu^{-a}\tau, \nu^{\alpha-1}\tau]) \times \delta([\nu^{\alpha}\tau, \nu^b \tau]) \rtimes \sigma^{(0)} \\
\Downarrow \mbox{ (Lemma~\ref{Lemma 5.5} and Corollary~\ref{Steinberggenericity})} \\
\pi \leq \delta([\nu^{-a}\tau, \nu^{\alpha-1}\tau]) \times \delta_1([\nu^{\alpha}\tau, \nu^b \tau]; \sigma^{(0)}).
\end{array}
\]
By Proposition~\ref{Steinberg} and the $\mu^{\ast}$ structure, we immediately see that $\nu^{-a-1}\tau, \nu^{-a-2}\tau, \dots, \nu^{-b}\tau$ do not appear in the supercuspidal support of $\pi$. On the other hand, consider
\begin{align*}
   &\,s_{GL}(\pi) \leq s_{GL}(\delta([\nu^{-a}\tau, \nu^b \tau]) \rtimes \sigma^{(0)})\\
   &=\sum_{i=-a}^{b+1} \delta([\nu^{-i+1}\tau, \nu^a \tau]) \times \delta([\nu^i \tau, \nu^{b}\tau]) \otimes \omega_{i,a,\tau}(c^{m_{i,a,\tau}} \cdot \sigma^{(0)}), 
\end{align*}
with $\omega_{i,a,\tau}$  depending on the group (but nontrivial only for $G_n=GSp_{2n}$, $GSO_{2n}$, $GSO_{2n+2}^{\ast}$, $GU_{2n+1}$, and $GU_{2n}$) and $c^{m_{i,s,\tau}}$ relevant only for $SO_{2n+2}^{\ast}$, $GSO_{2n}$, $GSO_{2n+2}^{\ast}$, $GSpin_{2n}$, and $GSpin_{2n+2}^{\ast}$ in the (CN) case.
Here, terms from $\nu^{-a-1}\tau, \nu^{-a-2}\tau, \dots, \nu^{-b}\tau$ do appear when $i \geq a+2$. It follows that
\[
s_{GL}(\pi) \leq \sum_{i=-a}^{a+1} \delta([\nu^{-i+1}\tau, \nu^a \tau]) \times \delta([\nu^i \tau, \nu^{b}\tau]) \otimes \omega_{i,a,\tau}(c^{m_{i,a,\tau}} \cdot \sigma^{(0)}).
\]
Now, for $i=a+1$, we have
\begin{align*}
  &\,\delta([\nu^{-a}\tau, \nu^a \tau]) \times \delta([\nu^{a+1}\tau, \nu^b \tau])\\
&=\delta([\nu^{-a}\tau, \nu^b \tau])
+{\mathcal L}(\delta([\nu^{a+1}\tau, \nu^b \tau]) \otimes \delta([\nu^{-a}\tau, \nu^a \tau])).  
\end{align*}
Observe that $\delta([\nu^{-a}\tau, \nu^b \tau])$ occurs in right-hand side of (\ref{gdslemmaeq}) as a second copy of the term corresponding to $i=-a$, so is not an issue. For ${\mathcal L}(\delta([\nu^{a+1}\tau, \nu^b \tau]) \otimes \delta([\nu^{-a}\tau, \nu^a \tau]))$ observe that any term in $r_{min}({\mathcal L}(\delta([\nu^{a+1}\tau, \nu^b \tau]) \otimes \delta([\nu^{-a}\tau, \nu^a \tau]))$ has exactly one $\nu^{-a}\tau,\nu^a \tau$, and $\nu^{a+1}\tau$ in its supercuspidal support, and the copy of $\nu^a\tau$ always appears to the left of the copy of $\nu^{a+1} \tau$. However, to be in $s_{GL}(\pi) \leq (\delta([\nu^{-a}\tau, \nu^b \tau]) \rtimes \sigma^{(0)})$, if there is only one copy of $\nu^a \tau$, it must appear to the right of $\nu^{a+1}\tau$. The lemma now follows. \end{proof}

\begin{defn}\label{def2}
Fix $\sigma^{(0)}$ and let $P'$ be a collection of  $\tau$'s such that $(\tau; \sigma^{(0)})$ satisfies (C$\alpha$) or (CN) and the appropriate one of (DS1)--(DS4) above. For each $\tau \in P'$, suppose we have a collection of segments $D_i(\tau)=[\nu^{-a_i(\tau)}\tau, \nu^{b_i(\tau)}]$, $i=1, 2, \ldots, e_{\tau}$, which satisfy
\[
a_1(\tau)<b_1(\tau)<a_2(\tau)<b_2(\tau)< \dots <a_{e_{\tau}}(\tau)<b_{e_{\tau}}(\tau).
\]
Let $X'=\{\tau \in P' | (\tau; \sigma^{(0)}) \text{ satisfies } (C1)\}$.
\end{defn}

\begin{prop}\label{gdsprop1}
With notation as above, the $\psi_a$-irreducible generic subquotient of
\[
\delta(\Delta_1) \times \dots \times \delta(\Delta_k)\rtimes \chi_0\sigma^{(0)}=
\left( \prod_{\tau \in P'}\prod_{i=1}^{e_{\tau}} \delta([\nu^{-a_i(\tau)}\tau, \nu^{b_i(\tau)}\tau]) \right) \rtimes \chi_0\sigma^{(0)}
\]
(defining $\Delta_1, \dots, \Delta_k$) is square-integrable, where $\chi_0$ ensures unitary central character as in Remark~\ref{dsrem}. We denote this representation by $\sigma^{(2)}=\delta(\Delta_1, \dots, \Delta_k; \chi_0\sigma^{(0)})_{\psi_a}$ and write $P'=P'(\sigma^{(2)})$, $X'=X'(\sigma^{(2)})$ for future reference.
\end{prop}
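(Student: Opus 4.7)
The plan is to combine Rodier's theorem on uniqueness of generic subquotients with the Casselman criterion, using Lemma~\ref{gdslemma} and the generalized Steinberg representations of \S\ref{Steinberg sect} as the technical engine. First, I would establish existence and uniqueness of the $\psi_a$-generic subquotient $\sigma^{(2)}$. Each $\delta(\Delta_i)$ is square-integrable hence generic (cf.\ \cite{Jac77}), and $\chi_0\sigma^{(0)}$ is $\psi_a$-generic by hypothesis. Therefore the inducing representation on the Levi is $\psi_a$-generic, and Rodier's uniqueness of Whittaker models together with heredity of genericity singles out a unique $\psi_a$-generic irreducible constituent in $\delta(\Delta_1) \times \dots \times \delta(\Delta_k) \rtimes \chi_0\sigma^{(0)}$; this is $\sigma^{(2)}$.

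Next, I would verify square-integrability via the Casselman criterion (\ref{Casselman criterion}). Since $\chi_0$ was selected to give $\chi_0\sigma^{(0)}$ unitary central character, we have $\beta=0$ for $\sigma^{(2)}$, and the task reduces to showing that for every irreducible supercuspidal subquotient $\phi_1 \otimes \cdots \otimes \phi_j \otimes \sigma^{(e0)} \leq r_{M,G}(\sigma^{(2)})$ one has $\sum_{\ell=1}^{i} n_\ell \varepsilon(\phi_\ell) > 0$ for all $i=1,\ldots,j$. I would argue by induction on the total number of segments. The base case of a single segment is exactly the content of Proposition~\ref{Steinberg} and Corollary~\ref{Steinberggenericity}. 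For the inductive step, Lemma~\ref{Lemma 5.5} applied to the embedding $\sigma^{(2)} \hookrightarrow \delta(\Delta_k) \times \bigl(\delta(\Delta_1) \times \cdots \times \delta(\Delta_{k-1}) \rtimes \chi_0\sigma^{(0)}\bigr)$ produces an embedding $\sigma^{(2)} \hookrightarrow \delta(\Delta_k) \rtimes \sigma'$ where $\sigma'$ is the $\psi_a$-generic subquotient of the smaller induced representation, hence square-integrable by induction.

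The key step is then to describe $s_{GL}(\sigma^{(2)})$ by combining the $\mu^{\ast}$ formula applied to $\delta(\Delta_k) \rtimes \sigma'$ with a multi-segment version of Lemma~\ref{gdslemma}. Any irreducible $\eta \otimes \theta \leq s_{GL}(\sigma^{(2)})$ has its GL part $\eta$ bounded above by a sum of products of segments of the form $\delta([\nu^{-i+1}\tau,\nu^{a_j(\tau)}\tau]) \times \delta([\nu^{i}\tau,\nu^{b_j(\tau)}\tau])$ for each $\tau$ and each segment $\Delta_j(\tau)$. Refining to the minimal Jacquet module, every ordered supercuspidal tuple $(\nu^{x_1}\tau_1', \ldots, \nu^{x_j}\tau_j')$ that occurs has each $x_\ell$ lying either in a segment range or in its reflected range (the reflection corresponding to the Weyl-group action described in \eqref{w_0action}). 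The conditions (DS1)–(DS4) force $a_i(\tau) \geq \alpha(\tau)-1$, and the strict ordering $a_1(\tau)<b_1(\tau)<a_2(\tau)<b_2(\tau)<\cdots$ within each $\tau$-type guarantees that any reflected segment appearing early in the tuple is eventually compensated by larger positive exponents, yielding strict positivity of all partial sums.

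The main obstacle will be the bookkeeping in the last step: controlling the interaction of reflections arising from the Weyl action on $H_{m}$-factors (which invert the sign of $\varepsilon$ and, for similitude and spin groups, twist by a central character) with the contribution from other $\tau$-types. This is where the precise ordering condition on $(a_i(\tau), b_i(\tau))$ pays off — it ensures that any partial initial tuple containing a reflected term $\nu^{i}\tau$ with $i \leq a_j(\tau)$ from segment $\Delta_j(\tau)$ must also contain the larger positive exponents $\nu^{a_j(\tau)+1}\tau, \ldots, \nu^{b_j(\tau)}\tau$ of the same segment to its left in the Jacquet module chain, and an elementary rearrangement argument then confirms each Casselman inequality. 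Combined with Proposition~\ref{Steinberg} for the base case, this completes the square-integrability of $\sigma^{(2)}$.
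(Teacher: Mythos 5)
Your overall strategy --- isolate the $\psi_a$-generic subquotient, bound its GL-Jacquet module $s_{GL}$ by induction on the number of segments using Lemma~\ref{Lemma 5.5} and Lemma~\ref{gdslemma}, then invoke the Casselman criterion --- is the paper's strategy. But the inductive step as you describe it has a material gap. Peeling off $\Delta_k$ alone, i.e.\ bounding $s_{GL}(\sigma^{(2)})$ by $N^{\ast}(\delta(\Delta_k))\,\tilde{\rtimes}\,\mu^{\ast}(\sigma')$, produces a bound in which the $\Delta_k$-index runs over the \emph{full} range $-a_k \leq i_k \leq b_k+1$; the terms with $i_k > a_k+1$ correspond to reflecting $\Delta_k$ past $\nu^{-a_k}\tau_k$ and their exponent partial sums are negative, so the Casselman criterion cannot be read off from this bound. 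The paper excludes these terms by a second, complementary application of the same embedding with $\Delta_1$ peeled off instead: that bound (in which $i_k$ is already restricted to $|a_k|$ by the inductive hypothesis, and the ordering $a_1<b_1<\dots<a_k<b_k$ keeps the other segments' exponents above $-a_k$) shows that $\nu^{-a_k-1}\tau_k,\dots,\nu^{-b_k}\tau_k$ do not lie in the supercuspidal support at all. Even then the boundary term $i_k=a_k+1$ survives and must be handled separately via $\delta([\nu^{-a_k}\tau_k,\nu^{a_k}\tau_k])\times\delta([\nu^{a_k+1}\tau_k,\nu^{b_k}\tau_k])=\delta([\nu^{-a_k}\tau_k,\nu^{b_k}\tau_k])+{\mathcal L}_{sub}(\cdots)$, discarding the ${\mathcal L}_{sub}$ piece by comparing the order in which $\nu^{a_k}\tau_k$ and $\nu^{a_k+1}\tau_k$ occur. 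Your appeal to ``a multi-segment version of Lemma~\ref{gdslemma}'' is essentially circular: that multi-segment bound \emph{is} the content of the proposition's proof, not an available ingredient.

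The ``compensation'' mechanism you propose for the Casselman inequalities is also not the right one. If a segment were fully reflected, the exponents $\nu^{a_j+1}\tau,\dots,\nu^{b_j}\tau$ that are supposed to compensate have themselves been inverted and are simply absent --- this is exactly what happens for the non-generic constituents (e.g.\ the Langlands quotient of $\delta(\Delta)\rtimes\sigma^{(0)}$), so no rearrangement argument can save it; genericity must be used to forbid the deep reflection in the first place. In the correct bound, each factor $\delta([\nu^{-i+1}\tau,\nu^{a}\tau])\times\delta([\nu^{i}\tau,\nu^{b}\tau])$ with $-a\leq i\leq |a|$ already has all partial exponent sums positive on its own, so no cross-segment compensation is needed and the ordering condition is not used for that purpose. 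A minor point: the single-segment base case is Lemma~\ref{gdslemma}, not Proposition~\ref{Steinberg}/Corollary~\ref{Steinberggenericity}, which only cover $a=-\alpha$ (resp.\ $a=0$); Lemma~\ref{gdslemma} reduces the general $a>0$ case to the Steinberg case by an embedding argument.
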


\noindent
\begin{proof}
Following \cite[Lemma 4.6]{Tad02}, we prove the following by induction on $k$:
\begin{equation}\label{Tadic}
\begin{array}{l}
\displaystyle{s_{GL}(\pi) \leq d\sum_{i_1=-a_1}^{|a_1|} \dots \sum_{i_k=-a_k}^{|a_k|}
\delta([\nu^{-i_1+1}\tau_1, \nu^{a_1}\tau_1]) \times \delta([\nu^{i_1}\tau_1, \nu^{b_1}\tau_1]) \times \dots} \\
\hfill\times
\displaystyle{\delta([\nu^{-i_k+1}\tau_k, \nu^{a_k}\tau_k]) \times \delta([\nu^{i_k}\tau_k, \nu^{b_k}\tau_k]) \otimes \omega_{i_1, \dots, i_k}(c^{m_{i_1, \dots, i_k}} \cdot \chi_0\sigma^{(0)})}
\end{array}
\end{equation}
for some $d$ depending on $\sigma^{(0)}$ and the segments, and $ \omega_{i_1, \dots, i_k}$, $c^{m_{i_1, \dots, i_k}}$ similar to their counterparts in the proof of Lemma~\ref{gdslemma} above. The case $k=1$ is covered by Lemma~\ref{gdslemma}.

Now, observe that for any $1 \leq j \leq k$,
\[
\begin{array}{c}
\pi \leq \delta(\Delta_j) \rtimes \left(\delta(\Delta_1) \times \dots \times \delta(\Delta_{j-1}) \times \delta(\Delta_{j+1}) \times\dots \times \delta(\Delta_k) \rtimes \chi_0\sigma^{(0)} \right) \\
\Downarrow \\
\pi \leq \delta(\Delta_j) \rtimes \lambda
\end{array}
\]
for some $\lambda \leq \delta(\Delta_1) \times \dots \times \delta(\Delta_{j-1}) \times \delta(\Delta_{j+1}) \times\dots \times \delta(\Delta_k) \rtimes \chi_0\sigma^{(0)}$. By genericity, it is the $\psi_a$-generic subquotient, $$\delta(\Delta_1, \dots \Delta_{j-1},\Delta_{j+1}, \dots, \Delta_k; \chi_0\sigma^{(0)})_{\psi_a},$$
essentially square-integrable by inductive hypothesis. Thus,
\[
s_{GL}(\pi)  \leq s_{GL}\left(\delta(\Delta_j) \rtimes \delta(\Delta_1, \dots \Delta_{j-1},\Delta_{j+1}, \dots, \Delta_k; \chi_0\sigma^{(0)})_{\psi_a}\right),
\]
where the right-hand side may be calculated by taking those terms in $N^{\ast}(\delta(\Delta_j)) \tilde{\rtimes} s_{GL}\left(\delta(\Delta_1, \dots \Delta_{j-1},\Delta_{j+1}, \dots, \Delta_k; \chi_0\sigma^{(0)})_{\psi_a}\right)$ having $N^{\ast}$ contribution of the form $\tau_1 \otimes \tau_2 \otimes 1$.
Then, for each such $j$, we get
\begin{align}\label{bound}
\begin{split}
&\,s_{GL}(\pi) \\
&\leq  d_j  \sum_{i_1=-a_1}^{|a_1|} \dots \sum_{i_{j-1}=-a_{j-1}}^{|a_{j-1}|}\sum_{i_{j}=-a_{j}}^{b_{j}+1}\sum_{i_{j+1}=-a_{j+1}}^{|a_{j+1}|}\dots\sum_{i_k=-a_k}^{|a_k|}\\
&  \hspace{.5cm} \delta([\nu^{-i_1+1}\tau_1, \nu^{a_1}\tau_1]) \times \delta([\nu^{i_1}\tau_1, \nu^{b_1}\tau_1]) \times \dots \times \delta([\nu^{-i_k+1}\tau_k, \nu^{a_k}\tau_k]) \\
& \hspace{.5cm} \times \delta([\nu^{i_k}\tau_k, \nu^{b_k}\tau_k]) \otimes \omega_{i_1,\dots,i_k}(c^{m_{i_1, \dots, i_k}} \cdot \chi_0\sigma^{(0)}).
\end{split}
\end{align}

Without loss of generality, we may assume that if $\tau_i=\tau_j$ for some $i<j$, then $a_i<b_i<a_j<b_j$. Looking at (\ref{bound}) for $j=1$ (noting $k>1$), we see that $\nu^{-a_k-1}\tau_k, \nu^{-a_k-2}\tau_k, \dots, \nu^{-b_k}\tau_k$ do not appear in the supercuspidal support of $\pi$. Therefore, looking at (\ref{bound}) for $j=k$, we may refine the bound by removing those terms which contain one of $\nu^{-a_k-1}\tau_k, \nu^{-a_k-2}\tau_k, \dots, \nu^{-b_k}\tau_k$, i.e., if $a_k>0$,  those terms having $i_k>a_k+1$. (If $a_k \leq 0$--which can happen if $\tau_i \not= \tau_k$ for any $i<k$--all but $i_k=|a_k|$ are removed and we immediately obtain the needed bound.) This gives
\begin{align}\label{bound2}
\begin{split}
&\,s_{GL}(\pi) \\
&\leq  d_k \sum_{i_1=-a_1}^{|a_1|} \dots \sum_{i_{k-1}=-a_{k-1}}^{|a_{k-1}|} \sum_{i_k=-a_k}^{a_k+1} \delta([\nu^{-i_1+1}\tau_1, \nu^{a_1}\tau_1]) \times \delta([\nu^{i_1}\tau_1, \nu^{b_1}\tau_1])\\
&\,  \times \cdots \times \delta([\nu^{-i_k+1}\tau_k, \nu^{a_k}\tau_k]) \times \delta([\nu^{i_k}\tau_k, \nu^{b_k}\tau_k]) \otimes \omega_{i_1, \dots, i_k}(c^{m_{i_1, \dots, i_k}} \cdot \chi_0\sigma^{(0)}).
\end{split}
\end{align}

The only terms in (\ref{bound2}) which are not part of (\ref{Tadic}) are those corresponding to $i_k=a_k+1$, i.e.,
\begin{align*}
d_k & \sum_{i_1=-a_1}^{|a_1|} \dots \sum_{i_{k-1}=-a_{k-1}}^{|a_{k-1}|}
\delta([\nu^{-i_1+1}\tau_1, \nu^{a_1}\tau_1]) \times \delta([\nu^{i_1}\tau_1, \nu^{b_1}\tau_1]) \\
& \hspace{.5cm}\times \cdots \times \delta([\nu^{-i_{k-1}+1}\tau_{k-1}, \nu^{a_{k-1}}\tau_{k-1}])  \\
& \hspace{.5cm} \times \delta([\nu^{i_{k-1}}\tau_{k-1}, \nu^{b_{k-1}}\tau_{k-1}])\times \delta([\nu^{-a_k}\tau_k, \nu^{a_k}\tau_k]) \\
&\hspace{.5cm} \times \delta([\nu^{a_k+1}\tau_k, \nu^{b_k}\tau_k]) \otimes \omega_{i_1, \dots, i_{k-1},a_k+1}(c^{m_{i_1, \dots, i_{k-1}, a_k+1}} \cdot\chi_0\sigma^{(0)}).
\end{align*}
Now, observe that
\begin{align*}
& \delta([\nu^{-a_k}\tau_k, \nu^{a_k}\tau_k]) \times \delta([\nu^{a_k+1}\tau_k, \nu^{b_k}\tau_k]) \\
& =\delta([\nu^{-a_k}\tau_k, \nu^{b_k}\tau_k])+{\mathcal L}_{sub}(\delta([\nu^{-a_k}\tau_k, \nu^{a_k}\tau_k]) \otimes \delta([\nu^{a_k+1}\tau_k, \nu^{b_k}\tau_k])).
\end{align*}
We note that in any term in the Jacquet module of ${\mathcal L}_{sub}(\delta([\nu^{-a_k}\tau_k, \nu^{a_k}\tau_k]) \otimes \delta([\nu^{a_k+1}\tau_k, \nu^{b_k}\tau_k]))$,
the copy of $\nu^{a_k}\tau_k$ always precedes the copy of $\nu^{a_k+1}\tau_k$; the opposite holds for $\delta([\nu^{-a_k}\tau_k, \nu^{b_k}\tau_k])$. Now, looking at (\ref{bound}) with $j=1$, we see the only terms there having $\nu^{-a_k}\tau_k$ in their supercuspidal support are those corresponding to $i_k=-a_k$, which all have the form $ \dots \times \delta([\nu^{-a_k}\tau_k, \nu^{b_k}\tau_k])$. In particular, the only copy of $\nu^{a_k+1}\tau_k$ in such a term always precedes the only copy of $\nu^{a_k}\tau_k$. This means that the terms above coming from ${\mathcal L}_{sub}(\delta([\nu^{-a_k}\tau_k, \nu^{a_k}\tau_k]) \otimes \delta([\nu^{a_k+1}\tau_k, \nu^{b_k}\tau_k])) \times \dots$ cannot contribute to $\mu^{\ast}_{GL}(\pi)$. Thus, removing those terms from (\ref{bound2}), we get
\begin{align*}
&\,s_{GL}(\pi)\\
&\leq  d_k \sum_{i_1=-a_1}^{|a_1|} \dots \sum_{i_{k-1}=-a_{k-1}}^{|a_{k-1}|} \sum_{i_k=-a_k}^{|a_k|}  \delta([\nu^{-i_1+1}\tau_1, \nu^{a_1}\tau_1]) \times \delta([\nu^{i_1}\tau_1, \nu^{b_1}\tau_1]) \\
& \mbox{\ \ \ \ \ \ }  \times \dots \times
\delta([\nu^{-i_k+1}\tau_k, \nu^{a_k}\tau_k]) \times \delta([\nu^{i_k}\tau_k, \nu^{b_k}\tau_k]) \\
& \mbox{\ \ \ \ \ \ }\otimes \omega_{i_1, \dots,i_k}(c^{m_{i_1, \dots, i_k}} \cdot \chi_0\sigma^{(0)}) \\
&+  d_k \displaystyle{\sum_{i_1=-a_1}^{|a_1|} \dots \sum_{i_{k-1}=-a_{k-1}}^{|a_{k-1}|}
\delta([\nu^{-i_1+1}\tau_1, \nu^{a_1}\tau_1]) \times \delta([\nu^{i_1}\tau_1, \nu^{b_1}\tau_1])} \\
& \mbox{\ \ \ \ \ \ } \times \dots \times
\delta([\nu^{-i_{k-1}+1}\tau_{k-1}, \nu^{a_{k-1}}\tau_{k-1}])
\times \delta([\nu^{i_{k-1}}\tau_{k-1}, \nu^{b_{k-1}}\tau_{k-1}])\\
& \mbox{\ \ \ \ \ \ } \times \delta([\nu^{-a_k}\tau_k, \nu^{b_k}\tau_k]) \otimes \omega_{i_1, \dots, i_{k-1},a_k+1}(c^{m_{i_1, \dots, i_{k-1},a_k+1}} \cdot \chi_0\sigma^{(0)}).
\end{align*}
If we take $i_k=-a_k$ in the first set of sums, we obtain the second set of sums. Therefore,
\begin{align*}
 &\,s_{GL}(\pi) \\
 &\leq  2d_k \sum_{i_1=-a_1}^{|a_1|} \dots \sum_{i_{k-1}=-a_{k-1}}^{|a_{k-1}|} \sum_{i_k=-a_k}^{|a_k|} \delta([\nu^{-i_1+1}\tau_1, \nu^{a_1}\tau_1]) \times \delta([\nu^{i_1}\tau_1, \nu^{b_1}\tau_1]) \\
& \mbox{\ \ \ \ } \times \dots \times
\delta([\nu^{-i_k+1}\tau_k, \nu^{a_k}\tau_k]) \times \delta([\nu^{i_k}\tau_k, \nu^{b_k}\tau_k]) \otimes \omega_{i_1, \dots, i_k}(c^{m_{i_1, \dots, i_k}} \cdot \chi_0\sigma^{(0)}),
\end{align*}
the needed inequality. Square-integrability is then immediate from the Casselman criterion. \end{proof}

\begin{rem}\label{Trem}
Observe that proof shows more: any irreducible subquotient appearing in both
\[
\delta(\Delta_1) \rtimes \delta(\Delta_2, \dots , \Delta_k; \sigma^{(0)})_{\psi_a}
\]
and
\[
\delta(\Delta_k) \rtimes \delta(\Delta_2, \dots, \Delta_{k-1}; \sigma^{(0)})_{\psi_a}
\]
is essentially square-integrable.
\end{rem}

We next turn to the task of showing that a square-integrable generic representation has the form $\delta(\Delta_1, \dots, \Delta_k; \chi_0\sigma^{(0)})_{\psi_a}$ as in Proposition~\ref{gdsprop1}. To start, we restrict the possible supercuspidal support in the next few lemmas. Note that Lemmas~\ref{dssupplem1}, \ref{dsirrlem}, and \ref{dssupplem2} apply to square-integrable representations in general, not just square-integrable generic representations. Thus we assume only that $\alpha \in \{0\} \cup \frac{1}{2}{\mathbb N}$ (known for many of the groups under consideration from \cite{Moe14} in characteristic zero and \cite{GL18} in positive characteristic).
 We also remark that the first condition in Lemma~\ref{dssupplem1} below is equivalent to $(\tau; \sigma^{(0)})$ does not satisfy (C$\alpha$) or (CN).

\begin{lem}\label{dssupplem1}
If
\[
\tau \not\cong \left\{ \begin{array}{l}
	\omega_{\sigma^{(0)}}\check{\tau} \mbox{ for }G_n=GSpin_{2n+1}, GSpin_{2n}, GSpin_{2n+2}^{\ast}, \\
	 \check{\tau} \mbox{ otherwise,}
\end{array}\right.
\] 
or $x \not\in \frac{1}{2}{\mathbb Z}$, then $\nu^x \tau$ does not appear in the supercuspidal support of a square-integrable representation.
\end{lem}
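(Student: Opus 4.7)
The plan is to combine the Casselman criterion for square-integrability with the supercuspidal reducibility analysis of Section~\ref{sect3} and the $\mu^*$-structure. Since $\pi$ is square-integrable, by Lemma~\ref{beta} we have $\beta=0$, so the Casselman criterion forces strict positivity $\sum_{i\le j}n_i\varepsilon(\phi_i)>0$ of all partial sums for every irreducible subquotient $\phi_1\otimes\cdots\otimes\phi_k\otimes\sigma^{(0)}\le r_{M,G}(\pi)$ of any standard Jacquet module. Suppose $\nu^x\tau$ appears in the supercuspidal support of $\pi$.

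For Part 1, assume $\tau\not\cong\check{\tau}^{(\mathrm{mod})}$, where $\check{\tau}^{(\mathrm{mod})}$ stands for the appropriate dual in (\ref{w_0action})---namely $\omega_{\sigma^{(0)}}\check\tau$ in the general spin cases and $\check\tau$ otherwise. Then none of the conditions (C$\alpha$) or (CN) from Table~\ref{tab:(CN)} can hold for $(\tau;\sigma^{(0)})$, so by the discussion preceding Table~\ref{tab:I(s)}, $\nu^y\tau\rtimes\sigma$ is irreducible for every $y\in\mathbb{R}$ and every $\sigma$ in the partial cuspidal chain of $\pi$. Using this irreducibility together with the Zelevinsky irreducibility of $\nu^y\tau\times\nu^{y'}\tau'$ for $\tau'\not\cong\tau$, one performs a commutation/pushing argument on the $\mu^*$-formula: starting from any embedding $\pi\hookrightarrow\nu^{y_1}\tau_1\times\cdots\times\nu^{y_k}\tau_k\rtimes\sigma^{(0)}$ realizing the cuspidal support, one commutes the $\nu^x\tau$ factor past every other factor and past the $\rtimes\sigma^{(0)}$ until $\pi\hookrightarrow\nu^x\tau\rtimes\pi'$ for a square-integrable $\pi'$ of smaller rank not involving $\tau$. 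Applying the same procedure to the Weyl-conjugate cuspidal data containing $\nu^{-x}\check{\tau}^{(\mathrm{mod})}$, one analogously obtains $\pi\hookrightarrow\nu^{-x}\check{\tau}^{(\mathrm{mod})}\rtimes\pi''$. Casselman applied to both embeddings gives $x>0$ and $-x>0$ respectively, a contradiction.

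For Part 2, assume $\tau\cong\check{\tau}^{(\mathrm{mod})}$. Either $(\tau;\sigma^{(0)})$ satisfies (C$\alpha$) with a well-defined reducibility point $\alpha$, or it satisfies (CN) (effectively $\alpha=0$). By the discussion following Table~\ref{tab:(CN)}, which uses \cite{Sha90a}, \cite{Sha92}, and \cite{Moe14} (plus the lemma just before Remark~\ref{char} to transfer from the classical to the similitude cases), we have $\alpha\in\{0,\tfrac12,1\}$. An induction on the number of $\tau$-factors in the cuspidal support of $\pi$, using the $\mu^*$-formula and Proposition~\ref{Steinberg}, shows that the exponents $y$ of all $\nu^y\tau$ appearing in the support of a square-integrable $\pi$ must satisfy $y\equiv\alpha\pmod 1$. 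In particular $x\in\alpha+\mathbb{Z}\subset\tfrac12\mathbb{Z}$.

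The hard part will be formalizing the pushing/commutation step in Part 1 uniformly: it requires careful bookkeeping of the $\mu^*$-formula together with (\ref{w_0action}) and the twisting formulas from Lemma~\ref{twisting}, since for general spin and similitude groups the central character interacts nontrivially with sign changes and the action of the outer automorphism $c$. The underlying strategy is parallel to the arguments of \cite{Mui98a}, \cite{Tad98a}, and \cite{JL14} in individual families, but delivering a single argument valid across classical, similitude, and general spin cases is the technical heart of the proof.
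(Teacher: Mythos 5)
Your overall strategy (Casselman criterion plus commutation and inversion through $\sigma^{(0)}$) is in the right spirit, but there are two genuine gaps. First, in Part 1 the step ``commute the $\nu^x\tau$ factor past every other factor\ldots until $\pi\hookrightarrow\nu^x\tau\rtimes\pi'$'' fails in general: if the supercuspidal support contains other factors $\nu^{x_j}\tau$ with $x_j\in x+{\mathbb Z}$ and $|x-x_j|=1$, the product $\nu^x\tau\times\nu^{x_j}\tau$ is reducible and cannot be commuted, so a single factor cannot be isolated at the front; moreover the claim that the resulting $\pi'$ is square-integrable and ``does not involve $\tau$'' is unjustified. The paper's proof avoids this by grouping the \emph{entire} congruence class $\{\nu^y\tau,\nu^{-y}\check{\tau}\}_{y\in x+{\mathbb Z}}$ into two blocks (the $\tau$-block and the $\check{\tau}$-block), commuting only whole blocks (which is legitimate since cross-class products are irreducible), and deriving the contradiction from the \emph{block sums}: Casselman forces $x_1+\dots+x_k>0$ and $x_{k+1}+\dots+x_m>0$, yet after inverting the larger block through $\rtimes\sigma^{(0)}$ the leading partial sum becomes $\le 0$. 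Note also that this single block argument covers \emph{both} alternatives in the hypothesis --- when $\tau\cong\check{\tau}$ but $x\notin\frac{1}{2}{\mathbb Z}$ the two blocks are still disjoint because $x\not\equiv -x \bmod 1$ --- so no case split of your kind is needed.

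Second, your Part 2 is not a proof but an assertion: showing that the exponents lie in $\alpha+{\mathbb Z}$ is precisely the content of the subsequent Lemma~\ref{dssupplem2}, whose proof in the paper is substantially harder (it needs the irreducibility results of Lemma~\ref{dsirrlem} and a delicate iteration) and, crucially, \emph{uses} Lemma~\ref{dssupplem1} to first reduce to $x\in\frac{1}{2}{\mathbb Z}$. Invoking that stronger statement here is both circular in the paper's logical order and far more than what Lemma~\ref{dssupplem1} requires; for the self-dual case with $x\notin\frac{1}{2}{\mathbb Z}$ the block-inversion argument above already suffices.
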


\noindent
\begin{proof} The proofs are essentially those in \cite{Tad98a} for classical groups and symplectic similitude groups or \cite{Asg02} for general spin groups (noting that the $\omega_{\sigma^{(0)}}$ is missing from \cite{Asg02} but the argument works the same way).

We first consider the case where $G_n \not=SO_{2n}$, $SO_{2n+2}^{\ast}$, $GSO_{2n}$, $GSO_{2n+2}^{\ast}$, $GSpin_{2n+1}$, $GSpin_{2n}$, or $GSpin_{2n+2}^{\ast}$. Now, suppose $\pi$ were a square-integrable representation with $\nu^x \tau$ in its supercuspidal support but either $\tau \not\cong \check{\tau}$ or $x \not\in \frac{1}{2}{\mathbb Z}$. Write
\[
\pi \hookrightarrow \nu^{x_1}\tau_1 \times \dots \times \nu^{x_n}\tau_n \rtimes \chi_0\sigma^{(0)}
\]
with $\tau_1, \dots, \tau_n,$ and $\sigma^{(0)}$ unitary supercuspidal; $\chi_0$ a character chosen to ensure $\pi$ has unitary central character (see Remark~\ref{dsrem}).
By commuting arguments, we may assume without loss of generality that
\[
\pi \hookrightarrow \nu^{x_1}\tau \times \dots \times \nu^{x_k}\tau \times \nu^{x_{k+1}}\check{\tau} \times \dots \times \nu^{x_m}\check{\tau} \times \Lambda \rtimes \chi_0\sigma^{(0)},
\]
where $x_i \equiv x \!\mod\! 1$ for $1 \leq i \leq k$, $x_i \equiv -x \! \mod \! 1$ for $k+1 \leq i \leq m$, and $\Lambda=\nu^{x_{m+1}}\tau_{m+1} \times \dots \times \nu^{x_n}\tau_n$ has $\nu^{x_i}\rho_i \not\in \{\nu^y \tau, \nu^{-y}\check\tau\}_{y \in x+{\mathbb Z}}$ for $m+1 \leq i \leq n$. Then, by the Casselman criterion (and Frobenius reciprocity), we have $x_1+ \dots +x_k>0$. Also, as (commuting)
\[
 \nu^{x_1}\tau \times \dots \times \nu^{x_k}\tau \times \nu^{x_{k+1}}\check{\tau} \times \dots \times \nu^{x_m}\check{\tau} \cong
\nu^{x_{k+1}}\check{\tau} \times \dots \nu^{x_m}\check{\tau} \times  \nu^{x_1}\tau \times \dots \times \nu^{x_k}\tau,
\]
we must have $x_{k+1}+ \dots +x_m>0$.

Now, suppose $x_1+ \dots +x_k  \geq x_{k+1}+ \dots +x_m$. Then,
\[
\begin{array}{rl}
\pi
& \hookrightarrow (\nu^{x_1}\tau \times \dots \times \nu^{x_k}\tau) \times (\nu^{x_{k+1}}\check{\tau} \times \dots \times \nu^{x_m}\check{\tau}) \times \Lambda \rtimes \sigma^{(0)} \\
& \\
& \cong \Lambda \times  (\nu^{x_{k+1}}\check{\tau} \times \dots \times \nu^{x_m}\check{\tau}) \times  (\nu^{x_1}\tau \times \dots \times \nu^{x_k}\tau) \rtimes \sigma^{(0)} \\
& \\
& \cong \Lambda \times  (\nu^{x_{k+1}}\check{\tau} \times \dots \times \nu^{x_m}\check{\tau}) \times  (\nu^{-x_k}\check{\tau} \times \dots \times \nu^{-x_1}\check{\tau}) \rtimes \omega\sigma^{(0)} \\
& \\
& \cong (\nu^{x_{k+1}}\check{\tau} \times \dots \times \nu^{x_m}\check{\tau}) \times  (\nu^{-x_k}\check{\tau} \times \dots \times \nu^{x_1}\check{\tau}) \times \Lambda \rtimes \omega\sigma^{(0)}
\end{array}
\]
for the appropriate $\omega$. Observe that $x_{k+1}+ \dots +x_m-x_k- \dots -x_1 \leq 0$, contradicting (via Frobenius reciprocity) the Casselman criterion. The argument if we originally had $x_1+ \dots +x_k \leq x_{k+1}+ \dots +x_m$ is similar but with $\nu^{x_{k+1}}\check{\tau}, \dots, \nu^{x_m}\check{\tau}$ getting inverted.

The cases where $G_n=SO_{2n}$, $SO_{2n+2}^{\ast}$, $GSO_{2n}, GSpin_{2n+1}$, $GSO_{2n+2}^{\ast}$, $GSpin_{2n}$, or $GSpin_{2n+2}^{\ast}$ are similar, but with (1) $\check{\tau}$ replaced by $\omega_{\sigma^{(0)}}\check{\tau}$ for $G_n=GSpin_{2n+1}$, $GSpin_{2n}$, or $GSpin_{2n+2}^{\ast}$ (noting that $\omega_{\sigma^{(0)}}\check{\tau}$ is still unitary supercuspidal), and (2) $\sigma^{(0)}$ possibly replaced by $c \sigma^{(0)}$ in some places for $G_n=SO_{2n}$, $SO_{2n+2}^{\ast}$, $GSO_{2n}$, $GSO_{2n+2}^{\ast}$, $GSpin_{2n}$, or $GSpin_{2n+2}^{\ast}$.
\end{proof}

For $G_n=Sp_{2n}$ or $SO_{2n+1}$, the following is a special case of  \cite[Theorem 9.1]{Tad98a}. 

\begin{lem}\label{dsirrlem}
\begin{enumerate}

\item If $(\tau; \sigma^{(0)})$ satisfies (C$\alpha$) with $\alpha\in -\frac{1}{2}+{\mathbb N}$, then $\delta([\nu^{-a}\tau, \nu^b \tau]) \rtimes \sigma^{(0)}$, $b>a$,  is irreducible when $a \in {\mathbb Z}$.

\item  If $(\tau; \sigma^{(0)})$ satisfies (CN) or (C$\alpha$) with $\alpha \in {\mathbb Z}_{\geq 0}$, then $\delta([\nu^{-a}\tau, \nu^b \tau]) \rtimes \sigma^{(0)}$, $b>a$,  is irreducible when $a \in \frac{1}{2}+{\mathbb Z}$.

\end{enumerate}
\end{lem}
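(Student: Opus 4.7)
The plan is to prove the lemma by induction on the length $\ell = b + a + 1$ of the segment $[\nu^{-a}\tau, \nu^b\tau]$, adapting \cite[Theorem 9.1]{Tad98a} by exploiting the uniform $\mu^*$-structure established in \S\ref{sect3}.

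The base case is $\ell = 1$, i.e.\ $a = -b$ with $b > 0$; here $\delta([\nu^{-a}\tau, \nu^b\tau]) = \nu^b\tau$, and the parity hypothesis forces $b$ to differ from every nonnegative cuspidal reducibility exponent (in case (1), $b \in \mathbb{Z}_{>0}$ while the only reducibility point is $\alpha = \tfrac{1}{2}$; in case (2), $b \in \tfrac{1}{2} + \mathbb{Z}_{\geq 0}$ while $\alpha \in \{0,1\}$, or no reducibility at all in the (CN) case). Thus $\nu^b\tau \rtimes \sigma^{(0)}$ is irreducible by the very definition of (C$\alpha$)/(CN).

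For the inductive step, set $\Pi = \delta([\nu^{-a}\tau, \nu^b\tau]) \rtimes \sigma^{(0)}$ and compute, from the $\mu^*$-formula of \S\ref{sect3},
\[
s_{(m)}(\Pi) = \nu^b\tau \otimes \bigl(\delta([\nu^{-a}\tau, \nu^{b-1}\tau]) \rtimes \sigma^{(0)}\bigr) + \nu^a\check{\tau} \otimes \bigl(\delta([\nu^{-a+1}\tau, \nu^b\tau]) \rtimes \sigma'{}^{(0)}\bigr),
\]
where $\check{\tau}$ is interpreted as $\omega_{\sigma^{(0)}}\check{\tau}$ for the general spin groups and $\sigma'{}^{(0)}$ records the central-character twist and possible $c$-action coming from $\tilde{\rtimes}$. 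Both shorter inductions have segment length $\ell - 1$ and still satisfy the parity hypothesis of the lemma, so by the inductive hypothesis they are irreducible; call them $\Sigma_1$ and $\Sigma_2$. If $\Pi$ were reducible, two distinct irreducible subquotients would have to split these two irreducible Jacquet-module pieces between them; Frobenius reciprocity, combined with the irreducibility of $\Sigma_1$ and $\Sigma_2$, would embed each subquotient into $\nu^b\tau \rtimes \Sigma_1$ or $\nu^a\check{\tau} \rtimes \Sigma_2$. Combining with the embedding $\delta([\nu^{-a}\tau,\nu^b\tau]) \hookrightarrow \nu^b\tau \times \delta([\nu^{-a}\tau, \nu^{b-1}\tau])$, Lemma~\ref{Lemma 5.5}, and the Langlands classification would force the two subquotients to coincide, a contradiction.

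The hard part will be the careful bookkeeping of the twist $\omega_{\sigma^{(0)}}$ and the $c$-action arising from $\tilde{\rtimes}$ in the similitude and non-split quasi-split cases. In particular, one must verify that the twisted supercuspidal $\sigma'{}^{(0)}$ still satisfies the same cuspidal reducibility condition with the same exponent $\alpha$, so that the inductive hypothesis genuinely applies to $\Sigma_2$. In the (CN) cases for $SO_{2n+2}^{\ast}$, $GSO_{2n+2}^{\ast}$, $GSpin_{2n}$, and $GSpin_{2n+2}^{\ast}$ listed in Table~\ref{tab:(CN)}, the two inequivalent supercuspidal representations related by $c^{m}$ or by $\omega_{\tau}$ must be tracked in tandem, which produces additional $c$-indexed summands in $s_{(m)}(\Pi)$ that need to be reconciled before the Frobenius-reciprocity argument above can be cleanly applied.
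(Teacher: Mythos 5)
Your strategy (induction on the segment length via the $\mu^{\ast}$-structure and a two-term Jacquet module) is genuinely different from the paper's proof, which instead realizes the Langlands quotient $L_{sub}(\delta([\nu^{-b}\tau,\nu^{a}\tau])\otimes\omega\sigma^{(0)})$ directly as a subrepresentation of $\delta([\nu^{-a}\tau,\nu^{b}\tau])\rtimes\sigma^{(0)}$ by a chain of commutations and inversions, each step using the irreducibility of the rank-one inductions $\nu^{x}\tau\rtimes\sigma^{(0)}$ and $\nu^{y}\tau\times\nu^{x}\tau$ that the parity hypothesis guarantees (with the cases $a\leq -1$, $a=0$ via Lemma~\ref{delicate}, and $a>0$ treated separately). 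Your route, however, has two genuine gaps. First, when $b=a+1$ with $a>0$ the first Jacquet-module piece is $\delta([\nu^{-a}\tau,\nu^{a}\tau])\rtimes\sigma^{(0)}$, a balanced segment to which the lemma, and hence your inductive hypothesis, does not apply (it requires $b>a$). That representation is unitarily induced from a discrete series, so its irreducibility is an R-group statement; the paper supplies it only for $a=1$ with $\alpha\notin\{0,1,N\}$ (Lemma~\ref{GLtypeirr}, Corollary~\ref{Rgroupcor}), while your case (2) needs it for $(\tau;\sigma^{(0)})$ of type (C0), (C1) or (CN) and all $a\in\tfrac{1}{2}+{\mathbb Z}_{>0}$, which is nowhere established.

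Second, and more seriously, the endgame does not close. Knowing that $s_{(m)}(\Pi)$ is a sum of two irreducible terms does not force $\Pi$ to be irreducible: Proposition~\ref{Steinberg} exhibits exactly this configuration, namely $\delta([\nu^{\alpha}\tau,\nu^{b}\tau])\rtimes\sigma^{(0)}$, whose $s_{(m)}$ also consists of two irreducible terms, yet the representation is reducible with each constituent absorbing one term. So the assertion that Frobenius reciprocity, Lemma~\ref{Lemma 5.5} and the Langlands classification ``force the two subquotients to coincide'' is precisely where the parity hypothesis must do its work, and your sketch does not say how. Frobenius reciprocity gives $\pi_{1}\hookrightarrow\nu^{b}\tau\rtimes\Sigma_{1}$ and $\pi_{2}\hookrightarrow\omega\nu^{a}\check{\tau}\rtimes\Sigma_{2}$, and Lemma~\ref{Lemma 5.5} then only returns $\pi_{i}\hookrightarrow\Pi$, which is consistent with $\Pi$ being a direct sum of the two. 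To identify $\pi_{1}$ with $\pi_{2}$ one must invert $\omega\nu^{a}\check{\tau}$ back to $\nu^{-a}\tau$ and commute it past the remaining factors using the parity condition---that is, run the paper's argument---so the inductive framing does not actually avoid the crucial step, and as written the proof is incomplete.
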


\noindent
\begin{proof}
First, we address (1) with $a\leq -1$. Let $\pi=L_{sub}(\delta([\nu^{-b}\tau, \nu^a \tau]) \otimes \omega\sigma^{(0)})$, where $\omega$ is such that $\delta([\nu^{-b}\tau, \nu^a \tau]) \otimes \omega\sigma^{(0)}$ is conjugate to $\delta([\nu^{-a}\tau, \nu^b \tau]) \otimes \sigma^{(0)}$ (n.b.: $\tau \otimes \sigma^{(0)}$ ramified  ensures that such an $\omega$ exists; trivial for classical or general spin groups). Then,
\[
\begin{array}{rl}
\pi & \hookrightarrow \delta([\nu^{-b}\tau, \nu^{a}\tau]) \rtimes \omega\sigma^{(0)} \\
& \\
& \hookrightarrow \nu^{a}\tau \times \dots \times \nu^{-b+1}\tau  \times \nu^{-b}\tau \rtimes \omega\sigma^{(0)} \\
& \\
& \hookrightarrow \nu^{a}\tau \times \dots \times \nu^{-b+1}\tau  \times \nu^{b}\tau \rtimes \omega'\sigma^{(0)} \\
& \\
& \hookrightarrow \nu^{b}\tau \times \nu^{a}\tau \times \dots \times \nu^{-b+1}\tau \rtimes \omega'\sigma^{(0)},
\end{array}
\]
noting the irreducibility of $\nu^{b} \tau \rtimes \sigma^{(0)}$ and $\nu^b \tau \times \nu^x \tau$ for $x=a, a-1, \dots, -b+1$. Iterating this processs, we eventually arrive at
\[
\pi \hookrightarrow \nu^b \tau \times \nu^{b-1}\tau \dots \times \nu^{-a} \tau \rtimes \sigma^{(0)}.
\]
By Lemma~\ref{Lemma 5.5}, we have $\pi \hookrightarrow \lambda \rtimes \sigma^{(0)}$ for some irreducible $\lambda \leq \nu^b \tau \times \nu^{b-1} \tau \dots \times \nu^{-a}\tau \rtimes \sigma^{(0)}$. Any $\lambda$ other than $\delta([\nu^{-a}\tau, \nu^b \tau])$ would have $m^{\ast}(\lambda)$--hence $\mu^{\ast}(\pi)$--containing a term of the form $\nu^x \tau \otimes \dots$ with $x \in \{-a,-a+1,\dots, b-1\}$. As this is not possible, we have $\lambda=\delta([\nu^{-a}\tau, \nu^b \tau])$. This makes $\pi$ both a subrepresentation and unique irreducible quotient (Langlands quotient) of $\delta([\nu^{-a}\tau, \nu^b \tau]) \rtimes \sigma^{(0)}$, implying irreducibility. The argument for (2) with $a \leq-\frac{1}{2}$ is similar.

We now address (1) when $a=0$. In this case, there are terms in $\{ \tau, \nu\tau, \dots, \nu^b \tau\}$ and $\{\nu^{-b}\tau, \dots, \nu^{-1}\tau, \tau\}$ for which their product is reducible and the commuting argument above breaks down. We may repair the argument using Lemma~\ref{delicate}.
First, note that the result is immediate if $b=0$ and follows from Lemma~\ref{delicate} if $b=1$. Thus, assume $b \geq 2$. Then,
\[
\begin{array}{rl}
\pi & \hookrightarrow \delta([\nu^{-b}\tau, \tau]) \rtimes \omega \sigma^{(0)} \\
& \\
& \hookrightarrow  \delta([\nu^{-1}\tau, \tau]) \times \nu^{-2}\tau \times \dots \times \nu^{-b}\tau \rtimes \omega\sigma^{(0)} \\
& \\
& \cong \nu^b \tau \times \dots \times \nu^2 \tau \times \delta([\nu^{-1}\tau, \tau]) \rtimes \omega' \sigma^{(0)} \\
& \\
& \cong \nu^b \tau \times \dots \times \nu^2 \tau \times \delta([\tau, \nu\tau]) \rtimes \sigma^{(0)}
\end{array}
\]
using the commuting/inverting argument above in conjunction with Lemma~\ref{delicate}. The argument now concludes as above.

We now take up (1) when $a>0$. Using the case $a \leq 0$ above, we have
\[
\begin{array}{rl}
\pi & \hookrightarrow \delta([\nu^{-b}\tau, \nu^a \tau]) \rtimes \omega\sigma^{(0)} \\
& \\
& \hookrightarrow \delta([\nu\tau, \nu^a \tau]) \times \delta([\nu^{-b}\tau, \tau]) \rtimes \omega\sigma^{(0)} \\
& \\
& \cong \delta([\nu\tau, \nu^a \tau]) \times \delta([\tau, \nu^b \tau]) \rtimes \omega'\sigma^{(0)} \\
& \\
& \cong \delta([\tau, \nu^b \tau]) \times \delta([\nu\tau, \nu^a \tau]) \rtimes \omega'\sigma^{(0)} \\
& \\
& \cong \delta([\tau, \nu^b \tau]) \times \delta([\nu^{-a}\tau, \nu^{-1} \tau]) \rtimes \sigma^{(0)} \\
\end{array}
\]
for the appropriate $\omega'$. By Lemma~\ref{Lemma 5.5}, we have either
\[
\pi \hookrightarrow \delta([\nu^{-a}\tau, \nu^b \tau]) \rtimes \sigma^{(0)}
\]
or
\[
\pi \hookrightarrow {\mathcal L}_{sub}(\delta([\nu^{-a}\tau, \tau]) \otimes \delta([\nu\tau, \nu^b \tau])) \rtimes \sigma^{(0)}.
\]
As the latter would imply $\mu^{\ast}(\pi)$ contains terms of the form $\tau \otimes \dots$, which is not the case as $\mu^{\ast}(\pi) \leq N^{\ast}\left(\delta([\nu^{-a}\tau, \nu^b \tau])\right) \tilde{\rtimes} (1 \otimes \sigma^{(0)})$, it must be the former. Again, $\pi$ appears as both the unique irreducible quotient (Langlands quotient) and as a subrepresentation in $\delta([\nu^{-a}\tau, \nu^b \tau]) \rtimes \sigma^{(0)}$, implying irreducibility, as needed. \end{proof}

\begin{lem}\label{dssupplem2}
Suppose $(\tau; \sigma^{(0)})$ satisfies (C$\alpha$) or (CN). Then $\nu^x \tau$ can appear in the supercuspidal support of a square-integrable representation only if $x \in \alpha+{\mathbb Z}$ (resp., $x \in {\mathbb Z}$ in the (CN) case).
\end{lem}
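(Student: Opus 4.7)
The plan is to argue by contradiction, adapting the commute-and-invert strategy of Lemma~\ref{dssupplem1}. Suppose $\pi$ is a square-integrable representation with $\nu^x\tau$ in its supercuspidal support but $x \notin \alpha + \mathbb{Z}$ (resp.\ $x \notin \mathbb{Z}$ in the (CN) case). By Lemma~\ref{dssupplem1}, $\tau$ is self-dual in the appropriate sense (with the twist by $\omega_{\sigma^{(0)}}$ in the general spin cases) and $x \in \tfrac{1}{2}\mathbb{Z}$, so $x \bmod 1 \in \{0, \tfrac{1}{2}\}$ and is distinct from $\alpha \bmod 1$. The key input is that, under (C$\alpha$) or (CN), the representation $\nu^y\tau \rtimes \chi\sigma^{(0)}$ is irreducible for every $y \equiv x \bmod 1$ and every character twist $\chi$ of the sort arising via Lemma~\ref{twisting} or the $w_0$-action in (\ref{w_0action}); indeed, such twists preserve the residue class modulo $1$ of the unique nonnegative reducibility exponent, which is $\alpha \bmod 1 \neq x \bmod 1$.

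I would then embed
$$
\pi \hookrightarrow \Lambda \times \nu^{x_1}\tau \times \cdots \times \nu^{x_k}\tau \rtimes \chi_0\sigma^{(0)},
$$
with $\chi_0$ as in Remark~\ref{dsrem}, grouping to the right all factors in the Bernstein class of $\tau$ with exponent $\equiv x \bmod 1$ (absorbing any $\nu^y\check\tau$ via $\tau \cong \check\tau$) and commuting the remaining factors into $\Lambda$; the commutations are permitted since products of supercuspidals from inequivalent Bernstein components are irreducible. Frobenius reciprocity and the Casselman criterion applied to the corresponding entry of $r_{\min}(\pi)$ then force strict positivity of the relevant partial sums, and in particular $\sum_i n_\tau\, x_i >0$ after subtracting $\Lambda$'s contribution. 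Iteratively inverting each $\nu^{x_j}\tau$, innermost outward, through its right-hand tail via the key input, and absorbing the accumulated twists into a single character $\omega$, produces a second embedding
$$
\pi \hookrightarrow \Lambda \times \nu^{-x_k}\tau \times \cdots \times \nu^{-x_1}\tau \rtimes \omega\chi_0\sigma^{(0)}.
$$
Applying Casselman to the corresponding new entry of $r_{\min}(\pi)$ yields $\sum_i n_\tau\, x_i < 0$, the desired contradiction.

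The main obstacle is justifying the iterated inversion, since, unlike in Lemma~\ref{dssupplem1}, the self-duality $\tau \cong \check\tau$ allows intermediate $GL$-products $\nu^{x_i}\tau \times \nu^{-x_j}\tau$ to be linked and hence reducible, so the clean commutation path used there does not apply verbatim. This is handled by choosing an inversion order in which each step replaces the current innermost $\nu^{x_j}\tau \rtimes \theta$ by the isomorphic $\nu^{-x_j}\tau \rtimes \omega_j\theta$, where $\theta$ is the already-inverted irreducible tail built from prior steps; irreducibility of $\nu^{x_j}\tau \rtimes \theta$ is deduced from the key input via induction-in-stages. The accumulating character twists are central characters of $\tau$-type factors and preserve the residue class of cuspidal reducibility, as encoded in Lemma~\ref{twisting} and (\ref{w_0action}), so the key input remains valid at every step.
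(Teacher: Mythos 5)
Your overall strategy (argue by contradiction, then invert the $\tau$-factors with exponents in the wrong congruence class and compare the two Casselman inequalities) is the right one, but the inversion step as you describe it has a genuine gap. You correctly identify the obstacle -- that once $\tau\cong\check\tau$, intermediate products $\nu^{x_i}\tau\times\nu^{-x_j}\tau$ can be linked and reducible -- but your proposed repair does not work: irreducibility of $\nu^{x_j}\tau\rtimes\theta$ for a non-supercuspidal tail $\theta$ is \emph{not} a consequence of the rank-one irreducibility $\nu^y\tau\rtimes\chi\sigma^{(0)}$ ``via induction-in-stages.'' A concrete failure: in the (C1) case with $x_1=x_2=\tfrac{1}{2}$, after the first inversion you face $\nu^{1/2}\tau\rtimes\theta$ with $\theta=\nu^{-1/2}\tau\rtimes\omega\sigma^{(0)}$; this equals $\nu^{1/2}\tau\times\nu^{-1/2}\tau\rtimes\omega\sigma^{(0)}$, which is reducible because the linked product $\nu^{1/2}\tau\times\nu^{-1/2}\tau$ already is, even though both rank-one inductions from $\sigma^{(0)}$ are irreducible. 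Once $\nu^{x_j}\tau\rtimes\theta$ is reducible, the identity $\nu^{x_j}\tau\rtimes\theta=\nu^{-x_j}\tau\rtimes\omega_j\theta$ holds only in the Grothendieck group, so a subrepresentation of one side need not embed in the other, and your second embedding is not obtained.

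The paper closes this gap with two pieces of machinery you would need to supply. First, it packages the exponents in $x+{\mathbb Z}$ into an irreducible product of segment representations $\delta([\nu^{-a_i}\tau,\nu^{b_i}\tau])$ with $b_i>a_i$ and $b_1\leq\dots\leq b_\ell$ (using Lemma~\ref{Lemma 5.5} and the Casselman criterion to rule out $a_i\geq b_i$), and then invokes Lemma~\ref{dsirrlem} -- a substantive result with its own proof, including the delicate $a=0$ case via Lemma~\ref{delicate} -- to invert the \emph{entire last segment} at once: $\delta([\nu^{-a_\ell}\tau,\nu^{b_\ell}\tau])\rtimes\sigma^{(0)}\cong\delta([\nu^{-b_\ell}\tau,\nu^{a_\ell}\tau])\rtimes\omega\sigma^{(0)}$. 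Second, when the inverted segment fails to commute past its left neighbor, Lemma~\ref{Lemma 5.5} together with the Zelevinsky structure of products of linked segments shows that in \emph{either} of the two resulting embeddings the new leftmost segment still begins at $\nu^{-b_\ell}\tau$ (with a possibly shorter right end), so iterating pushes a segment of nonpositive exponent sum to the front and contradicts the Casselman criterion there. Any correct proof along your lines must do something equivalent to these two steps; the contradiction $\sum_i x_i>0$ versus $\sum_i x_i<0$ cannot be reached by inverting the supercuspidal factors one at a time.
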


\noindent
\begin{proof}
By Lemma~\ref{dssupplem1}, we need eliminate only the following possibilities: (1) $x \in {\mathbb Z}$ in the (C$\alpha$) case when $\alpha \in \frac{1}{2}+{\mathbb Z}_{\geq 0}$, and (2) $x \in \frac{1}{2}+{\mathbb Z}$ in the (CN) and (C$\alpha$), $\alpha \in {\mathbb Z}_{\geq 0}$, cases.

Suppose not and let $\pi$ be a square-integrable representation with such a $\nu^x \tau$ in its supercuspidal support. By a commuting argument (and Remark~\ref{dsrem}), we may write
\[
\pi\hookrightarrow \nu^{x_1}\tau \times \dots \times \nu^{x_k}\tau \times \nu^{x_{k+1}}\tau_{k+1} \times \dots \times \nu^{x_m}\tau_m \rtimes \chi_0\sigma^{(0)}
\]
where $x_1, \dots, x_k \in x+{\mathbb Z}$ and for each $k+1 \leq i \leq m$ we have either $\tau_i \not\cong \tau$ or $x_i \not\in x+{\mathbb Z}$. By Lemma~\ref{Lemma 5.5},
\[
\pi \hookrightarrow \Sigma \times \Lambda \rtimes \chi_0\sigma^{(0)}
\]
for some irreducible $\Sigma \leq \nu^{x_1}\tau \times \dots \times \nu^{x_k}\tau$ and $\Lambda \leq \nu^{x_{k+1}}\tau_1 \times \dots \times \nu^{x_{k+m}}\tau_m$. Further, by the Langlands classification for general linear groups (subrepresentation setting) and a commuting argument (or \cite[Section 2.2]{Jan00a}), we may write
\[
\Sigma \hookrightarrow \delta([\nu^{-a_1}\tau, \nu^{b_1}\tau]) \times  \delta([\nu^{-a_2}\tau, \nu^{b_2}\tau]) \times \dots \times  \delta([\nu^{-a_{\ell}}\tau, \nu^{b_{\ell}}\tau])
\]
with $b_1 \leq b_2 \leq \dots \leq b_{\ell}$.

We first claim $b_i>a_i$ for all $i$ with $1 \leq i \leq \ell$. Were this not the case, let $j$ be the smallest value such that $a_j>b_j$. Then, for $i<j$, we have $a_j>b_j\geq b_i \geq a_i$, hence $\delta([\nu^{-a_i}\tau, \nu^{b_i}\tau]) \times \delta([\nu^{-a_j}\tau, \nu^{b_j}\tau])$ irreducible. Commuting, we get
\[
\pi \hookrightarrow \delta([\nu^{-a_j}\tau, \nu^{b_j}\tau]) \times \delta([\nu^{-a_1}\tau, \nu^{b_1}\tau]) \times \dots
\]
contradicting the Casselman criterion. Thus $b_i>a_i$ for all $1 \leq i \leq \ell$.

We now apply Lemma~\ref{dsirrlem}:
\[
\begin{array}{rl}
\pi & \hookrightarrow \delta([\nu^{-a_1}\tau, \nu^{b_1}\tau]) \times \dots \times \delta([\nu^{-a_{\ell-1}}\tau, \nu^{b_{\ell-1}}\tau]) \\
& \mbox{\ \ \ }\times \delta([\nu^{-a_{\ell}}\tau, \nu^{b_{\ell}}\tau]) \times \Lambda \times \chi_0\sigma^{(0)} \\
& \hookrightarrow \delta([\nu^{-a_1}\tau, \nu^{b_1}\tau]) \times \dots \times \delta([\nu^{-a_{\ell-1}}\tau, \nu^{b_{\ell-1}}\tau]) \\
& \mbox{\ \ \ }\times \Lambda \times\delta([\nu^{-a_{\ell}}\tau, \nu^{b_{\ell}}\tau]) \times \chi_0\sigma^{(0)} \\
& \hookrightarrow \delta([\nu^{-a_1}\tau, \nu^{b_1}\tau]) \times \dots \times \delta([\nu^{-a_{\ell-1}}\tau, \nu^{b_{\ell-1}}\tau]) \\
& \mbox{\ \ \ }\times \Lambda \times\delta([\nu^{-b_{\ell}}\tau, \nu^{a_{\ell}}\tau]) \times \omega\chi_0 (c^r\sigma^{(0)}) \\
& \hookrightarrow \delta([\nu^{-a_1}\tau, \nu^{b_1}\tau]) \times \dots \times \delta([\nu^{-a_{\ell-1}}\tau, \nu^{b_{\ell-1}}\tau])\\
& \mbox{\ \ \ } \times\delta([\nu^{-b_{\ell}}\tau, \nu^{a_{\ell}}\tau]) \times \Lambda \times \omega \chi_0 (c^r \cdot \sigma^{(0)})
\end{array}
\]
for the appropriate $\omega$ (when $G_n=GSp_{2n},GSO_{2n},GSO_{2n+2}^{\ast}, GU_{2n+1}$ or $GU_{2n}$) and $c^r$ (for $G_n=SO_{2n}$, $SO_{2n+2}^{\ast}$, $GSO_{2n}$, $GSO_{2n+2}^{\ast}$, $GSpin_{2n}$, or $GSpin_{2n+2}^{\ast}$ in the (CN) case). If $\delta([\nu^{-a_{\ell-1}}\tau, \nu^{b_{\ell-1}}\tau]) \times\delta([\nu^{-b_{\ell}}\tau, \nu^{a_{\ell}}\tau])$ is irreducible, we may commute $\delta([\nu^{-b_{\ell}}\tau, \nu^{a_{\ell}}\tau])$ around $\delta([\nu^{-a_{\ell-1}}\tau, \nu^{b_{\ell-1}}\tau])$. If not, Lemma~\ref{Lemma 5.5} implies
\[
\begin{array}{r}
\pi \hookrightarrow \delta([\nu^{-a_1}\tau, \nu^{b_1}\tau]) \times \dots \times \delta([\nu^{-b_{\ell}}\tau, \nu^{b_{\ell-1}}\tau]) \times\delta([\nu^{-a_{\ell-1}}\tau, \nu^{a_{\ell}}\tau]) \\
\times \Lambda \times \omega\chi_0 (c^r \cdot \sigma^{(0)})
\end{array}
\]
or
\[
\begin{array}{r}
\pi \hookrightarrow \delta([\nu^{-a_1}\tau, \nu^{b_1}\tau]) \times \dots \times {\mathcal L}_{sub}(\delta([\nu^{-b_{\ell}}\tau, \nu^{a_{\ell}}\tau]) \otimes \delta([\nu^{-a_{\ell-1}}\tau, \nu^{b_{\ell-1}}\tau])) \\
\times \Lambda \times \omega\chi_0(c^r \cdot \sigma^{(0)}).
\end{array}
\]
In either case, we have
\[
\delta([\nu^{-a_1}\tau, \nu^{b_1}\tau]) \times \dots \times \delta([\nu^{-a_{\ell-2}}\tau, \nu^{b_{\ell-2}}\tau]) \times \delta([\nu^{-b_{\ell}}\tau, \nu^{b'_{\ell}}\tau]) \times \dots
\]
with $b'_{\ell-1} \leq b_{\ell}$. Iterating, we eventually arrive at
\[
\pi \hookrightarrow \delta([\nu^{-b_{\ell}}\tau, \nu^{b'_1}\tau]) \times \dots
\]
with $b'_1 \leq b_{\ell}$. However, this contradicts the Casselman criterion, finishing the proof. \end{proof}

\bigskip

For $\pi$ a square-integrable representation, write
\[
\pi \hookrightarrow \nu^{x_1}\rho_1 \times \dots \times \nu^{x_n}\rho_n \rtimes \chi_0\sigma^{(0)}
\]
with $x_1+ \dots +x_n$ as small as possible (as in \cite[Definition 4.1.1]{Jan00a}) and appropriate $\chi_0$ (see Remark~\ref{dsrem}). By Lemma~\ref{Lemma 5.5}, we have $\pi \hookrightarrow \phi \rtimes \chi_0\sigma^{(0)}$ for some irreducible $\phi \leq \nu^{x_1}\rho_1 \times \dots \times \nu^{x_n}\rho_n$. Write $\phi={\mathcal L}_{sub}(\delta([\nu^{-a_1}\tau_1, \nu^{b_1}\tau_1]) \otimes \dots \otimes \delta([\nu^{-a_k}\tau_k, \nu^{b_k}\tau_k]))$.
As the next results are for square-integrable generic representations only (not arbitrary square-integrable representations), we note that we may assume
\begin{align*}
 &\,\,{\mathcal L}_{sub}(\delta([\nu^{-a_1}\tau_1, \nu^{b_1}\tau_1]) \otimes \dots \otimes \delta([\nu^{-a_k}\tau_k, \nu^{b_k}\tau_k]))\\
&=\delta([\nu^{-a_1}\tau_1, \nu^{b_1}\tau_1]) \times \dots \times  \delta([\nu^{-a_k}\tau_k, \nu^{b_k}\tau_k])   
\end{align*}
is irreducible (by genericity). Then,
\begin{equation}\label{embedding}
\pi \hookrightarrow \delta([\nu^{-a_1}\tau_1, \nu^{b_1}\tau_1]) \times \dots \times  \delta([\nu^{-a_k}\tau_k, \nu^{b_k}\tau_k]) \rtimes \chi_0\sigma^{(0)}
\end{equation}
(with $(-a_1+ \dots+b_1)+\dots+(-a_k+\dots+b_k)$ minimal). Note that the argument in the proof of Lemma~\ref{dssupplem2} tells us $b_i>a_i$ for all $i$. Further, a commuting argument allows us to assume without loss of generality that $b_1 \leq b_2 \leq \dots \leq b_k$.

We also remark that for discrete series, one has $\beta=0$ (see Definition~\ref{beta def} and Lemma~\ref{beta}).

\begin{lem}\label{Dlemma}
Let $\pi$ be a square-integrable generic representation. With notation as above, $[\nu^{-a_i}\tau_i, \nu^{b_i}\tau_i]$ must satisfy one of (DS1)--(DS4).
\end{lem}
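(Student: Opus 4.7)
By Lemma~\ref{dssupplem1} applied to each supercuspidal in the support of $\pi$, every $\tau_i$ is self-dual in the sense that forces $(\tau_i;\sigma^{(0)})$ to satisfy either $(\mathrm{C}\alpha_i)$ with $\alpha_i\in\{0,\tfrac12,1\}$ or $(\mathrm{CN})$. Lemma~\ref{dssupplem2} then forces each $a_i$ to lie in $\alpha_i+\mathbb{Z}$ (interpreting $\alpha_i=0$ in the $(\mathrm{CN})$ case). This already provides the parity/coset half of each of (DS1)--(DS4), so what remains is to establish the lower bound $a_i\geq -\alpha_i$ in each case, together with the exclusion of the extra value $a_i=0$ in the $(\mathrm{C}1)$ case.

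For the lower bound, the plan is to contradict the minimality of the embedding (\ref{embedding}). Suppose $a_i$ is strictly below $-\alpha_i$. Then the entire segment $\Sigma_i=[\nu^{-a_i}\tau_i,\nu^{b_i}\tau_i]$ lies strictly above the reducibility point, i.e.\ all exponents strictly exceed $\alpha_i$. I would use Lemma~\ref{Lemma 5.5} together with the genericity of $\pi$, in the style of the commutation argument in the proof of Lemma~\ref{dssupplem2}, to move $\delta(\Sigma_i)$ past the other segments to the rightmost position of the embedding. There, Lemmas~\ref{dsirrlem} and~\ref{dps}, together with their analogues in the $(\mathrm{CN})$ and $(\mathrm{C}0)$ cases, give the irreducibility of $\delta(\Sigma_i)\rtimes\chi_0\sigma^{(0)}$, and hence an isomorphism $\delta(\Sigma_i)\rtimes\chi_0\sigma^{(0)}\cong\delta(\check\Sigma_i)\rtimes\chi_0'\sigma^{(0)}$ for an appropriate twist $\chi_0'$. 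Substituting back yields a new embedding of $\pi$ whose sum of central exponents is strictly smaller (since the original segment had entirely positive exponents, inverting strictly reduces the sum), contradicting minimality.

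To exclude $a_i=0$ in the $(\mathrm{C}1)$ case, I would compute the contribution to $s_{(m_i)}(\pi)$ coming from the segment $[\tau_i,\nu^{b_i}\tau_i]$ via the $\mu^*$-formula. One of the resulting terms, arising from the subrepresentation embedding $\delta([\tau_i,\nu^{b_i}\tau_i])\hookrightarrow \delta([\nu\tau_i,\nu^{b_i}\tau_i])\times\tau_i$, has the shape
\[
\tau_i \otimes\bigl(\delta([\nu\tau_i,\nu^{b_i}\tau_i])\times(\text{other segments})\rtimes\chi_0\sigma^{(0)}\bigr).
\]
The genericity of $\pi$ together with Lemma~\ref{Lemma 5.5} (applied to the corresponding Frobenius reciprocity setting to pick out the generic piece of $\nu\tau_i\rtimes\sigma^{(0)}$) forces such a term to survive in $s_{(m_i)}(\pi)$. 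But any such term violates the Casselman criterion (\ref{Casselman criterion}) for square-integrability, since $m_i\cdot\varepsilon(\tau_i)=0$ fails to be strictly greater than $\beta=0$.

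The main technical obstacle will be the commutation step of the second paragraph: when other segments in the embedding involve the supercuspidal class of $\tau_i$ and are linked with $\Sigma_i$ in the sense of Zelevinsky, free commutation is not available, and one must iterate Lemma~\ref{Lemma 5.5} carefully, using the linking analysis of general linear groups together with the genericity hypothesis at each step, to ensure the resulting subembedding retains the segment form needed to invoke the irreducibility results of Lemmas~\ref{dsirrlem} and~\ref{dps}.
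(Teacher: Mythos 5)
Your overall skeleton (congruence classes from Lemmas~\ref{dssupplem1} and \ref{dssupplem2}, minimality of (\ref{embedding}) for the lower bounds, Casselman for excluding $a_i=0$ in the (C1) case) is the same as the paper's, but two of your key steps do not go through as stated.

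First, for the lower bound you invert the \emph{entire} segment, which requires the irreducibility of $\delta(\Sigma_i)\rtimes\chi_0\sigma^{(0)}$ when $a_i\equiv\alpha_i\bmod 1$ and $a_i<-\alpha_i$ (e.g.\ $\delta([\nu^{3/2}\tau,\nu^{b}\tau])\rtimes\sigma^{(0)}$ in the (C$\frac12$) case). Lemma~\ref{dsirrlem} does not supply this: it only treats $a$ in the \emph{opposite} congruence class from $\alpha$ (integral $a$ for (C$\frac12$), half-integral $a$ for (C0), (C1), (CN)), and Lemma~\ref{dps} concerns $\delta([\nu\tau,\nu^a\tau])\rtimes\delta([\nu\tau,\nu^b\tau];\sigma^{(0)})$, not $\delta(\Sigma)\rtimes\sigma^{(0)}$. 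The irreducibility you want is true but is essentially part of Theorem~\ref{gen4}, whose proof sits downstream of the discrete-series classification, so invoking it here would be circular. The paper sidesteps this entirely: choosing $j$ maximal among the offending indices, it peels off only the single lowest exponent, $\delta([\nu^{-a_j}\tau,\nu^{b_j}\tau])\hookrightarrow\delta([\nu^{-a_j+1}\tau,\nu^{b_j}\tau])\times\nu^{-a_j}\tau$, commutes the rank-one factor to the right, and inverts it using only the irreducibility of $\nu^{-a_j}\tau\rtimes\chi_0\sigma^{(0)}$ (which holds because $-a_j>\alpha$). Since $a_j<0<-a_j$, this already lowers the exponent sum and contradicts minimality.

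Second, your exclusion of $a_i=0$ in the (C1) case breaks down when some other segment has $a_j=-1$, which (DS1) permits. To convert your Jacquet-module term $\tau_i\otimes(\cdots)$ into an actual term of $s_{(m_i)}(\pi)$ you must commute $\tau_i$ (or the segment $[\tau_i,\nu^{b_i}\tau_i]$) past all the other factors; but $\tau_i\times\delta([\nu\tau_i,\nu^{b'}\tau_i])$ and $\delta([\tau_i,\nu^{b_i}\tau_i])\times\delta([\nu\tau_i,\nu^{b'}\tau_i])$ are reducible (the segments are linked), so the commutation is unavailable exactly in this configuration. The paper therefore splits the argument: when no $a_j=-1$ occurs, it runs the Casselman contradiction (via the embedding $\theta\hookrightarrow\tau\rtimes\delta([\nu\tau,\nu^{b_j}\tau];\chi_0\sigma^{(0)})$ obtained from unitarity); it then separately shows, using Lemma~\ref{dps}, that at most one index can have $a_j=-1$, and rules out the coexistence of $a_{j_1}=0$ with $a_{j_2}=-1$ by a supercuspidal-support argument that identifies the generic subquotient of $\delta([\tau,\nu^{b_{j_1}}\tau])\times\delta([\nu\tau,\nu^{b_{j_2}}\tau])\rtimes\chi_0\sigma^{(0)}$ with a representation induced from $\delta([\nu^{-b_{j_2}}\tau,\nu^{b_{j_1}}\tau])$ and contradicts minimality rather than square-integrability. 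Your proposal, which acknowledges the linking obstruction only in passing, does not contain this case analysis, and it is precisely where the work lies.
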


\noindent
\begin{proof}
Based on Lemmas~\ref{dssupplem1} and \ref{dssupplem2}, all that remains to be shown is the following:

\begin{enumerate}

\item $a_i \geq 0$ in the (C0) and (CN) cases.

\item $a_i \geq -\frac{1}{2}$ in the (C1/2) case, and

\item $a_i \geq -1$ and $a_i \not=0$ in the (C1) case.

\end{enumerate}

Fix a $\delta([\nu^{-a_i}\tau_i, \nu^{b_i}\tau_i])$ and let $\tau=\tau_i$.

For (2), we argue indirectly. Let $j$ be maximal with $\tau_j \cong \tau$ and $a_j<-\frac{1}{2}$. Then, $\nu^{-a_j}\tau_j \times \delta([\nu^{-a_\ell}\tau_\ell, \nu^{b_\ell}\tau_\ell])$ is irreducible for all $\ell>j$. Thus, writing $\Lambda=\delta([\nu^{-a_1}\tau_1, \nu^{b_1}\tau_1]) \times \dots \times \delta([\nu^{-a_{j-1}}\tau_{j-1}, \nu^{b_{j-1}}\tau_{j-1}])$, we have
\[
\begin{array}{rl}
\pi & \hookrightarrow \Lambda  \times \delta([\nu^{-a_j+1}\tau, \nu^{b_j}\tau]) \times \nu^{-a_j}\tau \times \delta([\nu^{-a_{j+1}}\tau_{j+1}, \nu^{b_{j+1}}\tau_{j+1}])\\
& \, \hspace{0.35in} \times \dots \times \delta([\nu^{-a_k}\tau_k, \nu^{b_k}\tau_k]) \rtimes \chi_0\sigma^{(0)} \\
& \cong \Lambda \times \delta([\nu^{-a_j+1}\tau, \nu^{b_j}\tau]) \times \delta([\nu^{-a_{j+1}}\tau_{j+1}, \nu^{b_{j+1}}\tau_{j+1}]) \\
& \, \hspace{0.3in} \times \dots \times \delta([\nu^{-a_k}\tau_k, \nu^{b_k}\tau_k])  \times \nu^{-a_j}\tau \rtimes \chi_0\sigma^{(0)} \\
& \cong \Lambda \times \delta([\nu^{-a_j+1}\tau, \nu^{b_j}\tau]) \times \delta([\nu^{-a_{j+1}}\tau_{j+1}, \nu^{b_{j+1}}\tau_{j+1}]) \\
& \, \hspace{0.3in}
\times \dots \times \delta([\nu^{-a_k}\tau_k, \nu^{b_k}\tau_k])  \times \nu^{a_j}\tau \rtimes \omega_j\chi_0\sigma^{(0)},
\end{array}
\]
noting the irreducibility of $\nu^{-a_j}\tau \rtimes \chi_0\sigma^{(0)} \cong \nu^{a_j}\tau \rtimes \omega_j \chi_0\sigma^{(0)}$. As $a_j<-a_j$, this contradicts the minimality of $x_1+\dots +x_n$, finishing this case. The same argument shows we cannot have $a_i<-1$ in the (C1) case or $a_i<0$ in the (C0),(CN) cases.

It remains to show that $a_i \not=0$ in the (C1) case. We first consider the case where $a_j \not=-1$ for all $j$ having $\tau_j \cong \tau$. Then, arguing indirectly, let $j$ be the largest such value for which $a_j=0$. Again,  writing $\Lambda=\delta([\nu^{-a_1}\tau_1, \nu^{b_1}\tau_1]) \times \dots \times \delta([\nu^{-a_{j-1}}\tau_{j-1}, \nu^{b_{j-1}}\tau_{j-1}])$, we have
\[
\begin{array}{rl}
\pi & \hookrightarrow \Lambda \times \delta([\tau, \nu^{b_j}\tau]) \times \delta([\nu^{-a_{j+1}}\tau_{j+1}, \nu^{b_{j+1}}\tau_{j+1}]) \times \dots  \\
& \hfill \times \delta([\nu^{-a_k}\tau_k, \nu^{b_k}\tau_k]) \rtimes \chi_0\sigma^{(0)} \\
& \\
\pi & \hookrightarrow \Lambda \times \delta([\nu^{-a_{j+1}}\tau_{j+1}, \nu^{b_{j+1}}\tau_{j+1}]) \times \dots \times \delta([\nu^{-a_k}\tau_k, \nu^{b_k}\tau_k]) \\
& \hfill \times \delta([\tau, \nu^{b_j}\tau]) \rtimes \chi_0\sigma^{(0)}
\end{array}
\]
noting that for all $\ell>j$ having $\tau_\ell \cong \tau$, we have $b_\ell \geq b_j$ and $a_\ell>a_j=0$ implying the irreducibility of $\delta([\tau, \nu^{b_j}\tau]) \times \delta([\nu^{-a_\ell}\tau, \nu^{b_\ell}\tau])$. By Lemma~\ref{Lemma 5.5},
\[
\pi \hookrightarrow \Lambda \times \delta([\nu^{-a_{j+1}}\tau_{j+1}, \nu^{b_{j+1}}\tau_{j+1}]) \times \dots \times \delta([\nu^{-a_k}\tau_k, \nu^{b_k}\tau_k]) \rtimes \theta
\]
for some irreducible $\theta \leq \delta([\tau, \nu^{b_j}\tau]) \rtimes \chi_0\sigma^{(0)}$. Now, noting that $$\delta([\nu\tau, \nu^{b_j}\tau]; \chi_0\sigma^{(0)})$$
is the irreducible generic subquotient of $\delta([\nu\tau, \nu^{b_j}\tau]) \rtimes \chi_0\sigma^{(0)}$ (Remark~\ref{dsrem}), we have
\[
\begin{array}{c}
\theta \leq \delta([\nu\tau, \nu^{b_j}\tau]) \times \tau \rtimes \chi_0\sigma^{(0)}=\tau \times \delta([\nu\tau, \nu^{b_j}\tau]) \rtimes \chi_0\sigma^{(0)} \\
\mbox{\ \ \ \ \ \ \ \ \ \ \ }\Downarrow \mbox{ (Lemma~\ref{Lemma 5.5})} \\
\theta \leq \tau \rtimes \delta([\nu\tau, \nu^{b_j}\tau]; \chi_0\sigma^{(0)}).
\end{array}
\]
As the induced representation is essentially unitary, $\tau \rtimes \delta([\nu\tau, \nu^{b_j}\tau];\chi_0 \sigma^{(0)})$ decomposes as a direct sum, so
\[
\begin{array}{c}
\theta \hookrightarrow \tau \rtimes \delta([\nu\tau, \nu^{b_j}\tau; \chi_0\sigma^{(0)}) \\
\Downarrow \\
\begin{array}{rl}
\pi & \hookrightarrow \Lambda \times \delta([\nu^{-a_{j+1}}\tau_{j+1}, \nu^{b_{j+1}}\tau_{j+1}]) \times \dots \times \delta([\nu^{-a_k}\tau_k, \nu^{b_k}\tau_k]) \times  \tau \\
& \hfill \rtimes \delta([\nu\tau, \nu^{b_j}\tau]; \chi_0\sigma^{(0)}) \\
& \cong \tau \times \Lambda \times \delta([\nu^{-a_{j+1}}\tau_{j+1}, \nu^{b_{j+1}}\tau_{j+1}]) \times \dots \times \delta([\nu^{-a_k}\tau_k, \nu^{b_k}\tau_k]) \\
& \hfill \rtimes \delta([\nu\tau, \nu^{b_j}\tau]; \chi_0\sigma^{(0)})
\end{array}
\end{array}
\]
(noting the irreducibility of $\tau \times \delta([\nu^{-a_\ell}\tau_\ell, \nu^{b_\ell}\tau_\ell])$ for $\ell<j$), contradicting the Casselman criterion.

To finish, we first establish that among the $j$ having $\tau_j \cong \tau$, there at most one index such $a_j=-1$. Suppose not and let $j_1<j_2$ be the two largest values such that $a_{j_1}=a_{j_2}=-1$ with $\tau_{j_1} \cong \tau_{j_2} \cong \tau$. Writing $\Lambda_1=\delta([\nu^{-a_1}\tau_1, \nu^{b_1}\tau_1]) \times \dots \times \delta([\nu^{-a_{j_1-1}}\tau_{j_1-1}, \nu^{b_{j_1-1}}\tau_{j_1-1}])$, $\Lambda_2=\delta([\nu^{-a_{j_1+1}}\tau_{j_1+1}, \nu^{b_{j_1+1}}\tau_{j_1+1}]) \times \dots \times \delta([\nu^{-a_{j_2-1}}\tau_{j_2-1}, \nu^{b_{j_2-1}}\tau_{j_2-1}])$, and $\Lambda_3=\delta([\nu^{-a_{j_2+1}}\tau_{j_2+1}, \nu^{b_{j_2+1}}\tau_{j_2+1}]) \times \dots \times \delta([\nu^{-a_k}\tau_k, \nu^{b_k}\tau_k])$, the usual commuting argument gives
\[
\begin{array}{rl}
\pi & \hookrightarrow \Lambda_1 \times \delta([\nu\tau, \nu^{b_{j_1}}\tau]) \times \Lambda_2 \times \delta([\nu\tau, \nu^{b_{j_2}}\tau]) \times \Lambda_3 \rtimes \chi_0\sigma^{(0)} \\
& \\
& \cong \Lambda_1 \times \Lambda_2 \times \Lambda_3 \times \delta([\nu\tau, \nu^{b_{j_1}}\tau]) \times \delta([\nu\tau, \nu^{b_{j_2}}\tau]) \rtimes \chi_0\sigma^{(0)} \\
& \hfill \Downarrow \mbox{ (Lemma~\ref{Lemma 5.5} and Corollary~\ref{Steinberggenericity})\ \ \ \ \ \ \ \ \ \ \ \ } \\
\pi & \hookrightarrow \Lambda_1 \times \Lambda_2 \times \Lambda_3 \times \delta([\nu\tau, \nu^{b_{j_1}}\tau]) \rtimes \delta([\nu\tau, \nu^{b_{j_2}}\tau]; \chi_0\sigma^{(0)}) \\
& \\
 & \hookrightarrow \Lambda_1 \times \Lambda_2 \times \Lambda_3 \times \delta([\nu^2\tau, \nu^{b_{j_1}}\tau]) \times \nu\tau \rtimes \delta([\nu\tau, \nu^{b_{j_2}}\tau]; \chi_0\sigma^{(0)}) \\
& \\
& \cong \Lambda_1 \times \Lambda_2 \times \Lambda_3 \times \delta([\nu^2\tau, \nu^{b_{j_1}}\tau]) \times \nu^{-1}\tau \rtimes \delta([\nu\tau, \nu^{b_{j_2}}\tau]; \omega\chi_0\sigma^{(0)}) \\
\end{array}
\]
by Lemma~\ref{dps} (twisted by $\chi_0$), for the appropriate $\omega$. However, this contradicts the minimality of $x_1+ \dots +x_n$. Thus there is at most one $j$ having $\tau_j \cong \tau$ and $a_j=-1$.

Finally, suppose we had $j_1,j_2$  with $\tau_{j_1}=\tau_{j_2}=\tau$, $a_{j_1}=0$ and $a_{j_2}=-1$; without loss of generality, assume $j_1$ maximal. As above, we may commute to get
\[
\pi \hookrightarrow  \Lambda_1 \times \Lambda_2 \times \Lambda_3 \times \delta([\tau, \nu^{b_{j_1}}\tau]) \times \delta([\nu\tau, \nu^{b_{j_2}}\tau]) \rtimes \chi_0\sigma^{(0)},
\]
noting that if $j_1>j_2$, then $\delta([\nu\tau, \nu^{b_{j_2}}\tau]) \times \delta([\tau, \nu^{b_{j_1}}\tau])$ is irreducible so may be commuted into the order above (though that is not crucial to the argument below). Again, by Lemma~\ref{Lemma 5.5} and Corollary~\ref{Steinberggenericity}, we have
\[
\pi \hookrightarrow \Lambda_1 \times \Lambda_2 \times \Lambda_3 \times \theta
\]
for some irreducible generic $\theta \leq \delta([\tau, \nu^{b_{j_1}}\tau]) \times \delta([\nu\tau, \nu^{b_{j_2}}\tau]) \rtimes \chi_0\sigma^{(0)}$. If $b_{j_1}>b_{j_2}$, the unique irreducible generic representation with this supercuspidal support is $\delta([\nu^{-b_{j_2}}\tau, \nu^{b_{j_1}}\tau]; \omega\chi_0\sigma^{(0)})_{\psi_a}$ for the appropriate $\omega$, so $\theta=\delta([\nu^{-b_{j_2}}\tau, \nu^{b_{j_1}}\tau]; \omega\chi_0\sigma^{(0)})_{\psi_a}$. Then,
\[
\begin{array}{rl}
\pi & \hookrightarrow \Lambda_1 \times \Lambda_2 \times \Lambda_3 \rtimes \delta([\nu^{-b_{j_2}}\tau, \nu^{b_{j_1}}\tau]; \omega\chi_0\sigma^{(0)})_{\psi_a} \\
& \\
& \hookrightarrow \Lambda_1 \times \Lambda_2 \times \Lambda_3 \times \delta([\nu^{-b_{j_2}}\tau, \nu^{b_{j_1}}\tau]) \rtimes \omega\chi_0\sigma^{(0)},
\end{array}
\]
which contradicts the minimality of $x_1+ \dots +x_n$. A similar argument applies if $b_{j_1}<b_{j_2}$. If $b_{j_1}=b_{j_2}=b$, the irreducible generic subquotient of $\delta([\tau, \nu^b\tau]) \times \delta([\nu\tau, \nu^b \tau]) \rtimes  \chi_0\sigma^{(0)}$ is also (by supercuspidal support considerations) the irreducible generic component of $\delta([\nu^{-b}\tau, \nu^b\tau]) \rtimes \omega \chi_0\sigma^{(0)}$ for the appropriate $\omega$. Noting the essential unitarity of this representation, we then have
\[
\pi \hookrightarrow \Lambda_1 \times \Lambda_2 \times \Lambda_3 \times \delta([\nu^{-b}\tau, \nu^b\tau]) \rtimes \omega \chi_0\sigma^{(0)},
\]
giving the same contradiction as above. This finishes the case where $a_j=-1$ for some $j$ having $\tau_j \cong \tau$, and the lemma. \end{proof}


\begin{lem}\label{one segment}
Let $\pi=\delta([\nu^{-a}\tau, \nu^b \tau]; \chi_0\sigma^{(0)})_{\psi_a}$ as in Proposition~\ref{gdsprop1}. Then,
\[
\pi \hookrightarrow \delta([\nu^{-a}\tau, \nu^b \tau]) \rtimes \chi_0\sigma^{(0)}.
\]
Further, we have
\[
\pi \hookrightarrow \left\{
\begin{array}{l}
\delta([ \nu\tau, \nu^a \tau]) \times \delta([\nu\tau, \nu^b \tau]) \rtimes T_1(\tau; \omega_1\chi_0\sigma^{(0)}) \\
\mbox{ in the (C0) case,} \\
\delta([\nu^{\frac{1}{2}}\tau, \nu^a \tau]) \times \delta([\nu^{\frac{1}{2}}\tau, \nu^b \tau]) \rtimes \omega_2\chi_0\sigma^{(0)} \\
\mbox{ in the (C1/2) case,} \\
\delta([ \nu\tau, \nu^a \tau]) \times \delta([\nu\tau, \nu^b \tau]) \rtimes (\tau \rtimes \omega_1\chi_0c_1 \cdot \sigma^{(0)})\\
\mbox{ in the (C1) and (CN) cases,}
\end{array} \right.
\]
where $c_1,\omega_1, \omega_2$ are such that the inducing representation appears in the Jacquet module of $\delta([\nu^{-a}\tau, \nu^b \tau]) \rtimes \chi_0\sigma^{(0)}$. 
\end{lem}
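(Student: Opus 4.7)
The proof splits into two parts matching the two claims of the lemma.

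My plan for the embedding in part (1) is to invoke Frobenius reciprocity: $\pi \hookrightarrow \delta([\nu^{-a}\tau, \nu^b\tau]) \rtimes \chi_0\sigma^{(0)}$ is equivalent to $\delta([\nu^{-a}\tau, \nu^b\tau]) \otimes \chi_0\sigma^{(0)}$ appearing as a quotient of $s_{GL}(\pi)$. This ``top'' term corresponds to the index $i_1 = -a$ in the bound (\ref{Tadic}) established in the proof of Proposition~\ref{gdsprop1}, and a supercuspidal-support/Casselman-criterion argument as in the proof of Lemma~\ref{gdslemma} forces it actually to appear in $s_{GL}(\pi)$ rather than only in the upper bound. (Equivalently, by the standard module conjecture \cite{HO13}, either $\delta([\nu^{-a}\tau, \nu^b\tau]) \rtimes \chi_0\sigma^{(0)}$ is irreducible and the embedding is trivial, or the Langlands quotient is non-generic, so $\pi$ must realize the subrepresentation whose Jacquet module contains this top term.)

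For part (2), the strategy in every case is to start from the embedding obtained in (1), decompose the segment $[\nu^{-a}\tau, \nu^b\tau]$ at the reducibility exponent $\alpha$, namely
\[
\pi \hookrightarrow \delta([\nu^{-a}\tau, \nu^{\alpha-1}\tau]) \times \delta([\nu^{\alpha}\tau, \nu^{b}\tau]) \rtimes \chi_0\sigma^{(0)},
\]
and then apply Lemma~\ref{Lemma 5.5}(2) together with Corollary~\ref{Steinberggenericity}: genericity of $\pi$ forces it to factor through the generic irreducible subrepresentation $\delta([\nu^{\alpha}\tau, \nu^{b}\tau]; \chi_0\sigma^{(0)})$ of the inner induction, giving
\[
\pi \hookrightarrow \delta([\nu^{-a}\tau, \nu^{\alpha-1}\tau]) \rtimes \delta([\nu^{\alpha}\tau, \nu^{b}\tau]; \chi_0\sigma^{(0)}).
\]
One now rewrites $\delta([\nu^{-a}\tau, \nu^{\alpha-1}\tau]) \rtimes \theta$ in ``positive-exponent'' form $\delta([\nu^{\alpha}\tau, \nu^{a}\tau]) \rtimes \omega \cdot \theta$ by means of the contragredient identity $\delta([\nu^{-a}\tau, \nu^{\alpha-1}\tau]) \cong \delta([\nu^{1-\alpha}\check\tau, \nu^{a}\check\tau])^{\vee}$, the self-duality hypothesis $\check\tau \cong \tau$ (or $\omega_{\sigma^{(0)}}\check\tau \cong \tau$ for $GSpin$ groups) built into (C$\alpha$)/(CN), and the $N^{\ast}$-structure/twisting formulae from Lemma~\ref{twisting}; the character $\omega$ absorbs the central-character contribution and the $c$-twist $c_1$ from (\ref{w_0action}).

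Specialising to each case now produces the claimed embedding. In the (C1/2) case the inner Steinberg is itself $\delta([\nu^{1/2}\tau, \nu^b\tau]) \rtimes \omega_2\chi_0\sigma^{(0)}$, yielding the formula directly. In the (C1) and (CN) cases we further embed $\delta([\nu\tau, \nu^b\tau]; \chi_0\sigma^{(0)})$ into $\tau \rtimes (\tau \rtimes \omega_1\chi_0 c_1 \cdot \sigma^{(0)})$ via Proposition~\ref{Steinberg} (noting that in (CN) the representation $\tau \rtimes \omega_1\chi_0 c_1 \cdot \sigma^{(0)}$ is irreducible, while in (C1) a parallel argument with $\delta([\tau,\nu\tau])$ replacing a single $\tau$ applies; in particular one absorbs $\tau$ into the base representation, which is consistent with the stated form). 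In the (C0) case, one needs one additional step: after the manipulations above one reaches a representation induced from $\tau \rtimes \omega_1\chi_0\sigma^{(0)}$, which decomposes as $T_1 \oplus T_{-1}$; genericity of $\pi$ together with the essential unitarity of this induction (so the decomposition is a direct sum that survives further parabolic induction) forces the embedding to factor through the generic summand $T_1(\tau; \omega_1\chi_0\sigma^{(0)})$.

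The main obstacle I expect is careful bookkeeping of the twisting character $\omega_i$ and the outer-automorphism twist $c_1$ across the contragredient step, since the $N^{\ast}$-structures for the various families of groups differ (attaching central characters to either the inverted $GL$ factor or to $\sigma^{(0)}$) and the $c$-action interacts nontrivially with ramification of $\tau \otimes \sigma^{(0)}$ via Table~\ref{tab:(CN)}. Verifying that the inducing data actually appears in the appropriate Jacquet module of $\delta([\nu^{-a}\tau,\nu^b\tau]) \rtimes \chi_0\sigma^{(0)}$, so that Lemma~\ref{Lemma 5.5} legitimately delivers the final embedding, amounts to the key compatibility check case by case.
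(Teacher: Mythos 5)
There are two genuine gaps here, one in each half of your argument.

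For the first embedding, your Frobenius reciprocity route requires knowing that $\delta([\nu^{-a}\tau,\nu^b\tau]) \otimes \chi_0\sigma^{(0)}$ actually occurs as a quotient of $s_{GL}(\pi)$, and nothing in Lemma~\ref{gdslemma} supplies this: that lemma (and the bound (\ref{Tadic})) gives only \emph{upper} bounds on $s_{GL}(\pi)$, never lower bounds, and the term in question occurs with multiplicity two in $s_{GL}$ of the full induced representation (once from $i=-a$ and once inside $\delta([\nu^{-a}\tau,\nu^{a}\tau])\times\delta([\nu^{a+1}\tau,\nu^{b}\tau])$ at $i=a+1$), so one cannot read off which subquotient carries which copy. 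Your parenthetical fallback conflates ``$\pi$ is not the Langlands quotient'' with ``$\pi$ is a subrepresentation''; that implication is the generalized injectivity conjecture, which is not what is being used and is not available for all the families treated here. The paper instead starts from the minimal embedding (\ref{embedding}) of $\pi$ into a product of essentially square-integrable $GL$-factors times a supercuspidal, and shows by counting the multiplicities $m(t)$ of $\nu^{\pm t}\tau$ in the supercuspidal support that only one segment can occur ($s=1$), which then forces that segment to be $[\nu^{-a}\tau,\nu^b\tau]$. That counting argument is the actual content of this part and is missing from your proposal.

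For the second embedding, your decomposition is in the wrong order: with the conventions of the paper, $\delta([\nu^{-a}\tau,\nu^b\tau])$ is a subrepresentation of $\delta([\nu^{\alpha}\tau,\nu^{b}\tau]) \times \delta([\nu^{-a}\tau,\nu^{\alpha-1}\tau])$ (upper piece on the left), not of $\delta([\nu^{-a}\tau,\nu^{\alpha-1}\tau]) \times \delta([\nu^{\alpha}\tau,\nu^{b}\tau])$; the two orders are not isomorphic since the segments are linked. Consequently Lemma~\ref{Lemma 5.5} groups the \emph{lower} piece with $\sigma^{(0)}$, not the upper one, so you cannot reach $\delta([\nu^{\alpha}\tau,\nu^b\tau];\chi_0\sigma^{(0)})$ as the inner factor the way you describe. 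Relatedly, your ``positive-exponent rewrite'' of $\delta([\nu^{-a}\tau,\nu^{\alpha-1}\tau])\rtimes\theta$ via the contragredient identity is only an equality in the Grothendieck group; as an isomorphism of induced representations it requires irreducibility, which fails exactly at the reducibility exponent. The paper's proof takes the correct order, lets Lemma~\ref{Lemma 5.5} produce the generic subquotient ${\mathcal T}$ of $\delta([\nu^{-a}\tau,\tau])\rtimes\chi_0\sigma^{(0)}$, and then achieves the inversion by identifying ${\mathcal T}$ (via supercuspidal support) with the generic \emph{direct summand} of the essentially unitary representation $\tau\rtimes\delta([\nu\tau,\nu^a\tau];\omega_1\chi_0\sigma^{(0)})_{\psi_a}$, which is what legitimately converts a subquotient statement into an embedding. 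Your use of essential unitarity in the (C0) case is the right instinct, but it is needed at this earlier inversion step in all cases, not only to select $T_1$ at the end.
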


\noindent
\begin{proof}
As in (\ref{embedding}), write
\[
\pi \hookrightarrow \delta([\nu^{-x_1}\tau, \nu^{y_1}\tau]) \times \dots \times \delta([\nu^{-x_s}\tau, \nu^{y_s}\tau]) \rtimes \chi'_0\sigma^{(0)}
\]
(with $(-x_1+ \dots +y_1)+ \dots +(-x_s+ \dots +y_s)$ minimal).
Recall that the irreducibility of $ \delta([\nu^{-x_1}\tau, \nu^{y_1}\tau]) \times \dots \times \delta([\nu^{-x_s}\tau, \nu^{y_s}\tau])$ and Casselman criterion tell us $y_i>x_i$ for all $i$.

We first argue that $s=1$. Let $m(t)$ be the number of times $\nu^{\pm t}\tau$ appears in  $\nu^{t_1}\tau \otimes \dots \otimes \nu^{t_r}\tau \otimes \omega'  \chi_0' (c' \cdot \sigma^{(0)}) \leq r_{M,G}(\pi)$, noting that this is well-defined and depends only on the supercuspidal support of $\pi$ (by the requirements for (C$\alpha$) and (CN), $\nu^x \tau$ becomes $\nu^{-x}\tau$ under block sign change). In the (C0) and (CN) cases, we have $m(0)=1$, from which we see that $s=1$ (noting Lemma~\ref{Dlemma}). In the (C1) case, we also have $m(0)=1$. This means we have $x_i \geq 0$ for at least one $i$; by Lemma~\ref{Dlemma}, we must then have $x_i>0$ for this $i$. Noting that $m(1)=2$, $\delta([\nu^{-x_i}\tau, \nu^{y_i} \tau])$ then accounts for both copies of $\nu^{\pm 1}\tau$ in the supercuspidal support. As any additional $\delta([\nu^{-x_j}\tau, \nu^{y_j}\tau])$ would increase $m(1)$, we see that we also have $s=1$ here. Finally, in the (C1/2) case, we have $m(\frac{1}{2})=2$. There are then two possibilities: (1) $s=1$ and $x_1>0$, or (2) $s=2$ and $x_i=-\frac{1}{2}$ for both $i$. To eliminate (2), observe that
\[
\begin{array}{rl}
\theta 
& \hookrightarrow \delta([\nu^{\frac{1}{2}}\tau, \nu^{y_1}\tau]) \times  \delta([\nu^{\frac{1}{2}}\tau, \nu^{y_2}\tau]) \rtimes \chi_0'\sigma^{(0)} \\
& \\
&\hookrightarrow \delta([\nu^{\frac{3}{2}}\tau, \nu^{y_1}\tau]) \times  \delta([\nu^{\frac{3}{2}}\tau, \nu^{y_2}\tau]) \times \nu^{\frac{1}{2}}\tau \times \nu^{\frac{1}{2}}\tau \rtimes \chi_0'\sigma^{(0)} \\
& \mbox{\ \ \ \ \ }\Downarrow \mbox{ (Lemma~\ref{Lemma 5.5})} \\
\theta & \hookrightarrow \delta([\nu^{\frac{3}{2}}\tau, \nu^{y_1}\tau]) \times  \delta([\nu^{\frac{3}{2}}\tau, \nu^{y_2}\tau]) \rtimes \xi
\end{array}
\]
for some irreducible generic $\xi \leq \nu^{\frac{1}{2}}\tau \times \nu^{\frac{1}{2}}\tau \rtimes \chi_0'\sigma^{(0)}$. As the generic component of $\nu^{\frac{1}{2}}\tau \times \nu^{\frac{1}{2}}\tau \rtimes \chi_0'\sigma^{(0)}$ is a subrepresentation of
$\delta([\nu^{-\frac{1}{2}}\tau, \nu^{\frac{1}{2}}\tau]) \rtimes \omega\chi_0'\sigma^{(0)}$ ($\omega$ trivial for classical or general spin groups, $\omega=\omega_{\nu^{-\frac{1}{2}}\tau}$ or $\omega_{\nu^{-\frac{1}{2}}\tau}^2$ for the other similitude groups), we have  $\xi \hookrightarrow \delta([\nu^{-\frac{1}{2}}\tau, \nu^{\frac{1}{2}}\tau]) \rtimes \omega\chi_0'\sigma^{(0)}$. Then,
\[
\theta \hookrightarrow  \delta([\nu^{\frac{3}{2}}\tau, \nu^{y_1}\tau]) \times  \delta([\nu^{\frac{3}{2}}\tau, \nu^{y_2}\tau]) \times \delta([\nu^{-\frac{1}{2}}\tau, \nu^{\frac{1}{2}}\tau]) \rtimes \omega\chi_0'\sigma^{(0)},
\]
which contradicts the minimality of $x_1+ \dots +x_n$. Thus $s=1$.

By supercuspidal support considerations, the only possibilities with $s=1$ are $\delta([\nu^{-x_1}\tau, \nu^{y_1} \tau])=\delta([\nu^{-a}\tau, \nu^{b} \tau])$ or $\delta([\nu^{-b}\tau, \nu^a \tau])$, and as it is clearly not the latter (e.g., by Proposition~\ref{gdsprop1} and the Casselman criterion), we have 
\[
\pi \hookrightarrow \delta([\nu^{-a}\tau, \nu^b\tau]) \rtimes \chi_0 \sigma^{(0)},
\]
as needed.

For the second embedding, we do the (C1) case; the remaining cases are similar. Observe that
\[
\begin{array}{c}
\pi \hookrightarrow \delta([\nu^{-a}\tau, \nu^b \tau]) \rtimes \chi_0\sigma^{(0)}
\hookrightarrow \delta([\nu\tau, \nu^b \tau]) \times \delta([\nu^{-a}\tau, \tau]) \rtimes \chi_0\sigma^{(0)} \\
\Downarrow \mbox{ (Lemma~\ref{Lemma 5.5})}\\
\pi \hookrightarrow \delta([\nu\tau, \nu^b \tau]) \times {\mathcal T}
\end{array}
\]
where ${\mathcal T}\leq \delta([\nu^{-a}\tau, \tau]) \rtimes \chi_0\sigma^{(0)}$ is the irreducible generic subquotient. Note that by supercuspidal support considerations, ${\mathcal T}$ is also the generic component of $\tau \rtimes \delta([\nu\tau, \nu^a \tau]; \omega_1\chi_0\sigma^{(0)})_{\psi_a}$. As the inducing representation is essentially unitary, we have ${\mathcal T} \hookrightarrow \tau \rtimes \delta([\nu\tau, \nu^a \tau];\omega_1\chi_0\sigma^{(0)})_{\psi_a}$. Thus,
\[
\begin{array}{c}
\pi \hookrightarrow
\delta([\nu\tau, \nu^b \tau]) \times \tau \times \delta([\nu\tau, \nu^a \tau]) \rtimes \omega_1\chi_0\sigma^{(0)} \\
\Downarrow \mbox{ (Lemma~\ref{Lemma 5.5})} \\
\begin{array}{rl}
\pi  \hookrightarrow &
\delta([\tau, \nu^b \tau]) \times \delta([\nu\tau, \nu^a \tau]) \rtimes \omega_1\chi_0\sigma^{(0)} \\
 \cong & \delta([\nu\tau, \nu^a \tau]) \times \delta([\tau, \nu^b \tau]) \rtimes \omega_1\chi_0\sigma^{(0)} \\
\hookrightarrow & \delta([\nu\tau, \nu^a \tau]) \times \delta([\nu\tau, \nu^b \tau]) \rtimes (\tau \rtimes \omega_1\chi_0 \sigma^{(0)})
\end{array} \\
\end{array}
\]
as claimed.
 \end{proof}

\begin{prop}\label{gdsprop2}
Let $\pi$ be an irreducible square-integrable  generic representation. Then $\pi$ is of the form $\delta(\Delta_1, \dots,  \Delta_k; \chi_0\sigma^{(0)})_{\psi_a}$ as in Proposition~\ref{gdsprop1}. Further,
\[
\pi \hookrightarrow \delta(\Delta_1) \times \dots\times \delta(\Delta_k) \rtimes \chi_0\sigma^{(0)}
\]
and the segments which appear are unique up to permutations of the $\Delta_i$.
\end{prop}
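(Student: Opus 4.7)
The plan is to start from the embedding (\ref{embedding}), invoke Lemma~\ref{Dlemma} so that the segments $\Delta_i=[\nu^{-a_i}\tau_i,\nu^{b_i}\tau_i]$ satisfy one of (DS1)--(DS4) and $x_1+\cdots+x_n$ is minimized, and then massage these segments into the canonical ordering of Definition~\ref{def2}. Since $\delta(\Delta_i)\times\delta(\Delta_j)$ is irreducible whenever $\tau_i\not\cong\tau_j$, repeated commuting combined with Lemma~\ref{Lemma 5.5} and the genericity of $\pi$ lets me group the segments according to the underlying unitary supercuspidal $\tau$. Within each $\tau$-group I then sort by ascending $b_i$.

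For the segments within a fixed $\tau$-group, the canonical order requires $a_i<b_i<a_{i+1}$. If some pair $\Delta_i,\Delta_{i+1}$ (same $\tau$) is Zelevinsky-linked, then by \cite{Zel80} the only generic irreducible subquotient of $\delta(\Delta_i)\times\delta(\Delta_{i+1})$ is $\delta(\Delta_i\cup\Delta_{i+1})\times\delta(\Delta_i\cap\Delta_{i+1})$ (a product of unlinked segments); invoking Lemma~\ref{Lemma 5.5} with genericity, I replace the linked pair by the nested pair consisting of the union and the intersection, which strictly reduces the number of same-$\tau$ pairs in which the smaller $b$-value is paired with the larger $a$-value. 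Iteration thus removes all linked pairs. A remaining violation of the canonical order would be a nested pair $\Delta_i\subsetneq\Delta_{i+1}$ with $b_i\geq a_{i+1}$; such a configuration I would rule out by contradicting minimality of $x_1+\cdots+x_n$: splitting $\delta(\Delta_{i+1})\hookrightarrow \delta(\Sigma')\times\delta(\Sigma'')$ at a carefully chosen index inside $\Delta_{i+1}$, then commuting and reapplying Zelevinsky together with the single-segment results of Lemma~\ref{one segment} and Corollary~\ref{Steinberggenericity}, produces an embedding of $\pi$ with strictly smaller $x_1+\cdots+x_n$. The idea parallels the minimality-driven arguments used to force $a_i\geq -1$ and $a_i\neq 0$ in the (C1) case of Lemma~\ref{Dlemma}.

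Once the segments are in canonical order, Proposition~\ref{gdsprop1} identifies $\pi$ as the generic subquotient $\delta(\Delta_1,\ldots,\Delta_k;\chi_0\sigma^{(0)})_{\psi_a}$, and the embedding $\pi\hookrightarrow \delta(\Delta_1)\times\cdots\times\delta(\Delta_k)\rtimes\chi_0\sigma^{(0)}$ is immediate from the above construction. Uniqueness of the canonical data up to permutation follows by inducting on the total number of segments: the Jacquet-module bound from the proof of Proposition~\ref{gdsprop1} exhibits a term carrying the full supercuspidal content of $(\Delta_1,\ldots,\Delta_k)$, which determines the outermost segment in each $\tau$-group from $\pi$ alone; peeling it off via Remark~\ref{Trem} reduces to a square-integrable generic representation of smaller rank to which the inductive hypothesis applies. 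The main obstacle, as indicated above, is the bad-nested case: the needed contradictions with minimality are combinatorially delicate and require careful bookkeeping of how Zelevinsky replacements interact with the embedding of $\pi$ and with the other segments, but the structure of the argument is modeled on the analogous manipulations already carried out in Lemma~\ref{Dlemma}.
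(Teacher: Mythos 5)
Your skeleton matches the paper's: start from the minimal embedding (\ref{embedding}), invoke Lemma~\ref{Dlemma}, group segments by $\tau$ via commuting, rule out violations of the ordering $a_1<b_1<a_2<\cdots$ by contradicting minimality of $x_1+\cdots+x_n$, and read off uniqueness from the supercuspidal support. But the proposal has a genuine gap, and one step is misdirected.

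First, the "remove Zelevinsky-linked pairs by replacing them with union and intersection" step is essentially vacuous. In the embedding (\ref{embedding}) the GL-factor $\delta([\nu^{-a_1}\tau_1,\nu^{b_1}\tau_1])\times\cdots\times\delta([\nu^{-a_k}\tau_k,\nu^{b_k}\tau_k])$ is already irreducible (that is how the embedding was set up, using genericity), so no two segments are linked. The configurations that actually violate the canonical order are \emph{unlinked} pairs: nested segments with $d_i>c_{i+1}\geq 0$, the boundary case $d_i=c_{i+1}$, and (when $c_{i+1}\in\{-\tfrac12,-1\}$) certain disjoint pairs. These are exactly what must be excluded, and your proposal defers all of them to "combinatorially delicate bookkeeping modeled on Lemma~\ref{Dlemma}." That deferred portion is the heart of the proof. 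In particular, the nested case $d_i>c_{i+1}\geq 0$ is not handled by the single-segment Lemma~\ref{one segment}: after isolating the two offending segments one lands on the \emph{two-segment} generalized Steinberg $\delta([\nu^{-c_i}\tau,\nu^{c_{i+1}}\tau],[\nu^{-d_i}\tau,\nu^{d_{i+1}}\tau];\omega'\chi_0 c'\cdot\sigma^{(0)})_{\psi_a}$, and one needs the nontrivial embedding statement of Lemma~\ref{k=2} to convert this into an embedding of $\pi$ contradicting minimality. The case $d_i=c_{i+1}$ requires a different contradiction (Casselman criterion, not minimality), and the cases $c_{i+1}=-\tfrac12$ or $-1$ require yet another argument (ruling out $c_i=c_{i+1}$, then forcing $d_i=d_{i+1}$ and using essential unitarity). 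None of these is supplied, and none follows formally from the manipulations in Lemma~\ref{Dlemma}.

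Second, for uniqueness, "peeling off the outermost segment via Remark~\ref{Trem}" does not work as stated: that remark identifies common subquotients of two induced representations and does not let you recover a lower-rank square-integrable representation canonically from $\pi$. The paper instead reads the segment ends $b_k(\tau), a_k(\tau), b_{k-1}(\tau),\ldots$ directly off the multiplicity function $m_\tau(x)$ on the supercuspidal support of $\pi$, which is determined by $\pi$ alone; you should replace your induction by that (or by an equivalent supercuspidal-support count), since your induction as written is circular without it.
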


\noindent
\begin{proof}
Consider the embedding in (\ref{embedding}) (with the assumptions that $x_1+ \dots x_n$ is minimal and the $d_i$'s are nondecreasing) and recall that $a_i<b_i$ for all $i$. Fix a particular $\tau$ and via a commuting argument, write
\[
\pi \hookrightarrow \delta([\nu^{-c_1}\tau, \nu^{d_1}\tau]) \times \dots \times  \delta([\nu^{-c_{\ell}}\tau, \nu^{d_{\ell}}\tau]) \times \Lambda \rtimes \chi_0\sigma^{(0)},
\]
where $\Lambda$ contains all the $\delta([\nu^{-a_i}\tau_i, \nu^{b_i}\tau_i])$ having $\tau_i \not\cong \tau$ and $c_i,d_i$ are the $a_i,b_i$ for those $\tau_i$ having $\tau_i \cong \tau$.

As a result of  Lemma~\ref{Dlemma}, it suffices to show that $c_{i+1}>d_i$ for all $1\leq i \leq \ell-1$ (noting that if $\ell=1$ there is nothing to show). Suppose this were not the case and let $i$ be the largest index such that $d_i \geq c_{i+1}$. Write
\[
\pi \hookrightarrow \Lambda_1 \times \delta([\nu^{-c_i}\tau, \nu^{d_i}\tau]) \times \delta([\nu^{-c_{i+1}}\tau, \nu^{d_{i+1}}\tau]) \times \Lambda_2 \rtimes \chi_0\sigma^{(0)},
\]
where $\Lambda_1=\delta([\nu^{-c_1}\tau, \nu^{d_1}\tau]) \times \dots \times \delta([\nu^{-c_{j-1}}\tau, \nu^{d_{j-1}}\tau])$ and $$\Lambda_2=\delta([\nu^{-c_{i+2}}\tau, \nu^{d_{i+2}}\tau]) \times \dots \times \delta([\nu^{-c_{\ell}}\tau, \nu^{d_{\ell}}\tau]) \times \Lambda.$$
For $j>i+1$, we have $d_j>c_j > d_{i+1} > c_{i+1}$, so $\delta([\nu^{-c_{i+1}}\tau, \nu^{d_{i+1}}\tau]) \times \delta([\nu^{-c_j}\tau, \nu^{d_j}\tau])$ is irreducible. A similar argument applies to $$\delta([\nu^{-c_i}\tau, \nu^{d_i}\tau]) \times \delta([\nu^{-c_j}\tau, \nu^{d_j}\tau]).$$
Therefore, we may commute $\delta([\nu^{-c_i}\tau, \nu^{d_i}\tau])$ and $\delta([\nu^{-c_{i+1}}\tau, \nu^{d_{i+1}}\tau])$ to the right to get
\[
\begin{array}{c}
\pi \hookrightarrow \Lambda_1 \times \Lambda_2 \times \delta([\nu^{-c_i}\tau, \nu^{d_i}\tau]) \times \delta([\nu^{-c_{i+1}}\tau, \nu^{d_{i+1}}\tau])  \rtimes \chi_0\sigma^{(0)} \\
\mbox{\ \ \ \ \ \ }\Downarrow \mbox{ (Lemma~\ref{Lemma 5.5})} \\
\pi \hookrightarrow \Lambda_1 \times \Lambda_2 \rtimes \theta
\end{array}
\]
for some irreducible generic $\theta \leq \delta([\nu^{-c_i}\tau, \nu^{d_i}\tau]) \times \delta([\nu^{-c_{i+1}}\tau, \nu^{d_{i+1}}\tau])  \rtimes \chi_0\sigma^{(0)}$.

If $d_i=c_{i+1}$, write $d_i$ for both. Then,
\[
\begin{array}{rl}
\theta & \leq \delta([\nu^{-c_i}\tau, \nu^{d_i}\tau]) \times \delta([\nu^{-d_i}\tau, \nu^{d_{i+1}}\tau])  \rtimes \chi_0\sigma^{(0)} \\
& \\
& \leq \delta([\nu^{-d_i}\tau, \nu^{d_i}\tau]) \times \delta([\nu^{-c_i}\tau, \nu^{d_{i+1}}\tau]) \rtimes \chi_0\sigma^{(0)}.
\end{array}
\]
Now, $\delta([\nu^{-c_i}\tau, \nu^{d_{i+1}}\tau]; \chi_0\sigma^{(0)})_{\psi_a}$ is the irreducible generic subquotient of $\delta([\nu^{-c_i}\tau, \nu^{d_{i+1}}\tau]) \rtimes \chi_0\sigma^{(0)}$ (definition in Proposition~\ref{gdsprop1}). Thus,
\[
\theta \leq \delta([\nu^{-d_i}\tau, \nu^{d_i}\tau]) \times \delta([\nu^{-c_i}\tau, \nu^{d_{i+1}}\tau]; \chi_0\sigma^{(0)})_{\psi_a}.
\]
As the inducing representation is essentially unitary, we have
\[
\theta \hookrightarrow \delta([\nu^{-d_i}\tau, \nu^{d_i}\tau]) \times \delta([\nu^{-c_i}\tau, \nu^{d_{i+1}}\tau]; \chi_0\sigma^{(0)})_{\psi_a}.
\]
It then follows that
\[
\pi \hookrightarrow \Lambda_1 \times \Lambda_2 \times \delta([\nu^{-d_i}\tau, \nu^{d_i}\tau]) \times \delta([\nu^{-c_i}\tau, \nu^{d_{i+1}}\tau]; \chi_0\sigma^{(0)})_{\psi_a}.
\]
As $\delta([\nu^{-c_j}\tau, \nu^{d_j}\tau]) \times \delta([\nu^{-d_i}\tau, \nu^{d_i}\tau])$ is irreducible for all $j<i$ (as $c_j <d_j \leq d_i$), a commuting argument gives
\[
\pi \hookrightarrow \delta([\nu^{-d_i}\tau, \nu^{d_i}\tau]) \times \Lambda_1 \times \Lambda_2 \rtimes \delta([\nu^{-c_i}\tau, \nu^{d_{i+1}}\tau]; \chi_0\sigma^{(0)})_{\psi_a},
\]
which (by Frobenius reciprocity) contradicts the Casselman criterion for the square-integrability of $\pi$. Thus we cannot have $d_i=c_{i+1}$.

If $d_i>c_{i+1} \geq 0$, first suppose $c_{i+1}>c_i$. Then, we have
\[
\begin{array}{rl}
\theta & \leq \delta([\nu^{-c_i}\tau, \nu^{d_i}\tau]) \times \delta([\nu^{-c_{i+1}}\tau, \nu^{d_{i+1}}\tau])  \rtimes \chi_0\sigma^{(0)} \\
& \\
& \leq  \delta([\nu^{c_{i+1}+1}\tau, \nu^{d_i}\tau]) \times \delta([\nu^{-c_i}\tau, \nu^{c_{i+1}}\tau]) \times \delta([\nu^{-c_{i+1}}\tau, \nu^{d_{i+1}}\tau])  \rtimes \chi_0\sigma^{(0)} \\
& \\
& \leq  \delta([\nu^{-d_i}\tau, \nu^{-c_{i+1}-1}\tau]) \times \delta([\nu^{-c_i}\tau, \nu^{c_{i+1}}\tau]) \times \delta([\nu^{-c_{i+1}}\tau, \nu^{d_{i+1}}\tau]) \\
& \hfill \rtimes \omega'\chi_0 c'\cdot\sigma^{(0)} \\
& \mbox{\ \ \ \ \ \ \ \ }\Downarrow \mbox{ (Lemma~\ref{Lemma 5.5} and definition in Proposition~\ref{gdsprop1})}\\
\theta & =  \delta([\nu^{-c_i}\tau, \nu^{c_{i+1}}\tau], [\nu^{-d_i}\tau, \nu^{d_{i+1}}\tau];  \omega'\chi_0c' \cdot\sigma^{(0)})_{\psi_a},
\end{array}
\]
for the appropriate $\omega',c'$.
From Lemma~\ref{k=2} below, we then have
\[
\begin{array}{c}
\theta \hookrightarrow \delta([\nu^{-c_i}\tau, \nu^{c_{i+1}}\tau]) \times \delta([\nu^{-d_i}\tau, \nu^{d_{i+1}}\tau]) \rtimes \omega'\chi_0 c'\cdot\sigma^{(0)} \\
\Downarrow \\
\pi \hookrightarrow \Lambda_1 \times \Lambda_2 \times  \delta([\nu^{-c_i}\tau, \nu^{c_{i+1}}\tau]) \times \delta([\nu^{-d_i}\tau, \nu^{d_{i+1}}\tau]) \rtimes \omega'\chi_0 c'\cdot\sigma^{(0)}.
\end{array}
\]
However, this contradicts the minimality of $x_1+ \dots +x_n$. The argument if $c_i>c_{i+1}$ is similar, but with $\theta= \delta([\nu^{-c_{i+1}}\tau, \nu^{c_i}\tau], [\nu^{-d_i}\tau, \nu^{d_{i+1}}\tau];  \omega''\chi_0 c''\cdot\sigma^{(0)})_{\psi_a}$. The argument is also similar if $c_i=c_{i+1}$, but in this case, $\theta$ is the generic component of $\delta([\nu^{-c_i}\tau, \nu^{c_i}\tau]) \rtimes \delta_1([\nu^{-d_i}\tau, \nu^{d_{i+1}}\tau]; \chi_0 \sigma^{(0)})$. This no longer has $\theta$ essentially square-integrable, but one still has $\theta \hookrightarrow \delta([\nu^{-c_i}\tau, \nu^{c_i}\tau]) \rtimes \delta([\nu^{-d_i}\tau, \nu^{d_{i+1}}\tau]; \sigma^{(0)})_{\psi_a}$ by essential unitarity. Thus we have eliminated the possibility $d_i>c_{i+1} \geq 0$.

It now remains to eliminate the possibility $d_i>c_{i+1}$ with $c_{i+1}<0$. By Lemma~\ref{Dlemma}, the only such possibilities are $c_{i+1}=-\frac{1}{2}$ in the (C1/2) case and $c_{i+1}=-1$ in the (C1) case. In either case, we must have $c_i \geq c_{i+1}$. We first argue that we cannot have $c_i=c_{i+1}$. For the (C1) case, this is done in the proof of (3) in Lemma~\ref{Dlemma}. In the (C1/2) case, we have
\[
\pi \hookrightarrow \Lambda_1 \times \Lambda_2 \times \delta([\nu^{\frac{1}{2}}\tau, \nu^{d_i}\tau]) \times \delta([\nu^{\frac{1}{2}}\tau, \nu^{d_{i+1}}\tau]) \rtimes  \chi_0\sigma^{(0)}.
\]
By the definition in Proposition~\ref{gdsprop1}, the irreducible generic subquotient of $ \delta([\nu^{\frac{1}{2}}\tau, \nu^{d_i}\tau]) \rtimes \delta([\nu^{\frac{1}{2}}\tau, \nu^{d_{i+1}}\tau]) \rtimes  \chi_0\sigma^{(0)}$ is $ \delta([\nu^{-d_i}\tau, \nu^{d_{i+1}}\tau]; \chi_0\sigma^{(0)})_{\psi_a}$. Therefore,
\[
\begin{array}{rl}
 \pi & \hookrightarrow \Lambda_1 \times \Lambda_2 \rtimes \delta([\nu^{-d_i}\tau, \nu^{d_{i+1}}\tau]; \chi_0\sigma^{(0)})_{\psi_a} \\
& \\
& \hookrightarrow  \Lambda_1 \times \Lambda_2 \rtimes \delta([\nu^{-d_i}\tau, \nu^{d_{i+1}}\tau]) \rtimes \chi_0\sigma^{(0)}
\end{array} \]
by Lemma~\ref{one segment}, contradicting the minimality of $x_1+ \dots +x_n$. Thus, $c_i>c_{i+1}$. Now, observe that $d_i<d_{i+1}$ implies $\delta([\nu^{-c_i}\tau, \nu^{d_i}\tau]) \times \delta([\nu^{-c_{i+1}}\tau, \nu^{d_{i+1}}\tau])$ reducible, also contradicting the conditions in (\ref{embedding}). Thus we must have $d_i=d_{i+1}$. However, were this the case we would have (letting $\alpha=\frac{1}{2}$ or $1$, as appropriate)
\[
\pi \hookrightarrow \Lambda_1 \times \Lambda_2 \rtimes \theta
\]
for $\theta$ the irreducible generic subquotient of $\delta([\nu^{-c_i}\tau, \nu^{d_i}\tau]) \times \delta([\nu^{\alpha}\tau, \nu^{d_i}\tau]) \rtimes  \chi_0\sigma^{(0)}$. Then $\theta$ is also the generic component of the (essentially unitary) representation $\delta([\nu^{-d_i}\tau, \nu^{d_i}\tau]) \rtimes \delta([\nu^{\alpha}\tau, \nu^{c_i}\tau]);  \chi_0\sigma^{(0)})$. Thus,
\[
\pi \hookrightarrow \Lambda_1 \times \Lambda_2 \times
\delta([\nu^{-d_i}\tau, \nu^{d_i}\tau]) \rtimes \delta([\nu^{\alpha}\tau, \nu^{c_i}\tau]);  \chi_0\sigma^{(0)}),
\]
contradicting the minimality of $x_1+ \dots +x_n$ and finishing this case.


We have now shown that $\pi$ embeds in an induced representation of the form in Proposition~\ref{gdsprop1}; that $\pi$ is the corresponding representation from Proposition \ref{gdsprop1} is immediate from genericity. It remains to show the uniqueness of the data claimed.

The argument for the uniqueness of the data follows the approach of \cite[Lemma 2.1.1]{Jan18} (based on \cite[Lemma 3.1]{Jan00b}), essentially the observation that the supercuspidal support determines the segment ends.
Suppose there were two such sets of data for $\pi$ and fix $\tau \in P$. Suppose one set has corresponding segments $[\nu^{-a_1(\tau)}, \nu^{b_1(\tau)}]$, $\ldots, [\nu^{-a_k(\tau)}, \nu^{b_k(\tau)}]$ and the other $[\nu^{-a_1'(\tau)}, \nu^{b_1'(\tau)}], \ldots, [\nu^{a_{\ell}'(\tau)}, \nu^{b_{\ell}'(\tau)}]$. As in Lemma~\ref{one segment}, let $m_{\tau}(x)$ be the number of times $\nu^{\pm x}\tau$ appears in a term in the minimal Jacquet module of $\pi$. It is not difficult, based on the conditions $a_1(\tau)<b_1(\tau)< \dots <a_k(\tau)<b_k(\tau) $ and similarly for $a_i'(\tau),b_i'(\tau)$--that the largest value of $x$ having $m_{\tau}(x)=1$ is $x=b_k(\tau)=b'_{\ell}(\tau)$. The largest $x$ having $m_{\tau}(x)=2$ is $x=a_k(\tau)=a'_{\ell}(\tau)$; if no such $x$ exists, $a_k(\tau)=a'_{\ell}(\tau)=\left\{\begin{array}{l} -\alpha \mbox{ for (C$\alpha$),} \\ 0 \mbox{ for (CN)}\end{array}\right.$ (and $k=\ell=1$). Iterating this argument gives the uniqueness of the segments claimed.
\end{proof}

\begin{rem} Observe that if 
$(\tau, \sigma^{(0)})$ satisfies (CN), then $$\delta([\tau, \nu\tau]; \sigma^{(0)})_{\psi_a} \cong \delta([\tau, \nu\tau]; c\sigma^{(0)})_{\psi_a},$$
so one may not have uniqueness of $\sigma^{(0)}$.
\end{rem}

\begin{lem}\label{k=2}
Suppose $a <b<c<d$ with $$\theta=\delta([\nu^{-a}\tau, \nu^b \tau], [\nu^{-c}\tau, \nu^d \tau]; \chi_0\sigma^{(0)})_{\psi_a}$$ as in Proposition~\ref{gdsprop1} (so (DS1)--(DS4) satisfied). Then,
\[
\theta \hookrightarrow \delta([\nu^{-a}\tau, \nu^b \tau]) \times \delta([\nu^{-c}\tau, \nu^d \tau]) \rtimes \chi_0\sigma^{(0)}.
\]
\end{lem}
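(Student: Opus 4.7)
The plan is to adapt the argument of Lemma~\ref{one segment} to the two-segment setting. A useful simplification comes from the hypothesis $a<b<c<d$, which implies $[\nu^{-a}\tau,\nu^b\tau]\subset[\nu^{-c}\tau,\nu^d\tau]$; the two segments are therefore not linked, so by \cite{Zel80} the product $\delta([\nu^{-a}\tau,\nu^b\tau])\times\delta([\nu^{-c}\tau,\nu^d\tau])$ is irreducible and the order of the two factors in $I:=\delta([\nu^{-a}\tau,\nu^b\tau])\times\delta([\nu^{-c}\tau,\nu^d\tau])\rtimes\chi_0\sigma^{(0)}$ is immaterial. The content of the lemma is therefore that the generic subquotient $\theta$ (in the sense of Proposition~\ref{gdsprop1}) is in fact a subrepresentation of $I$.

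Proceeding as in Lemma~\ref{one segment}, I would begin by choosing a generic embedding
\[
\theta\hookrightarrow\delta([\nu^{-x_1}\tau,\nu^{y_1}\tau])\times\cdots\times\delta([\nu^{-x_s}\tau,\nu^{y_s}\tau])\rtimes\chi_0'\sigma',
\]
where $\sigma'$ differs from $\sigma^{(0)}$ by at most a $c$- or $\omega$-twist, chosen so that the positive-exponent sum $\sum_i\sum_{t=-x_i}^{y_i}\max(t,0)$ is minimal over all such embeddings. Genericity of $\theta$, together with Lemma~\ref{Lemma 5.5}, guarantees such an embedding exists, and the arguments in Lemma~\ref{Dlemma} force each segment to satisfy (DS1)--(DS4) and to have $y_i>x_i$.

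The heart of the proof is to establish $s=2$ and to identify the two segments as $[\nu^{-a}\tau,\nu^b\tau]$ and $[\nu^{-c}\tau,\nu^d\tau]$. Let $m(t)$ denote the multiplicity of $\nu^{\pm t}\tau$ in the supercuspidal support of $\theta$; since this support is determined by the two given segments, $m(t)$ is explicitly computable. The equation $m(d)=1$ forces one segment, say $\Delta_A$, to have right endpoint $d$, and the combined constraints from $m(d), m(c), \ldots$, together with the minimality of the exponent sum and the restrictions on $x_i$ imposed by (DS1)--(DS4), pin $\Delta_A=[\nu^{-c}\tau,\nu^d\tau]$; the residual multiplicities at $t\le b$ then force a unique second segment $\Delta_B=[\nu^{-a}\tau,\nu^b\tau]$ with no further segments. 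A final check matches $\chi_0'\sigma'$ with $\chi_0\sigma^{(0)}$ up to the $c$- and $\omega$-equivalences afforded by (CN) and the relevant central-character formulas. Combined with the opening irreducibility remark, this yields the desired embedding. The main obstacle is this counting and matching step, as the precise form of (DS1)--(DS4) and the presence or absence of $c$/$\omega$ twists vary with (C$\alpha$) and (CN); in each case, however, this is a bookkeeping refinement of the single-segment analysis already carried out in the proof of Lemma~\ref{one segment}.
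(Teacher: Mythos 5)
Your overall strategy is the right one and matches the paper's: take a minimal embedding of $\theta$ into a representation induced from segments and a supercuspidal, show $s=2$, and identify the two segments. The opening observation that the nesting $[\nu^{-a}\tau,\nu^b\tau]\subset[\nu^{-c}\tau,\nu^d\tau]$ makes $\delta([\nu^{-a}\tau,\nu^b\tau])\times\delta([\nu^{-c}\tau,\nu^d\tau])$ irreducible is correct and is used implicitly in the paper. The reduction to $s=2$ via $m(t)$ is also essentially the paper's argument (though note that in the (C1/2) case eliminating the configurations $s=3$ and $s=4$ with $x_i=-\tfrac12$ is not pure bookkeeping: it needs Lemma~\ref{one segment} and Lemma~\ref{Lemma 5.5} to manufacture an embedding contradicting minimality).

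The genuine gap is in the step where you claim the multiplicities, minimality, and (DS1)--(DS4) ``pin'' the two segments as $[\nu^{-a}\tau,\nu^b\tau]$ and $[\nu^{-c}\tau,\nu^d\tau]$. They do not. Because the supercuspidal support of a representation of $G_n$ is only defined up to sign changes $\nu^x\tau\leftrightarrow\nu^{-x}\check\tau$, the invariant $m(t)$ (counting $\nu^{\pm t}\tau$ together) determines only the multiset of segment endpoints $\{a,b,c,d\}$, not their pairing. The ``crossed'' pairing $\delta([\nu^{-a}\tau,\nu^{c}\tau])\otimes\delta([\nu^{-b}\tau,\nu^{d}\tau])$ has exactly the same $m(t)$ profile (e.g.\ for $a=1,b=2,c=3,d=4$ both pairings give $m(0)=2$, $m(1)=4$, $m(2)=3$, $m(3)=2$, $m(4)=1$), is again a pair of nested, hence unlinked, segments with irreducible product, and is compatible with the Casselman criterion; so nothing in your counting excludes it. Nor does minimality of the exponent sum: minimality is taken over embeddings that actually exist, and the whole point of the lemma is that one does not yet know the uncrossed pairing yields an embedding rather than merely a subquotient. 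Excluding (or rather, defusing) the crossed pairing is the main content of the paper's proof: assuming $\theta\hookrightarrow\delta([\nu^{-a}\tau,\nu^{c}\tau])\times\delta([\nu^{-b}\tau,\nu^{d}\tau])\rtimes\chi_1\sigma^{(0)}$, one runs a long chain of re-embeddings --- peeling off $\delta([\nu\tau,\nu^c\tau])$ and $\delta([\tau,\nu^d\tau])$, invoking Lemma~\ref{Lemma 5.5}, Lemma~\ref{one segment}, the essential unitarity of $\tau\rtimes\delta([\nu^{-c}\tau,\nu^d\tau];\cdot)_{\psi_a}$, and the generalized Steinberg representations of Proposition~\ref{Steinberg} --- to arrive at the desired embedding $\theta\hookrightarrow\delta([\nu^{-c}\tau,\nu^{d}\tau])\times\delta([\nu^{-a}\tau,\nu^{b}\tau])\rtimes\chi_0\sigma^{(0)}$ anyway. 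Your proposal is missing this argument entirely, and the lemma does not follow without it.
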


\noindent
\begin{proof}
As in (\ref{embedding}), write
\[
\theta \hookrightarrow \delta([\nu^{-x_1}\tau, \nu^{y_1}\tau]) \times \dots \times \delta([\nu^{-x_s}\tau, \nu^{y_s}\tau]) \rtimes \chi_0'(c' \cdot \sigma^{(0)})
\]
with $\delta([\nu^{-x_1}\tau, \nu^{y_1}\tau]) \times \dots \times \delta([\nu^{-x_s}\tau, \nu^{y_s}\tau])$ irreducible (and $(-x_1+ \dots +y_1)+ \dots +(-x_s+ \dots +y_s)$ minimal). Further, by Lemma~\ref{Dlemma}, we have $x_i \geq -\alpha$ for (C$\alpha$) or $x_i \geq 0$ for (CN).

We next show that $s=2$. As in Lemma~\ref{one segment}, let $m(t)$ be the number of times $\nu^{\pm t}\tau$ appears in  $\nu^{t_1}\tau \otimes \dots \otimes \nu^{t_r}\tau \otimes \chi_0 \omega (d \cdot \sigma^{(0)}) \leq r_{M,G}(\theta)$. In the (C0) and (CN) cases, we have $m(0)=2$, from which we see that $s=2$. In the (C1) case, we have $m(0)=2$ if $a\not=-1$ (resp., $m(0)=1$ if $a=-1$). This means we have $x_i \geq 0$ for at least two $i$ when $a \not=-1$ (resp., at least one $i$ when $a=-1$); by Lemma~\ref{Dlemma}, we must then have $x_i>0$ for these $i$. Noting that $m(1)=4$ when $a \not=-1$ (resp., $m(1)=3$ when $a=-1$), these then account for all 4 (resp., all 3) copies of $\nu^{\pm 1}\tau$ in the supercuspidal support. As any additional $\delta([\nu^{-x_j}\tau, \nu^{y_j}\tau])$ would increase that value, we see that we also have $s=2$ here. Finally, in the (C1/2) case, first suppose $a \not=-\frac{1}{2}$. In this case, $m(\frac{1}{2})=4$. There are then three possibilities: (1) $s=2$ and $x_i>0$ for both $i$, (2) $s=3$ with $x_3>0$ and $x_1=x_2=-\frac{1}{2}$, or (3) $s=4$ and $x_i=-\frac{1}{2}$ for all $i$. To eliminate (2), observe that
\[
\begin{array}{rl}
\theta 
& \hookrightarrow \delta([\nu^{\frac{1}{2}}\tau, \nu^{y_1}\tau]) \times  \delta([\nu^{\frac{1}{2}}\tau, \nu^{y_2}\tau]) \times \delta([\nu^{-x_3}\tau, \nu^{y_3}\tau]) \rtimes \chi_0' (c' \cdot \sigma^{(0)}) \\
& \\
&\cong \delta([\nu^{-x_3}\tau, \nu^{y_3}\tau]) \times \delta([\nu^{\frac{1}{2}}\tau, \nu^{y_1}\tau]) \times  \delta([\nu^{\frac{1}{2}}\tau, \nu^{y_2}\tau]) \rtimes \chi_0' (c' \cdot \sigma^{(0)}) \\
& \Downarrow \mbox{ (Lemma~\ref{Lemma 5.5} and definition in Proposition~\ref{gdsprop1})} \\
\theta & \hookrightarrow \delta([\nu^{-x_3}\tau, \nu^{y_3}\tau]) \rtimes \delta([\nu^{-y_1}\tau, \nu^{y_2}\tau]; \omega'\chi_0' (c'' \cdot \sigma^{(0)}))_{\psi_a} \\
& \Downarrow \mbox{ (Lemma~\ref{one segment})} \\
\theta & \hookrightarrow \delta([\nu^{-x_3}\tau, \nu^{y_3}\tau]) \times \delta([\nu^{-y_1}\tau, \nu^{y_2}\tau]) \rtimes \omega' \chi_0' (c'' \cdot \sigma^{(0)})
\end{array}
\]
which contradicts the minimality of $x_1+ \dots +x_n$. We may eliminate (3) similarly, leaving $s=2$ as the only possibility. The argument when $a=-\frac{1}{2}$ is similar but somewhat easier as there are fewer cases to consider; we omit the details.

Next, observe that $x_1,x_2,y_1,y_2$ must be $a,b,c,d$ in some order by supercuspidal support considerations (as the values of $m$ change at the segment ends; see \cite[Lemma 3.1]{Jan00b} for a more general version of this observation). To satisfy $\delta([\nu^{-x_1}\tau, \nu^{y_1}\tau]) \times \delta([\nu^{-x_1}\tau, \nu^{y_1}\tau])$ irreducible and the Casselman criterion, there are only two possibilities: $\delta([\nu^{-x_1}\tau, \nu^{y_1}\tau]) \otimes \delta([\nu^{-x_2}\tau, \nu^{y_2}\tau])=\delta([\nu^{-a}\tau, \nu^{b}\tau]) \otimes \delta([\nu^{-c}\tau, \nu^{d}\tau])$ or $\delta([\nu^{-a}\tau, \nu^{c}\tau]) \otimes \delta([\nu^{-b}\tau, \nu^{d}\tau])$. To see that the latter does not hold, suppose it did. In the (C1) case, one then has
\[
\begin{array}{rl}
\theta & \hookrightarrow \delta([\nu^{-a}\tau, \nu^{c}\tau]) \times \delta([\nu^{-b}\tau, \nu^{d}\tau]) \rtimes \chi_1 \sigma^{(0)} \\
& \\
& \hookrightarrow \delta([\nu\tau, \nu^{c}\tau]) \times \delta([\nu^{-a}\tau, \tau]) \times \delta([\nu^{-b}\tau, \nu^{d}\tau]) \rtimes \chi_1 \sigma^{(0)} \\
& \\
& \cong \delta([\nu\tau, \nu^{c}\tau]) \times \delta([\nu^{-b}\tau, \nu^{d}\tau]) \times \delta([\nu^{-a}\tau, \tau]) \rtimes \chi_1 \sigma^{(0)} \\
& \\
&  \hookrightarrow  \delta([\nu\tau, \nu^{c}\tau]) \times  \delta([\tau, \nu^{d}\tau]) \times  \delta([\nu^{-b}\tau,\nu^{-1}\tau]) \times \delta([\nu^{-a}\tau, \tau]) \rtimes \chi_1 \sigma^{(0)} \\
& \mbox{\ \ \ \ \ \ \ \ }\Downarrow \mbox{(Lemma~\ref{Lemma 5.5})} \\
&  \hookrightarrow  \delta([\nu\tau, \nu^{c}\tau]) \times  \delta([\tau, \nu^{d}\tau]) \rtimes  \delta([\nu^{-a}\tau, \nu^{b}\tau]; \omega_1\chi_1 \sigma^{(0)})_{\psi_a} \\
& \mbox{\ \ \ \ \ \ \ \ } \Downarrow \mbox{(Lemma~\ref{one segment})} \\
&  \hookrightarrow  \delta([\nu\tau, \nu^{c}\tau]) \times  \delta([\tau, \nu^{d}\tau]) \times \delta([\nu\tau, \nu^{a}\tau]) \times  \delta([\nu\tau, \nu^{b}\tau])  \rtimes (\tau \rtimes \omega_2\chi_1 \sigma^{(0)}) \\
& \\
&  \cong  \delta([\nu\tau, \nu^{a}\tau]) \times  \delta([\nu\tau, \nu^{b}\tau]) \times \delta([\nu\tau, \nu^{c}\tau]) \times  \delta([\tau, \nu^{d}\tau])  \rtimes (\tau \rtimes \omega_2\chi_1 \sigma^{(0)})\\
& \mbox{\ \ \ \ \ \ \ \ }\Downarrow \mbox{(Lemma~\ref{Lemma 5.5})} \\
&  \hookrightarrow  \delta([\nu\tau, \nu^{a}\tau]) \times  \delta([\nu\tau, \nu^{b}\tau]) \rtimes  T_1([\nu^{-c}\tau, \nu^{d}\tau]; \tau \rtimes \omega_3\chi_1 \sigma^{(0)})
\end{array}
\]
where $T_1([\nu^{-c}\tau, \nu^d \tau]; \tau \rtimes \omega_3 \chi_1 \sigma^{(0)})$ is the generic component of 
$$\delta([\nu\tau, \nu^{c}\tau]) \times  \delta([\tau, \nu^{d}\tau])  \rtimes (\tau \rtimes \omega_3 \chi_1 \sigma^{(0)}).$$
By supercuspidal support considerations, this is also the generic component of $\tau \rtimes \delta([\nu^{-c}\tau, \nu^d \tau]; \omega_3 \chi_1 \sigma^{(0)})_{\psi_a}$. By essential unitarity,
\[
T_1([\nu^{-c}\tau, \nu^d \tau]; \tau \rtimes \omega_3 \chi_1 \sigma^{(0)}) \hookrightarrow \tau \rtimes \delta([\nu^{-c}\tau, \nu^d \tau]; \omega_3 \chi_1 \sigma^{(0)})_{\psi_a}.
\]
Thus,
\[
\begin{array}{rl}
\theta & \hookrightarrow \delta([\nu\tau, \nu^{a}\tau]) \times  \delta([\nu\tau, \nu^{b}\tau]) \times \tau  \rtimes \delta([\nu^{-c}\tau, \nu^d \tau]; \omega_3 \chi_1 \sigma^{(0)})_{\psi_a} \\
& \mbox{\ \ \ \ \ \ \ \ } \Downarrow \mbox{ (Lemma~\ref{one segment})} \\
&  \hookrightarrow  \delta([\nu\tau, \nu^{a}\tau]) \times  \delta([\nu\tau, \nu^{b}\tau]) \times  \tau \times \delta([\nu^{-c}\tau, \nu^{d}\tau]) \rtimes \omega_3 \chi_1 \sigma^{(0)} \\
& \\
& \cong \delta([\nu^{-c}\tau, \nu^{d}\tau])  \times  \delta([\nu\tau, \nu^{a}\tau]) \times  \delta([\nu\tau, \nu^{b}\tau]) \times \tau \rtimes \omega_3 \chi_1  \sigma^{(0)} \\
& \mbox{\ \ \ \ \ \ \ \ }\Downarrow \mbox{(Lemma~\ref{Lemma 5.5})} \\
&  \hookrightarrow  \delta([\nu^{-c}\tau, \nu^{d}\tau]) \rtimes  \delta_1([\nu^{-a}\tau, \nu^{b}\tau]; \chi_0\sigma^{(0)}) \\
&\mbox{\ \ \ \ \ \ \ \ } \Downarrow \mbox{ (Lemma~\ref{one segment})} \\
\theta & \hookrightarrow \delta([\nu^{-c}\tau, \nu^{d}\tau]) \times \delta([\nu^{-a}\tau, \nu^{b}\tau]) \rtimes \chi_0 \sigma^{(0)},
\end{array}
\]
implying the needed embedding. The (C0), (C1/2), and  (CN) cases are argued similarly. \end{proof}




We close this section by discussing essentially square-integrable representations for the similitude cases. Let $\sigma^{(e2)}$ be an irreducible, generic, essentially square-integrable representation and write $\sigma^{(e2)}=\chi\sigma^{(2)}$ for some character $\chi$; without loss of generality, we may assume $\chi=|\cdot|^s$, $s \in {\mathbb R}$. Now,
\[
\sigma^{(2)}=\delta(\Delta'_1, \dots, \Delta'_k; \chi_0\sigma^{(0)})_{\psi_a},
\]
for some $\Delta'_1, \dots, \Delta'_k, \chi_0\sigma^{(0)}$ as above. By Lemma~\ref{twisting} and Proposition~\ref{gdsprop1}, if $\sigma^{(0)}$ is a representation of $G_{n_0}(F)$ (and recalling $n_0 \not=1$ for $GSO_{2n}$ and $GSpin_{2n}$),
\begin{align*}
\chi\sigma^{(2)} & \hookrightarrow \chi(\delta(\Delta'_1) \times \dots \times \delta(\Delta'_k) \rtimes \chi_0 \sigma^{(0)}) \\
& \cong
	\left\{ \begin{array}{l}
	\delta(\Delta'_1) \times \dots \times \delta(\Delta'_k) \rtimes \chi\chi_0\sigma^{(0)} \\
	\mbox{ if } G_n=GSp_{2n}, GSO_{2n}, GSO_{2n+2}^{\ast}, GU_{2n+1}, GU_{2n},\\
	\chi\delta(\Delta'_1) \times \dots \times  \chi\delta(\Delta'_k) \rtimes \chi\chi_0\sigma^{(0)} \\
	\mbox{ if } G_n=GSpin_{2n+1}, GSpin_{2n} \mbox{ with }n_0>0, \\
	\chi\delta(\Delta'_1) \times \dots \times \chi\delta(\Delta'_k) \rtimes \chi^2\chi_0\sigma^{(0)}  \\
	\mbox{ if } G_n=GSpin_{2n+1}, GSpin_{2n} \mbox{ with }n_0=0, \\
	\chi\delta(\Delta'_1) \times \dots \times  \chi\delta(\Delta'_k) \rtimes \chi\chi_0\sigma^{(0)} \\
	\mbox{ if } G_n=GSpin_{2n+2}^{\ast} \mbox{ (any $n_0$)}. \\
\end{array} \right.
\end{align*}
Letting $\delta(\Delta_i)$ be $\delta(\Delta'_i)$ or $\chi\delta(\Delta'_i)$ as needed, and $\sigma^{(e0)}$ be $\chi\chi_0\sigma^{(0)}$ or $\chi^2\chi_0\sigma^{(0)}$ as appropriate, we write
\begin{equation}\label{essentially sq int}
\sigma^{(e2)}=\delta(\Delta_1, \dots, \Delta_k, \sigma^{(e0)})_{\psi_a}.
\end{equation}

For groups other than $GSpin_{2n+1}$, $GSpin_{2n}$, and $GSpin_{2n+2}^{\ast}$, we still have $\Delta_1, \dots, \Delta_k$ satisfying {\bf (DS1)--(DS4)}. For $GSpin_{2n+1}$, $GSpin_{2n}$, and $GSpin_{2n+2}^{\ast}$, we note that $\chi_0$ is trivial. Writing $\chi=|\cdot|^s$ with $s \in {\mathbb R}$ as above, we have $\omega_{\sigma^{(e0)}}=|\cdot|^s\omega_{\sigma^{(0)}}$ unless $n_0=0$ (where $\sigma^{(0)}$ is a representation of $G_{n_0}$) and the general spin group is split. This implies the
conditions {\bf (DS1)--(DS4)} are shifted up by the exponent of $\sigma^{(e0)}$. When $n_0=0$ and the group is split, the shift is half the exponent.
This then gives the following conditions on essentially square-integrable representations of general groups $\delta([\nu^{-a}\tau, \nu^b \tau])$ which occur in the classification of generic essential square-integrable representations, with $\beta=\beta(\sigma^{(e0)})$ as in Definition~\ref{beta def}:

\begin{description}

\item[(EDS1)]
If $(\tau; \sigma^{(0)})$ satisfies (C1), then $a \in \beta+\left({\mathbb N} \cup \{-1\}\right)$.

\item[(EDS2)]
If $(\tau; \sigma^{(0)})$ satisfies (C0), then $a \in\beta+{\mathbb Z}_{\geq 0}$.

\item[(EDS3)]
If $(\tau; \sigma^{(0)})$ satisfies (C1/2), then $a \in \beta-\frac{1}{2}+{\mathbb Z}_{\geq 0}$.

\item[(EDS4)]
If $(\tau; \sigma^{(0)})$ satisfies (CN), then $a \in \beta+{\mathbb Z}_{\geq 0}$.

\end{description}

\subsection{Tempered generic representations}\label{gtsection}

Let $\sigma^{(2)}$ be a $\psi_a$-square-integrable generic representation of $G_{k_0}(F)$ and $\beta_1, \dots, \beta_c$ (repetition possible) irreducible unitary supercuspidal representations of $H_{k_1}(F)$, $\dots$, $H_{k_c}(F)$, resp. For $i=1, \dots, c$, let
$\delta(\Psi'_i)=\delta([\nu^{-e_i}\beta_i, \nu^{e_i}\beta_i])$, $2e_i \in {\mathbb Z}_{\geq 0}$ be a sequence of (unitary) square-integrable representations (of $H_{k_i(2e_i+1)}(F)$, $i=1, \dots, c$, resp.). Then the unique $\psi_a$-generic component $\sigma^{(t)}$ 
\begin{equation}\label{tempered generic}
    \sigma^{(t)} \hookrightarrow \delta(\Psi'_1) \times \dots \times \delta(\Psi'_c) \rtimes \sigma^{(2)}
\end{equation}
is a tempered representation of $G_n(F)$, where $n=k_0+2\sum_{i=1}^c(2e_i+1)k_i$.

It follows from a result of Harish-Chandra (see \cite[Proposition 4.1]{Wal03}) that all irreducible tempered generic representations of $G$ are obtained this way, and the inducing data are unique up to conjugation. In particular, the data $\{\delta(\Psi'_i)\}_{i=1}^c$ and $\sigma^{(2)}$ are determined up to replacements of the form
\[
\delta(\Psi'_i) \leftrightarrow \delta(\check{\Psi}'_i)
\]
for $SO_{2n+1}, Sp_{2n}, U_{2n+1},U_{2n}$. The replacement $\delta(\Psi'_i) \leftrightarrow \delta(\check{\Psi}'_i)$ also requires $\sigma^{(2)} \leftrightarrow c^{(2e_i+1)k_i} \cdot \sigma^{(2)}$ for $SO_{2n}$ and $SO_{2n+2}^{\ast}$; $\sigma^{(2)} \leftrightarrow \omega_{\delta(\Psi'_i)}\sigma^{(2)}$ for $GSp_{2n}$, $GU_{2n+1}$, and $GU_{2n}$; $\sigma^{(2)} \leftrightarrow \omega_{\delta(\Psi'_i)}(c^{(2e_i+1)k_i} \cdot \sigma^{(2)})$ for $GSO_{2n}$ and $GSO_{2n+2}^{\ast}$.
For general spin groups, the replacement is
\[
\delta(\Psi'_i) \leftrightarrow \omega_{\sigma^{(2)}}\delta(\check{\Psi}'_i)
\]
(see (\ref{w_0action})); for $GSpin_{2n}$ and $GSpin_{2n+2}^{\ast}$, we also have $\sigma^{(2)} \leftrightarrow c^{(2e_i+1)k_i} \cdot \sigma^{(2)}$.
Note that in the cases of $SO_{2n}$, $SO_{2n+2}^{\ast}$, $GSp_{2n}$, $GSO_{2n}$, $GSO_{2n+2}^{\ast}$, $GU_{2n+1}$, $GU_{2n}$, $GSpin_{2n}$, and $GSpin_{2n+2}^{\ast}$, the effects on $\sigma^{(2)}$ accumulate. Also note that the replacements indicated need not be nontrivial.

We close this section by discussing essentially tempered representations in the similitude cases. If $\sigma^{(et)}$ is an irreducible (generic) essentially tempered representation, we may write $\sigma^{(et)}=\nu^{\varepsilon}\sigma^{(t)}$ for some irreducible (generic) tempered represetation $\sigma^{(t)}$ and some $\varepsilon =\varepsilon(\sigma^{(et)})\in {\mathbb R}$. Letting $\beta=\beta(\sigma^{(et)})$ as in Definition~\ref{beta def} and Note~\ref{beta note}, it follows from Lemma~\ref{twisting} that
\begin{align*}
   \sigma^{(et)}&=\nu^{\varepsilon}\sigma^{(t)} \hookrightarrow \nu^{\varepsilon}\left(\delta(\Psi'_1) \times \dots \times \delta(\Psi'_c) \rtimes \sigma^{(2)}\right) \\
   &\cong \nu^{\beta}\delta(\Psi'_1) \times \dots \times \nu^{\beta}\delta(\Psi'_c) \rtimes \sigma^{(e2)}, 
\end{align*}
where
\[
\sigma^{(e2)}=\left\{ \begin{array}{l} \nu^{2\varepsilon}\sigma^{(2)} \mbox{ if }G_n=GSpin_{2n+1}, GSpin_{2n} \mbox{ with }k_0=0, \\
	\nu^{\varepsilon}\sigma^{(2)} \mbox{ otherwise,}
\end{array}\right.
\]
Letting
\begin{equation}\label{Psi}
\delta(\Psi_i)=\nu^{\beta}\delta(\Psi'_i)
\end{equation}
for $i=1, \dots, c$, we may then write
\begin{equation}\label{essentially tempered}
\sigma^{(et)} \hookrightarrow \delta(\Psi_1) \times \dots \times \delta(\Psi_c) \rtimes \sigma^{(e2)}.
\end{equation}

\subsection{Generic representations}\label{generic}

We consider the representations
$\delta(\Sigma_1)$, $\ldots, \delta(\Sigma_f),$ where
\begin{equation} \label{equg1}
\Sigma_1 = [v^{-q_1}\xi_1, v^{-q_1 + w_1}\xi_1], 
\end{equation}
$$\Sigma_2 = [v^{-q_2}\xi_2, v^{-q_2 + w_2}\xi_2],$$
$$\vdots$$
$$\Sigma_f = [v^{-q_f}\xi_f, v^{-q_f + w_f}\xi_f],$$
and $\xi_1, \xi_2, \cdots, \xi_f$ are irreducible unitary and
supercuspidal, with possible repetitions, $q_i \in \mathbb{R}$, $w_i
\in \mathbb{Z}_{\geq0}$. Let $\sigma^{(et)}$ be a generic essentially tempered representation of $G_n(F)$ as in \eqref{essentially tempered}, $\beta=\beta(\sigma^{(et)})$ (see Definition~\ref{beta def} and Note~\ref{beta note}), and suppose
\[
q_i \not=\frac{w_i}{2}-\beta
\]
for all $i$.

We are interested in the induced representation $\delta(\Sigma_1) \times \dots \times \delta(\Sigma_f) \rtimes \sigma^{(et)}$. In the Grothendieck group, we have $\delta(\Sigma_i) \times \delta(\Sigma_j)=\delta(\Sigma_j) \times \delta(\Sigma_i)$ and
\begin{equation}\label{equalities}
\delta(\Sigma_i) \rtimes \sigma^{(et)}=\left\{ \begin{array}{l}
	\delta(\check{\Sigma}_i) \rtimes \sigma^{(et)} \mbox{ if } G_n=SO_{2n+1}, Sp_{2n},U_{2n+1}, U_{2n}, \\
	\delta(\check{\Sigma}_i) \rtimes c^{n_i}\cdot\sigma^{(et)} \mbox{ if } G_n=SO_{2n},SO_{2n+2}^{\ast}, \\
	\omega_{\sigma^{(et)}}\delta(\check{\Sigma}_i) \rtimes \sigma^{(et)} \mbox{ if } G_n=GSpin_{2n+1}, \\
	\delta(\check{\Sigma}_i) \rtimes \omega_{\delta(\Sigma_i)}\sigma^{(et)} \mbox{ if } G_n=GSp(2n,F), GU_{2n+1}, GU_{2n}, \\
	\delta(\check{\Sigma}_i) \rtimes \omega_{\delta(\Sigma_i)}(c^{n_i} \cdot\sigma^{(et)}) \mbox{ if } G_n=GSO_{2n}, GSO_{2n+2}^{\ast},\\
	\omega_{\sigma^{(et)}}\delta(\check{\Sigma}_i) \rtimes c^{n_i}\cdot\sigma^{(et)} \mbox{ if } G_n=GSpin_{2n}, GSpin_{2n+2}^{\ast}
\end{array}\right.
\end{equation}
(where $\delta(\Sigma_i)$ is a representation of $H_{n_i}(F)$).
Therefore, replacing $\delta(\Sigma_i)$, $\sigma^{(et)}$ by their counterparts on the right-hand side of (\ref{equalities}) and commuting as needed,  
we may--and do--assume that the exponents of 
$$\delta(\Sigma_1),
\delta(\Sigma_2), \cdots, \delta(\Sigma_f), \sigma^{(et)}$$
are in the order needed for Langlands data (see (\ref{Langlands classification})):
\begin{equation}\label{Langlands classification II}
\frac{w_1}{2}-q_1 \geq \dots \geq \frac{w_f}{2}-q_f> \beta.
\end{equation}
Recall that (Standard Module Conjecture -- see \cite{HO13}) the Langlands quotient 
$$L(\delta(\Sigma_1) \otimes \dots \otimes \delta(\Sigma_f) \otimes \sigma^{(et)})$$ 
is generic if and only if $\delta(\Sigma_1) \times \dots \times \delta(\Sigma_f) \rtimes \sigma^{(et)}$ is irreducible. In the remainder of this section, we determine the conditions under which this happens.

First, we have Theorem~\ref{gen1} below. The proof is essentially the same as in \cite{Jan96b} (see \cite[Theorem 3.3 and Remark 3.4]{Jan96b}), which in turn is based on that in \cite{Tad94}. To allow for a more uniform argument, we make the following notational conventions: for $\delta(\Sigma)$ a representation of $H_{m}(F)$, let
\begin{equation}\label{conventions}
\begin{array}{c}
\omega'_{\Sigma}=\left\{ \begin{array}{l}
	\omega_{\delta(\Sigma)} \mbox{ for } G_n=GSp_{2n},GSO_{2n},GSO_{2n+2}^{\ast},GU_{2n+1},GU_{2n}, \\
	1 \mbox{ otherwise},
\end{array} \right. \\
\omega'_{\sigma^{(et)}}=\left\{ \begin{array}{l}
	\omega_{\sigma^{(et)}} \mbox{ for }G_n=GSpin_{2n+1}, GSpin_{2n}, GSpin_{2n+2}^{\ast},\\
	1 \mbox{ otherwise};
\end{array} \right. \\
c_{\Sigma}=\left\{ \begin{array}{l}
	c^{m} \mbox{ for }G_n=SO_{2n}, SO_{2n+2}^{\ast}, GSO_{2n}, GSO_{2n+2}^{\ast}, GSpin_{2n}, GSpin_{2n+2}^{\ast}, \\
	1 \mbox{ otherwise}.
\end{array} \right.
\end{array}
\end{equation}
With these conventions, we have the following consequence of (\ref{equalities}):
\begin{equation}\label{w_0actionII}
\delta(\Sigma_i) \rtimes \sigma^{(et)}=\omega'_{\sigma^{(et)}} \delta(\check{\Sigma}_i) \rtimes \omega'_{\Sigma_i}(c_{\Sigma_i} \cdot \sigma^{(et)})
\end{equation}
for all the families of groups under consideration.


\begin{thr}\label{gen1}With notation as above (including (\ref{Langlands classification II})), the representation $\sigma$ of $G$
defined by
$$
\sigma:=\delta(\Sigma_1) \times \delta(\Sigma_2) \times \dots \times \delta(\Sigma_f) \rtimes \sigma^{(et)}
$$
is irreducible if and only if 
$\{\Sigma_j\}_{j=1}^f$ and
$\sigma^{(et)}$ satisfy the following properties (notation as in (\ref{conventions})):\\

\noindent
($G1$) $\delta(\Sigma_i) \times \delta(\Sigma_j)$ and $\delta(\Sigma_i) \times \omega'_{\sigma^{(et)}}\delta(\check{\Sigma}_j)$ are irreducible  for all $1 \leq i\not= j \leq f$;

$$\delta(\Sigma_i) \rtimes \sigma^{(et)} \text{ is irreducible for
all }1 \leq i \leq f. \leqno(G2)$$
\end{thr}

\noindent
\begin{proof}
For the first part of ($G1$), if $\delta(\Sigma_i) \times \delta(\Sigma_j)$ is reducible for some $i \not= j$, then $\sigma$ is clearly reducible. Also, for ($G2)$, if $\delta(\Sigma_i) \rtimes \sigma^{(et)}$ is reducible, then $\sigma$ is reducible. For the second part of ($G1$), (\ref{w_0actionII}) tells us that
\[
\sigma=\delta(\Sigma_1) \times \dots \times \delta(\Sigma_{j-1}) \times \omega'_{\sigma^{(et)}}\delta(\check{\Sigma}_j) \times \delta(\Sigma_{j+1}) \times \dots \times \delta(\Sigma_f)  \rtimes \omega'_{\Sigma_j}(c_{\Sigma_i} \cdot \sigma^{(et)})
\]
in the Grothendieck group. It now follows that the reducibility of $\delta(\Sigma_i) \times \omega'_{\sigma^{(et)}}\delta(\check{\Sigma}_j)$ for $i \not= j$ implies the reducibility of $\sigma$, as needed. Thus conditions ($G1$) and ($G2$)  are necessary for the irreducibility of $\sigma$.

To see that ($G1$) and ($G2$) are sufficient, let $\pi=L(\delta(\Sigma_1) \otimes \dots \otimes \delta(\Sigma_f) \otimes \sigma^{(et)})$. Then, $\pi=L_{sub}(\omega'_{\sigma^{(et)}}\delta(\check{\Sigma}_1) \otimes \omega'_{\sigma^{(et)}}\delta(\check{\Sigma}_2) \otimes \dots \otimes \omega'_{\sigma^{(et)}}\delta(\check{\Sigma}_f) \otimes  \omega'_{\Sigma_1}\omega'_{\Sigma_2} \dots \omega'_{\Sigma_f}(c_{\Sigma_1} c_{\Sigma_2} \dots c_{\Sigma_j} \cdot \sigma^{(et)}))$ (e.g., \cite[Lemma 1.1]{Jan98}). Now, using ($G1$) and ($G2$), and noting that $\delta(\Sigma_i) \times \delta(\Sigma_j)$ (resp., $\delta(\Sigma_i) \rtimes \sigma^{(et)}$) is irreducible if and only if $\omega'_{\sigma^{(et)}} \delta(\check{\Sigma}_i) \times \omega'_{\sigma^{(et)}}\delta(\check{\Sigma}_j)$ (resp., $\omega'_{\sigma^{(et)}}\delta(\check{\Sigma}_i) \rtimes \omega'_{\Sigma_{i+1}}\dots \omega'_{\Sigma_f}(c_{\Sigma_{i+1}} \dots c_{\Sigma_f} \cdot\sigma^{(et)})$) is irreducible (see Lemma~\ref{twisting} and Remark~\ref{action of c remark}), we get
\[
\begin{array}{rl}
\pi
\hookrightarrow & \omega'_{\sigma^{(et)}}\delta(\check{\Sigma}_1) \times \omega'_{\sigma^{(et)}}\delta(\check{\Sigma}_2) \times \dots \times \omega'_{\sigma^{(et)}}\delta(\check{\Sigma}_f) \\
&\rtimes  \omega'_{\Sigma_1}\omega'_{\Sigma_2} \dots \omega'_{\Sigma_f} (c_{\Sigma_1}c_{\Sigma_2} \dots c_{\Sigma_f} \cdot \sigma^{(et)}) \\
& \\
\cong &
\omega'_{\sigma^{(et)}}\delta(\check{\Sigma}_2) \times \dots \times \omega'_{\sigma^{(et)}}\delta(\check{\Sigma}_f) \times  \omega'_{\sigma^{(et)}}\delta(\check{\Sigma}_1) \\
&\rtimes \omega'_{\Sigma_1}\omega'_{\Sigma_2} \dots \omega'_{\Sigma_f} (c_{\Sigma_1}c_{\Sigma_2} \dots c_{\Sigma_f} \cdot \sigma^{(et)}) \\
& \\
\cong & \omega'_{\sigma^{(et)}}\delta(\check{\Sigma}_2) \times \dots \times \omega'_{\sigma^{(et)}}\delta(\check{\Sigma}_f) \times  \delta(\Sigma_1) \\
&\rtimes \omega'_{\Sigma_2} \dots \omega'_{\Sigma_f} (c_{\Sigma_2} \dots c_{\Sigma_f} \cdot \sigma^{(et)}) \\
& \\
\cong & \delta(\Sigma_1) \times \omega'_{\sigma^{(et)}}\delta(\check{\Sigma}_2) \times \dots \times \omega'_{\sigma^{(et)}}\delta(\check{\Sigma}_f) \\
& \rtimes \omega'_{\Sigma_2} \dots \omega'_{\Sigma_f} (c_{\Sigma_2} \dots c_{\Sigma_f} \cdot \sigma^{(et)}) \\
& \vdots \mbox{ (continuing with $\delta(\Sigma_2)$, $\delta(\Sigma_3), \dots$ in succession)} \\
& \\
\cong & \delta(\Sigma_1) \times \delta(\Sigma_2) \times \dots \times \delta(\Sigma_f) \rtimes \sigma^{(et)}.
\end{array}
\]
This induced representation is the standard module admitting $\pi$ as its Langlands quotient. As $\pi$ appears with multiplicity one in the the standard module and is both a subrepresentation and the unique irreducible quotient, we must have irreducibility, as needed. Thus ($G1$) and ($G2$) are also sufficient. \end{proof}

We now take up the question of when $\delta(\Sigma) \rtimes \sigma^{(et)}$ is irreducible. As above, to uniformize the presentation, we retain the notational conventions of Note~\ref{beta note},  (\ref{Psi}), (\ref{essentially tempered}) and (\ref{conventions}).
We note that for a character $\chi$ of $F^{\times}$, the corresponding character $\chi \circ \xi_n$ of $GSpin_{2n+1}(F)$, $GSpin_{2n}(F)$, or $GSpin_{2n+2}^{\ast}(F)$
 (see discussion preceding Lemma~\ref{twisting}) satisfies
\[
\omega_{\chi \circ \xi_n}=\left\{ \begin{array}{l} \chi^2 \mbox{ for }G_n=GSpin_{2n+1} \mbox{ or }GSpin_{2n}\mbox{ if }n>0, \\ \chi  \mbox{ for }G_n=GSpin_{2n+1} \mbox{ or }GSpin_{2n}\mbox{ if }n=0, \\
\chi^2 \mbox{ for }G_n=GSpin_{2n+2}^{\ast} \end{array} \right.
\]
 (e.g., for $n>0$, one calculates $\chi \circ (e_1+\dots+e_n+2e_0)(d(1, \dots, 1, a_0))$--see \S \ref{sect3}). In particular, if $\varepsilon=\varepsilon(\sigma^{(e0)})$, $\beta=\beta(\sigma^{(e0)})$, and $\sigma^{(e0)}$ a representation of $G_{n_0}(F)$,
\begin{equation} \label{beta exponent}
\begin{array}{rl}
\omega_{\sigma^{(e0)}}
& =\omega_{\nu^{\varepsilon}\sigma^{(0)}} \\
&=\left\{ \begin{array}{l} \nu^{2 \varepsilon} \omega_{\sigma^{(0)}} \mbox{ if }G_{n_0}=GSpin_{2n_0+1},GSpin_{2n_0} \mbox{ with }n_0>0 \\ \nu^{\varepsilon} \omega_{\sigma^{(0)}} \mbox { if } G_{n_0}= GSpin_{2n_0+1}, GSpin_{2n_0} \mbox{ with } n_0=0, \\ 
\nu^{2 \varepsilon} \omega_{\sigma^{(0)}} \mbox{ if }G_{n_0}=GSpin^{\ast}_{2n_0+2} \\
\end{array} \right. \\
& =\nu^{2\beta}\omega_{\sigma^{(0)}}.
\end{array}
\end{equation}

\begin{thr}\label{gen2}
For $\Sigma=[\nu^{-q}\xi, \nu^{-q+w}\xi]$ and $\sigma^{(et)}$ as above (satisfying (\ref{Langlands classification II})),
$\delta(\Sigma) \rtimes \sigma^{(et)}$ is irreducible 
if and only if the following hold:\\

\noindent
$(G3)$ $\delta(\Sigma) \times \delta(\Psi_j)$ and $\omega'_{\sigma^{(e2)}}\delta(\check{\Sigma}) \times \delta(\Psi_j)$
are irreducible for all $1 \leq j \leq c$ (where $\Psi_j$ is given in (\ref{essentially tempered}) and $\omega'_{\sigma^{(e2)}}$ in (\ref{conventions})),
and
$$\delta(\Sigma) \rtimes \sigma^{(e2)} \text{ is irreducible.}
 \leqno(G4)$$
\end{thr}

\noindent
\begin{proof} Let $\delta_i=\delta(\Psi_i)$.

First, suppose ($G3$) and ($G4$) hold. We must show $\delta(\Sigma) \rtimes  \sigma^{(et)}$ is irreducible.
Let (see (\ref{conventions}) for notation)
\[
\pi=L(\delta(\Sigma) \otimes \sigma^{(et)})=L_{sub}(\omega'_{\sigma^{(e2)}}\delta(\check{\Sigma}); \omega'_{\Sigma}(c_{\Sigma} \cdot \sigma^{(et)})).
\]
noting $\omega'_{\sigma^{(et)}}=\omega'_{\sigma^{(e2)}}$ by Table \ref{tab:center} and (\ref{essentially tempered}). Then,
\[
\begin{array}{rl}
\pi & \hookrightarrow \omega'_{\sigma^{(e2)}}\delta(\check{\Sigma}) \times \delta_1 \times \dots \times \delta_k \rtimes  \omega'_{\Sigma}(c_{\Sigma} \cdot \sigma^{(e2)}) \\
& \mbox{ (using ($G3$))} \\
& \cong \delta_1 \times \dots \times \delta_k \times \omega_{\sigma^{(e2)}}\delta(\check{\Sigma}) \rtimes  \omega'_{\Sigma}(c_{\Sigma} \cdot \sigma^{(e2)}) \\
& \mbox{ (using ($G4$))} \\
& \cong \delta_1 \times \dots \times \delta_k \times \delta(\Sigma) \rtimes  \sigma^{(e2)} \\
& \mbox{ (using ($G3$))} \\
&\cong \delta(\Sigma) \times \delta_1 \times \dots \times \delta_k \rtimes  \sigma^{(e2)}.
\end{array}
\]
Therefore, $ \pi \hookrightarrow \delta(\Sigma) \rtimes T$ for some irreducible $T \leq \delta_1 \times \dots \times \delta_k \rtimes \sigma^{(e2)}$. We claim $T= \sigma^{(et)}$. To see this, observe that
\[
\mu^{\ast}(\pi) \leq \mu^{\ast}(\delta(\Sigma) \rtimes T)=\mu^{\ast}(\omega'_T \delta(\check{\Sigma}) \rtimes \omega'_{\Sigma}(c_{\Sigma} \cdot T) )
\]
By properties of the Langlands classification (\cite{BJ08}), $\omega'_T \delta(\check{\Sigma}) \rtimes \omega'_{\Sigma}(c_{\Sigma} \cdot T)$ is the only term in $\mu^{\ast}(\delta(\Sigma) \rtimes T)$ of its central character. By uniqueness of the Langlands subrepresentation data for $\pi$, we must then have $T=\sigma^{(et)}$. However, we now have $\pi$ appearing as both a subrepresentation and the Langlands quotient in $\delta(\Sigma) \rtimes \sigma^{(et)}$. This contradicts multiplicity one in the Langlands classification unless $\delta(\Sigma) \rtimes \sigma^{(et)}$ is irreducible, as needed.

Now, suppose (at least) one of ($G3$) or ($G4$)  fails. We must show $\delta(\Sigma) \rtimes \sigma^{(et)}$ is reducible.

First, suppose ($G3$) fails with some $\delta(\Sigma) \times \delta_i$ reducible (the slightly harder case); without loss of generality, say $\delta(\Sigma) \times \delta_1$. Note that this implies $\delta(\check{\Sigma}) \times \check{\delta}_1$ is reducible. Write $\sigma^{(et)} \hookrightarrow \delta_1 \rtimes T$. Now, suppose $\delta(\Sigma) \rtimes \sigma^{(et)}$ were irreducible (hence generic) so $\pi=\delta(\Sigma) \rtimes \sigma^{(et)}$. We consider $\check{\pi}=\delta(\check{\Sigma}) \rtimes \check{\sigma}^{(et)}$. By contragredience, we have
\[
\begin{array}{c}
\check{\pi} \cong \delta(\check{\Sigma}) \rtimes \check{\sigma}^{(et)} \hookrightarrow \delta(\check{\Sigma}) \times \delta(\check{\Psi}_1) \rtimes \check{T} \\
\Downarrow \mbox{ (Lemma~\ref{Lemma 5.5})} \\
\check{\pi} \hookrightarrow \delta(\check{\Sigma} \cap \check{\Psi}_1) \times \delta(\check{\Sigma} \cup\check{\Psi}_1) \rtimes \check{T} \\
\mbox{or} \\
\check{\pi} \hookrightarrow {\mathcal L}_{sub}(\delta(\check{\Sigma}) \otimes \delta(\check{\Psi}_1)) \rtimes \check{T}.
\end{array}
\]
Noting that $\check{\pi}$ is generic (with respect to $\psi_a \circ \sigma$ in the unitary or general unitary case, we must have the former. On the other hand, we claim that properties of the Langlands classification imply it must be the latter. In particular, we claim that
$\mu^{\ast}( \delta(\check{\Sigma} \cap \check{\Psi}_1) \times \delta(\check{\Sigma} \cup\check{\Psi}_1) \rtimes \check{T})$ contains no terms of the form $\delta(\check{\Sigma}) \otimes \dots$. To this end, we make things more explicit: noting that $\Psi=[\nu^{-e_1+\beta}\beta_1, \nu^{e_1+\beta}\beta_1]$ (see (\ref{Psi})), we have
\[
\check{\Sigma}=[\nu^{-w+q}\check{\xi}, \nu^q \check{\xi}] \mbox{ and }
\check{\Psi}_1=[\nu^{-\beta-e_1}\check{\beta}_1, \nu^{-\beta+e_1}\check{\beta}_1].
\]
Reducibility requires $\beta_1=\xi$ and $-w+q<-\beta-e_1\leq q+1<-\beta+e_1+1$. Then,
\[
\check{\Sigma} \cap \check{\Psi}_1=[\nu^{-\beta-e_1}\check{\xi}, \nu^q \check{\xi}] \mbox{ and }\check{\Sigma} \cup \check{\Psi}_1=[\nu^{-w+q}\check{\xi}, \nu^{-\beta+e_1}\check{\xi}].
\]
If $\mu^{\ast}(\check{T})=\sum_i \kappa_i \otimes \theta_i$, then by the $\mu^{\ast}$ structures discussed in \S \ref{sect3}, we must have
\begin{align}\label{G3equation}
\begin{split}
\delta([\nu^{-w+q}\check{\xi}, \nu^{q}\check{\xi}]) \leq & \,\, \delta([\nu^{x_1}\check{\xi}, \nu^q \check{\xi}]) \times \delta([\omega'_{\check{T}}\nu^{y_1}\xi, \omega'_{\check{T}}\nu^{e_1+\beta} \xi])
\times \delta([\nu^{x_2}\check{\xi}, \nu^{-\beta+e_1} \check{\xi}])\\
& \times \delta([\omega'_{\check{T}}\nu^{y_2}\xi, \omega'_{\check{T}}\nu^{w-q} \xi]) \times \kappa_i.
\end{split}
\end{align}
First, observe that since $e_1-\beta>q$, we must have $x_2=e_1-\beta+1$. Using (\ref{beta exponent}), we have
\[
\varepsilon(\omega'_{\check{T}})=-2\beta
\]
for general spin groups.
It then follows from (\ref{Langlands classification II}) that $\varepsilon(\omega_{\check{T}}'\nu^{q-w}\xi)=q-w-2\beta>q$, also giving $y_2=w-q+1$. Further, $\varepsilon(\omega_{\check{T}}'\nu^{e_1+\beta}\xi)=e_1-\beta>q$, giving $y_1=e_1+\beta+1$.
Thus we are reduced to  
\begin{equation}\label{G3equation II}
\delta([\nu^{-w+q}\check{\xi}, \nu^{q}\check{\xi}]) \leq \delta([\nu^{x_1}\check{\xi}, \nu^q \check{\xi}])
\times \kappa_i.
\end{equation}
We now argue that $\nu^{-w+q}\check{\xi}$ cannot appear in the right-hand side of (\ref{G3equation II}). First, $\nu^{-w+q}\check{\xi}$ cannot appear in $\delta([\nu^{x_1}\check{\xi}, \nu^{q}\check{\xi}])$ since $x_1 \geq -e_1-\beta>-w+q$. It also cannot appear in $\kappa_i$ as we would have
\[
\varepsilon(\kappa_i)=\frac{-w+q+x_1-1}{2} \leq -\frac{w}{2}+q<-\beta,
\]
contradicting the Casselman criterion for the essential temperedness of $\check{T}$ 
and finishing the argument for the case where the first possibility in (G3) fails.

Now suppose it is $\omega'_{\sigma^{(e2)}}\delta(\check{\Sigma}) \times \delta_1$ which is reducible. In this case, we use
\begin{align*}
   \pi \hookrightarrow \omega'_{\sigma^{(e2)}} \delta(\check{\Sigma}) \rtimes \omega'_{\Sigma}(c_{\Sigma} \cdot \sigma^{(et)}) \hookrightarrow &\,\, \omega'_{\sigma^{(e2)}} \delta(\Sigma) \rtimes \omega'_{\Sigma}(c_{\Sigma} \cdot(\delta_1 \rtimes T))\\
  & \cong  \omega'_{\sigma^{(e2)}} \delta(\Sigma) \times \delta_1 \rtimes \omega'_{\Sigma} (c_{\Sigma} \cdot T) 
\end{align*}
(see Lemma~\ref{twisting}, noting $\omega'_{\Sigma}$ trivial for the $GSpin$ cases) and argue as above.

The proof for ($G4$)  is similar. Here, since $(G3$) has been addressed above, we are free to assume ($G3$) holds but ($G4$) fails. Then,
\[
\check{\pi} \hookrightarrow \delta(\check{\Sigma}) \times \check{\delta}_1 \times \dots \times \check{\delta}_k \rtimes  \check{\sigma}^{(e2)}
\cong \check{\delta}_1 \times \dots \times \check{\delta}_k \times \delta(\check{\Sigma}) \rtimes  \check{\sigma}^{(e2)}.
\]
Write  $\delta(\check{\Sigma}) \rtimes  \check{\sigma}^{(e2)}=L(\delta(\check{\Sigma}) \otimes  \check{\sigma}^{(e2)})+\sum_j Q_j$. Note that by the Standard Module Conjecture, $L(\delta(\check{\Sigma}) \otimes  \check{\sigma}^{(e2)})$ is not generic. Again, by Lemma~\ref{Lemma 5.5},
\[
\begin{array}{c}
\check{\pi} \hookrightarrow \check{\delta}_1 \times \dots \times \check{\delta}_k \times \delta(\check{\Sigma}) \rtimes  \check{\sigma}^{(e2)} \\
\Downarrow \\
\check{\pi} \hookrightarrow \check{\delta}_1 \times \dots \times \check{\delta}_k \rtimes L(\delta(\check{\Sigma}) \otimes  \check{\sigma}^{(e2)}) \\
\mbox{or} \\
\check{\pi} \hookrightarrow \check{\delta}_1 \times \dots \times \check{\delta}_k \rtimes Q_i
\end{array}
\]
for some $i$. As above, genericity implies it must be the second, but properties of the Langlands classification require it be the first. We again have $\check{\pi}$ appearing with multiplicity two in $\check{\delta}_1 \times \dots \times \check{\delta}_k \times \delta(\check{\Sigma}) \rtimes  \check{\sigma}^{(e2)}$, giving a contradiction and finishing the proof.
\end{proof}

We now take up the question of when $\delta(\Sigma) \rtimes \sigma^{(e2)}$ is irreducible. Write
\[
\sigma^{(e2)}=\delta(\Delta_1, \dots, \Delta_k; \sigma^{(e0)})_{\psi_a}
\]
as in (\ref{essentially sq int}).
Again, to uniformize the presentation, we retain the notational conventions (\ref{conventions}). We also point out that it follows from Lemma~\ref{twisting} that for similitude groups,
\begin{equation}\label{irreducibility equation}
\lambda \rtimes \sigma^{(e0)} \mbox{ is irreducible } \Leftrightarrow \nu^{-\beta}\lambda \rtimes \sigma^{(0)} \mbox{ is irreducible},
\end{equation}
with $\beta=\beta(\sigma^{(e0)})$ as in Definition~\ref{beta def}.

\begin{thr}\label{gen3}
For $\Sigma$ as above, 
$\delta(\Sigma) \rtimes \sigma^{(e2)}$ is irreducible 
if and only if the following hold:\\

\noindent
$(G5)$ $\delta(\Sigma) \times \delta(\Delta_i)$ and 
$\omega'_{\sigma^{(e0)}}\delta(\check{\Sigma}) \times \delta(\Delta_i)$ are irreducible for all $i=1, \dots, k$, and

\noindent
$(G6)$ either (a) $\delta(\Sigma) \rtimes
\sigma^{(e0)}$ is irreducible, or (b) $q=-1+\beta$ (so $(\xi, \sigma^{(0)})$ satisfies $(C1)$) and there is some $i$ having $\delta(\Delta_i)=\delta([\nu^{1+\beta}\xi, \nu^{b_i+\beta}\xi])$ with $b_i \geq -q+w$.


\end{thr}

\noindent
\begin{proof}
We first address $(\Leftarrow)$. That is, we assume ($G5$) and ($G6$) both hold and show $\delta(\Sigma) \rtimes \sigma^{(e2)}$ is irreducible. We assume it is ($G6$)(a) which holds, and comment on the changes needed for ($G6$)(b) afterwards.

Let $\pi \hookrightarrow \delta(\Sigma) \rtimes \sigma^{(e2)}$ be an irreducible subrepresentation. Then,
\begin{equation}\label{inv}
\begin{array}{rl}
\pi & \hookrightarrow \delta(\Sigma) \times \delta([\nu^{-a_1}\tau_1, \nu^{b_1}\tau_1]) \times \dots \times \delta([\nu^{-a_k}\tau_k, \nu^{b_k}\tau_k]) \rtimes  \sigma^{(e0)} \\
& \mbox{ (using ($G5$))} \\
& \cong  \delta([\nu^{-a_1}\tau_1, \nu^{b_1}\tau_1]) \times \dots \times \delta([\nu^{-a_k}\tau_k, \nu^{b_k}\tau_k]) \times \delta(\Sigma) \rtimes  \sigma^{(e0)} \\
& \mbox{ (using ($G6$)(a))} \\
& \cong \delta([\nu^{-a_1}\tau_1, \nu^{b_1}\tau_1]) \times \dots \times \delta([\nu^{-a_k}\tau_k, \nu^{b_k}\tau_k]) \times \omega'_{\sigma^{(e0)}} \delta(\check{\Sigma}) \rtimes \omega'_{\Sigma}  (c_{\Sigma} \cdot \sigma^{(e0)}) \\
& \mbox{ (using ($G5$))} \\
&\cong \omega'_{\sigma^{(e0)}} \delta(\check{\Sigma}) \times  \delta([\nu^{-a_1}\tau_1, \nu^{b_1}\tau_1]) \times \dots \times \delta([\nu^{-a_k}\tau_k, \nu^{b_k}\tau_k]) \rtimes \omega'_{\Sigma} (c_{\Sigma} \cdot \sigma^{(e0)}).
\end{array}
\end{equation}
In particular, $\pi$ has a term of the form $\omega'_{\sigma^{(e0)}} \delta(\check{\Sigma}) \otimes \theta$ in its Jacquet module. Note that $\delta(\Sigma) \rtimes \sigma^{(e2)}=\omega'_{\sigma^{(e0)}} \delta(\check{\Sigma})\rtimes \omega'_{\Sigma} (c_{\Sigma} \cdot \sigma^{(e2)})$ (as $\omega'_{\sigma^{(e2)}}=\omega'_{\sigma^{(e0)}}$ -- see Table \ref{tab:center}). From properties of the Langlands classification (\cite{BJ08}), the only term of the form $\omega'_{\sigma^{(e0)}} \delta(\check{\Sigma}) \otimes \theta$ in the Jacquet module of $\omega'_{\sigma^{(e0)}} \delta(\check{\Sigma}) \rtimes \omega'_{\Sigma}(c_{\Sigma} \cdot \sigma^{(e2)})$ is $\omega'_{\sigma^{(e0)}} \delta(\check{\Sigma}) \otimes \omega'_{\Sigma}(c_{\Sigma} \cdot  \sigma^{(e2)})$. Thus $\theta=\omega'_{\Sigma}(c_{\Sigma} \cdot \sigma^{(e2)})$. Now,
\[
\begin{array}{c}
\pi \hookrightarrow \omega'_{\sigma^{(e0)}} \delta(\check{\Sigma})\rtimes \omega'_{\Sigma} (c_{\Sigma} \cdot \sigma^{(e2)}) \\
\Downarrow \\
\pi \cong L(\delta(\Sigma) \otimes \sigma^{(e2)})
\end{array}
\]
as $L(\delta(\Sigma) \otimes \sigma^{(e2)})$ is the Langlands subrepresentation of $\omega'_{\sigma^{(e0)}} \delta(\check{\Sigma})\rtimes \omega'_{\Sigma} (c_{\Sigma} \cdot \sigma^{(e2)})$. Thus, $\pi$ appears as both irreducible subrepresentation (from above) and unique irreducible quotient in $\delta(\Sigma) \rtimes \sigma^{(e2)}$, contradicting multiplicity one in the Langlands classification unless we have irreducibility.

We now discuss the changes needed if it is ($G6$)(b) which holds. Since $ \delta([\nu^{-a_i}\tau_i, \nu^{b_i}\tau_i]) \times  \delta([\nu^{-a_j}\tau_j, \nu^{b_j}\tau_j])$ is irreducible for all $i \not= j$--hence may be commuted while preserving equivalences--we may without loss of generality assume $\tau_k \cong \xi$ with $a_k=-1+\beta$. Then, to produce the inversion of $\delta(\Sigma)$ of (\ref{inv}), first observe that\footnote{This represents a minor correction to the proof of \cite[Theorem 3.11]{JL14}.}
\[
\begin{array}{c}
\begin{array}{rl}
\sigma^{(e0)} & \hookrightarrow \delta([\nu^{-a_1}\tau_1, \nu^{b_1}\tau_1]) \times \dots \times \delta([\nu^{-a_{k-1}}\tau_{k-1}, \nu^{b_{k-1}}\tau_{k-1}]) \\
&\hfill \times \delta([\nu^{1-\beta}\xi, \nu^{b_k}\xi]) \rtimes  \sigma^{(e0)}
\end{array} \\
\Downarrow \mbox{ (Lemma~\ref{Lemma 5.5})} \\
\sigma^{(0)} \hookrightarrow \delta([\nu^{-a_1}\tau_1, \nu^{b_1}\tau_1]) \times \dots \times \delta([\nu^{-a_{k-1}}\tau_{k-1}, \nu^{b_{k-1}}\tau_{k-1}]) \rtimes \theta
\end{array}
\]
for some irreducible $\theta \leq \delta([\nu^{1-\beta}\xi, \nu^{b_k}\xi]) \rtimes  \sigma^{(e0)}$. By genericity, 
$$
\theta=\delta([\nu^{1-\beta}\xi, \nu^{b_k}\xi];  \sigma^{(e0)})_{\psi_a}.
$$
Then,
\[
\begin{array}{rl}
\pi  \hookrightarrow & \delta(\Sigma) \times \delta([\nu^{-a_1}\tau_1, \nu^{b_1}\tau_1]) \times \dots \times \delta([\nu^{-a_{k-1}}\tau_{k-1}, \nu^{b_{k-1}}\tau_{k-1}]) \\
& \rtimes \delta([\nu^{1-\beta}\xi, \nu^{b_k}\xi];  \sigma^{(e0)})_{\psi_a} \\
& \mbox{ (using ($G5$))} \\
\cong & \delta([\nu^{-a_1}\tau_1, \nu^{b_1}\tau_1]) \times \dots \times \delta([\nu^{-a_{k-1}}\tau_{k-1}, \nu^{b_{k-1}}\tau_{k-1}]) \\
& \rtimes \left( \delta(\Sigma) \rtimes \delta([\nu^{1-\beta}\xi, \nu^{b_k}\xi];  \sigma^{(e0)})_{\psi_a}\right) \\
\cong& 
\delta([\nu^{-a_1}\tau_1, \nu^{b_1}\tau_1]) \times \dots \times \delta([\nu^{-a_{k-1}}\tau_{k-1}, \nu^{b_{k-1}}\tau_{k-1}]) \\
& \rtimes
\left(\omega'_{\sigma^{(e0)}}\delta(\check{\Sigma}) \rtimes \omega'_{\Sigma} c_{\Sigma} \cdot \delta([\nu^{1-\beta}\xi, \nu^{b_k}\xi];  \sigma^{(e0)})_{\psi_a}\right),
\end{array}
\]
by irreducibility (Lemma~\ref{dps} and Lemma~\ref{twisting}), noting that we have $\omega'_{\delta([\nu^{1-\beta}\xi, \nu^{b_k}\xi];  \sigma^{(e0)})_{\psi_a}}=\omega'_{\sigma^{(e0)}}$ from Table~\ref{tab:center2}. The rest of the argument proceeds the same way.


We now turn to $(\Rightarrow)$. First, suppose it is (G5) which fails and it is $\delta(\Sigma) \rtimes \delta(\Delta_i)$ which reduces; without loss of generality, we take $i=1$. Now, observe that by Lemma~\ref{Lemma 5.5},
\[
\sigma^{(e2)} \hookrightarrow \delta(\Delta_1) \rtimes \delta(\Delta_2, \dots, \Delta_k; \sigma^{(e0)})_{\psi_a}
\]
Thus,
\[
\delta(\Sigma) \rtimes \sigma^{(e2)} \hookrightarrow \delta(\Sigma) \times \delta(\Delta_1) \rtimes \delta(\Delta_2, \dots, \Delta_k; \sigma^{(e0)})_{\psi_a};
\]
the argument now proceeds as in the proof of Theorem~\ref{gen2}; in particular, the proof that reducibility of $\delta(\Sigma) \times \delta(\Psi_j)$ implies the reducbility of $\delta(\Sigma) \rtimes \sigma^{(e0)}$. The case where $\omega'_{\sigma^{(e0)}}\delta(\check{\Sigma}) \rtimes \delta(\Delta_i)$ is reducible is similar.
\end{proof}

\begin{rem}
We remark that the results in Theorems \ref{gen2} and \ref{gen3} do not hold if $\sigma^{(et)}$ and $\sigma^{(e2)}$ are not assumed to be the generic subquotients of (\ref{essentially tempered}) and the induced representation in Proposition~\ref{gdsprop1}. To show this, consider the case of classical groups and suppose $(\rho, \sigma)$ satisfies (C1/2). Looking at \cite[Example 14.1.4]{MT02} and using the notation there, we note that there are 4 square-integrable subrepresentations of
\[
\delta([\nu^{\frac{-2k_1+1}{2}}\rho, \nu^{\frac{2k_2-1}{2}}\rho]) \times \delta([\nu^{\frac{-2k_3+1}{2}}\rho, \nu^{\frac{2k_4-1}{2}}\rho]) \rtimes \sigma,
\]
corresponding to $\varepsilon_1, \varepsilon_4, \varepsilon_{13},\varepsilon_{16}$. By \cite{Han10}, the square-integrable generic representation is the one corresponding to $\varepsilon_1$. Suppose $k_3=k_2+1$. By \cite[Lemma 3.2 (1)]{Jan18}, $\nu^{\frac{2k_3+1}{2}}\rho \rtimes \delta$ is reducible if and only if $\varepsilon(\rho, 2k_3)=\varepsilon(\rho, 2k_2)$, i.e., if $\varepsilon=\varepsilon_1$ or $\varepsilon_{16}$ (but not $\varepsilon_4$ or $\varepsilon_{13}$). One can construct a similar example for tempered representations using \cite[Theorem 4.7]{Jan18}.
\end{rem}

It remains to address the question of when $\delta(\Sigma) \rtimes \sigma^{(e0)}$ is reducible (see \cite[Theorem 9.1]{Tad98a} and \cite[Theorem 3.13]{JL14} for the split classical groups). Recall that Lemma~\ref{beta} implies $\beta=\beta(\sigma^{(et)})=\beta(\sigma^{(e0)})$, so $w,q$ satisfy the following (from (\ref{Langlands classification II})):
\[
\frac{w}{2}-q> \beta.
\]

\begin{thr}\label{gen4}
Write $\sigma^{(e0)}=\nu^{\varepsilon_0}\sigma^{(0)}$ and set
$$\omega_{\sigma^{(0)}}'=\left\{ \begin{array}{l} \omega_{\sigma^{(0)}} \mbox{ if }G_n=GSpin_{2n+1},GSpin_{2n},GSpin_{2n+2}^{\ast}, \\
	1 \mbox{ if not},
\end{array} \right. $$
as in (\ref{conventions}).
For $\Sigma$ as above (in particular, requiring $\frac{w}{2}-q>\beta$), we have $\delta(\Sigma) \rtimes \sigma^{(e0)}$ irreducible if and only if the following hold:

\noindent
$(G7)$ $\omega_{\sigma^{(0)}}'\check{\xi} \not\cong \xi$, or

\noindent
$(G8)$ $q+\beta \not\in \left\{ \begin{array}{l} -\alpha+{\mathbb Z}_{\geq 0} \mbox{ if $(\xi, \sigma^{(0)})$ satisfies (C$\alpha$) for }\alpha \in \{0, \frac{1}{2},1\}, \\
{\mathbb Z}_{\geq 0} \mbox{ if $(\xi, \sigma^{(0)})$ satisfies (CN)}.
\end{array} \right.$

\end{thr}

\noindent
\begin{proof}
First, suppose $(G7)$ holds. Let
\begin{align*}
  \pi&=L(\delta([\nu^{-q}\xi, \nu^{-q+w}\xi]) \otimes \sigma^{(e0)})\\
  &=L_{sub}(\delta([\omega_{\sigma^{(0)}}' \nu^{q-w+2\beta}\check{\xi}, \omega_{\sigma^{(0)}}' \nu^{q+2\beta}\check{\xi}]) \otimes \omega_{\Sigma}'(c_{\Sigma}\cdot \sigma^{(e0)})),  
\end{align*}
where $\omega_{\Sigma}'$ and $c_{\Sigma}$ are as in (\ref{conventions}). Then,
\begin{equation}\label{irreducibility calculation}
\begin{array}{rl}
\pi  \hookrightarrow & \delta([\omega_{\sigma^{(0)}}' \nu^{q-w+2\beta}\check{\xi}, \omega_{\sigma^{(0)}}' \nu^{q+2\beta}\check{\xi}]) \rtimes \omega_{\Sigma}'(c_{\Sigma}\cdot \sigma^{(e0)}) \\
 \hookrightarrow & \omega_{\sigma^{(0)}}' \nu^{q+2\beta}\check{\xi} \times \omega_{\sigma^{(0)}}' \nu^{q+2\beta-1}\check{\xi} \times \dots \times \omega_{\sigma^{(0)}}' \nu^{q-w+2\beta+1}\check{\xi} \\
 & \times \omega_{\sigma^{(0)}}' \nu^{q-w+2\beta}\check{\xi} \rtimes \omega_{\Sigma}'(c_{\Sigma} \cdot \sigma^{(e0)}) \\
 \cong & \omega_{\sigma^{(0)}}' \nu^{q+2\beta}\check{\xi} \times \omega_{\sigma^{(0)}}' \nu^{q+2\beta-1}\check{\xi} \times \dots \times \omega_{\sigma^{(0)}}' \nu^{q-w+2\beta+1}\check{\xi} \\
 & \times \nu^{-q+w}\xi \rtimes \omega_{\Sigma}'\omega_{\nu^{q-w}\check{\xi}}'(c_{\xi}c_{\Sigma} \cdot \sigma^{(e0)}) \\
 \cong & \nu^{-q+w}\xi \times \omega_{\sigma^{(0)}}' \nu^{q+2\beta}\check{\xi} \times \omega_{\sigma^{(0)}}' \nu^{q+2\beta-1}\check{\xi} \\
 & \times \dots \times \omega_{\sigma^{(0)}}' \nu^{q-w+2\beta+1}\check{\xi} \nu^{-q+w}\xi \rtimes \omega_{\Sigma}'\omega_{\nu^{q-w}\check{\xi}}'(c_{\xi}c_{\Sigma}\cdot \sigma^{(e0)}) \\
 \vdots & \mbox{ (iterating)} \\
 \cong & \nu^{-q+w}\xi \times \nu^{-q+w-1}\xi \times \dots \times \nu^{-q+1}\xi \times \nu^{-q} \xi \rtimes \sigma^{(e0)}.
\end{array}
\end{equation}
By Lemma~\ref{Lemma 5.5}, $\pi \hookrightarrow \lambda \rtimes \sigma^{(e0)}$ for some irreducible $\lambda \leq \nu^{-q+w}\xi \times \nu^{-q+w-1}\xi \times \dots \times \nu^{-q+1}\xi \times \nu^{-q} \xi$. A subquotient other than $\delta([\nu^{-q} \xi, \nu^{-q+w}\xi])$ would produce (by Frobenius reciprocity) a term of the form $\nu^x \xi \otimes \leq \mu^{\ast}(\pi)$ with $x \not=-q+w$, which is not possible. Thus $\lambda=\delta([\nu^{-q} \xi, \nu^{-q+w}\xi])$ and
\[
\pi \hookrightarrow \delta([\nu^{-q} \xi, \nu^{-q+w}\xi]) \rtimes \sigma^{(e0)}.
\]
We now have $\pi$ appearing as both a subrepresentation and the Langlands quotient in $\delta([\nu^{-q} \xi, \nu^{-q+w}\xi]) \rtimes \sigma^{(e0)}$. As $\pi$ must appear with multiplicity one, we see that $\delta([\nu^{-q} \xi, \nu^{-q+w}\xi]) \rtimes \sigma^{(e0)}$ is irreducible.

Now, suppose it is $(G8)$ which holds; by the argument above, we may also assume $(G7)$ does not, i.e., that $\omega_{\sigma^{(0)}}'\check{\xi} \cong \xi$. If $q+\beta \not\equiv 0, \frac{1}{2}\!\! \mod\! 1$, the same argument as above works. The case where $q+\beta \in \frac{1}{2}{\mathbb Z}$ but $q+\beta \not\equiv \alpha\!\! \mod\! 1$ (resp., $q+\beta \not\equiv\!\! \mod\! 1$ in the (CN) case) is covered by Lemmas~\ref{dsirrlem} and \ref{twisting}. If $q+\beta \equiv \alpha\!\! \mod\! 1$ (or $0$ in the (CN) case), the same basic argument as above also works, but the commuting/inverting argument in (\ref{irreducibility calculation}) is less obvious. To see that $\nu^{q+2\beta-k}\xi \rtimes \omega_k'c_k\sigma^{(e0)}$, $0 \leq k \leq w$ is irreducible, note that by Lemma~\ref{twisting},
\[
\nu^{q+2\beta-k}\xi \rtimes \omega_k'c_k\sigma^{(e0)} \cong \nu^{\beta}(\nu^{q+\beta-k}\xi \rtimes \omega_k'c_k\sigma^{(0)}).
\]
Then, by $(G8)$,we have
\[
q+\beta < -\alpha \Rightarrow q+\beta-k < -\alpha
\]
implying the needed irreducibility. To see that $\nu^{q+2\beta-k}\xi \times \nu^{-q+\ell}\xi$, $0 \leq k<\ell \leq w$ is irreducible, observe that since $\ell >k \geq 0$, we have $-\alpha \leq \frac{k+\ell-1}{2}$. Then,
\[
q+\beta<\frac{k+\ell-1}{2} \Rightarrow q+2\beta<-q+\ell-1,
\]
implying the needed irreducibility.

In the other direction, suppose both $(G7)$ and $(G8)$ fail. Except when $-q+\beta=0$ in the (C1) case, the conditions {\bf (EDS1)-(EDS4)} are satisfied. Then $\delta(\Sigma; \sigma^{(e0)})_{\psi_a}$ is an essentially square-integrable subquotient of $\delta(\Sigma) \rtimes \sigma^{(e0)}$ (Proposition~\ref{gdsprop1} and the discussion at the end of \S \ref{gdssection}). On the other hand, the conditions on $q,w$ are those required in the Langlands classification; consequently, $L(\Sigma \otimes \sigma^{(e0)})$ is the unique irreducible quotient of $\delta(\Sigma) \rtimes \sigma^{(e0)}$. Reducibility is then clear. When $q+\beta=0$ in the (C1) case, we note that the generalized Steinberg representation $\delta([\nu^{\beta+1}\xi, \nu^{\beta+w}\xi]; \sigma^{(e0)})$ is generic (Corollary~\ref{Steinberggenericity}), so there is a generic subquotient (necessarily a subrepresentation) of $\nu^{\beta}\xi \rtimes \delta([\nu^{\beta+1}\xi, \nu^{\beta+w}\xi]; \sigma^{(e0)})$, which is essentially tempered but not essentially square-integrable. By genericity and supercuspidal support considerations, this generic essentially tempered representation must also be a subquotient of $\delta([\nu^{-q} \xi, \nu^{-q+w}\xi]) \rtimes \sigma^{(e0)}$. This is cleary distinct from the Langlands quotient and reducibility follows. \end{proof}

\begin{rem}
We used the generic property to keep the proof above relatively short, but based on known examples (e.g., \cite{Tad98b}, \cite{KLM20}, \cite{BJ03}), we expect the same reducibility points when $\sigma^{(e0)}$ is not generic. The above result does not deal with the case $q+\beta=\frac{w}{2}$ (nor can it be obtained from (\ref{w_0actionII})). Again, based on known examples, we expect reducibility if and only if $q \equiv \alpha\!\! \mod \! 1$ (resp., never in the (CN) case).
\end{rem}

\begin{note}\label{interpretation}
Condition (G8) in Theorem~\ref{thm1intro} is restatement of (G8) above. In particular, note that by (\ref{irreducibility equation}),
the condition $(\rho, \sigma^{(0)})$ (C$\alpha$) corresponds to $\nu^x \rho \rtimes \sigma^{(e0)}$ having reducibility at $x=\beta+\alpha$. It then suffices to  show that for (C$\alpha$), $q+\beta \not\in -\alpha+{\mathbb Z}_{\geq 0} \Leftrightarrow \pm \alpha \not\in\{-q-\beta, -q+1-\beta, \dots, -q+w-\beta\}$ (noting $-a=-q$ and $b=-q+w$).

We check this in the most complicated case, namely the (C1) case. We first note that if $q+\beta \not\in {\mathbb Z}$, both clearly hold. Thus we restrict our attention to the case where $q+\beta \in {\mathbb Z}$.

\ 

\noindent
\underline{($\Rightarrow$:)}
We have $q+\beta \not\in -1+{\mathbb Z}_{\geq 0} \Leftrightarrow q+\beta<-1 \Leftrightarrow 1<-q-\beta$. Then $-1<-q-\beta$ as well, so neither $\pm 1 \in [-q-\beta, -q+w-\beta]$.

\ 

\noindent
\underline{($\Leftarrow$):}
Suppose both $\pm 1 \not\in [-q-\beta, -q-\beta+w]$. Observe that since $-q-\beta+w \geq 1$, $1 \not\in[-q-\beta, -q-\beta+w] \Rightarrow -q-\beta>1$ (so $-1 \not\in[-q-\beta, -q-\beta+w]$ as well), or equivalently, $q+\beta \not\in -1+{\mathbb Z}_{\geq 0}$.


\end{note}

\section{Functoriality for quasi-split classical groups}\label{sect5}

Let $G_n = SO_{2n+1}, Sp_{2n}, SO_{2n}, SO_{2n+2}^*, U_{2n+1}, U_{2n}$, quasi-split classical groups of rank $n$. From now on to the end of this paper, we focus on these groups. We follow \cite{CPSS11} to recall the Langlands functoriality for $G_n$ as follows. 

\begin{enumerate}
\item When $G_n = SO_{2n+1}$, the $L$-group is ${}^LG_n=Sp_{2n}(\mathbb{C}) \times W_F$ with connected component ${}^LG_n^0=Sp_{2n}(\mathbb{C})$, and we have the natural embedding
$$i: {}^LG_n=Sp_{2n}(\mathbb{C}) \times W_F \hookrightarrow GL_{2n}(\mathbb{C}) \times W_F = {}^LGL_{2n}.$$

\item When $G_n = Sp_{2n}$, the $L$-group is ${}^LG_n=SO_{2n+1}(\mathbb{C}) \times W_F$ with connected component ${}^LG_n^0=SO_{2n+1}(\mathbb{C})$, and we have the natural embedding
$$i: {}^LG_n=SO_{2n+1}(\mathbb{C}) \times W_F \hookrightarrow GL_{2n+1}(\mathbb{C}) \times W_F = {}^LGL_{2n+1}.$$

\item When $G_n = SO_{2n}$, the split special even orthogonal group, the $L$-group is ${}^LG_n=SO_{2n}(\mathbb{C}) \times W_F$ with connected component ${}^LG_n^0=SO_{2n}(\mathbb{C})$, and we have the natural embedding
$$i: {}^LG_n=SO_{2n}(\mathbb{C}) \times W_F \hookrightarrow GL_{2n}(\mathbb{C}) \times W_F = {}^LGL_{2n}.$$

\item When $G_n = SO_{2n+2}^*$, the quasi-split non-split special even orthogonal group, the $L$-group is ${}^LG_n=SO_{2n+2}(\mathbb{C}) \rtimes W_F$ with connected component ${}^LG_n^0=SO_{2n+2}(\mathbb{C})$. Here the 
Weil group acts through the quotient $W_F/W_E \cong Gal(E/F)$ which gives the Galois structure of $SO_{2n+2}^*$, write the quadratic extension as  $E=F(\sqrt{\varepsilon})$. More explicitly, let $h \in O_{2n+2}$ be any element of negative determinant, then for the non-trivial element $\sigma \in Gal(E/F)$, it acts on ${}^LG_n^0$ by $\sigma(g)=h^{-1}gh$. 
Choose an $L$-homomorphism 
$$\xi: W_F \rightarrow O_{2n+2}(\mathbb{C}) \times W_F \hookrightarrow GL_{2n+2}(\mathbb{C}) \times W_F,$$
which induces the isomorphism 
$$W_F / W_E \rightarrow Gal(E/F) \cong O_{2n+2}(\mathbb{C}) / SO_{2n+2}(\mathbb{C}),$$
 i.e., $\xi$ factors through $W_F/W_E \cong Gal(E/F)$ and sends the non-trivial element $\sigma$ to $h \times \sigma$. Write $\xi$ as $\xi(w)=\xi'(w) \times w$ with $\xi'(w) \in O_{2n+2}(\mathbb{C})$. Then we have the following embedding
 $$i: SO_{2n+2}(\mathbb{C}) \rtimes W_F \hookrightarrow GL_{2n+2}(\mathbb{C}) \times W_F,$$
 given by 
 $i(g \times 1)=g \times 1, i(1 \times w) = \xi(w) = \xi'(w) \times w$, where $g \in SO_{2n+2}(\mathbb{C})$ and $w \in W_F$. 

\item When $G_n = U_{2n+1}=U(J_{2n+1}')$, the odd quasi-split unitary group, where $J_{2n+1}'=\left( \begin{array}{cccccc}
	 & & & & & 1 \\
	 & & & & -1 &\\
  & & & 1 & & \\
  & & \iddots & & & \\
  & -1 & & & & \\
  1 & & & & & 
\end{array} \right),$
the $L$-group is ${}^LG_n=GL_{2n+1}(\mathbb{C}) \rtimes W_F$ with connected component ${}^LG_n^0=GL_{2n+1}(\mathbb{C})$. Here the 
Weil group acts through the quotient $W_F/W_E \cong Gal(E/F)$ which gives the Galois structure of $U_{2n+1}$, write the quadratic extension as  $E=F(\sqrt{\varepsilon})$. More explicitly, the non-trivial element $\sigma \in Gal(E/F)$ acts on ${}^LG_n^0$ by $\sigma(g)=J_{2n+1}'^{-1}{}^tg^{-1}J_{2n+1}'$. 
The standard representation of ${}^LG_n$ is $\mathbb{C}^{2n+1} \times \mathbb{C}^{2n+1}$, where the action of ${}^LG_n^0$ is by $(g \times 1) (v_1, v_2) = (g v_1, \sigma(g)v_2)$, and the Weil group acts through the quotient $W_F/W_E \cong Gal(E/F)$ with the action of the non-trivial Galois element by 
$(1 \times \sigma) (v_1, v_2)=(v_2, v_1)$. Then we have the following embedding
$$i: {}^LG_n  \hookrightarrow (GL_{2n+1}(\mathbb{C}) \times GL_{2n+1}(\mathbb{C})) \rtimes W_F = {}^L(Res_{E/F} GL_{2n+1}),$$
by $i(g \times w) = (g \times \sigma(g)) \times w$, where on the right hand side, $W_F$ acts on $GL_{2n+1}(\mathbb{C}) \times GL_{2n+1}(\mathbb{C})$ through the quotient $W_F/W_E \cong Gal(E/F)$ with $\sigma(g_1 \times g_2) = g_2 \times g_1$. 

\item When $G_n = U_{2n}=U(J_{2n}')$, the even quasi-split unitary group, where $J_{2n}'=\begin{pmatrix}
&J_n\\
-J_n&
\end{pmatrix}$ and $J_n$ as in the case of $U_{2n+1}$, 
the $L$-group is ${}^LG_n=GL_{2n}(\mathbb{C}) \rtimes W_F$ with connected component ${}^LG_n^0=GL_{2n}(\mathbb{C})$. Here the 
Weil group acts through the quotient $W_F/W_E \cong Gal(E/F)$ which gives the Galois structure of $U_{2n}$, again, write the quadratic extension as  $E=F(\sqrt{\varepsilon})$. More explicitly, the non-trivial element $\sigma \in Gal(E/F)$ acts on ${}^LG_n^0$ by $\sigma(g)=J_{2n}'^{-1}{}^tg^{-1}J_{2n}'$. 
Similar to the case of $G_n = U_{2n+1}$, we have the following embedding
$$i: {}^LG_n  \hookrightarrow (GL_{2n}(\mathbb{C}) \times GL_{2n}(\mathbb{C})) \rtimes W_F = {}^L(Res_{E/F} GL_{2n}),$$
by $i(g \times w) = (g \times \sigma(g)) \times w$, where on the right hand side, $W_F$ acts on $GL_{2n}(\mathbb{C}) \times GL_{2n}(\mathbb{C})$ through the quotient $W_F/W_E \cong Gal(E/F)$ with $\sigma(g_1 \times g_2) = g_2 \times g_1$.
\end{enumerate}

Recall from the introduction that 
$N = 2n$ for $G_n=SO_{2n+1}, U_{2n}, SO_{2n}$, 
$N = 2n+2$ for $G_n=SO_{2n+2}^*$, 
 $N = 2n+1$ for $G_n=Sp_{2n}, U_{2n+1}$.
Also recall that $H_N = GL_N$ when $G_n=SO_{2n+1}, Sp_{2n}, SO_{2n}, SO_{2n+2}^*$, and let 
$H_N=Res_{E/F} GL_N$ when $G_n=U_{2n+1}, U_{2n}$. 
We recall Table \ref{tab:Langlands functoriality} which summarizes the cases of funtoriality we will consider from $G_n$ to $H_N$:

\begin{table}[H]
\begin{tabular}{ |c|c|c| } 
 \hline
$G_n$ & $i: {}^L G_n \hookrightarrow {}^L H_N$ & $H_N$ \\ 
 \hline
$SO_{2n+1}$ & $Sp_{2n}(\mathbb{C}) \times W_F \hookrightarrow GL_{2n}(\mathbb{C}) \times W_F$ & $GL_{2n}$ \\ 
 $Sp_{2n}$ & $SO_{2n+1}(\mathbb{C}) \times W_F \hookrightarrow GL_{2n+1}(\mathbb{C}) \times W_F$ & $GL_{2n+1}$ \\ 
 $SO_{2n}$ & $SO_{2n}(\mathbb{C}) \times W_F \hookrightarrow GL_{2n}(\mathbb{C}) \times W_F$ & $GL_{2n}$\\
  $SO_{2n+2}^*$ & $SO_{2n+2}(\mathbb{C}) \rtimes W_F \hookrightarrow GL_{2n+2}(\mathbb{C}) \times W_F$ & $GL_{2n+2}$\\
  $U_{2n+1}$ & $GL_{2n+1}(\mathbb{C}) \rtimes W_F \hookrightarrow GL_{2n+1}^{\times 2}(\mathbb{C}) \rtimes W_F$ & $Res_{E/F} GL_{2n+1}$\\
    $U_{2n}$ & $GL_{2n}(\mathbb{C}) \rtimes W_F \hookrightarrow GL_{2n}^{\times 2}(\mathbb{C})  \rtimes W_F$ & $Res_{E/F} GL_{2n}$\\
 \hline
\end{tabular}
\end{table}

\section{Surjectivity of local Langlands functorial lifting maps}\label{sect6}

In this section, first we carry out the image of the local Langlands functorial lifting from $\Pi^{(sg)}(G_n)$ (generic supercuspidal representations) to $H_{N}$ following \cite{CKPSS04} and \cite{CPSS11}, then using the descent method as in \cite{JS03}, we prove that the rest of local Langlands functorial lifting given by Cogdell, 
Piatetski-Shapiro, Shahidi \cite{CPSS11} is also surjective. 
In each case, we write down the corresponding local Langlands parameters. 
To the complete, we provide a uniform proof for $G_n$, following \cite{JS03, Liu11, JL14}.

First, we define
\begin{equation}\label{rho}
R:=\begin{cases}
\Lambda^{2} & \text{ if } G_{n}=SO_{2n+1},\\
Sym^{2} & \text{ if } G_{n}=Sp_{2n}, \text{ or } SO_{2n}, \text{ or } SO_{2n+2}^*,\\
Asai & \text{ if } G_{n}= U_{2n+1},\\
Asai\otimes\delta & \text{ if } G_{n}=U_{2n};
\end{cases}
\end{equation}
and,
\begin{equation}\label{rho-}
R^{-}:=\begin{cases}
Sym^{2} & \text{ if } G_{n}=SO_{2n+1},\\
\Lambda^{2} & \text{ if } G_{n}=Sp_{2n}, \text{ or } SO_{2n}, \text{ or } SO_{2n+2}^*,\\
Asai \otimes \delta & \text{ if } G_{n}=U_{2n+1},\\
Asai & \text{ if }  G_{n}=U_{2n}.
\end{cases}
\end{equation}
For symplectic or orthogonal groups, $Sym^2$ and $\Lambda^2$ denote the symmetric and
exterior second powers of the standard representation of $GL_m(\mathbb{C})$, respectively. 
For unitary groups, $Asai$ denotes the Asai representation of the $L$-group of $Res_{E/F}(GL_{m})$ and $\delta$ is the character associated to the quadratic extension $E/F$ via the class field theory. 
Recall that given 
$\tau$ an irreducible unitary supercuspidal representation of $H_k$, $\check{\tau} = \tilde{\tau}$, if $H_k=GL_k$, $\check{\tau}=\tilde{\tau}^{\iota}$,
 if $H_k=Res_{E/F}GL_k$, where the involution $\iota$ is the nontrivial element in the Galois group $Gal_{E/F}$.
Given 
$\tau$ an irreducible unitary supercuspidal representation of $H_k$,
such that $\check{\tau}=\tau$, 
we have the following identities:
 $$
 L(s,\tau\times\tau^*)=L(s,\tau,R)L(s,\tau,R^{-}),
 $$
 where $\tau^* = \tau$, if $H_k=GL_k$; $\tau^* = \tau^{\iota}$,
 if $H_k=Res_{E/F}GL_k$.

\begin{rem} \label{rmk1}
Assume $\sigma^{(0)}$ is an irreducible generic supercuspidal representation of $G_n$, $\tau$ is an irreducible unitary supercuspidal representation of $H_k$. 
If $L(\sigma^{(0)} \times \tau, s)$ has a pole at $s=0$ (case
($C1$)), then $L(\tau, R, s)$ has a pole at $s=0$. 
\end{rem}

Let $\Phi(G_n)$ be the set of local Langlands parameters for $G_n$
(for a definition and discussion of the local Langlands reciprocity conjecture,
see \cite[Introduction]{Liu11} and the references therein). These are
${}^LG_n$-conjugacy classes of admissible homomorphisms 
$$W_F \times SL_2(\mathbb{C}) \rightarrow {}^LG_n,$$
where $W_F \times SL_2(\mathbb{C})$ is the Weil-Deligne group. 

Note that when $G_n=SO_{2n}, SO_{2n+2}^*$, 
we have the embedding 
$${}^LG_n \rightarrow {}^LGL_{N}.$$
Given a local Langlands parameter $\phi \in \Phi(GL_{N})$, $\phi: W_F \times SL_2(\mathbb{C}) \rightarrow {}^LGL_{N}$, assume that it factors through ${}^LG_n$ and $\phi \ncong c \phi$ within ${}^LG_n$, where $c \phi$
is the c-conjugate of $\phi$, here $c$ is the fixed outer automorphism (see \S \ref{sect3}). 
Then $\phi$ gives two elements in 
$\Phi(G_n)$ (see \cite[Chapter 1]{Art13}), which are denoted by $\phi$ and $c \phi$. To identify $\phi$ and $c \phi$ in this situation, let $\widetilde{\Phi}(G_n)$ be the set of $c$-conjugacy classes of $\phi \in \Phi(G_n)$. Write $\widetilde{\phi}=c \phi$.
Note that for any $\phi \in \Phi(G_n)$, if $\phi \ncong c \phi$, then they automatically have the same twisted local factors since they come from the same local Langlands parameter $\phi \in \Phi(GL_{N})$. Define the twisted local factors of $\widetilde{\phi}$ to be those of $\phi$. From now on, we use the notation $\widetilde{\Phi}(G_n)$, and when $G_n=SO_{2n+1}, Sp_{2n}, U_{2n+1}, U_{2n}$, $\widetilde{\Phi}(G_n)={\Phi}(G_n)$, $\widetilde{\phi}=\phi$. When convenience, write $G=G_n$.

\subsection{Supercuspidal generic representations}

Let $\Pi^{(sg)}(G_n)$ be the set of all equivalence classes of irreducible generic
supercuspidal representations of $G_n$. Let $\Pi^{(sg)}_{\varepsilon}(H_{N})$ be the set of all equivalence classes of irreducible tempered
representations of $H_{N}(F)$ of the following form:
\begin{equation}\label{isobaricsum}
\tau_1 \times \tau_2 \times
\cdots \times \tau_r,
\end{equation}
with central character $\chi$ being trivial when restricting to $F^*$ except when $G_n=SO_{2n+2}^*$, in which case it is 
the quadratic character $\eta_{\varepsilon}$  associated to the square class $\varepsilon$ defining $G_n$.  
Here for each $1 \leq i \leq r$,
$\tau_i$ is an irreducible unitary supercuspidal 
representation of $H_{n_i}(F)$ such that $\tau \cong \check{\tau}$, $L(\tau_i, R, s)$ has
a pole at $s = 0$, and for $i \neq j$, $\tau_i \not \cong \tau_j$. 

Cogdell, Kim, 
Piatetski-Shapiro, Shahidi \cite{CKPSS04, CPSS11}, and Kim, Krishnamurthy \cite{KK04, KK05}, constructed the following local Langlands functorial lifting map:

\begin{thr}[Cogdell, Kim, 
Piatetski-Shapiro, Shahidi, Krishnamurthy] \label{thm2}
There is a map $l$ from 
$\Pi^{(sg)}(G_n)$ to $\Pi^{(sg)}_{\varepsilon}(H_{N})$ and it preserves the local factors:
$$L(\sigma \times \tau, s) = L(l(\sigma) \times \tau, s),$$
$$\epsilon(\sigma \times \tau, s, \psi) = \epsilon(l(\sigma)
\times \tau, s, \psi),$$ for any $\sigma \in \Pi^{(sg)}(G_n)$ and any irreducible
generic representation $\tau$ of $H_k(F)$ $(k$ any positive
integer$)$.
\end{thr}

The following theorem, which is one of the main ingredients of the results in this section, shows that the map $l$ in Theorem \ref{thm2} is surjective. 
Arthur \cite{Art13} and Mok \cite{Mok15} proved this result using the trace formula method and the global descent result of Ginzburg, Rallis and Soudry \cite{GRS11}. 
Jiang and Soudry \cite{JS12} (for $G_n=SO_{2n+1}, Sp_{2n}, SO_{2n}$, $SO_{2n+2}^*$), Soudry and Tanay \cite{ST15} (for $G_n=U_{2n}$),
constructed the local descent map from irreducible supercuspidal representations of $H_{N}$ to irreducible supercuspidal representations of $G_n$.
The generalization of the descent map from irreducible supercuspidal representations of $H_N$ to representations of the form in \eqref{isobaricsum} is straightforward for $G_n=SO_{2n+1}, Sp_{2n}$, $SO_{2n}, SO_{2n+2}^*$, but for $G_n=U_{2n}$, further work may be needed. 

\begin{thr}[Arthur, Jiang-Soudry, Mok,  Soudry-Tanay] \label{thm5}
For $G_n=SO_{2n+1}, Sp_{2n}, SO_{2n}, SO_{2n+2}^*$, $U_{2n+1}, U_{2n}$, 
The map $l$ in Theorem \ref{thm2} is surjective.
\end{thr}




\begin{rem}\label{rmk2}
For $\sigma \in \Pi^{(sg)}(G_n)$, assume $\tau_1 \times \tau_2 \times
\cdots \times \tau_r \in \Pi^{(sg)}_{\alpha}(H_{N})$ is the lifting of $\sigma$. Then it is clear 
that $L(\sigma \times \tau_i, s)$ has a pole at $s=0$, $1 \leq i \leq r$. 
Therefore, by Remark \ref{rmk1}, each pair $(\tau_i, \sigma)$ must be of $(C1)$. 
\end{rem}

We have the following proposition about lifting images of $\sigma$
and $c \sigma$ when $\sigma \ncong c \sigma \in \Pi^{(sg)}(G_n)$.

\begin{prop}\label{prop2}
If $\sigma \ncong c \sigma \in \Pi^{(sg)}(G_n)$, then
$l(\sigma)=l(c \sigma) \in \Pi^{(sg)}_{\alpha}(H_N)$.
In particular, 
\begin{align*}
L(\sigma \times \tau, s) & = L(c \sigma \times \tau, s),\\
\epsilon(\sigma \times \tau, s, \psi)
&= \epsilon(c \sigma \times \tau, s, \psi),
\end{align*}
for any irreducible generic representation $\tau$ of $H_k(F)$, where $k$ is any positive integer.
\end{prop}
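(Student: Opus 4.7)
The plan is to reduce the claim to the equality of local factors and then to establish the latter. By Theorem~\ref{thm2}, the lift $l(\sigma)$ is characterized as the unique element of $\Pi^{(sg)}_{\varepsilon}(H_N)$ satisfying
\[
L(\sigma\times\tau,s)=L(l(\sigma)\times\tau,s),\qquad \epsilon(\sigma\times\tau,s,\psi)=\epsilon(l(\sigma)\times\tau,s,\psi)
\]
for every irreducible generic representation $\tau$ of $H_k(F)$ and every $k\ge 1$. Hence it suffices to prove
\begin{equation}\label{fac-eq}
L(c\sigma\times\tau,s)=L(\sigma\times\tau,s),\qquad \epsilon(c\sigma\times\tau,s,\psi)=\epsilon(\sigma\times\tau,s,\psi)
\end{equation}
for all such $\tau$; the equality $l(c\sigma)=l(\sigma)$ then follows from the uniqueness of the lift, and this common lift automatically lies in $\Pi^{(sg)}_{\varepsilon}(H_N)$. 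First I would note that $c$ is nontrivial only when $G_n=SO_{2n}$ or $SO_{2n+2}^{*}$, so the proposition is vacuous outside these two cases, and we may restrict to them.

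To verify \eqref{fac-eq}, I would invoke the Langlands--Shahidi construction of the factors. Each $\gamma$-factor $\gamma(s,\sigma\times\tau,\psi)$ is extracted from the local coefficient attached to the induced representation $\tau\rtimes\sigma$ on a standard maximal parabolic $P=MU$ of an ambient quasi-split group $\tilde G_{n+k}$ of the same type as $G_n$, with Levi factor $M\cong GL_k\times G_n$. The outer automorphism $c$ of $G_n$ extends to an outer automorphism $\tilde c$ of $\tilde G_{n+k}$ (conjugation by the block-diagonal element built from the identity on $GL_k$ and $c$ on the $G_n$-factor, cf.\ Remark~\ref{rmk c}); $\tilde c$ preserves $P$, acts trivially on the $GL_k$-factor of $M$ and by $c$ on the $G_n$-factor, and preserves Haar measures by Lemma~\ref{lem c}. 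Moreover, $\tilde c$ carries the Whittaker datum of $\tilde G_{n+k}$ to one in the same $T(F)$-orbit, so the two become equivalent after a suitable torus adjustment (which does not affect $L$- and $\epsilon$-factors). Consequently $\tilde c$ induces an isomorphism $\tau\rtimes\sigma\xrightarrow{\sim}\tau\rtimes c\sigma$ intertwining Whittaker functionals and standard intertwining operators, so the two local coefficients coincide, giving \eqref{fac-eq}.

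The main obstacle is precisely this last compatibility: one must check that the Whittaker normalization used in the definition of the local coefficient is respected by $\tilde c$ up to an element in the torus-orbit of the generic character. If any delicate point arises there, I would fall back on a global argument: using the globalization results available for generic supercuspidals (following Shahidi, or Prasad--Schulze-Pillot), lift $\sigma$ to a generic cuspidal automorphic representation $\Sigma$ of $G_n(\mathbb{A})$ with prescribed local component at our place and unramified behavior elsewhere; apply the global automorphism induced by $c$ to obtain $c\Sigma$, also cuspidal and generic. The functorial lifts $l(\Sigma)$ and $l(c\Sigma)$ to $H_N$ then agree at every unramified place, since the Satake parameters of $\Sigma_v$ and $(c\Sigma)_v$ are conjugate inside $O_N(\mathbb{C})$ and hence map to the same $GL_N(\mathbb{C})$-conjugacy class. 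By strong multiplicity one for $H_N$ the global lifts coincide, and specialization at our place yields $l(c\sigma)=l(\sigma)$, giving the displayed equalities of $L$- and $\epsilon$-factors as a consequence.
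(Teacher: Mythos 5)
Your proposal is essentially sound, and it is hard to compare it against the paper in detail because the paper does not actually give a proof of Proposition~\ref{prop2} -- it defers to the proof of the corresponding statement in \cite{JL14}. Your first argument (extending $c$ to an automorphism $\tilde c$ of the ambient group $\tilde G_{n+k}$ that normalizes $P$, acts trivially on the $GL_k$-factor and by $c$ on $G_n$, and preserves Haar measures, so that the Langlands--Shahidi local coefficients of $\tau\otimes\sigma$ and $\tau\otimes c\sigma$ coincide) is the standard route and almost certainly the intended one; the global fallback is also viable. Two points deserve more care than you give them. First, your reduction invokes ``uniqueness of the lift,'' but Theorem~\ref{thm2} only asserts existence; what you actually need is that an element of $\Pi^{(sg)}_{\varepsilon}(H_N)$ -- an isobaric product of pairwise distinct self-dual supercuspidals $\tau_i$ with $L(\tau_i,R,s)$ having a pole at $s=0$ -- is determined by its twisted local $L$-functions (the constituents are read off from the poles of $L(\rho\times\tau,s)$ at $s=0$ over self-dual supercuspidal $\tau$). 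That is true and easy, but it is a separate fact, not part of Theorem~\ref{thm2}. Second, the claim that $\tilde c$ moves the generic character only within its $T(F)$-orbit is the one place where the argument could genuinely fail and is not covered by Remark~\ref{rmk c}; it requires the explicit orbit computation of \S\ref{Whittaker models}. For $SO_{2n}$ the orbit invariant is $k_{n-1}k_n$ modulo $(F^{\times})^2$ and $c$ merely swaps $k_{n-1}$ and $k_n$, so the orbit is preserved; for $SO^{\ast}_{2n+2}$ one checks likewise that the class in $E^{\times}/(F^{\times}\cdot N_1(E/F))$ is unchanged. With those two checks made explicit, your proof is complete.
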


\begin{proof}
The proof is similar to that of \cite[Proposition 4.4]{JL14} which is omitted here. 
\end{proof}

Next, we figure out the corresponding parameters of irreducible supercuspidal generic representations of $G_n(F)$.
We need to recall the following result.

\begin{thr}[\cite{Hen10, CST17, Sha20}]\label{thm3} 
The local Langlands reciprocity map $r$ for $H_k(F)$
has the following property: 
\begin{align*}
    \gamma(\phi, R, s, \psi)&=\gamma(r(\phi), R, s, \psi),
\end{align*}
for any local Langlands parameter $\phi \in \Phi(H_k)$.
\end{thr}

As in \cite{JS03} and \cite{Liu11}, using the above result, we have the following proposition.

\begin{prop} \label{prop1}
Assume $\tau$ is an irreducible
 supercuspidal representation of $H_k(F)$ having local Langlands parameter $\phi$ (which is an irreducible
 admissible $k$-dimensional complex representation of $W_F$) such that $\tau \cong \check{\tau}$. Then $L(\tau, R, s)$
 has a pole at $s=0$ if and only if $L(\phi, R, s)$ has a pole at $s=0$. 
\end{prop}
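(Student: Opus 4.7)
The plan is to deduce the proposition from Theorem \ref{thm3} by comparing orders of poles at $s=0$ on the two sides of the gamma-factor identity. First I would record the basic shape of the local $L$-functions in question. Since $\tau$ is an irreducible unitary supercuspidal representation of $H_k(F)$, the local $L$-function $L(s,\tau,R)$ has the form $P(q^{-s})^{-1}$ with $P\in\mathbb{C}[X]$ and $P(0)=1$, so it has no zeros and at worst a finite-order pole; on the Galois side $L(s,\phi,R)=\det(1-q^{-s}(R\circ\phi)(\mathrm{Frob})\mid V^{I_F})^{-1}$ has exactly the same shape, with its pole order at $s=0$ equal to the multiplicity of the trivial representation in the inertia-invariants of $R\circ\phi$.

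Next I would exploit the functional equation
\[
\gamma(s,\tau,R,\psi) \;=\; \epsilon(s,\tau,R,\psi)\,\frac{L(1-s,\check\tau,\widetilde R)}{L(s,\tau,R)},
\]
together with its analogue for $\phi$. The $\epsilon$-factor is a nonvanishing monomial in $q^{-s}$. The hypothesis $\tau\cong\check\tau$ forces $\phi$ to be self-dual in the appropriate sense, so that $R\circ\phi$ is self-dual for each of the four choices $R=\Lambda^2,\mathrm{Sym}^2,\mathrm{Asai},\mathrm{Asai}\otimes\delta$, and hence $L(1-s,\check\tau,\widetilde R)=L(1-s,\tau,R)$. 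This numerator is holomorphic and nonvanishing at $s=0$: the nonvanishing is immediate from the $P(q^{-s})^{-1}$ shape, while holomorphy at $s=1$ follows from the supercuspidal hypothesis on $\tau$ (and on the Galois side from the fact that, since $\phi$ is irreducible and unitary, $R\circ\phi$ admits no quotient on which Frobenius acts by $q$). Consequently the order of vanishing of $\gamma(s,\tau,R,\psi)$ at $s=0$ coincides with the order of pole of $L(s,\tau,R)$ at $s=0$, and the identical statement holds with $\tau$ replaced by $\phi$.

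Finally, Theorem \ref{thm3} gives $\gamma(s,\tau,R,\psi)=\gamma(s,\phi,R,\psi)$, so these orders at $s=0$ agree, and therefore $L(s,\tau,R)$ has a pole at $s=0$ if and only if $L(s,\phi,R)$ does, proving the proposition. The main technical point requiring care is the verification that $L(1-s,\tau,R)$ and $L(1-s,\phi,R)$ are holomorphic and nonvanishing at $s=0$ in all four cases $R=\Lambda^2,\mathrm{Sym}^2,\mathrm{Asai},\mathrm{Asai}\otimes\delta$; for the two Asai cases this additionally relies on the compatibility of the Asai construction with the local Langlands correspondence for $GL$ over the quadratic extension $E$, which reduces the Galois-side holomorphy to the automorphic-side holomorphy already used for $\tau$.
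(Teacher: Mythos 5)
Your proof is correct and takes essentially the same route as the paper: both arguments combine the functional equation defining $\gamma$ with the gamma-factor identity of Theorem \ref{thm3}, using that the $\epsilon$-factor is a nonvanishing monomial and that the ``other'' $L$-factor contributes nothing at the relevant point because $\tau$ is unitary supercuspidal (resp.\ $\phi$ is irreducible with bounded image). The only cosmetic difference is that the paper detects the pole of $L(\tau,R,s)$ at $s=0$ through the pole of $\gamma(\check\tau,R,s,\psi)$ at $s=1$, whereas you detect it through the order of vanishing of $\gamma$ at $s=0$ -- the same computation read from the other end of the functional equation.
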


\begin{proof}
By definition
$$\gamma(\check{\tau}, R, s, \psi) = \epsilon(\check{\tau}, R, s, \psi) \cdot \frac{L(\tau, R, 1-s)}{L(\check{\tau}, R, s)}.$$
If the local L-function $L(\tau, R, s)$ has a pole at $s=0$, then the gamma function
$\gamma(\check{\tau}, R, s, \psi)$ has a pole at $s=1$. Hence by
Theorem \ref{thm3}, the gamma function $\gamma(\check{\phi}, R, s, \psi)$
also has a pole at $s=1$. Since we also have
$$ \gamma(\check{\phi}, R, s, \psi) = \epsilon(\check{\phi}, R, s, \psi)\frac{L(\phi, R, 1-s)}{L(\check{\phi}, R, s)},$$
the L-function $L(\phi, R, s)$ has a pole at $s=0$. Similarly we can prove the other direction. This completes the proof.
\end{proof}

Given a local $L$-parameter $\phi$ of $G_n$, 
for convenience, 
if $L(\phi, R, s)$ has a pole at $s=0$, then we say $\phi$ is of type $R$; if $L(\phi, R^-, s)$ has a pole at $s=0$, then we say $\phi$ is of type $R^-$.

Let $\Phi^{(sg)}(G_n)$ be the subset of $\Phi(G_n)$
consisting of all parameters of type
$$\phi = \bigoplus_i \phi_i$$
with the  properties that:

(1) $\phi_i$'s are irreducible 
self-dual (resp. self-conjugate-dual in the case of unitary groups, see \cite[Section 3]{GGP12})
representations of $W_F$, and $\phi_i \not \cong \phi_j$ if $i \neq j$;

(2) for each $i$, $L(\phi_i, R, s)$ has a pole at $s=0$.

Note that for any $\phi \in \Phi(G_n)$, $det(\phi)$ is trivial except when $G_n=SO_{2n+2}^*$, in which case it is  
the quadratic character $\eta_{\varepsilon}$ associated to the square class $\varepsilon$ defining $G_n$.

\noindent
Let $\widetilde{\Phi}^{(sg)}(G_n)$ be the image of $\Phi^{(sg)}(G_n)$ in $\widetilde{\Phi}(G_n)$.
As a consequence of Theorem \ref{thm5} and Proposition \ref{prop1}, we have the
following result for irreducible generic
supercuspidal representations of $G$:

\begin{thr} \label{thm6}
There is a surjective map $\iota$ from
$\Pi^{(sg)}(G_n)$ to the set $\widetilde{\Phi}^{(sg)}(G_n)$ and it 
preserves the local factors:
$$L(\sigma \times \tau, s) = L(\iota(\sigma) \otimes r^{-1}(\tau),
s),$$
$$\epsilon(\sigma \times \tau, s, \psi) =
\epsilon(\iota(\sigma) \otimes r^{-1}(\tau), s, \psi),$$ for
any $\sigma \in \Pi^{(sg)}(G_n)$ and any irreducible
generic representations $\tau$ of $H_{k_{\tau}}(F)$, with all
$k_{\tau} \in \mathbb{Z}_{>0}$. Here, $r^{-1}(\tau)$ is the
irreducible admissible representation of of $W_F \times SL_2(\mathbb{C})$ of dimension
$k_{\tau}$ which corresponds to $\tau$ under the local Langlands
reciprocity map for $H_{k_{\tau}}$.
\end{thr}

\begin{rem}\label{rmk6}
Note that as mentioned at the beginning of this section, for any $\widetilde{\phi} \in \widetilde{\Phi}(G_n)$, its twisted local factors are defined to be those of $\phi$ (a representative of $\widetilde{\phi}$).

Also note that by Proposition \ref{prop2} and Theorem \ref{thm6},
if $\sigma \ncong c \sigma \in \Pi^{(sg)}(G_n)$,
then they have the same lifting image and the same twisted local factors.
\end{rem}

\subsection{Square-integrable generic representations}

First, we recall the classification of square-integrable generic representations of $G_n(F)$ given in \S \ref{gdssection}.

Let $P{'}$ be a finite set of irreducible supercuspidal self-dual
(or self-conjugate-dual in the case of unitary groups) representations $\tau$ of $H_{k_{\tau}}(F)$. Assume that
for each $\tau \in P{'}$, there is a sequence of segments
\begin{equation}
D_i(\tau)=[v^{-a_i(\tau)}\tau, v^{b_i(\tau)}\tau], i=1, 2, \cdots,
e_{\tau},
\end{equation}
satisfying
\begin{equation} \label{equ8}
2a_i(\tau) \in \mathbb{Z} \hbox{ and } 2b_i(\tau) \in
\mathbb{Z}_{\geq 0},
\end{equation}
and
\begin{equation} \label{equ9}
a_1(\tau) < b_1(\tau) < a_2(\tau) < b_2(\tau) < \cdots
< a_{e_{\tau}}(\tau) < b_{e_{\tau}}(\tau).
\end{equation}
Let
$\sigma^{(0)}$ be an irreducible supercuspidal
generic representation of $G_{n{'}}(F)$. Assume that \\
(DS1) if $(\tau, \sigma^{(0)})$ satisfies $(C1)$, then $-1 \leq
a_i(\tau) \in \mathbb{Z}\smallsetminus \{0\}$, for $1
\leq i \leq e_{\tau}$;\\
(DS2) if $(\tau, \sigma^{(0)})$ satisfies $(C0)$, then
$a_i(\tau) \in \mathbb{Z}_{\geq0}$, for $1 \leq i \leq e_{\tau}$;\\
(DS3) if $(\tau, \sigma^{(0)})$ satisfies $(C \frac{1}{2})$, then
$a_i(\tau) \in -\frac{1}{2} + \mathbb{Z}_{\geq0}$, for $1 \leq i
\leq e_{\tau}$;\\
(DS4) if $(\tau, \sigma^{(0)})$ satisfies $(CN)$, then
$a_i(\tau) \in \mathbb{Z}_{\geq0}$, for $1 \leq i
\leq e_{\tau}$.\\
Then the unique generic constituent of
\begin{equation} \label{equ40}
(\times_{\tau \in P{'}} \times_{i=1}^{e_{\tau}} \delta(D_i(\tau)))
\rtimes \sigma^{(0)}
\end{equation}
is square-integrable. Assume that the \eqref{equ40} is a representation of $G_n(F)$.
Then every square-integrable generic representation of $G_n(F)$
can be obtained this way for a unique set consisting of a finite set
$P{'}$, segments $\{D_i(\tau)|1 \leq i \leq e_{\tau}, \tau \in
P{'}\}$ and a unique generic supercuspidal representation
$\sigma^{(0)}$, satisfying conditions
\eqref{equ8}, \eqref{equ9}, (DS1) - (DS4).

Recall that we say $(\tau, \sigma^{(0)})$ satisfies $(C \alpha)$,
where $\alpha \in \{0, \frac{1}{2}, 1\}$, if $v^{\pm \alpha} \tau
\rtimes \sigma^{(0)}$ reduces, and $v^{\pm \beta} \tau \rtimes
\sigma^{(0)}$ is irreducible for all $|\beta| \neq \alpha$. 
We say $(\tau, \sigma^{(0)})$ satisfies 
$(CN)$ if $\tau \cong \check{\tau}$, $k_{\tau}$ is odd, and $c \sigma^{(0)} \ncong \sigma^{(0)}$. 
By 
\cite{Sha90a, Sha92, ACS16}, we know that our $(\tau, \sigma^{(0)})$ must satisfy
one of $(C \alpha)$ or $(CN)$, and the followings hold:

(1) $(\tau, \sigma^{(0)})$ satisfies $(C1)$ if and only if
$L(\sigma^{(0)} \times \tau, s)$ has a pole at $s=0$;

(2) $(\tau, \sigma^{(0)})$ satisfies $(C0)$ or $(CN)$ if and only if $L(\tau,
R, s)$ has a pole at $s=0$, but $L(\sigma^{(0)} \times \tau, s)$
is holomorphic at $s=0$;

(3) $(\tau, \sigma^{(0)})$ satisfies $(C \frac{1}{2})$ if and only
if $L(\tau, R^-, s)$ has a pole at $s=0$.

Therefore, for convenience, we rephrase conditions (DS1) -- (DS4) as follows:\\
(DS1$'$) ($C1$), if $L(\sigma^{(0)} \times \tau, s)$ has a pole at $s=0$,
then $-1 \leq a_i(\tau) \in \mathbb{Z}\smallsetminus \{0\}$, for $1
\leq i \leq e_{\tau}$;\\
(DS2$'$) $(C0)$ or $(CN)$, if $L(\tau, R, s)$ has a pole at $s=0$, but
$L(\sigma^{(0)} \times \tau, s)$ is holomorphic at $s=0$, then
$a_i(\tau) \in \mathbb{Z}_{\geq0}$, for $1 \leq i \leq e_{\tau}$;\\
(DS3$'$) $(C\frac{1}{2})$, if $L(\tau, R^-, s)$ has a pole at
$s=0$, then $a_i(\tau) \in -\frac{1}{2} + \mathbb{Z}_{\geq0}$, for
$1 \leq i \leq e_{\tau}$.

Let $\Pi^{(dg)}_{\varepsilon}(H_N)$ be the set of all equivalence classes of
irreducible tempered representations of $H_N(F)$ of the following form:
\begin{equation} \label{equldg1}
\delta([\nu^{-m_1}\tau_1, \nu^{m_1}\tau_1]) \times \delta([\nu^{-m_2}\tau_2, \nu^{m_2}\tau_2])  \times \cdots
\times \delta([\nu^{-m_r}\tau_r, \nu^{m_r}\tau_r]),
\end{equation}
with central character $\chi$ being trivial when restricting to $F^*$ except when $G_n=SO_{2n+2}^*$, in which case it is 
the quadratic character $\eta_{\varepsilon}$  associated to the square class $\varepsilon$ defining $G_n$.  
Here the 
segments $[v^{-m_i} \tau_i, v^{m_i} \tau_i]$ are pairwise distinct,
$\tau_i \cong \check{\tau}_i$,
and satisfy the following properties
for each $i$:\\
(1) if $L(\tau_i, R^-, s)$ has a pole at $s=0$, then $m_i \in
\frac{1}{2} + \mathbb{Z}_{\geq0}$; \\
(2) if $L(\tau_i, R, s)$ has a pole at $s=0$, then $m_i \in
\mathbb{Z}_{\geq0}.$




Then we have the following theorem which is analogous to \cite[Theorem 2.1]{JS04}, \cite[Theorem 4.8]{Liu11}, and \cite[Theorem 4.8]{JL14}. 

\begin{thr} \label{thm7}
There is a surjective map $l$ $($which extends the one in Theorem \ref{thm5}$)$ from $\Pi^{(dg)}(G_n)$ to $\Pi^{(dg)}_{\varepsilon}(H_N)$ and it preserves the local factors:
\begin{equation} \label{equldg2}
L(\sigma \times \pi, s)=L(l(\sigma) \times \pi, s), 
\end{equation}
\begin{equation} \label{equldg3}
\epsilon(\sigma \times \pi, s, \psi)=\epsilon(l(\sigma) \times
\pi, s, \psi),
\end{equation}
for any $\sigma \in
\mathrm{\Pi}^{(dg)}(G_n)$ and any irreducible generic
representation $\pi$ of any $H_k(F)$, $k \in
\mathbb{Z}_{>0}$.
\end{thr}

\noindent
\begin{proof}
This map has already been given by Cogdell, Kim, Piatetski-Shapiro, and Shahidi (see \cite{CKPSS04} and \cite{CPSS11}), so it suffices to prove the surjectivity. That is, given a $\rho \in \Pi^{(dg)}(H_N(F))$, to construct a $\sigma_{\rho} \in \Pi^{(dg)}(G_n)$ such that $\rho =
l(\sigma_{\rho})$ and \eqref{equldg2}, \eqref{equldg3} hold.

We follow the idea as in \cite{JS04}. 
Let $\Pi^{(ss)}(H_k)$ be the set of
equivalence classes of irreducible supercuspidal
representations $\tau$ of $H_k(F)$ ($k \in \mathbb{Z}_{>0}$) such that $\tau \cong \check{\tau}$.
Given $\rho \in \Pi^{(dg)}_{\alpha}(H_N)$,
let
$$P(\rho):=\{\tau \in \Pi^{(ss)}(H_k) | L(\rho
\times \tau, s) \, \, has \,\, a \,\, pole \,\, in \,\, \mathbb{R},
k \in \mathbb{Z}_{>0}\}.$$ Then $P(\rho)$ is finite.
For $\tau \in P(\rho)$, we list the real poles of $L(\rho \times
\tau, s)$ as follows
\begin{equation} \label{equldg4}
-m_{d_{\tau}}(\tau) < \dots < -m_2(\tau) < -m_1(\tau) \leq 0. 
\end{equation}
Put $d_{\tau} = 0$ if $L(\rho \times \tau, s)$ is holomorphic for
$\tau$ irreducible supercuspidal ($\tau \cong \check{\tau}$ or not). We consider
the following subset of $P(\rho)$:
\begin{flushleft}
$A(\rho)=\{\tau \in P(\rho)|L(\tau_i, R, s) \hbox{ has a
pole at s=0, and }d_{\tau}\hbox{ is odd} \},$\\
$B(\rho)=\{\tau \in P(\rho)|L(\tau_i, R, s) \hbox{ has a
pole at s=0, and }d_{\tau}\hbox{ is even} \},$\\
$C(\rho)=\{\tau \in P(\rho)|L(\tau_i, R^-, s)\hbox{ has a pole
at s=0}\}.$ 
\end{flushleft} 
Here $R$ and $R^-$ in the $L$-functions are defined in \eqref{rho} and \eqref{rho-}. 
Then
$$P(\rho)=A(\rho) \cup B(\rho) \cup C(\rho).$$
Further, if $\tau \in A(\rho) \cup B(\rho)$, then
$\{m_i(\tau)\}^{d_{\tau}}_{i=1} \subset \mathbb{Z}_{\geq0}$; if 
$\tau \in C(\rho)$, then $\{m_i(\tau)\}^{d_{\tau}}_{i=1} \subset
\frac{1}{2} + \mathbb{Z}_{\geq0}$.

Observe that  for $\tau \in
A(\rho)$, $d_{\tau}$ is odd and the central character $\omega_{\tau}$ is quadratic on $F^*$; for $\tau \in
B(\rho)$, $d_{\tau}$ is even and the central character $\omega_{\tau}$ is quadratic on $F^*$;  for $\tau \in C(\rho)$, $L(\tau, R^-, s)$ has a pole at $s=0$ which implies that the central character $\omega_{\tau}$ is trivial on $F^*$.
Hence, the following character is trivial on $F^*$:
$$\prod_{\tau \in A(\rho)} \omega_{\tau}^{d_{\tau}-1}
\prod_{\tau \in B(\rho)} \omega_{\tau}^{d_{\tau}} \prod_{\tau \in C(\rho)} \omega_{\tau}^{d_{\tau}}.$$
Therefore, the representation $\times_{\tau \in A(\rho)}
\tau$ is a representation of $H_{2k}(F)$ with central character $\chi_0$, which is trivial when restricting to $F^*$ except when $G_n=SO_{2n+2}^*$, in which case it is $\eta_{\alpha}$, $k$ is an integer,
$2k=\sum_{\tau \in A(\rho)} k_{\tau}$, where $k_{\tau}$ is so defined that $\tau$ is a representation of $H_{k_{\tau}}(F)$. Since for $\tau \in A(\rho)$, $L(\tau, R, s)\hbox{ has a pole at s=0}$, by Theorem
\ref{thm5}, there exists an irreducible supercuspidal generic
representation $\sigma^{(0)}$ (not necessarily unique up to
equivalence) of $G_{k}(F)$, such that
\begin{equation} \label{equldg5}
l(\sigma^{(0)})=\times_{\tau \in A(\rho)} \tau 
\end{equation}
on $H_{*}(F)$. 

Let
\begin{align}\label{equldg}
    \begin{split}
        A_0(\rho)&=\{\tau \in A(\rho)|d_{\tau}=1 \hbox{ and }
m_1(\tau)=0\},\\
A_1(\rho)&=\{\tau \in A(\rho)|d_{\tau} \geq 3 \hbox{ and }m_1(\tau)=0\},\\
A_2(\rho)&=\{\tau \in A(\rho)|m_1(\tau) \geq 1\}.
    \end{split}
\end{align}
Then they form a partition of $A(\rho)$. For $\tau \in
A_1(\rho)$, let
\begin{equation} \label{equldg6}
\Delta_i(\tau)=\delta([\nu^{-m_{2i}(\tau)}\tau,
\nu^{m_{2i+1}(\tau)}\tau]), i=1, 2, \dots, \frac{d_{\tau}-1}{2}; 
\end{equation}
for $\tau \in A_2(\rho)$, let
\begin{equation} \label{equldg7}
\Delta_0(\tau)=\delta([\nu\tau,
\nu^{m_1(\tau)}\tau]), \Delta_i(\tau)=\delta([\nu^{-m_{2i}(\tau)}\tau,
\nu^{m_{2i+1}(\tau)}\tau]),
\end{equation}
$$i=1, 2, \dots, \frac{d_{\tau}-1}{2}.$$

For $\tau \in B(\rho)$, let
\begin{equation} \label{equldg8}
\Delta_i(\tau)=\delta([\nu^{-m_{2i-1}(\tau)}\tau,
\nu^{m_{2i}(\tau)}\tau]), i=1, 2, \dots, \frac{d_{\tau}}{2}.
\end{equation}

Similarly, for $\tau \in C(\rho)$, if $d_{\tau}$ is odd, let
\begin{equation} \label{equldg9}
\Delta_0(\tau)=\delta([\nu^{\frac{1}{2}}\tau,
\nu^{m_1(\tau)}\tau]), \Delta_i(\tau)=\delta([\nu^{-m_{2i}(\tau)}\tau,
\nu^{m_{2i+1}(\tau)}\tau]),
\end{equation}
$$i=1, 2, \dots, \frac{d_{\tau}-1}{2}.$$
Finally, for $\tau \in C(\rho)$, if $d_{\tau}$ is even, let
\begin{equation} \label{equldg10}
\Delta_i(\tau)=\delta([\nu^{-m_{2i-1}(\tau)}\tau,
\nu^{m_{2i}(\tau)}\tau]), i=1, 2, \dots, \frac{d_{\tau}}{2}.
\end{equation}

We  now define
$$J_{\tau}=\left\{
             \begin{array}{ll}
               \{1, 2, \dots, \frac{d_{\tau}-1}{2}\}, & \hbox{in case \eqref{equldg6};} \\
               \{0, 1, 2, \dots, \frac{d_{\tau}-1}{2}\}, & \hbox{in cases \eqref{equldg7}, \eqref{equldg9};} \\
               \{1, 2, \dots, \frac{d_{\tau}}{2}\}, & \hbox{in cases \eqref{equldg8}, \eqref{equldg10}.}
             \end{array}
           \right.$$
and let $\sigma_{\rho}$ be the unique irreducible generic
constituent of
\begin{equation} \label{equldg11}
(\times_{\tau \in P(\rho) \smallsetminus A_0(\rho)} \times_{j \in J_{\tau}}
\Delta_j(\tau)) \rtimes \sigma^{(0)},
\end{equation}
where possibly $\sigma^{(0)} = 1 \otimes c$.
Observe that
$\sigma_{\rho}$ is a representation of $G_n(F)$. It is now easy to see
that the sequence of segments in \eqref{equldg6}-\eqref{equldg10}, together with
$\sigma^{(0)}$ satisfy  (DS1$'$)-(DS3$'$), hence
$\sigma_{\rho}$ is square-integrable.


Next we will show that formula \eqref{equldg2} and \eqref{equldg3} hold for the pair
$(\sigma_{\rho}, \rho)$. 
First, we show that
\begin{equation} \label{equ14}
\gamma(\sigma_{\rho} \times \pi, s, \psi) = \gamma(\rho \times \pi, s, \psi), 
\end{equation}
for all irreducible generic representations $\pi$ of $H_k(F)$ with
all $k \in \mathbb{Z}_{>0}$. By the multiplicativity of twisted gamma functions (\cite{Sha90b}), it is  enough to show \eqref{equ14} for
supercuspidal representation $\pi$, see also \cite[Lemma 7.2]{CKPSS04}. 

Since $\sigma_{\rho}$ is a
constituent of \eqref{equldg11}, using Theorem \ref{thm5} and the fact that
$l(\sigma^{(0)})=\times _{\tau \in A(\rho)} \tau$, we have
\begin{equation} \label{equ16}
\begin{split}
& \quad \gamma(\sigma_{\rho} \times \pi, s, \psi)\\
& =[\prod_{\tau \in P(\rho)\smallsetminus A_0(\rho)} \prod_{j \in
J_{\tau}} \gamma(\Delta_j(\tau) \times \pi, s, \psi)
\gamma(\Delta_j(\tau)^\vee \times \pi, s, \psi)]\\
& \quad \cdot \gamma(\sigma^0 \times \pi, s, \psi)\\
& =[\prod_{\tau \in P(\rho)\smallsetminus A_0(\rho)} \prod_{j \in
J_{\tau}} \gamma(\Delta_j(\tau) \times \pi, s, \psi)
\gamma(\Delta_j(\tau)^\vee \times \pi, s, \psi)]\\
& \quad \cdot \prod_{\tau \in A(\rho)} \gamma(\tau \times \pi, s,
\psi).
\end{split}
\end{equation}

We split the product \eqref{equ16} into the following five terms\\ \\
(1): $\prod_{\tau \in A_0(\rho)} \gamma(\tau \times \pi, s, \psi)$,\\
$(2)_i$: $\prod_{\tau \in A_i(\rho)} \gamma(\tau \times \pi, s,
\psi) \prod_{j \in J_{\tau}} \gamma(\Delta_j(\tau) \times \pi, s,
\psi)
\gamma(\Delta_j(\tau)^\vee \times \pi, s, \psi), \\
\qquad\qquad i=1, 2,$\\
$(3)_B$: $\prod_{\tau \in B(\rho)} \prod_{j \in J_{\tau}}
\gamma(\Delta_j(\tau) \times \pi, s, \psi)
\gamma(\Delta_j(\tau)^\vee \times \pi, s, \psi),$\\
$(4)_C$: $\prod_{\tau \in C(\rho)} \prod_{j \in J_{\tau}}
\gamma(\Delta_j(\tau) \times \pi, s, \psi)
\gamma(\Delta_j(\tau)^\vee \times \pi, s, \psi)$.\\

Next, we consider $\gamma(\rho \times \pi, s, \psi)$. By the
assumption on $\rho$, \eqref{equldg4} and the multiplicativity of
gamma functions (\cite{Sha90b}), we have
\begin{equation} \label{equ18}
\begin{split}
& \quad \gamma(\rho \times \pi, s, \psi)  \\
& =\prod_{i=1}^r \gamma(\delta([\nu^{-m_i}\tau_i, \nu^{m_i}\tau_i]) \times \pi, s, \psi)\\
& =\prod_{\tau \in P(\rho)} \prod_{i=1}^{d_{\tau}} \gamma(\delta([\nu^{-m_i(\tau)}\tau, \nu^{m_i(\tau)}\tau]) \times \pi, s, \psi).
\end{split}
\end{equation}
It suffice to show that the product in \eqref{equ18} consists of exactly 
the factors which appear in the five products above.

Note that each term in the product (1) appears in \eqref{equ18} since
for $\tau \in A_0(\rho)$, we have $d_{\tau} = 1$ and $m_1(\tau) =
0$.

For the induced representation
$$\Delta_j(\tau) \times \Delta_j(\tau)^\vee,$$
by the multiplicativity of twisted gamma functions, we
have
$$\gamma((\Delta_j(\tau) \times \Delta_j(\tau)^\vee) \times
\pi, s, \psi) = \gamma(\Delta_j(\tau) \times \pi, s, \psi)
\gamma(\Delta_j(\tau)^\vee \times \pi, s, \psi)$$ for $j \geq
1$.

We consider two cases. In the first case the representation
$\Delta_j(\tau)$ appears in \eqref{equldg6}, \eqref{equldg7}, \eqref{equldg9}, where
$$\Delta_j(\tau) \times
\Delta_j(\tau)^\vee=\delta[v^{-m_{2j}(\tau)}\tau,
v^{m_{2j+1}(\tau)}\tau] \times \delta[v^{-m_{2j+1}(\tau)}\tau,
v^{m_{2j}(\tau)}\tau].$$ By \cite{Zel80}, we know that if $D_i(\tau)$ is a
segment, then the unique generic constituent of the representation
$\delta(D_i(\tau)) \times \delta(D_i(\tau)^\vee)$ is
$$\delta(D_i(\tau) \cap D_i(\tau)^\vee) \times \delta(D_i(\tau)
\cup D_i(\tau)^\vee).$$
So the unique generic constituent of the
representation $\Delta_j(\tau) \times \Delta_j(\tau)^\vee$ is
\begin{equation*}
\begin{split}
\delta([\nu^{-m_{2j}(\tau)}\tau, \nu^{m_{2j}(\tau)}\tau]) \times \delta([\nu^{-m_{2j+1}(\tau)}\tau, \nu^{m_{2j+1}(\tau)}\tau]).
\end{split}
\end{equation*}
By multiplicativity of gamma functions, 
\begin{equation} \label{equ19}
\begin{split}
& \quad \gamma((\Delta_j(\tau) \times \Delta_j(\tau)^\vee) \times
\pi, s, \psi) \\
& =\gamma(\delta([\nu^{-m_{2j}(\tau)}\tau, \nu^{m_{2j}(\tau)}\tau]) \times \pi, s, \psi)
\gamma(\delta([\nu^{-m_{2j+1}(\tau)}\tau, \nu^{m_{2j+1}(\tau)}\tau]) \times \pi, s, \psi).
\end{split}
\end{equation}

In the second case the representation $\Delta_j(\tau)$ appears in \eqref{equldg8} and \eqref{equldg10}, 
similarly, we have
\begin{equation} \label{equ20}
\begin{split}
& \quad \gamma((\Delta_j(\tau) \times \Delta_j(\tau)^\vee) \times
\pi, s, \psi) \\
& =\gamma(\delta([\nu^{-m_{2j-1}(\tau)}\tau, \nu^{m_{2j-1}(\tau)}\tau]) \times \pi, s, \psi)
\gamma(\delta([\nu^{-m_{2j}(\tau)}\tau, \nu^{m_{2j}(\tau)}\tau]) \times \pi, s, \psi).
\end{split}
\end{equation}

By \eqref{equ19},
\begin{equation} \label{equ21}
\begin{split}
& \quad \prod_{\tau \in A_1(\rho)} \gamma(\tau \times \pi, s,
\psi) \prod_{j \in J_{\tau}} \gamma(\Delta_j(\tau) \times \pi, s,
\psi) \gamma(\Delta_j(\tau)^\vee \times \pi, s, \psi) \\
& =\prod_{\tau \in A_1(\rho)} \gamma(\tau \times \pi, s,
\psi) \prod_{k=2}^{d_{\tau}} \gamma(
\delta([\nu^{-m_{k}(\tau)}\tau, \nu^{m_{k}(\tau)}\tau])
\times \pi, s, \psi)\\
& =\prod_{\tau \in A_1(\rho)} \prod_{k=1}^{d_{\tau}} \gamma(
\delta([\nu^{-m_{k}(\tau)}\tau, \nu^{m_{k}(\tau)}\tau]) \times \pi, s, \psi).
\end{split}
\end{equation}
So, the product of type $(2)_1$ appears in \eqref{equ18}.

Similarly, by \eqref{equ20}, the product of type
$(3)_B$ appears in \eqref{equ18}, and also the following part of $(4)_C$
appears in \eqref{equ18}
\begin{equation} \label{equ23}
\begin{split}
& \quad \prod_{\substack{\tau \in C(\rho) \\ d_{\tau} \,\, even}} \prod_{j \in J_{\tau}}
\gamma(\Delta_j(\tau) \times \pi, s, \psi)
\gamma(\Delta_j(\tau)^\vee \times \pi, s, \psi) \\
& =\prod_{\substack{\tau \in C(\rho) \\ d_{\tau} \,\, even}} \prod_{i=1}^{d_{\tau}}
\gamma(\delta([\nu^{-m_{i}(\tau)}\tau, \nu^{m_{i}(\tau)}\tau]) \times \pi, s, \psi).
\end{split}
\end{equation}

The product $(2)_2$ can be treated as in \eqref{equ21}, except that we still have to
consider that $j=0$ and $\tau \in A_2(\rho)$ in \eqref{equldg7}, where
$$\Delta_0(\tau) \times \Delta_0(\tau)^\vee \times \tau =
\delta[v \tau, v^{m_1(\tau)} \tau] \times \delta[v^{-m_1(\tau)}
\tau, v^{-1} \tau] \times \tau.$$ By \cite{Zel80}, the unique generic
constituent of this representation is
$$\delta[v^{-m_1(\tau)} \tau, v^{m_1(\tau)} \tau].$$
So,
\begin{equation} \label{equ79}
\begin{split}
& \quad \gamma(\tau \times \pi, s, \psi) \gamma(\Delta_0(\tau) \times \pi, s, \psi)
\gamma(\Delta_0(\tau)^\vee, s, \psi)\\
& = \gamma(\delta[v^{-m_1(\tau)} \tau, v^{m_1(\tau)} \tau] \times \pi, s, \psi).
\end{split}
\end{equation}
By \eqref{equ19} and \eqref{equ79}, for $j \geq 1$
\begin{equation} \label{equ22}
\begin{split}
& \quad \prod_{\tau \in A_2(\rho)} \gamma(\tau \times \pi, s,
\psi) \prod_{j \in J_{\tau}} \gamma(\Delta_j(\tau) \times \pi, s,
\psi) \gamma(\Delta_j(\tau)^\vee \times \pi, s, \psi)\\
& = \prod_{\tau \in A_2(\rho)} \gamma(\delta[v^{-m_1(\tau)} \tau, v^{m_1(\tau)} \tau] \times \pi, s,
\psi) \prod_{i=2}^{d_{\tau}} \gamma(\delta[v^{-m_i(\tau)} \tau, v^{m_i(\tau)} \tau] \times \pi, s,
\psi)\\
& = \prod_{\tau \in A_2(\rho)} \prod_{i=1}^{d_{\tau}} \gamma(\delta[v^{-m_i(\tau)} \tau, v^{m_i(\tau)} \tau] \times \pi, s,
\psi).
\end{split}
\end{equation}
Therefore, the product $(2)_2$ appears in \eqref{equ18}.

The product of type $(4)_C$ with $d_{\tau}$ odd can be treated
similarly to the last case and to \eqref{equ22}. We only have to consider
$j=0$ and $\tau \in C(\rho)$ in \eqref{equldg9}, where
$$\Delta_0(\tau) \times \Delta_0(\tau)^\vee = \delta[v^{\frac{1}{2}} \tau, v^{m_1(\tau)} \tau] \times
\delta[v^{-m_1(\tau)} \tau, v^{-\frac{1}{2}} \tau].$$ By \cite{Zel80}, the
unique generic constituent of this representation is
$$\delta[v^{-m_1(\tau)} \tau, v^{m_1(\tau)} \tau].$$
As in \eqref{equ23} and \eqref{equ22}, 
\begin{equation} \label{equ24}
\begin{split}
& \quad \prod_{\substack{\tau \in C(\rho) \\ d_{\tau} \,\, odd}} \prod_{j \in J_{\tau}}
\gamma(\Delta_j(\tau) \times \pi, s, \psi)
\gamma({\Delta}_j(\tau)^\vee \times \pi, s, \psi)\\
& =\prod_{\substack{\tau \in C(\rho) \\ d_{\tau} \,\, odd}} \prod_{i=1}^{d_{\tau}}
\gamma(\delta[v^{-m_i(\tau)} \tau, v^{m_i(\tau)} \tau] \times \pi, s, \psi).
\end{split}
\end{equation}
Multiplying
\eqref{equ23} and \eqref{equ24}, we see that $(3)_C$ appears in \eqref{equ18}. Since
$$P(\rho) = A(\rho) \cup B(\rho) \cup C(\rho),$$
the identity \eqref{equ14} is now proved.


Since both $\sigma_{\rho}$ and $\rho$ are tempered, by \cite{Sha90a}, 
see also \cite{CKPSS04}, \cite{CS98} and \cite{JS04}, we know that \eqref{equldg2} and \eqref{equldg3} follow from \eqref{equ14}. \end{proof}

\begin{rem}\label{rmk3}
Note that if $\rho^{(2)} = l(\sigma^{(2)})$, then
the $A_2(\rho^{(2)})=\{\tau \in P'(\sigma^{(2)})|(\tau, \sigma^{(0)}) \text{ satisfies } (C1)\}$, see
Proposition \ref{gdsprop1} for the definition of $P'(\sigma^{(2)})$. 



\end{rem}

Next we generalize Theorem \ref{thm6} to $\Pi^{(dg)}(G_n)$. Let $\Phi^{(d)}(G_n)$ be the subset of $\Phi(G_n)$
consisting of all the local Langlands parameters of type
$$\phi = \bigoplus_i \phi_i \otimes S_{2m_i + 1},$$
where the $\phi_i$'s are irreducible 
self-dual (resp.  self-conjugate-dual in the case of unitary groups)
representations of $W_F$
of dimension $k_{\phi_i}$, the $S_{2m_i+1}$'s are irreducible
representations of $SL_2(\mathbb{C})$ of dimension $2m_i+1$, and they
satisfy the following:

(1) for each $i$, $\phi_i \otimes S_{2m_i+1}$ is irreducible
and $L(\phi_i \otimes S_{2m_i+1}, R, s)$ has a pole at $s=0$;

(2) $\phi_i \otimes S_{2m_i+1}$ and $\phi_j \otimes S_{2m_j+1}$ are
not equivalent if $i \neq j$;

(3) the image $\phi(W_F \times SL_2(\mathbb{C}))$ is not contained
in any proper Levi subgroup of ${}^LG_n$.


The local Langlands parameters in $\Phi^{(d)}(G_n)$ are called
$discrete$. 
Let $\widetilde{\Phi}^{(d)}(G_n)$ be the image of $\Phi^{(d)}(G_n)$ in $\widetilde{\Phi}(G_n)$.
The following theorem is analogous to  \cite[Theorem 2.2]{JS04}, \cite[Theorem 4.9]{Liu11}, and \cite[Theorem 4.10]{JL14}. 

\begin{thr} \label{thm10}
There is a surjective map $\iota$ $($which
extends the one in Theorem \ref{thm6}$)$ from $\Pi^{(dg)}(G_n)$ to the
set $\widetilde{\Phi}^{(d)}(G_n)$ and it  preserves the local factors:
\begin{equation}
L(\sigma \times \tau, s) = L(\iota(\sigma) \otimes r^{-1}(\tau),
s), 
\end{equation}
\begin{equation}
\epsilon(\sigma \times \tau, s, \psi) =
\epsilon(\iota(\sigma) \otimes r^{-1}(\tau), s, \psi),  
\end{equation}
for all $\sigma \in \Pi^{(dg)}(G_n)$ and all irreducible
generic representations $\tau$ of any $H_{k_{\tau}}(F)$,
$k_{\tau} \in \mathbb{Z}_{>0}$. Here, $r^{-1}(\tau)$ is the
irreducible admissible representation of $W_F \times SL_2(\mathbb{C})$ of dimension
$k_{\tau}$ corresponding to $\tau$ by the local Langlands
reciprocity map for $H_{k_{\tau}}$.\end{thr}

\begin{proof}
Given $\sigma \in \Pi^{(dg)}(G_n)$, by Theorem \ref{thm7}, $l(\sigma) \in \Pi^{(dg)}(H_N)$ and has the form \eqref{equldg1}. Let $\phi=
r^{-1}(l(\sigma))$, then $\phi \in \Phi^{(d)}(G_n)$. 
Define $\iota(\sigma)=\widetilde{\phi}$, the image of $\phi$ in $\widetilde{\Phi}(G_n)$.
Therefore, we have constructed a map $\iota$ from $\Pi^{(g)}(G_n)$ to
$\widetilde{\Phi}^{(g)}(G_n)$, which naturally extends the one in Theorem \ref{thm6}. Since $l$ preserves local factors, so is $\iota$.

To show that $\iota$ is surjective, given any $\widetilde{\phi} \in 
\widetilde{\Phi}^{(g)}(G_n)$, let 
$\phi \in \Phi^{(t)}(G_n)$ be a representative, which has the following multiplicity one
decomposition
\begin{equation} \label{equ28}
\phi = \bigoplus_{i=1}^r \phi_i \otimes S_{2m_i+1}, 2m_i \in \mathbb{Z}_{\geq0},
\end{equation}
where each $\phi_i \otimes S_{2m_i+1}$
is irreducible
and $L(\phi_i \otimes S_{2m_i+1}, R, s)$ has a pole at $s=0$ for $1 \leq i \leq r$. So, $\phi_i$ is
self-dual (resp.  self-conjugate-dual in the case of unitary groups),
and it is (conjugate)-orthogonal ((conjugate)-symplectic, respectively) if and
only if $S_{2m_i+1}$ is orthogonal (symplectic, respectively), i.e.
if and only if $m_i \in \mathbb{Z}_{>0}$ ($m_i \in \frac{1}{2} +
\mathbb{Z}_{>0}$, respectively). Hence, for $1 \leq i \leq r$,\\
(D1) if $L(R^-(\phi_i), s)$ has a pole at $s=0$, then $m_i
\in \frac{1}{2} + \mathbb{Z}_{>0}$;\\
(D2) if $L(R(\phi_i), s)$ has a pole at $s=0$, then $m_i \in
\mathbb{Z}_{>0}$.\\
In the case of unitary groups, for the definitions of conjugate-orthogonal, conjugate-symplectic, see \cite[Section 3]{GGP12}.

Let $\tau_i = r(\phi_i)$ be the irreducible self-dual (resp. self-conjugate-dual in the case of unitary groups) supercuspidal
representation of $H_{k_{\phi_i}}(F)$, corresponding to $\phi_i$.
By Theorem \ref{thm3}, we have
\begin{equation} \label{equ25}
L(R^-(\phi_i), s) \text{ has a pole at } s=0 \Leftrightarrow
L(\tau_i, R^-, s) \text{ has a pole at } s=0, 
\end{equation}
and
\begin{equation} \label{equ26}
L(R(\phi_i), s) \text{ has a pole at } s=0 \Leftrightarrow
L(\tau_i, R, s) \text{ has a pole at } s=0. 
\end{equation}
And
\begin{equation} \label{equ1000}
\begin{split}
r(\phi_i \otimes S_{2m_i+1}) & = \delta[v^{-m_i} r(\phi_i), v^{m_i} r(\phi_i)] =
\delta[v^{-m_i} \tau_i, v^{m_i} \tau_i];\\
r(\phi) & = \times_{i=1}^r 
\delta[v^{-m_i} \tau_i, v^{m_i} \tau_i].
\end{split}
\end{equation}
So $r(\phi)$ has the form \eqref{equldg1}, and by \eqref{equ25}, \eqref{equ26} and the conditions (D1),
(D2), the conditions (1) and (2) in the definition of $\Pi^{(dg)}(H_N)$ hold, i.e., $r(\phi) \in \Pi^{(dg)}(H_N(F))$. Therefore, by Theorem \ref{thm7}, there
exists a square-integrable generic representation $\sigma$ of
$G$, such that
$l(\sigma) = r(\phi)$, and
$$L(\sigma \times r(\phi{'}), s) = L(r(\phi) \times r(\phi{'}), s)
= L(\phi \otimes \phi{'}, s)$$
$$\epsilon(\sigma \times r(\phi{'}), s, \psi) = \epsilon(r(\phi) \times r(\phi{'}), s, \psi)
= L(\phi \otimes \phi{'}, s, \psi)$$ for generic parameters $\phi{'}$ for $H_k(F)$ with $k \in
\mathbb{Z}_{>0}$ (i.e., $r(\phi{'})$ is an irreducible generic representations of $H_k(F)$). 
As a result, $\iota(\sigma) = \widetilde{\phi}$.

This completes the proof of the theorem.
\end{proof}

\begin{rem}\label{rmk7}
Suppose
$\sigma^{(2)} \in \Pi^{(dg)}(G_n)$ is the unique generic
constituent of 
$(\times_{\tau \in P{'}} \times_{i=1}^{e_{\tau}} \delta(D_i(\tau)))
\rtimes \sigma^{(0)}$
(possibly $\sigma^{(0)}= 1 \otimes c$) and $\sigma^{(0)} \ncong c \sigma^{(0)}$.
Then, by \S \ref{gdssection}, $\sigma^{(2)} \ncong c \sigma^{(2)}$ are both in $\Pi^{(dg)}(G_n)$,
and $c \sigma^{(2)}$ is the unique generic
constituent of 
$(\times_{\tau \in P{'}} \times_{i=1}^{e_{\tau}} \delta(D_i(\tau)))
\rtimes c \sigma^{(0)}$. 
Note that if $(\tau, \sigma^{(0)})$
satisfies $(C \beta)$, then so does $(\tau, c \sigma^{(0)})$.

By Remark \ref{rmk6}, Theorem \ref{thm7} and Theorem \ref{thm10},
and by the multiplicativity of local factors (see \cite{Sha90b}, \cite{JS12} and \cite{CKPSS04}),
in the above situation, $\sigma^{(2)}$ and $c \sigma^{(2)}$
have the same lifting image and the same twisted local factors.
\end{rem}

\subsection{Tempered generic representations}

Let $\Pi^{(tg)}_{\varepsilon}(H_N)$ be 
the set of equivalence classes of
tempered representations of $H_N(F)$ of the following form
\begin{equation} \label{equltg1}
\delta([\nu^{-h_1}\lambda_1, \nu^{h_1} \lambda_1]) \times \delta([\nu^{-h_2}\lambda_2, \nu^{h_2} \lambda_2]) \times \dots
\times \delta([\nu^{-h_f}\lambda_f, \nu^{h_f} \lambda_f]), 
\end{equation}
with central character $\chi$ being trivial when restricting to $F^*$ except when $G_n=SO_{2n+2}^*$, in which case it is 
the quadratic character $\eta_{\varepsilon}$  associated to the square class $\varepsilon$ defining $G_n$.  
Here $\lambda_1,
\lambda_2, \dots \, \lambda_f$ are unitary supercuspidal
representations, and $2h_i \in \mathbb{Z}_{\geq0}$, such that for $1
\leq i \leq f$:

$(1)$ if $\lambda_i \not \cong \check{\lambda}_i$, then
$\delta([\nu^{-h_i}\lambda_i, \nu^{h_i} \lambda_i])$ occurs in \eqref{equltg1} as many times as
$\check\delta([\nu^{-h_i}\lambda_i, \nu^{h_i} \lambda_i])=\delta([\nu^{-h_i}\check\lambda_i, \nu^{h_i} \check\lambda_i])$ does;

$(2)$ if $L(\lambda_i, R^-, s)$ has a pole at $s=0$, and $h_i
\in \mathbb{Z}_{\geq0}$, then $\delta([\nu^{-h_i}\lambda_i, \nu^{h_i} \lambda_i])$ occurs an
even number of times in \eqref{equltg1};

$(3)$ if $L(\lambda_i, R, s)$ has a pole at $s=0$, and $h_i
\in \frac{1}{2} + \mathbb{Z}_{\geq0}$, then $\delta([\nu^{-h_i}\lambda_i, \nu^{h_i} \lambda_i])$ 
occurs an even number of times in \eqref{equltg1}.

The following theorem is analogous to  \cite[Theorem 4.1]{JS04}, \cite[Theorem 4.12]{Liu11}, and \cite[Theorem 4.12]{JL14}.

\begin{thr} \label{thm9}
There is a surjective map $l$ $($which extends the one in Theorem \ref{thm7}$)$
from $\Pi^{(tg)}(G_n)$ to $\Pi^{(tg)}_{\varepsilon}(H_N)$ and it preserves the local factors:
\begin{equation} \label{equltg2}
L(\sigma \times \pi, s) = L(l(\sigma) \times \pi, s), 
\end{equation}
\begin{equation} \label{equltg3}
\epsilon(\sigma \times \pi, s, \psi) = \epsilon(l(\sigma) \times \pi, s, \psi),
\end{equation}
for any $\sigma \in \Pi^{(tg)}(G_n)$ and any
irreducible generic representation $\pi$ of any $H_k(F)$, $k
\in \mathbb{Z}_{>0}$.\end{thr}

\begin{proof}
This map has already been given by Cogdell, Kim, Piatetski-Shapiro, and Shahidi (see \cite{CKPSS04} and \cite{CPSS11}), so it suffices to prove the surjectivity. That is, given a $\rho \in \Pi^{(tg)}(H_N(F))$, to construct a $\sigma_{\rho} \in \Pi^{(tg)}(G_n)$ such that $\rho =
l(\sigma_{\rho})$ and \eqref{equltg2}, \eqref{equltg3} hold.

Define the following sets $N, W$, and $R$ from the factors
of the induced representation in \eqref{equltg1}:

$N$ consists of $\delta([\nu^{-h_i}\lambda_i, \nu^{h_i} \lambda_i])'s$ with $1 \leq i \leq f$
such that $\lambda_i \not \cong \check{\lambda}_i$;

$W$ consists of $\delta([\nu^{-h_i}\lambda_i, \nu^{h_i} \lambda_i]){'}s$ with $1 \leq i \leq f$
such that $L(\lambda_i, R^-, s)$ has a pole at $s=0$, and $h_i
\in \mathbb{Z}_{\geq0}$, or $L(\lambda_i, R, s)$ has a pole at
$s=0$, and $h_i \in \frac{1}{2} + \mathbb{Z}_{\geq0}$;

$S$ consists of $\delta([\nu^{-h_i}\lambda_i, \nu^{h_i} \lambda_i])'s$ with $1 \leq i \leq f$
such that $L(\lambda_i, R^-, s)$ has a pole at $s=0$, and $h_i
\in \frac{1}{2} + \mathbb{Z}_{\geq0}$, or $L(\lambda_i, R, s)$
has a pole at $s=0$, and $h_i \in \mathbb{Z}_{\geq0}$.

Note that these sets are taken with multiplicities. Denote by
$\mu_i{'}$ the multiplicity of $\delta([\nu^{-h_i}\lambda_i, \nu^{h_i} \lambda_i])$ in \eqref{equltg1}.

By the above definition, for $\delta([\nu^{-h_i}\lambda_i, \nu^{h_i} \lambda_i]) \in W$, $\mu_i{'} =
2\mu_i$ is even. Let
$$\{\delta([\nu^{-h_{i_1}}\lambda_{i_1}, \nu^{h_{i_1}} \lambda_{i_1}]), \delta([\nu^{-h_{i_2}}\lambda_{i_2}, \nu^{h_{i_2}} \lambda_{i_2}]), \cdots, \delta([\nu^{-h_{i_u}}\lambda_{i_u}, \nu^{h_{i_u}} \lambda_{i_u}])$$ be the set of all different elements in $W$.
Let
$$J_W(\rho) = \times_{j=1}^u \underbrace{(\delta([\nu^{-h_{i_j}}\lambda_{i_j}, \nu^{h_{i_j}} \lambda_{i_j}]) \times
\cdots \times \delta([\nu^{-h_{i_j}}\lambda_{i_j}, \nu^{h_{i_j}} \lambda_{i_j}])}_{\mu_{i_j} \, copies}.$$

By the above definition, if $\delta([\nu^{-h_{i}}\lambda_{i}, \nu^{h_{i}} \lambda_{i}]) \in N$, then
$\check{\delta}([\nu^{-h_{i}}\lambda_{i}, \nu^{h_{i}} \lambda_{i}]) = \delta([\nu^{-h_{i}}\check{\lambda}_{i}, \nu^{h_{i}} \check{\lambda}_{i}])
\in N$, and the multiplicities of $\delta([\nu^{-h_{i}}\lambda_{i}, \nu^{h_{i}} \lambda_{i}])$ and $\check{\delta}([\nu^{-h_{i}}\lambda_{i}, \nu^{h_{i}} \lambda_{i}])$ are equal. Let
$\{\delta([\nu^{-h_{z_1}}\lambda_{z_1}, \nu^{h_{z_1}} \lambda_{z_1}]), \check{\delta}([\nu^{-h_{z_1}}\lambda_{z_1}, \nu^{h_{z_1}} \lambda_{z_1}])$,
 $\cdots, \delta([\nu^{-h_{z_v}}\lambda_{z_v}, \nu^{h_{z_v}} \lambda_{z_v}]), \check{\delta}([\nu^{-h_{z_v}}\lambda_{z_v}, \nu^{h_{z_v}} \lambda_{z_v}])\}$
be the set of different elements in $N$.

Let
$$J_N(\rho) = \times_{j=1}^v \underbrace{(\delta([\nu^{-h_{z_j}}\lambda_{z_j}, \nu^{h_{z_j}} \lambda_{z_j}]) \times
\cdots \times \delta([\nu^{-h_{z_j}}\lambda_{z_j}, \nu^{h_{z_j}} \lambda_{z_j}])}_{\mu_{z_j}{'} \,
copies}.$$

For $S$, we consider following two cases
\begin{eqnarray*}
& & S_1 = \{\delta([\nu^{-h_i}\lambda_i, \nu^{h_i} \lambda_i]) \in S, | \mu_i{'} = 2\mu_i+1
\text{ is odd} \},\\
& & S_2 = \{\delta([\nu^{-h_i}\lambda_i, \nu^{h_i} \lambda_i]) \in S, | \mu_i{'} = 2\mu_i
\text{ is even} \}.
\end{eqnarray*}

Let
\begin{align*}
     & \{\delta([\nu^{-m_1}\tau_1, \nu^{m_1} \tau_1]), \cdots, \delta([\nu^{-m_r}\tau_r, \nu^{m_r} \tau_r])\} \\
     := & \{\delta([\nu^{-h_{x_1}}\lambda_{x_1}, \nu^{h_{x_1}} \lambda_{x_1}]), \cdots, \delta([\nu^{-h_{x_r}}\lambda_{x_r}, \nu^{h_{x_r}} \lambda_{x_r}])\}   
\end{align*} 
be the set of all different elements of $S_1$, and let
\begin{align*}
\{\delta([\nu^{-h_{t_1}}\lambda_{t_1}, \nu^{h_{t_1}} \lambda_{t_1}]), \cdots, \delta([\nu^{-h_{t_c}}\lambda_{t_c}, \nu^{h_{t_c}} \lambda_{t_c}])\}   
\end{align*}
be
the set of all different elements of $S_2$. In these two cases, we
define
\begin{equation} \label{equ51}
J_{S_1}(\rho) = \times_{j=1}^r \underbrace{(\delta([\nu^{-h_{x_j}}\lambda_{x_j}, \nu^{h_{x_j}} \lambda_{x_j}]) \times
\cdots \times \delta([\nu^{-h_{x_j}}\lambda_{x_j}, \nu^{h_{x_j}} \lambda_{x_j}])}_{\mu_{x_j} \, copies}, 
\end{equation}
\begin{equation} \label{equ52}
J_{S_2}(\rho) = \times_{j=1}^c \underbrace{(\delta([\nu^{-h_{t_j}}\lambda_{t_j}, \nu^{h_{t_j}} \lambda_{t_j}]) \times
\cdots \times \delta([\nu^{-h_{t_j}}\lambda_{t_j}, \nu^{h_{t_j}} \lambda_{t_j}])}_{\mu_{t_j} \,
copies}.
\end{equation}
Note that in \eqref{equ51}, twice of the multiplicity of
$\delta([\nu^{-h_{x_j}}\lambda_{x_j}, \nu^{h_{x_j}} \lambda_{x_j}])$ is decreased by 1. The reason for doing this is that we want to use Theorem
\ref{thm7}.

Then by assumption, the induced representation
\begin{equation}\label{rho2}
    \rho^{(2)} := \delta([\nu^{-m_1}\tau_1, \nu^{m_1} \tau_1]) \times \cdots \times \delta([\nu^{-m_r}\tau_r, \nu^{m_r} \tau_r])
\end{equation}
is in $\Pi^{(dg)}_{\varepsilon}(H_{n{''}})$, for some $n{''}$. Hence by Theorem \ref{thm7}, there exists $\sigma^{(2)} \in \Pi^{(dg)}(G_{n{''}})$, such that $l(\sigma^{(2)}) =
\rho^{(2)}$. 

Let $\delta([\nu^{-p_1}\eta_1, \nu^{p_1} \eta_1])
\cdots \delta([\nu^{-p_d}\eta_d, \nu^{p_d} \eta_d])
$ be the list of all factors (with possible repetitions)
which appear in $J_N(\rho) \times J_W(\rho) \times J_{S_1}(\rho)
\times J_{S_2}(\rho)$. 
Define $\sigma_{\rho}$ to be the
unique generic constituent of
\begin{align}\label{equ53}
\begin{split}
\delta([\nu^{-p_1}\eta_1, \nu^{p_1} \eta_1])
\times \cdots \times  \delta([\nu^{-p_d}\eta_d, \nu^{p_d} \eta_d]) \rtimes \sigma^{(2)}.  
\end{split}
\end{align}
Then by the discussion in \S \ref{gtsection}, $\sigma_{\rho}$ is in $\Pi^{(tg)}(G_{n})$.

Next, we show that \eqref{equltg2} and \eqref{equltg3} hold. First we show that
$$\gamma(\sigma_{\rho} \times \pi, s, \psi) = \gamma(\rho \times
\pi, s, \psi).$$ 
Again, by the multiplicativity of twisted gamma functions (\cite{Sha90b}), it is  enough to show this for
supercuspidal representation $\pi$, see also \cite[Lemma 7.2]{CKPSS04}. 

By the multiplicativity of twisted gamma functions, and by Theorems \ref{thm5} and \ref{thm7}, for an irreducible
supercuspidal representation $\pi$ of $H_k(F)$, we have
\begin{eqnarray*}
& & \quad \gamma(\sigma_{\rho} \times \pi, s, \psi)\\
& & =[\prod_{i=1}^d \gamma(\delta([\nu^{-p_i}\eta_i, \nu^{p_i} \eta_i]) \times \pi, s, \psi)
\gamma(\check{\delta}([\nu^{-p_i}\eta_i, \nu^{p_i} \eta_i]) \times \pi, s,
\psi)]\\
& & \quad \cdot 
[\prod_{i=1}^r \gamma(\delta([\nu^{-m_i}\tau_i, \nu^{m_i} \tau_i]) \times \pi, s, \psi)]\\
& & =[\prod_{i=1}^f \gamma(\delta([\nu^{-h_i}\lambda_i, \nu^{h_i} \lambda_i]) \times \pi, s,
\psi)]\\
& & =\gamma(\rho \times
\pi, s, \psi).\\
\end{eqnarray*}
Since $\sigma_{\rho}$ and $\rho$ are both tempered, we can get
\eqref{equltg2} and \eqref{equltg3} in the same way.
This completes the proof.
\end{proof}

\begin{rem}\label{rmk4}
By Theorem \ref{thm9}, for each $\sigma^{(t)}
\in \Pi^{(tg)}(G_n)$ as in \eqref{tempered generic},
\begin{align}\label{equltg4}
\begin{split}
\rho^{(t)}=l(\sigma^{(t)}) & = \delta([\nu^{-e_1}\beta_1, \nu^{e_1}\beta_1])\times \dots \times \delta([\nu^{-e_c}\beta_c, \nu^{e_c}\beta_c]) \times l(\sigma^{(2)})\\
& \quad \times \delta([\nu^{-e_c}\check{\beta}_c, \nu^{e_c}\check{\beta}_c])
\times \dots \times \delta([\nu^{-e_1}\check{\beta}_1, \nu^{e_1}\check{\beta}_1]),
\end{split}
\end{align}
which is irreducible and generic.
\end{rem}

Next, we write down the parameters for representations in $\Pi^{(tg)}(G_n)$.
From \eqref{equltg4}, we can see that the local Langlands parameter of $\sigma$ is
$$\phi_{\sigma^{(2)}} \oplus \bigoplus_{i=1}^c [\phi_{\beta_i}
\times S_{2e_i+1} \oplus \check{\phi}_{\beta_i} \times
S_{2e_i+1}],$$
where $\check{\phi}_{\beta_i}$ is irreducible representation of $W_F$ corresponding to $(r({\phi}_{\beta_i}))^{\vee}$ under the local Langlands
reciprocity map $r$ for general linear groups.

Let $\Phi^{(t)}(G_n)$ be the subset of
$\Phi(G_n)$ consisting of the local Langlands parameters $\phi$
with the property that $\phi(W_F)$ is bounded. The parameters in
$\Phi^{(t)}(G_n)$ are called $tempered$. Then we have the following 
result that the local Langlands parameters corresponding to representations in 
$\Pi^{(tg)}(G_n)$ are exactly the tempered parameters.
Let $\widetilde{\Phi}^{(t)}(G_n)$ be the image of $\Phi^{(t)}(G_n)$ in $\widetilde{\Phi}(G_n)$.
The following theorem is analogous to  \cite[Theorem 4.2]{JS04}, \cite[Theorem 4.13]{Liu11}, and \cite[Theorem 4.14]{JL14}.

\begin{thr} \label{thm11}
There is a surjective map $\iota$ $($which
extends the one in Theorem \ref{thm10}$)$ from $\Pi^{(tg)}(G_n)$ to the
set $\widetilde{\Phi}^{(t)}(G_n)$ and it preserves the local factors:
$$L(\sigma \times \tau, s) = L(\iota(\sigma) \otimes r^{-1}(\tau),
s),$$
$$\epsilon(\sigma \times \tau, s, \psi) =
\epsilon(\iota(\sigma) \otimes r^{-1}(\tau), s, \psi),$$
for all $\sigma \in \Pi^{(tg)}(G_n)$ and all irreducible
generic representations $\tau$ of any $H_{k_{\tau}}(F)$,
$k_{\tau} \in \mathbb{Z}_{>0}$. Here, $r^{-1}(\tau)$ is the
irreducible admissible representation of $W_F \times SL_2(\mathbb{C})$ of dimension
$k_{\tau}$ corresponding to $\tau$ by the local Langlands
reciprocity map for $H_{k_{\tau}}$.
\end{thr}

\begin{proof}
Let $\sigma$ be an irreducible tempered generic
representation of $G_n(F)$. Consider $l(\sigma) \in \Pi^{(tg)}(H_N)$ given in Theorem \ref{thm9}.
Then $\phi = r^{-1}(l(\sigma)) \in \Phi^{(t)}(G_{n})$. So $l(\sigma) = r(\phi)$.
Define $\iota(\sigma)=\widetilde{\phi}$, the image of $\phi$ in $\widetilde{\Phi}(G_n)$.
Therefore, we have constructed a map $\iota$ from $\Pi^{(g)}(G_n)$ to
$\widetilde{\Phi}^{(g)}(G_n)$, which naturally extends the one in Theorem \ref{thm10}. Since $l$ preserves local factors, so is $\iota$.

To show $\iota$ is surjective, given any $\widetilde{\phi} \in 
\widetilde{\Phi}^{(g)}(G_n)$, let 
$\phi \in \Phi^{(t)}(G_n)$ be a representative. Composing $\phi$ with the embedding 
$$i: {}^L G_n \hookrightarrow {}^L H_N,$$
we obtain a
$N$-dimensional representation of $W_F \times SL_2(\mathbb{C})$.
Since the image of $\phi$ preserves a non-degenerate 
bilinear form, so it can be decomposed into the
following form
$$\phi = J_N{'}(\phi) \oplus J_W{'}(\phi) \oplus J_{S_2}{'}(\phi)
\oplus J_{S_1}{'}(\phi) \oplus J_2(\phi),$$ where each
summand is as follows.

$J_N{'}(\phi)$ is
\begin{equation} \label{equ54}
J_N{'}(\phi) = \bigoplus_{j=1}^v \mu{'}_{z_j} (\phi_{z_j}
\otimes S_{2h_{z_j}+1} \oplus \check{\phi}_{z_j} \otimes
S_{2h_{z_j}+1}), 
\end{equation}
with the properties that

(1) $2h_{z_j} \in \mathbb{Z}_{\geq0}$;

(2) $\mu{'}_{z_j} \in \mathbb{Z}_{>0}$ are the multiplicities;

(3) $\phi_{z_j} \not \cong \check{\phi}_{z_j}$;

(4) $\phi_{z_1}, \phi_{z_2}, \cdots, \phi_{z_v}$ are pairwise
non-equivalent irreducible bounded representations of $W_F$.

$J_W{'}(\phi)$ is
\begin{equation} \label{equ55}
J_W{'}(\phi) = \bigoplus_{j=1}^u 2\mu_{i_j} (\phi_{i_j}
\otimes S_{2h_{i_j}+1}), 
\end{equation}
with the properties that

(1) $2h_{i_j} \in \mathbb{Z}_{\geq0}$;

(2) $\mu_{i_j} \in \mathbb{Z}_{>0}$ are the half of the
multiplicities;

(3) $\phi_{i_1}, \phi_{i_2}, \cdots, \phi_{i_u}$ are pairwise
non-equivalent irreducible bounded self-dual (resp. self-conjugate-dual in the case of unitary groups) representations of
$W_F$, such that $\phi_{i_j} \otimes S_{2h_{i_j}+1}{'}s$ are
of type $R^-$. 
That is for each $j$, either $\phi_{i_j}$ is of type $R^-$
and $h_{i_j} \in \mathbb{Z}_{\geq0}$, or $\phi_{i_j}$ is of type $R$
and $h_{i_j} \in \frac{1}{2} + \mathbb{Z}_{\geq0}$.

$J_{S_2}{'}(\phi)$ is
\begin{equation} \label{equ56}
J_{S_2}{'}(\phi) = \bigoplus_{j=1}^c 2\mu_{t_j} (\phi_{t_j}
\otimes S_{2h_{t_j}+1}),
\end{equation}
with the properties that

(1) $2h_{t_j} \in \mathbb{Z}_{\geq0}$;

(2) $\mu_{t_j} \in \mathbb{Z}_{>0}$ are the half of the
multiplicities;

(3) $\phi_{t_1}, \phi_{t_2}, \cdots, \phi_{t_c}$ are pairwise
non-equivalent irreducible bounded self-dual (resp. self-conjugate-dual in the case of unitary groups) representations of
$W_F$, such that $\phi_{t_j} \otimes S_{2h_{t_j}+1}{'}s$ are
of type $R$. That is for each $j$, either $\phi_{t_j}$ is of type $R^-$
and $h_{t_j} \in \frac{1}{2} + \mathbb{Z}_{\geq0}$, or $\phi_{t_j}$
is of type $R$ and $h_{t_j} \in \mathbb{Z}_{\geq0}$.

$J_{S_1}{'}(\phi)$ and $J_2(\phi)$ are
\begin{equation} \label{equ57}
J_{S_1}{'}(\phi) \oplus J_2(\phi)= \bigoplus_{j=1}^r (2\mu_{x_j}+1) (\phi_{x_j}
\otimes S_{2h_{x_j}+1}),
\end{equation}
\begin{equation} \label{equ1012}
J_2(\phi)= \bigoplus_{j=1}^r (\phi_{x_j}
\otimes S_{2h_{x_j}+1)}, 
\end{equation}
with the properties that

(1) $2h_{x_j} \in \mathbb{Z}_{\geq0}$;

(2) $\mu_{x_j} \in \mathbb{Z}_{>0}$, $2\mu_{x_j}+1$ are the
multiplicities;

(3) $\phi_{x_1}, \phi_{x_2}, \cdots, \phi_{x_r}$ are pairwise
non-equivalent irreducible bounded self-dual (resp. self-conjugate-dual in the case of unitary groups) representations of
$W_F$, such that $\phi_{x_j} \otimes S_{2h_{x_j}+1}{'}s$ are
of type $R$. We note that some of the summations in \eqref{equ54}, \eqref{equ55}, 
\eqref{equ56} and \eqref{equ1012}, 
may be empty.

Let $\sigma$ be the unique irreducible generic constituent of the
following induced representation of $G_{n}(F)$
$$J_N(\sigma) \times J_W(\sigma) \times J_{S_2}(\sigma) \times J_{S_1}(\sigma) 
\rtimes \sigma^{(2)},$$ where
\begin{eqnarray*}
& & J_N(\sigma) = \times_{j=1}^v \delta([\nu^{-h_{z_j}}r(\phi_{z_j}),
\nu^{h_{z_j}}r(\phi_{z_j})])^{\times \mu{'}_{z_j}},\\
& & J_W(\sigma) = \times_{j=1}^u \delta([\nu^{-h_{i_j}}r(\phi_{i_j}),
\nu^{h_{i_j}}r(\phi_{i_j})])^{\times \mu_{i_j}},\\
& & J_{S_2}(\sigma) = \times_{j=1}^c
\delta([\nu^{-h_{t_j}}r(\phi_{t_j}),
\nu^{h_{t_j}}r(\phi_{t_j})])^{\times \mu_{t_j}},\\
& & J_{S_1}(\sigma) = \times_{j=1}^r
\delta([\nu^{-h_{x_j}}r(\phi_{x_j}),
\nu^{h_{x_j}}r(\phi_{x_j})])^{\times \mu_{x_j}},
\end{eqnarray*}
and $\sigma^{(2)} \in \Pi^{(dg)}(
G_{n{''}})$ (for some $n{''}$) is the one given in Theorem \ref{thm10}, such that 
$y(\sigma^{(2)}) = J_2(\phi)$. Then
$\sigma$ is tempered and generic, and by the proof of Theorem \ref{thm9},
we have that $l(\sigma) = r(\phi)$ and 
$$L(\sigma \times \tau, s) = L(r^{-1}(l(\sigma))) \otimes r^{-1}(\tau), s) = L(\iota(\sigma) \otimes r^{-1}(\tau),
s),$$
$$\epsilon(\sigma \times \tau, s, \psi) = \epsilon(r^{-1}(l(\sigma))) \otimes r^{-1}(\tau), s, \psi) = \epsilon(\iota(\sigma) \otimes r^{-1}(\tau),
s, \psi),$$ for all irreducible generic representations $\tau$
of $H_{k_{\tau}}(F)$, with all $k_{\tau} \in \mathbb{Z}_{>0}$. Here
$r^{-1}(\tau)$ is the irreducible admissible representation of $W_F \times SL_2(\mathbb{C})$
of dimension $k_{\tau}$, corresponding to $\tau$ by the local
Langlands reciprocity map for $H_{k_{\tau}}$.
Therefore, $\widetilde{\phi} = \iota(\sigma)$.

This completes the proof of the theorem.
\end{proof}

\begin{rem}\label{rmk8}
By \S 3.3, if $\sigma^{(t)} \in \Pi^{(tg)}(G_n)$ 
is the unique generic constituent of
$\delta([\nu^{-e_1}\beta_1, \nu^{e_1}\beta_1]) \times \dots \times \delta([\nu^{-e_c}\beta_c, \nu^{e_c}\beta_c])
\rtimes \sigma^{(2)}$ (possibly $\sigma^{(2)} = 1 \otimes c$)
and $\sigma^{(2)} \ncong c \sigma^{(2)}$, then $\sigma^{(t)} \ncong c \sigma^{(t)}$,
both are in $\Pi^{(tg)}(G_n)$, and $c \sigma^{(t)}$
is the unique generic constituent of
$\delta([\nu^{-e_1}\beta_1, \nu^{e_1}\beta_1]) \times \dots \times \delta([\nu^{-e_c}\beta_c, \nu^{e_c}\beta_c])
\rtimes c \sigma^{(2)}$.

By Remark \ref{rmk7}, Theorem \ref{thm9}, Theorem \ref{thm11},
and the multiplicativity of local factors (see \cite{Sha90b}, \cite{JS12} and \cite{CKPSS04}),
in the above situation, $\sigma^{(t)}$ and $c \sigma^{(t)}$
have the same lifting image and the same twisted local factors.
\end{rem}

\subsection{Generic representations}

 First, we make the following definition which is based on Theorem \ref{thm1intro} and the classification of generic representations in \S \ref{sect4}. Note that $\beta=0$ for the groups considered in this section.

\begin{defn} \label{def1}
Let $G_n = SO_{2n+1}, Sp_{2n}, SO_{2n}, SO_{2n+2}^*, U_{2n+1}, U_{2n}$, quasi-split classical groups of rank $n$.
Let $\{\Sigma_i\}_{i=1}^f$ and $\sigma^{(t)}$ be as in \S \ref{generic}.
Then $\{\Sigma_i\}_{i=1}^f$ is called a
$G_n$-generic sequence of segments with respect to $\sigma^{(t)}$
if it satisfies the following conditions,

$(1)$ the segment $\Sigma_i$ is not linked to either $\Sigma_j$ or
$\check{\Sigma}_j$ for $1 \leq i \neq j \leq f$.

$(2)$ for $1 \leq i \leq f$, $\Sigma_i$ and $\check{\Sigma}_i$ are not linked to any
segment, which corresponds to a representation in any of the
families
$$\delta([\nu^{-a_i(\tau)}\tau, \nu^{b_i(\tau)}\tau]), i=1, 2, \cdots,
e_{\tau}, \tau \in P',$$
$$ \{\delta([\nu^{-e_j}\beta_j, \nu^{e_j}\beta_j])\}_{j=1}^c, \{\delta([\nu^{-e_j}\check\beta_j, \nu^{e_j}\check\beta_j]) |
\beta_j \not \cong \check{\beta}_j, 1 \leq j \leq c \};$$

$(3)$ one of the following three conditions holds,

$(3a)$ $\xi_i \not \cong \check{\xi_i}$; or

$(3b)$ there exists $\tau \in
X'$, such that $\tau \cong \xi_i$, $q_i = -1$,
and there is some $1 \leq j \leq e_{\tau}$, with 
$a_j(\tau) = -1$ and $1+w_i \leq b_j(\tau)$; or

$(3c)$ $(\xi_i, \sigma^{(0)})$ is $(C\alpha)$ $(\alpha = 0,
\frac{1}{2}, 1)$, but $\pm\alpha \not \in \{-q_i, -q_i + 1, \cdots,
-q_i + w_i\}$; $(\xi_i, \sigma^{(0)})$ is $(CN)$, but $q_i \notin \mathbb{Z}_{\geq 0}$.

See Definition \ref{def2} and Proposition \ref{gdsprop1} for the definitions of $P'$ and $X'$. 
\end{defn}

Let $\Pi^{(g)}_{\varepsilon}(H_N)$ be 
the set of equivalence classes of
irreducible representations $\pi$ of $H_N(F)$  
which are
Langlands quotients of representations
\begin{equation} \label{equlg1}
\delta(\Sigma_1) \times \cdots \times
\delta(\Sigma_f) \times \rho^{(t)} \times \delta(\check{\Sigma}_f)
\times \cdots \times \delta(\check{\Sigma}_1), 
\end{equation}
with central character $\chi$ being trivial when restricting to $F^*$ except when $G_n=SO_{2n+2}^*$, in which case it is 
the quadratic character $\eta_{\varepsilon}$  associated to the square class $\varepsilon$ defining $G_n$.  
Here
$\{\Sigma_j\}_{j=1}^f$ are of the form \eqref{equg1}, $\xi_1, \xi_2,
\cdots, \xi_f$ are irreducible unitary and supercuspidal, with
possible repetitions, $q_i \in \mathbb{R}$, $w_i \in
\mathbb{Z}_{\geq0}$, and $\rho^{(t)}
\in \Pi^{(tg)}_{\alpha}(H_{N^*})$, such that the following hold:

$(1)$ $\frac{w_1}{2} - q_1 \geq \frac{w_2}{2} - q_2 \geq \cdots \geq
\frac{w_f}{2} - q_f > 0$;

$(2)$ The segment $\Sigma_i$ is not linked to either $\Sigma_j$ or
$\check{\Sigma}_j$ for $1 \leq i \neq j \leq f$;

$(3)$ The representations $\delta(\Sigma_i) \times \rho^{(t)}$ and
$\delta(\check{\Sigma}_i) \times \rho^{(t)}$ are irreducible for
all $1 \leq i \leq f$;

$(4)$ Assume $\xi_i \cong \check{\xi}_i$ and $2q_i \in \mathbb{Z}$, such
that if $L(\xi_i, R^-, s)$ has a pole at $s = 0$, then $q_i \in
\frac{1}{2} + \mathbb{Z}$, and if $L(\xi_i, R, s)$ has a pole at
$s = 0$, then $q_i \in \mathbb{Z}$. Then $\Sigma_i$ is
not linked to $\check{\Sigma}_i$. Moreover, if $L(\rho^{(0)}
\times \xi_i, s)$ has a pole at $s=0$
and $q_i \in \mathbb{Z}$, then either (a) $-q_i \geq 2$ or (b) $q_i =
-1$, $\xi_i = \tau
\in A_2(\rho^{(2)}$) and there is some $1 \leq j \leq e_{\tau}$
such that $a_j(\tau) = -1$ and $1 + w_i \leq b_j(\tau)$. See \eqref{rho2} for $\rho^{(2)}$ and see 
\eqref{equldg} for the definition of $A_2(\rho^{(2)})$.

The following theorem is analogous to  \cite[Theorem 5.1]{JS04}, \cite[Theorem 4.15]{Liu11}, and \cite[Theorem 4.16]{JL14}.

\begin{thr}\label{thmgen}
There is a surjective map $l$ $($which extends the one in Theorem \ref{thm9}$)$
from $\Pi^{(g)}(G_n)$ to $\Pi^{(g)}_{\varepsilon}(H_N)$ and it preserves the local factors:
\begin{equation} \label{equlg2}
L(\sigma \times \pi, s) = L(l(\sigma) \times \pi, s), 
\end{equation}
\begin{equation} \label{equlg3}
\epsilon(\sigma \times \pi, s, \psi) = \epsilon(l(\sigma) \times \pi, s, \psi), 
\end{equation}
for any $\sigma \in \Pi^{(g)}(G_n)$ and any
irreducible generic representation $\pi$ of any $H_k(F)$, $k
\in \mathbb{Z}_{>0}$.
\end{thr}

\begin{proof}
This map has already been given by Cogdell, Kim, Piatetski-Shapiro, and Shahidi (see \cite{CKPSS04} and \cite{CPSS11}), so it suffices to prove the surjectivity. That is, given a $\rho \in \Pi^{(g)}_{\varepsilon}(H_N)$, to construct a $\sigma_{\rho} \in \Pi^{(g)}(G_n)$ such that $\rho =
l(\sigma_{\rho})$ and \eqref{equlg2}, \eqref{equlg3} hold.

Define a representation
 $\sigma_{\rho}$ of  $G$ as
 $$\sigma_{\rho} := \pi_1 \times \pi_2 \times \cdots \times \pi_f \rtimes
\sigma^{(t)}$$ 
where $\pi_i=\delta(\Sigma_i)$ and 
$\sigma^{(t)}$ is the irreducible tempered
generic representation of $G_{n^{*}}(F)$ attached to $\rho^{(t)}$.
Now, first we want to prove that $\sigma_{\rho}$ is a generic
representation of $G_n(F)$, so we have to verify the conditions in
the Definition \ref{def1}.

(1) From the condition (2) in the definition of $\rho \in \Pi^{(g)}_{\varepsilon}(H_N)$, 
we can see that the
segment $\Sigma_i$ is not linked to either $\Sigma_j$ or
$\check{\Sigma}_j$ for $1 \leq i \neq j \leq f$, that is condition
(1) in Definition \ref{def1}.

(2) For $1 \leq i \leq f$, if $\Sigma_i$ is linked to some segment
corresponding to a representation in any of the families
$$\{\delta([\nu^{-m_j}\tau_j, \nu^{m_j}\tau_j])\}_{j=1}^r, \{\delta([\nu^{-e_j}\beta_j, \nu^{e_j}\beta_j])\}_{j=1}^c, \{\delta([\nu^{-e_j}\check{\beta}_j, \nu^{e_j}\check{\beta}_j])\}_{j=1}^c$$ 
which completely determine $\rho^{(t)}$ as in \eqref{equltg4} and \S \ref{gtsection}, then by the
classification theory in \cite{Zel80}, we know that the representation
$\delta(\Sigma_i) \times \rho^{(t)}$ and $\delta(\check{\Sigma}_i)
\times \rho^{(t)}$ must be reducible, this contradicts to condition
(3) in the definition of $\rho \in \Pi^{(g)}_{\varepsilon}(H_N)$. So condition (2) in Definition \ref{def1} holds.

(3) If $\xi_i$ is not self-dual (resp. self-conjugate-dual in the case of unitary groups), then condition (3a) in Definition
\ref{def1} holds. Otherwise, the condition (4) in the definition 
of $\rho \in \Pi^{(g)}_{\varepsilon}(H_N)$ holds. So if
$L(\rho^{(0)} \times \xi_i, s)$ has a pole at $s=0$, then either
$-q_i \geq 2$, i. e. $\pm1 \not \in \{-q_i, -q_i + 1, \cdots, -q_i +
w_i\}$, that is condition (3C1), or $q_i = -1$ and $\xi_i \in
A_2(\rho^{(2)}$), that is condition (3b). If $L(\xi_i, R, s)$
has a pole at $s = 0$, but $L(\sigma^{(0)} \times \xi_i, s)$ has no
pole at $s = 0$, then $q_i \in \mathbb{Z}$, and $\Sigma_i$ is not
linked to $\check{\Sigma}_i$. The linkage condition means $\pm0
\not \in \{-q_i, -q_i + 1, \cdots, -q_i + w_i\}$ or $q_i \notin \mathbb{Z}_{\geq 0}$, otherwise,
$\Sigma_i$ must link to $\check{\Sigma}_i$. So, condition (3C0) or (3CN) 
holds. If $L(\xi_i, R^-, s)$ has a pole at $s = 0$, then $q_i
\in \frac{1}{2} + \mathbb{Z}$, and $\Sigma_i$ is not linked to
$\check{\Sigma}_i$, which means $\pm\frac{1}{2} \not \in \{-q_i,
-q_i + 1, \cdots, -q_i + w_i\}$. Otherwise, $\Sigma_i$ also must
link to $\check{\Sigma}_i$. So, condition (3C$\frac{1}{2}$) also
holds.

So, all conditions in Definition \ref{def1} are satisfied, which means
$\sigma_{\rho}$ is indeed generic. Next, we have to prove \eqref{equlg2} and \eqref{equlg3}. By \cite[Lemma 7.2]{CKPSS04}, for equalities of local factors, we only have to 
consider twisting irreducible supercuspidal representations of $H_k$, with all $k
\in \mathbb{Z}_{>0}$. 

First, we prove the compatibility of local gamma functions using
the multiplicity of local gamma functions (\cite{Sha90b}, \cite{JPSS83}). First we know that
\begin{equation} \label{equ61}
\gamma(\sigma_{\rho} \times \pi, s, \psi) = [\prod_{i=1}^f
\gamma(\pi_i \times \pi, s, \psi) \gamma(\check{\pi}_i \times \pi,
s, \psi)] \gamma(\sigma^{(t)} \times \pi, s, \psi). 
\end{equation}
On the other hand, by Theorem \ref{thm9}, we know that
$$\gamma(\sigma^{(t)} \times \pi, s, \psi) = \gamma(\rho^{(t)} \times \pi, s, \psi).$$
So from \eqref{equ61} we get that
\begin{align*}
& \quad \gamma(\sigma_{\rho} \times \pi, s, \psi)\\
& = [\prod_{i=1}^f
\gamma(\pi_i \times \pi, s, \psi) \gamma(\check{\pi}_i \times \pi,
s, \psi)] \gamma(\rho^{(t)} \times \pi, s, \psi)\\
& = \gamma((\delta(\Sigma_1) \times \cdots \times \delta(\Sigma_f)
\times \rho^{(t)} \times \delta(\check{\Sigma}_f) \times \cdots
\times \delta(\check{\Sigma}_1)) \times \pi, s, \psi)\\
& = \gamma(\rho \times \pi, s, \psi)
\end{align*}
for $\rho$ is the Langlands
quotient of representation
$$\delta(\Sigma_1) \times \cdots \times
\delta(\Sigma_f) \times \rho^{(t)} \times \delta(\check{\Sigma}_f)
\times \cdots \times \delta(\check{\Sigma}_1).$$ Hence,
\begin{equation} \label{equ63}
\gamma(\sigma_{\rho} \times \pi, s, \psi) = \gamma(\rho \times
\pi, s, \psi). 
\end{equation}

Then, we want to prove the equality of local $L$-factors
$$L(\sigma_{\rho} \times \pi, s) = L(\rho \times \pi, s),$$
using the multiplicity of local $L$-factors (\cite{Sha90b}, \cite{JPSS83}). First we have that
\begin{equation} \label{equ62}
L(\sigma_{\rho} \times \pi, s) = [\prod_{i=1}^f
L(\pi_i \times \pi, s) L(\check{\pi}_i \times \pi, s)]
L(\sigma^{(t)} \times \pi, s). 
\end{equation}
On the other hand, by
Theorem \ref{thm9}, we know that
$$L(\sigma^{(t)} \times \pi, s) = L(\rho^{(t)} \times \pi, s).$$
So from \eqref{equ62} we get that
\begin{align*}
& \quad L(\sigma_{\rho} \times \pi, s)\\
& = [\prod_{i=1}^f
L(\pi_i \times \pi, s) L(\check{\pi}_i \times \pi,
s)] L(\rho^{(t)} \times \pi, s)\\
& = L((\delta(\Sigma_1) \times \cdots \times \delta(\Sigma_f)
\times \rho^{(t)} \times \delta(\check{\Sigma}_f) \times \cdots
\times \delta(\check{\Sigma}_1)) \times \pi, s)\\
& = L(\rho \times \pi, s)
\end{align*}
for $\rho$ is the Langlands
quotient of representation
$$\delta(\Sigma_1) \times \cdots \times
\delta(\Sigma_f) \times \rho^{(t)} \times \delta(\check{\Sigma}_f)
\times \cdots \times \delta(\check{\Sigma}_1).$$ 
Hence,
\begin{equation} \label{equ64}
L(\sigma_{\rho} \times \pi, s) = L(\rho \times \pi, s), 
\end{equation}
this proves \eqref{equlg2}.

We can rewrite \eqref{equ63} as
$$\epsilon(\sigma_{\rho} \times \pi, s, \psi) \frac{L(\sigma_{\rho} \times
\check{\pi}, 1-s)}{L(\sigma_{\rho} \times \pi, s)} = \epsilon(\rho
\times \pi, s, \psi) \frac{L(\rho \times \check{\pi}, 1-s)}{L(\rho
\times \pi, s)}.$$ Then combining with \eqref{equ64}, we obtain that
$$\epsilon(\sigma_{\rho} \times \pi, s, \psi) = \epsilon(\rho
\times \pi, s, \psi).$$ This proves \eqref{equlg3}, hence completes the proof of the theorem.
\end{proof}

At last, we assign the corresponding parameters for representations in $\Pi^{(g)}(G_n)$. 
Let $\Phi^{(g)}(G_n)$ be the subset of $\Phi(G_n)$ consisting of elements of the following form:
$$\phi = \phi^{(t)} \oplus \bigoplus_{i=1}^f \left(\lvert \cdot \rvert^{-q_i+\frac{w_i}{2}}r^{-1}(\xi_i) \otimes
S_{w_i+1} \oplus \lvert \cdot \rvert^{q_i-\frac{w_i}{2}}r^{-1}(\check{\xi}_i)
\otimes S_{w_i+1} \right),$$ 
where $\phi^{(t)}$ is a representative of $\iota(\sigma^{(t)})$ for an irreducible tempered representation $\sigma^{(t)}$ of $G_{n^*}(F)$ ($n^*\leq n$), and the sequence 
$$\{\Sigma_j=[v^{-q_j}\xi_j, v^{-q_j + w_j}\xi_j]\}_{j=1}^f$$ is a
$G_n$-generic sequence of segments with respect to $\sigma^{(t)}$ (see Definition~\ref{def1}). Here,
$\iota$ is the reciprocity map
given in Theorem \ref{thm11} for irreducible tempered generic
representations in $\Pi^{(tg)}(G_n)$, $r$ is the reciprocity map
for $H_{*}(F)$, and $\lvert \cdot \rvert^s$ is the character of $W_F$
normalized as in \cite{Tat79} via local class field theory.
Let $\widetilde{\Phi}^{(g)}(G_n)$ be the image of $\Phi^{(g)}(G_n)$ in $\widetilde{\Phi}(G_n)$.

The following theorem is analogous to the result in the last paragraph of 
Section 5 of \cite{JS04}, \cite[Theorem 4.17]{Liu11}, and \cite[Theorem 4.17]{JL14}. 

\begin{thr} \label{thm19}
There is a surjective map $\iota$ $($which
extends the one in Theorem \ref{thm11}$)$ from $\Pi^{(g)}(G_n)$ to
$\widetilde{\Phi}^{(g)}(G_n)$ and it preserves the local factors:
$$L(\sigma \times \tau, s) = L(\iota(\sigma) \otimes r^{-1}(\tau),
s),$$
$$\epsilon(\sigma \times \tau, s, \psi) =
\epsilon(\iota(\sigma) \otimes r^{-1}(\tau), s, \psi),$$
for all $\sigma \in \Pi^{(g)}(G_n)$ and all irreducible
generic representations $\tau$ of any $H_{k_{\tau}}(F)$,
$k_{\tau} \in \mathbb{Z}_{>0}$. Here, $r^{-1}(\tau)$ is the
irreducible admissible representation of $W_F \times SL_2(\mathbb{C})$ of dimension
$k_{\tau}$  corresponding to $\tau$ by the local Langlands
reciprocity map for $H_{k_{\tau}}$.
\end{thr}

\begin{proof}
Given any $\sigma \in \Pi^{(g)}(G_n)$, by the classification of generic representations of $G_n(F)$ in \S \ref{generic}, there exists an irreducible tampered generic representation $\sigma^{(t)}$ of $G_{n^*}(F)$ and 
a sequence of segments $\{\Sigma_j=[v^{-q_j}\xi_j, v^{-q_j + w_j}\xi_j]\}_{j=1}^f$ which is a
$G_n$-generic sequence of segments with respect to $\sigma^{(t)}$ (see Definition~\ref{def1}), such that 
$$
\sigma = \delta(\Sigma_1) \times \delta(\Sigma_2) \times \cdots \times \delta(\Sigma_f) \rtimes
\sigma^{(t)}.
$$

Let $\phi^{(t)}$ be a representative of $\iota(\sigma^{(t)})$
and let 
$$\phi = \phi^{(t)} \oplus \bigoplus_{i=1}^f \left(\lvert \cdot \rvert^{-q_i+\frac{w_i}{2}}r^{-1}(\xi_i) \otimes
S_{w_i+1} \oplus \lvert \cdot \rvert^{q_i-\frac{w_i}{2}}r^{-1}(\check{\xi}_i)
\otimes S_{w_i+1} \right),$$
which is exactly $r^{-1}(l(\sigma))$. 
It is easy to see that $\phi \in \Phi^{(g)}(G_n)$.
Define $\iota(\sigma)=\widetilde{\phi}$, the image of $\phi$ in $\widetilde{\Phi}(G_n)$.
Therefore, we have constructed a map $\iota$ from $\Pi^{(g)}(G_n)$ to
$\widetilde{\Phi}^{(g)}(G_n)$, which naturally extends the one in Theorem \ref{thm11}. Since $l$ preserves local factors, so is $\iota$.

To show that this map is surjective, take any $\widetilde{\phi} \in 
\widetilde{\Phi}^{(g)}(G_n)$ and let 
$$\phi = \phi^{(t)} \oplus \bigoplus_{i=1}^f \left(\lvert \cdot \rvert^{-q_i+\frac{w_i}{2}}r^{-1}(\xi_i) \otimes
S_{w_i+1} \oplus \lvert \cdot \rvert^{q_i-\frac{w_i}{2}}r^{-1}(\check{\xi}_i)
\otimes S_{w_i+1} \right)$$
be a representative,
where $\phi^{(t)}$ is a tempered parameter of  $G_{n^*}$. 
Let 
$\sigma^{(t)}$ be an irreducible tampered generic representation of $G_{n^*}$ lifting to $\widetilde{\phi}^{(t)}$. Then 
the sequence of segments 
$$\{\Sigma_j=[v^{-q_j}\xi_j, v^{-q_j + w_j}\xi_j]\}_{j=1}^f$$ is a
$G_n$-generic sequence of segments with respect to $\sigma^{(t)}$.

Let 
$$
\sigma = \delta(\Sigma_1) \times \delta(\Sigma_2) \times \cdots \times \delta(\Sigma_f) \rtimes
\sigma^{(t)}.
$$
By the classification of irreducible generic representations in \S \ref{generic}, we can see that $\sigma$ is irreducible and generic. Hence $\sigma \in \Pi^{(g)}(G_n)$ and we can also easily see that $\iota(\sigma)$ is actually equal to $\widetilde{\phi}$. Therefore, $\iota$ is indeed surjective.

This completes the proof of the theorem.
\end{proof}

\begin{rem}\label{rmknew1}
Given any $\sigma \in \Pi^{(g)}(G_n)$, let $\iota(\sigma)=\widetilde{\phi} \in \widetilde{\Phi}^{(g)}(G_n)$ and let $\phi$ is a representative of $\widetilde{\phi}$. 
Recall the embedding 
$i: {}^LG_n  \hookrightarrow {}^LH_N$ in \S \ref{sect5} (see also Table \ref{tab:Langlands functoriality}). 
Form Theorem \ref{thmgen} and Theorem \ref{thm19}, we can see that the composition $i \circ \phi$ is actually the local Langlands parameter corresponding to the lifing $l(\sigma)$ of $\sigma$.

A local Langlands parameter $\phi \in {\Phi}(G_n)$ is called generic if there is a generic representation in the corresponding local $L$-packet. From Theorem \ref{thm19},
we can see that
${\Phi}^{(g)}(G_n)$ is actually the set of all generic local Langlands parameters of $G_n(F)$.
\end{rem}

\begin{rem}\label{rmk9}
By \S \ref{generic}, if $\sigma^{(g)} \in \Pi^{(g)}(G_n)$ is
the irreducible generic representation 
$\pi_1 \times \pi_2 \times \cdots \times \pi_f \rtimes
\sigma^{(t)}$
(possibly $\sigma^{(t)} = 1 \otimes c$) and $\sigma^{(t)} \ncong c \sigma^{(t)}$,
then $\sigma^{(g)} \ncong c \sigma^{(g)}$, both are in $\Pi^{(g)}(G_n)$,
and $c \sigma^{(g)}$ is
the irreducible generic representation 
$\pi_1 \times \pi_2 \times \cdots \times \pi_f \rtimes
c \sigma^{(t)}$.

By Remark \ref{rmk8}, Theorem \ref{thmgen} and Theorem \ref{thm19},
and by the multiplicativity of local factors (see \cite{Sha90b}, \cite{JS12} and \cite{CKPSS04}),
in the above situation, $\sigma^{(g)}$ and $c \sigma^{(g)}$
have the same lifting image and the same twisted local factors.
We record this result as the following theorem.
\end{rem}

\begin{thr}\label{thm20}
For any $\sigma \in \Pi^{(g)}(G_n)$,
if $\sigma \ncong c \sigma$, then 
$l(\sigma)=l(c \sigma)$,
and $\iota(\sigma)=\iota(c \sigma)$.
That is, they have the same lifting image and the same twisted local factors.
\end{thr}

\section{Representations attached to parameters}\label{sect7}

In this section, as in \cite{JS04}, \cite{Liu11},
and \cite{JL14}, we associate
an irreducible representation of $G_n(F)$ to each local
Langlands parameter $\widetilde{\phi} \in \widetilde{\Phi}(G_n)$. 
The key idea is to analyze the structures of local
Langlands parameters. The following proposition is analogous to 
\cite[Proposition 6.1]{JS04}, \cite[Proposition 5.1]{Liu11}, and 
\cite[Proposition 5.1]{JL14}. 

\begin{prop} \label{prop3}
Let ${\phi} \in {\Phi}(G_n)$ be a local Langlands parameter. Then either $\phi \in \Phi^{(t)}(G_n)$, 
or 
\begin{equation} \label{equrp1}
\phi = \phi^{(t)} \oplus \phi^{(n)}, 
\end{equation}
where $\phi^{(t)} \in \Phi^{(t)}(G_{n^{*}})$ $(n^{*} < n)$ and 
$\phi^{(n)}$ is of the following form
\begin{equation} \label{equrp2}
\phi^{(n)} = \bigoplus_{i=1}^f \left(\lvert \cdot \rvert^{-q_i+\frac{w_i}{2}} \phi_i \otimes
S_{w_i+1} \oplus \lvert \cdot \rvert^{q_i-\frac{w_i}{2}} \check{\phi}_i \otimes
S_{w_i+1}\right).
\end{equation}
Here, $f \in \mathbb{Z}_{>0}$,
$w_1, w_2, \dots, w_f \in \mathbb{Z}_{\geq0}$, $q_1, q_2, \dots, q_f \in \mathbb{R}$, 
such that $\phi_i$ is an
irreducible bounded representation of $W_F$ for $1 \leq i \leq f$, 
$$\frac{w_1}{2} - q_1 \geq \frac{w_2}{2} - q_2 \geq \dots \geq  \frac{w_f}{2} - q_f > 0.$$
$\lvert \cdot \rvert$ is the character of $W_F$
normalized as in \cite{Tat79} via local class field theory, and  $\check{\phi}_{i}$ is irreducible representation of $W_F$ corresponding to $r({\phi}_{i})^{\vee}$ under the local Langlands
reciprocity map $r$ for general linear groups.
\end{prop}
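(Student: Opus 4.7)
My plan is to decompose $\phi$ by passing through the embedding $i: {}^LG_n \hookrightarrow {}^LH_N$ and then using the self-duality (or conjugate self-duality) structure this embedding imposes, exactly paralleling the Langlands classification of generic representations we have just established in \S\ref{sect4} and \S\ref{sect6}.

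First, I would form $\Phi:= i \circ \phi$, which is an $N$-dimensional (or $N+N$ in the unitary case) semisimple complex representation of $W_F \times SL_2(\mathbb{C})$. By the standard classification of such representations, it decomposes uniquely (up to order and isomorphism) as $\Phi \cong \bigoplus_{j} |\cdot|^{s_j}\phi_j \boxtimes S_{m_j}$, where each $\phi_j$ is an irreducible bounded representation of $W_F$, $s_j \in \mathbb{R}$, and $S_m$ denotes the irreducible $SL_2(\mathbb{C})$-representation of dimension $m$. The factorization through ${}^LG_n$ endows $\Phi$ with a non-degenerate bilinear (resp.\ Hermitian, in the unitary case) form respected by the action, which translates into a duality/conjugate-duality involution on the multiset of irreducible constituents sending $|\cdot|^{s_j}\phi_j \boxtimes S_{m_j}$ to $|\cdot|^{-s_j}\check\phi_j \boxtimes S_{m_j}$, preserving multiplicities.

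Next I would partition the summands according to the sign of $s_j$. Summands with $s_j = 0$ are either (conjugate) self-dual and contribute singly, or come in dual pairs $\phi_0 \oplus \check{\phi}_0$ (with $\phi_0$ bounded); in either case each such piece is tempered. I would collect all these into a parameter $\phi^{(t)}$. Summands with $s_j \neq 0$ pair off under the duality: each $|\cdot|^{s_j}\phi_j \boxtimes S_{m_j}$ with $s_j > 0$ is matched with a corresponding $|\cdot|^{-s_j}\check\phi_j \boxtimes S_{m_j}$ of equal multiplicity. Set $q_i - \tfrac{w_i}{2} := -s_j$ (so $w_i+1 = m_j$ and $\phi_i := \phi_j$) and, after reordering, this gives the shape of $\phi^{(n)}$ in \eqref{equrp2} with $\tfrac{w_1}{2}-q_1 \geq \cdots \geq \tfrac{w_f}{2}-q_f > 0$.

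Finally, I need to verify that $\phi^{(t)}$ really defines a parameter in $\Phi^{(t)}(G_{n^*})$ for some $n^*<n$ (rather than only a parameter for $H_{N^*}$). This is the point where the argument must use more than the $GL_N$-side: one shows that the self-dual (or dual-pair) structure of the remaining $s=0$ piece is compatible with the L-group structure of a smaller member $G_{n^*}$ of the same family. Equivalently, one factors $\phi$ through the L-group ${}^LP$ of a standard parabolic subgroup $P=MU\subset G_n$ with Levi $M\cong H_{k_1}\times\cdots\times H_{k_f}\times G_{n^*}$ matching the decomposition above; this is the Langlands classification at the level of parameters, and is the mirror image of the Langlands classification of irreducible representations used in Theorem~\ref{thm19}. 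This compatibility step is where the main care is required, because one must check the bounded character identifies the right type (orthogonal/symplectic/conjugate-self-dual) on the residual piece so that it actually lands in ${}^LG_{n^*}$. Once this is done, the required decomposition $\phi = \phi^{(t)} \oplus \phi^{(n)}$ follows, and the case $\phi\in\Phi^{(t)}(G_n)$ corresponds to the situation where no $s_j\neq 0$ summand appears.
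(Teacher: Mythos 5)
Your overall strategy is the same as the paper's: decompose $i\circ\phi$ into irreducible constituents $\lvert\cdot\rvert^{s_j}\phi_j\boxtimes S_{m_j}$, put the bounded ($s_j=0$) constituents into $\phi^{(t)}$, and pair each unbounded constituent with its (conjugate-)dual to form $\phi^{(n)}$. However, the one step you explicitly defer --- ``once this is done'' --- is precisely the entire content of the paper's proof, so as written the proposal has a genuine gap. Knowing that the \emph{multiset} of constituents is stable under duality does not by itself show that the restriction of the invariant form to the bounded part $V_1$ is non-degenerate; a priori the form could pair vectors of $V_1$ against vectors of the unbounded part $V_2$, in which case $\phi^{(t)}$ would not define a parameter of a smaller group of the same type. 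Your suggested fix (factoring $\phi$ through the $L$-group of a parabolic) is a reformulation of what must be proved, not a proof of it.

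The missing argument is short and is what the paper supplies: one shows $V_1\perp V_2$, hence that both are non-degenerate. The paper does this by taking $v_1\in\mathrm{rad}(V_1)$ and any irreducible summand $V_2'$ of $V_2$ on which $W_F$ acts through $\lvert\cdot\rvert^{t}\phi_1'$ with $t\neq 0$ and $\phi_1'$ bounded; the function $w\mapsto\langle\phi(w)v_2,v_1\rangle=\lvert w\rvert^{t}\langle\phi_1'(w)v_2,v_1\rangle$ is bounded (since the action on $v_1\in V_1$ is bounded) yet would be unbounded unless it vanishes identically, so $v_1\perp V_2$ and therefore $v_1\in\mathrm{rad}(V)=0$. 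Equivalently, and perhaps more in the spirit of your write-up: the form induces an equivariant isomorphism $V\cong V^{\vee}$ (or its conjugate), and $\mathrm{Hom}_{W_F\times SL_2(\mathbb{C})}(V_1,V_2^{\vee})=0$ because no bounded irreducible constituent can be isomorphic to an unbounded one; hence $V_1\perp V_2$ and each inherits a non-degenerate form of the same type, which is exactly what places $\phi^{(t)}$ in $\Phi^{(t)}(G_{n^*})$. If you add this orthogonality argument, the rest of your decomposition (the pairing of the $s_j>0$ constituents with their duals and the reordering to get $\frac{w_1}{2}-q_1\geq\dots\geq\frac{w_f}{2}-q_f>0$) goes through as in the paper.
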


\begin{proof}
Given a parameter $\phi \in \Phi(G_n)$, assume $V = \mathbb{C}^{N}$ be the corresponding non-degenerate space of dimension $N$, with a form $<,>$ which is (conjugate)-orthogonal or (conjugate)-symplectic (see \cite[Section 3]{GGP12}). 

Let $V_1$ be the direct sum of all irreducible subspaces, which are stable under
the action of $W_F \times SL_2(\mathbb{C})$ and in which $\phi(W_F)$ is bounded. Let $V_2$
be the direct sum of all irreducible subspaces, which are stable under the
action of $W_F \times SL_2(\mathbb{C})$ and in which $\phi(W_F)$ is unbounded.
Then $$V = V_1 \oplus V_2.$$

First, let us show that both subspaces $V_1$ and $V_2$ are non-degenerate with respect to
the restriction of the non-degenerate form $<,>$. Let $rad(V_i)$ be the radical of
$(V_i, <,>|_{V_i})$, that is $rad(V_i)=\{v \in V_i|<v,w>=0,\forall w \in V_i\}$. Then $rad(V_i)$ is stable
under the action of $W_F \times SL_2(\mathbb{C})$, since
$\phi(g)$ preserves the form.

For any $v_1 \in rad(V_1)$, assume that $\phi_1 \otimes
S_{w_1+1}$ corresponds to an arbitrary irreducible summand $V_2{'}$
of $V_2$, where $\phi_1$ is an irreducible unbounded representation
of $W_F$. Write
$$\phi_1 = |\cdot|^t \phi_1{'},$$
where $\phi_1{'}(W_F)$ is bounded and $0 \neq t \in \mathbb{R}$.
Then, for any $v_2 \in V_2{'}$, $w \in W_F$,we have
$$<v_2, \phi_1(w^{-1})(v_1)> = <\phi_1(w)(v_2), v_1> = |w|^t <(\phi_1{'}(w)
\otimes id)(v_2), v_1>.$$ Since $<v_2, \phi_1(v_1)>$ is
bounded, but $|w|^t <(\phi_1{'}(w) \otimes id)(v_2), v_1>$ is
unbounded, so
$$<\phi_2(w)(v_2), v_1> = 0, \forall v_2 \in V_2{'}.$$
Since $v_2$ is arbitrary, we have
$$<v_2, v_1> = 0, \forall v_2 \in V_2{'}.$$
Since $V_2{'}$ is arbitrary, we get
$$<v_2, v_1> = 0, \forall v_2 \in V_2.$$
Since $v_1 \in rad(V_1)$, we have
$$<v, v_1> = 0, \forall v \in V.$$
Since $V$ is non-degenerate, $v_1$ must be zero. So, $rad(V_1) = 0$,
that is $V_1$ is non-degenerate. Similarly, we can show that is $V_2$ is also non-degenerate.

Denote by $\phi^{(t)}$ the sub-representation of $W_F \times
SL_2(\mathbb{C})$ on $V_1$, and by $\phi^{(n)}$ the
sub-representation of $W_F \times SL_2(\mathbb{C})$ on $V_2$. Then
following similar arguments as in 
\cite[Proposition 5.1]{Liu11},
$\phi^{(t)} \in \Phi^{(t)}(G_{n-n^{*}})$ and
$\phi^{(n)}$ of the form as in \eqref{equrp2}. This completes the proof of the proposition.
\end{proof}

Let $\Pi{'}(G_n)$ be 
the set of equivalence classes of
irreducible representations $\sigma$ of $G_n(F)$  which are Langlands quotients
$$L(\nu^{x_1}\delta_1 \otimes \dots \otimes \nu^{x_k}\delta_k \otimes \sigma^{(t)}),$$
with central character $\chi_{\alpha}$ being trivial when restricting to $F^*$ except when $G_n=SO_{2n+2}^*$, in which case it is 
the quadratic character $\eta_{\varepsilon}$  associated to the square class $\varepsilon$ defining $G_n$.  
Here $\sigma^{(t)}$ is an
irreducible tempered generic representation of $G_{n^{*}}(F)$ (possibly $\sigma^{(t)}=1 \otimes c$--for the definition, see Remark \ref{rmk c}), $x_1 \geq x_2 \geq \cdots \geq x_k>0$,
and $\delta_i$ is a square-integrable representation of $H_{n_i}(F)$, for $i=1,2,\ldots, k$. 
Then, we have the following result which is analogous to 
\cite[Theorem 6.1]{JS04}, \cite[Theorem 5.2]{Liu11}, and 
\cite[Theorem 5.2]{JL14}:

\begin{thr} \label{thm13}
There is a surjective map $\iota$ $($which extends
the one in Theorem \ref{thm11}$)$ from $\Pi{'}(G_n)$ to the set
$\widetilde{\Phi}(G_n)$ and it preserves the local factors:
$$L(\sigma \times \tau, s) = L(\iota(\sigma) \otimes r^{-1}(\tau),
s),$$
$$\epsilon(\sigma \times \tau, s, \psi) =
\epsilon(\iota(\sigma) \otimes r^{-1}(\tau), s, \psi),$$ for
all $\sigma \in \Pi{'}(G_n)$ and all irreducible admissible
representations $\tau$ of any $H_{k_{\tau}}(F)$, $k_{\tau} \in
\mathbb{Z}_{>0}$. Here, $r^{-1}(\tau)$ is the irreducible admissible
representation of $W_F \times SL_2(\mathbb{C})$ of dimension $k_{\tau}$ corresponding to
$\tau$ by the local Langlands reciprocity map for $H_{k_{\tau}}$.
\end{thr}

\noindent
\begin{proof}
Given any $\sigma \in \Pi'(G_n)$ which is the Langlands quotient $L(\nu^{x_1}\delta_1 \otimes \dots \otimes \nu^{x_k}\delta_k \otimes \sigma^{(t)})$,
where $\sigma^{(t)}$ is an
irreducible tempered generic representation of $G_{n^{*}}(F)$ (possibly $\sigma^{(t)}=1 \otimes c$), $x_1 \geq x_2 \geq \cdots \geq x_k>0$,
and $\delta_i$ is a square-integrable representation of $H_{n_i}(F)$, for $i=1,2,\ldots, k$.

Using the surjective map $\iota$ in Theorem \ref{thm11}, let $\widetilde{\phi}^{(t)} = \iota(\sigma^{(t)}) \in \widetilde{\Phi}^t(G_n)$ and let ${\phi}^{(t)}$ be a representative. Assume that $\phi_i$ is the corresponding Langlands parameter for $\delta_i$ under the local Langlands raciprocity map for $H_{n_i}(F)$, for $i=1,2,\ldots,k$.
Then let 
$$\phi = \bigoplus_{i=1}^k (\lvert \cdot \rvert^{x_i} \phi_i \oplus \lvert \cdot \rvert^{-x_i} \check{\phi}_i) \bigoplus {\phi}^{(t)}.$$
Define $\iota(\sigma)=\widetilde{\phi}$, the image of $\phi$ in $\widetilde{\Phi}(G_n)$. Then using multiplicativity of local factors, it is easy to see that the local factors are preserved. In this way, we construct a map $\iota$ from $\Pi{'}(G_n)$ to the set $\widetilde{\Phi}(G_n)$ which preserves local factors and naturally extends the one in Theorem \ref{thm11}.

To prove that this map $\iota$ is surjective, given any 
$\widetilde{\phi} \in \widetilde{\Phi}(G_n)$, let $\phi \in \Phi(G_n)$ be a representative. By Proposition \ref{prop3}, it can be written as
$$\phi = \phi^{(t)} \oplus \phi^{(n)},$$
where $\phi^{(t)} \in \Phi^{(t)}(G_{n^*})$ $(n^{*} < n)$ and 
$\phi^{(n)}$ is of the following form
\begin{equation*}
\phi^{(n)} = \bigoplus_{i=1}^f \left(\lvert \cdot \rvert^{-q_i+\frac{w_i}{2}} \phi_i \otimes
S_{w_i+1} \oplus \lvert \cdot \rvert^{q_i-\frac{w_i}{2}} \check{\phi}_i \otimes
S_{w_i+1}\right).
\end{equation*}
Here, $f \in \mathbb{Z}_{>0}$,
$w_1, w_2, \dots, w_f \in \mathbb{Z}_{\geq0}$, $q_1, q_2, \dots, q_f \in \mathbb{R}$, 
such that $\phi_i$ is an
irreducible bounded representation of $W_F$ for $1 \leq i \leq f$, and 
$$\frac{w_1}{2} - q_1 \geq \frac{w_2}{2} - q_2 \geq \dots \geq  \frac{w_f}{2} - q_f > 0.$$

By Theorem \ref{thm11}, there exists
$\sigma^{(t)} \in \Pi^{(tg)}(G_{n^*})$ such that
\begin{equation} \label{equrp4}
\iota(\sigma^{(t)}) = \widetilde{\phi}^{(t)} \in \widetilde{\Phi}^{(t)}(G_{n^*}). 
\end{equation}
Using the local Langlands reciprocity map $r$ for $H_k(F)$, define
\begin{equation} \label{equrp5}
\Sigma_i = [v^{-q_i}r(\phi_i), v^{-q_i+w_i}r(\phi_i)], 1 \leq i
\leq f.
\end{equation}
Let $\sigma$ be the Langlands quotient
of the induced representation
$$\delta(\Sigma_1) \times \delta(\Sigma_2) \times \dots
\delta(\Sigma_f) \rtimes \sigma^{(t)},$$
(possibly $\sigma^{(t)} = 1 \otimes c$).
Then, it is easy to see that $\sigma \in \Pi{'}(G_n)$ and $\iota(\sigma)$ is equal to $\widetilde{\phi}$. Therefore $\iota$ is surjective.

This completes the proof of the theorem.
\end{proof}

\begin{rem}\label{rmknew2}
When $\widetilde{\phi} \in \widetilde{\Phi}^{(g)}(G_n)$, let $\phi \in {\Phi}^{(g)}(G_n)$ be a representative, which is a generic local Langlands parameter. 
Then, by definition, $\phi$ is of the form
$$\phi^{(t)} \oplus \bigoplus_{i=1}^f \left(\lvert \cdot \rvert^{-q_i+\frac{w_i}{2}}r^{-1}(\xi_i) \otimes
S_{w_i+1} \oplus \lvert \cdot \rvert^{q_i-\frac{w_i}{2}}r^{-1}(\check{\xi}_i)
\otimes S_{w_i+1} \right),$$
where $\phi^{(t)}$ is a representative of $\iota(\sigma^{(t)})$ for an irreducible tempered representation $\sigma^{(t)}$ of $G_{n^*}(F)$ ($n^* \leq n$) and 
the sequence of segments 
$$\{\Sigma_j=[v^{-q_j}\xi_j, v^{-q_j + w_j}\xi_j]\}_{j=1}^f$$ is a
$G_n$-generic sequence of segments with respect to $\sigma^{(t)}$.

Then by the classification of generic representations in \S \ref{generic},
$$\delta(\Sigma_1) \times \delta(\Sigma_2) \times \dots
\delta(\Sigma_f) \rtimes \sigma^{(t)}$$
is irreducible and generic. The $\sigma$ constructed in Theorem \ref{thm13} is actually
equal to $\delta(\Sigma_1) \times \delta(\Sigma_2) \times \dots
\delta(\Sigma_f) \rtimes \sigma^{(t)}$, hence generic.
From the construction in Theorem \ref{thm19},
we can see that this $\sigma$ indeed matches the one constructed in Theorem \ref{thm19} for this generic local Langlands parameter $\phi$. Hence, the map $\iota$ constructed in Theorem \ref{thm13} is a natural extension of the one constructed in Theorem \ref{thm19}.

Therefore, we can conclude that $\phi \in {\Phi}(G_n)$ is a 
generic local Langlands parameter if and only if the representation $\sigma$ attached to $\phi$ in Theorem \ref{thm13} is generic.
\end{rem}

\begin{rem}\label{rmk10}
When $G_n=SO_{2n}, SO_{2n+2}^*$,
if $\sigma \in \Pi{'}(G_n)$ is the
Langlands quotient of the induced representation
$$\delta(\Sigma_1) \times \delta(\Sigma_2) \times \dots \times
\delta(\Sigma_f) \rtimes \sigma^{(t)},$$
possibly $\sigma^{(t)}=1 \otimes c$
and $\sigma^{(t)} \ncong c \sigma^{(t)}$, then
$\sigma \ncong c \sigma$ and $c \sigma$
is the
Langlands quotient of the induced representation
$$\delta(\Sigma_1) \times \delta(\Sigma_2) \times \dots \times
\delta(\Sigma_f) \rtimes c\sigma^{(t)}.$$
This matches the local Langlands classification for $G_n(F)$ -- see
\cite[Proposition 6.3 and Section 2]{BJ03}.

By Remark \ref{rmk8}, Theorem \ref{thm13},
and the multiplicativity of local factors (see \cite{Sha90b}, \cite{JS12} and \cite{CKPSS04}),
in the above situation, $\sigma$ and $c \sigma$
have the same twisted local factors.
\end{rem}

\appendix

\begin{section}{\texorpdfstring{$F$}{}-roots}\label{F-roots}

In this appendix, we identify the simple $F$-roots and coroots for the similitude groups. In all cases, we take $X=\{e_1, e_2, \dots, e_n,e_0\}$ and $\check{X}=\{\check{e}_1, \check{e}_2, \dots, \check{e}_n, \check{e}_0\}$ and indicate the simple roots and coroots. The data for the corresponding classical groups are obtained by removing $e_0$ and $\check{e}_0$.

\begin{itemize}

\item $G_n=GSp_{2n}$
\[
\begin{array}{l}
\Pi=\{e_1-e_2, e_2-e_3, \dots, e_{n-1}-e_n, 2e_n-e_0\} \\
\check{\Pi}=\{ \check{e}_1-\check{e}_2, \check{e}_2-\check{e}_3, \dots, \check{e}_{n-1}-\check{e}_n, \check{e}_n\}
\end{array}
\]

\item $G_n=GSO_{2n}$
\[
\begin{array}{l}
\Pi=\{e_1-e_2, e_2-e_3, \dots, e_{n-1}-e_n, e_{n-1}+e_n-e_0\} \\
\check{\Pi}=\{ \check{e}_1-\check{e}_2, \check{e}_2-\check{e}_3, \dots, \check{e}_{n-1}-\check{e}_n, \check{e}_{n-1}+\check{e}_n\}
\end{array}
\]

\item $G_n=GSO_{2n+2}^{\ast}$
\[
\begin{array}{l}
\Pi=\{e_1-e_2, e_2-e_3, \dots, e_{n-1}-e_n, e_n-e_0\} \\
\check{\Pi}=\{ \check{e}_1-\check{e}_2, \check{e}_2-\check{e}_3, \dots, \check{e}_{n-1}-\check{e}_n, 2\check{e}_n\}
\end{array}
\]

\item $G_n=GSpin_{2n+1}$
\[
\begin{array}{l}
\Pi=\{e_1-e_2, e_2-e_3, \dots, e_{n-1}-e_n, e_n\} \\
\check{\Pi}=\{ \check{e}_1-\check{e}_2, \check{e}_2-\check{e}_3, \dots, \check{e}_{n-1}-\check{e}_n, 2\check{e}_n-\check{e}_0\}
\end{array}
\]

\item $G_n=GSpin_{2n}$
\[
\begin{array}{l}
\Pi=\{e_1-e_2, e_2-e_3, \dots, e_{n-1}-e_n, e_{n-1}+e_n\} \\
\check{\Pi}=\{ \check{e}_1-\check{e}_2, \check{e}_2-\check{e}_3, \dots, \check{e}_{n-1}-\check{e}_n, \check{e}_{n-1}+\check{e}_n-\check{e}_0\}
\end{array}
\]

\item $G_n=GSpin_{2n+2}^{\ast}$
\[
\begin{array}{l}
\Pi=\{e_1-e_2, e_2-e_3, \dots, e_{n-1}-e_n, e_n\} \\
\check{\Pi}=\{ \check{e}_1-\check{e}_2, \check{e}_2-\check{e}_3, \dots, \check{e}_{n-1}-\check{e}_n, 2\check{e}_n-\check{e}_0\}
\end{array}
\]

\item $G_n=GU_{2n+1}$
\[
\begin{array}{l}
\Pi=\{e_1-e_2, e_2-e_3, \dots, e_{n-1}-e_n, e_n-e_0\} \\
\check{\Pi}=\{ \check{e}_1-\check{e}_2, \check{e}_2-\check{e}_3, \dots, \check{e}_{n-1}-\check{e}_n, 2\check{e}_n\}
\end{array}
\]

\item $G_n=GU_{2n}$
\[
\begin{array}{l}
\Pi=\{e_1-e_2, e_2-e_3, \dots, e_{n-1}-e_n, 2e_n-e_0\} \\
\check{\Pi}=\{ \check{e}_1-\check{e}_2, \check{e}_2-\check{e}_3, \dots, \check{e}_{n-1}-\check{e}_n, \check{e}_n\}
\end{array}
\]

\end{itemize}

\end{section}


\end{document}